\documentclass[11pt, reqno]{amsart}
\usepackage{fullpage}

\newif\ifArxiv
\Arxivtrue

 \newif\ifHideFoot
\HideFoottrue

\ifHideFoot
\else
\usepackage[notref,notcite]{showkeys}
\fi 

\usepackage{amssymb,amsmath,amsthm,amscd,mathrsfs,graphicx, color}
\usepackage[cmtip,all,matrix,arrow,tips,curve]{xy}
 \usepackage{hyperref}
 \usepackage{comment}
\usepackage[usenames,dvipsnames]{xcolor}
\usepackage{xypic}
\hypersetup{colorlinks=true,citecolor=ForestGreen,linkcolor=Maroon,urlcolor=NavyBlue}
 \usepackage{mathtools}
\usepackage{mathpazo}
\usepackage[normalem]{ulem}
\usepackage{thmtools}
\usepackage{cleveref}
\usepackage{enumitem}
\usepackage{tikz-cd}
  
\numberwithin{equation}{section}
\newtheorem{teo}{Theorem}[section]
\newtheorem{pro}[teo]{Proposition}
\newtheorem{lem}[teo]{Lemma}
\newtheorem{cor}[teo]{Corollary}

\newtheorem{teoalpha}{Theorem}

\newtheorem{coralpha}[teoalpha]{Corollary}

\theoremstyle{definition}
\newtheorem{exa}[teo]{Example}
\newtheorem{dfn}[teo]{Definition}
\theoremstyle{remark}
\newtheorem{rem}[teo]{Remark}

\ifHideFoot

\newcommand{\Yano}[1]{}
\newcommand{\Shend}[1]{}

\else 

\newcommand{\marg}[1]{\normalsize{{
\color{red}\footnote{{\color{blue}#1}}}{\marginpar[\vskip
-.25cm{\color{red}\hfill\tiny\thefootnote$\implies$}]{\vskip
-.2cm{\color{red}$\impliedby$\tiny\thefootnote}}}}}
\newcommand{\Yano}[1]{\marg{(Yano) #1}}
\newcommand{\Shend}[1]{\marg{(Shend) #1}}

\fi

\newcommand{\m}[1]{\mathcal{#1}}

\newcommand{\X}{\mathcal{X}}
\newcommand{\XX}{\mathcal{X}}
\newcommand{\Z}{\mathcal{Z}}
\newcommand{\Y}{\mathcal{Y}}

\newcommand{\D}{\mathcal{D}}

\newcommand{\C}{\mathbb{C}}
\newcommand{\Q}{\mathbb{Q}}

\newcommand{\ra}{\rightarrow}

\title[Moduli of KSBA stable pairs]{Moduli spaces of snc klt KSBA stable pairs are naturally of log general type}

\author{Sebastian Casalaina-Martin}
\address{University of Colorado, Department of Mathematics, 
Boulder, CO 80309, USA }
\email{casa@math.colorado.edu}

\author{Shend Zhjeqi}
\address{University of Michigan, Department of Mathematics, 
Ann Arbor, MI 48109, USA }
\email{shendzh@umich.edu}

\thanks{Research of the first named author is supported in part by a grant from the Simons Foundation (SFI-MPS-TSM-00013682). The second named author was partially supported by the Simons Collaboration grant Moduli of Varieties.}

\date{\today}

\begin{document}

\begin{abstract}
We generalize work of Wei--Wu on Viehweg hyperbolicity for families of stable pairs over smooth projective varieties to the case of smooth Deligne--Mumford stacks.  The results of Wei--Wu build on results of Popa--Schnell and Viehweg--Zuo, utilize a result of Campana–P\u{a}un, and extend  results of Kebekus--Kov\'acs and Patakfalvi.  We apply our results to smooth proper moduli stacks parameterizing generically klt relatively simple normal crossings KSBA stable pairs, showing that such moduli stacks naturally have big log canonical bundles. We also consider implications for their coarse moduli spaces.
\end{abstract}

\maketitle

\ifHideFoot
\setcounter{page}{1}
\fi

\section*{Introduction}

Recent work of a number of authors \cite{VZ03, KK08, Pat12, CPFol19, PS17, WW23} has established Viehweg hyperbolicity for families of smooth projective varieties of general type, as well as for families of KSBA stable pairs.  
These results build on strategies developed in \cite{Zuo00negativity, VZ01isotriv, VZ02, VZ03},   as well as positivity  results for variations of Hodge structures  due to Griffiths and  \cite{fujita_fiber_spaces, Kawamata, FF14variationsHMS, FFS14Remarks, Zuo00negativity,PW16}, and  for push forwards of relative canonical bundles due to \cite{kollar_sub, EV90effective, KP17proj}.
Specifically, in the case of KSBA stable pairs,   \cite[Thm.~A]{WW23} implies that if $X$ is a smooth projective variety and  
 $f:( Y, D)\to X$ is a family of  KSBA stable pairs of maximal variation,  
  then for any reduced divisor  $\Delta\subseteq X$ such that
  over the complement $X- \Delta$ the morphism $f$ is relative simple normal crossings (see \Cref{D:RelsncDef}) and the fibers are klt pairs,   
one has that  $K_{X}+\Delta$ is big.

While one can apply these results to coarse moduli spaces of smooth moduli stacks of KSBA stable pairs by ignoring loci with extra automorphisms,  our goal in this paper is to generalize the results to smooth proper DM stacks with projective coarse moduli spaces, so that we can apply the results directly to the moduli stacks.  This seems more natural in the context of moduli spaces, and can give sharper statements when applied to the coarse moduli space of the stack.  We discuss this comparison further below.
This paper is the fourth in a series with the aim of generalizing results of \cite{PS17, WW23} on Viehweg hyperbolicity  to
the case of Deligne–Mumford stacks; the previous articles in this series are 
\cite{CMZpositivity, CMZslope_stability, CMZfoliations}. In forthcoming work we will address the results of \cite{PS17}.

Our main result in this paper is the following generalization of  the result of Wei--Wu on KSBA stable pairs mentioned above to the case of smooth proper DM stacks:

\begin{teoalpha}[Viehweg hyperbolicity for DM stacks of stable pairs]
 \label{T:main-pairs}  
 Let $\mathcal X$ be a smooth proper integral DM stack over $\mathbb C$ with projective coarse moduli space, and let $f:(\mathcal Y,\mathcal D)\to \mathcal X$ be a family of KSBA stable pairs having maximal variation, in the sense that $\operatorname{Var}(f)=\dim \mathcal X$ (see \S \ref{S:variation}).
  Then for any reduced divisor   $\mathbf \Delta\subseteq \mathcal X$ such that
  over the complement $\mathcal X-\mathbf \Delta$ the morphism $f$ is relative simple normal crossings (see \Cref{D:RelsncDef}) and the fibers are klt pairs,   
 one has that  $K_{\mathcal X}+\mathbf \Delta$ is big. 
\end{teoalpha}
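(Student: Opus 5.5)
The plan is to reduce to the Viehweg--Zuo / Popa--Schnell strategy as used by Wei--Wu, carried out on a smooth projective variety that finitely covers the stack, and then to descend bigness. Since $\mathcal X$ is a smooth proper DM stack with projective coarse space, by Kresch--Vistoli there is a finite flat surjection $\pi: X'\to \mathcal X$ from a projective variety. After resolving, and after a further log resolution so that $\pi^{-1}(\mathbf\Delta)$ together with the branch data is snc, we get a generically finite proper morphism $\pi:X\to\mathcal X$ with $X$ smooth projective. Set $\Delta_X:=(\pi^{-1}\mathbf\Delta)_{\mathrm{red}}$ together with the $\pi$-exceptional divisors. Pull back the family to $f_X:(Y,D)\to X$. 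This is still a family of KSBA stable pairs. It is relatively snc with klt fibers over $X-\Delta_X$, and since variation is computed on fibers and $\pi$ is generically finite, $\operatorname{Var}(f_X)=\dim X$. By \cite[Thm.~A]{WW23}, $K_X+\Delta_X$ is big.

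The key step is comparing $K_X+\Delta_X$ with $\pi^*(K_{\mathcal X}+\mathbf\Delta)$. The log ramification formula gives
\[
K_X+\Delta_X=\pi^*(K_{\mathcal X}+\mathbf\Delta)+R,
\]
where $R$ is supported on the ramification divisors not contained in $\pi^{-1}\mathbf\Delta$ and on exceptional divisors. So bigness of $K_X+\Delta_X$ does not directly give bigness of $\pi^*(K_{\mathcal X}+\mathbf\Delta)$. This is precisely the obstacle, and it explains why the stacky statement is sharper than what one gets on the coarse space. To overcome it I would not apply Theorem A of \cite{WW23} as a black box. Instead I would rerun its proof, which yields a Viehweg--Zuo type sheaf: a big line bundle $L$ with an injection
\[
L^{\otimes m}\hookrightarrow (\Omega^1_X(\log \Delta_X))^{\otimes N}.
\]
I would then upgrade it to an injection into $(\pi^*\Omega^1_{\mathcal X}(\log\mathbf\Delta))^{\otimes N}$, with $L=\pi^*(\text{big})$ up to the ramification twist. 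The point is that the Higgs bundle and Hodge-module constructions, together with the Kodaira--Spencer maps, come from the family over $\mathcal X$ itself. Away from $\mathbf\Delta$ the family is relatively snc, so the Higgs field is a map $\theta:\mathcal E\to \mathcal E\otimes\Omega^1_{\mathcal X}(\log\mathbf\Delta)$ already defined on the stack (étale locally), and its pullback factors through $\pi^*\Omega^1_{\mathcal X}(\log\mathbf\Delta)\subseteq\Omega^1_X(\log\Delta_X)$. The bigness input, namely positivity of $\det f_*\omega^{[k]}_{(Y,D)/X}$ from maximal variation, is a pullback of a big line bundle on $\mathcal X$. Pushforward of relative log pluricanonical sheaves commutes with the flat base change $\pi$, and bigness pulls back and descends along generically finite maps.

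Concretely, the steps are as follows. First, construct on $\mathcal X$ (i.e.\ equivariantly on an étale atlas) a big line bundle $\mathcal L$ and the graded Higgs sheaf from the Popa--Schnell/Wei--Wu construction. Second, deduce a nonzero map $\mathcal L^{m}\to(\Omega^1_{\mathcal X}(\log\mathbf\Delta))^{\otimes N}$ by checking it after pullback to $X$. Third, apply the stacky version of the Campana--Păun theorem (proved in our earlier paper \cite{CMZfoliations}) to conclude that $K_{\mathcal X}+\mathbf\Delta$ is big. That theorem says a big subsheaf of tensor powers of log cotangent forces the log canonical class to be big, and I will assume it in the form available for smooth proper DM stacks with projective coarse space. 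I expect the main difficulty to be the second step, namely ensuring the Viehweg--Zuo map genuinely lands in the pulled-back stacky log forms rather than in $\Omega^1_X(\log\Delta_X)$. I would handle this by constructing everything on the étale atlas of $\mathcal X$, where the relevant objects are canonical, then descending by uniqueness, and using $\pi$ only to invoke the projective-variety positivity results of \cite{PS17, WW23}.
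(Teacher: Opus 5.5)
Your overall architecture matches the paper's. You carry out the Popa--Schnell/Wei--Wu construction on $\mathcal X$ itself via \'etale descent, obtain a Viehweg--Zuo sheaf inside $(\Omega^1_{\mathcal X}(\log\mathbf\Delta))^{\otimes s}$, and conclude with the stacky Campana--P\u{a}un result of \cite{CMZfoliations}. Your explanation of why pulling back to a ramified cover and quoting \cite[Thm.~A]{WW23} falls short is also correct.

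The gap is in the mechanism you give for the key property that the Higgs field has poles only along $\mathbf\Delta$. The Higgs bundle containing the big line bundle is not the variation of Hodge structure of $f$. One passes to a resolved fibre power to get a section of $\omega^{\otimes m}(m\mathcal D)\otimes f^*\mathcal A^{-m}$ (\Cref{P:TWW5.2}). Since $m>1$ in general, one then takes the cyclic cover $h\colon\mathcal Z\to\mathcal X$ branched along the zero divisor of that section. The Hodge module $\mathsf M$ built from $h$ has a singular locus $\mathcal S$ that is in general not contained in $\mathbf\Delta$: it depends on the chosen section, not on where $f$ is relatively snc. So $\theta$ on $\mathcal E_\bullet=\operatorname{gr}^F_\bullet\widetilde{\mathcal V}^{\ge 0}$ has logarithmic poles along $\mathbf E=\mathbf\Delta\cup\mathcal S$. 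Your sentence ``away from $\mathbf\Delta$ the family is relatively snc, so $\theta$ maps to $\mathcal E\otimes\Omega^1_{\mathcal X}(\log\mathbf\Delta)$'' is therefore false for the relevant $\mathcal E$.

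What is missing is the Popa--Schnell refinement, in three parts.
\begin{itemize}
\item First, blow up the base so that $\mathbf\Delta\cup\mathcal S$ is snc, modifying the family and the section compatibly (\Cref{L:PWW5.3}, \Cref{T:EHM+GFB}).
\item Then pass to $\mathcal F_\bullet=(\mathcal G_\bullet\cap\mathcal E_\bullet)^{\vee\vee}$. Here $\mathcal G_\bullet\subseteq\operatorname{gr}^F_\bullet\mathcal M$ is the image of $R^0f_*$ of the twisted logarithmic complex $C_{(\mathcal Y,\lceil\mathcal D\rceil)\to\mathcal X,\bullet}$. Only on $\mathcal F_\bullet$ does $\theta$ have poles along $\mathbf\Delta$ alone. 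For pairs this rests on \cite[Lem.~6.2]{WW23} (\Cref{L:coherence}), the one place where relative snc (rather than snc fibres) is used.
\item Doing this on $\mathcal X$ requires a pushforward of Hodge modules along the schematic map $\mathcal Z\to\mathcal X$ whose graded pieces are computed by relative Spencer complexes (\Cref{L:PSP10.2}).
\end{itemize}

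Separately, the Higgs sheaf does not hand you a nonzero map from a big line bundle into $(\Omega^1_{\mathcal X}(\log\mathbf\Delta))^{\otimes N}$. It gives $\mathcal F_0\otimes\mathcal K^\vee\to(\Omega^1_{\mathcal X}(\log\mathbf\Delta))^{\otimes s}$, with $\mathcal K$ a kernel of a Kodaira--Spencer map. The image is big only because $\mathcal K^\vee$ is weakly positive. That positivity on the stack is \cite[Thm.~B]{CMZpositivity`}; checking nonvanishing after pullback to $X$ does not replace it.

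By contrast, the step you flag as the main difficulty is automatic once everything is built on $\mathcal X$. That step is landing in $\pi^*\Omega^1_{\mathcal X}(\log\mathbf\Delta)$ rather than $\Omega^1_X(\log\Delta_X)$, so the detour through $X$ and \cite[Thm.~A]{WW23} is not needed. Also, your resolved $\pi$ is no longer flat, so flat base change is available only for the finite flat Kresch--Vistoli cover. That is how the paper uses it, to get bigness of $\det f_*(\omega^{\otimes m}_{\mathcal Y/\mathcal X}(m\mathcal D))^{\vee\vee}$ from \cite{KP17proj}.
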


The proof of \Cref{T:main-pairs} is given in \S \ref{S:ProofMain}. 
We discuss the notion of big line bundles on stacks in \cite[Def.~2.4]{CMZpositivity};  in short, a line bundle on the stack is said to be big if a positive  tensor power  descends to a big line bundle on the coarse moduli space.  
Maximal variation of $f$ is  the statement that the morphism from $\mathcal X$ to the moduli stack of KSBA stable pairs is generically finite (see \S \ref{S:variation}).
The condition that $f$ be relative simple normal crossings 
over the complement $\mathcal U=\mathcal X-\mathbf \Delta$  means that the family $f|_{\mathcal U}:f^{-1}(\mathcal U)\to \mathcal U$ is smooth, the divisor $\mathcal D|_{f^{-1}(\mathcal U)}$ is an snc $\mathbb Q$-divisor on $f^{-1}(\mathcal U)$, and every stratum of $\mathcal D|_{f^{-1}(\mathcal U)}$ is smooth over $\mathcal U$ (see \Cref{D:RelsncDef}).  This condition is called log smooth in \cite{WW23} (although it is a special type of log smooth morphism; see also \Cref{R:LogSm}).  

The case of \Cref{T:main-pairs} 
where $\mathcal X=X$ is a smooth projective variety follows from \cite[Thm.~A]{WW23}; in fact the case 
where $\mathcal X=[V/G]$ is the quotient of a smooth projective variety $V$ by a finite group $G$ follows directly from \cite[Thm.~A]{WW23};  the content of \Cref{T:main-pairs}  is to extend this result to more general smooth Deligne--Mumford stacks.  In \cite{WW23}, they also consider the case where $\mathcal X=X$ is assumed to be a smooth \emph{quasi}-projective variety, and weaken the condition on the generic fibers from being klt to being lc. 
  They also consider the case where the fibers are only assumed to be of log general type (rather than KSBA stable), but they then require the family to have a relative good minimal model.  Elsewhere we will consider the natural extension of these Wei--Wu results to the case where $\mathcal X$ is assumed to be a smooth separated DM stack of finite type over $\mathbb C$ with quasi-projective coarse moduli space.

One might be tempted to weaken the snc conditions on $f$ outside of $\mathbf \Delta$, for instance,  by weakening the relative snc condition described above with the condition that the \emph{fibers} be smooth with snc boundary divisor; however, there is one key step of the proof of \Cref{T:main-pairs}, namely  \Cref{L:coherence}, which relies on a result of Wei--Wu, \cite[Lem.~6.2]{WW23}, and that result uses  the stronger relative snc hypotheses in its proof (see \Cref{R:relsnc}).  Finally, one cannot easily weaken the hypothesis that $f$ be smooth outside of $\mathbf \Delta$, as is shown by the example of the moduli space of curves.  
For instance, for the moduli stack of marked stable  curves $\overline {\mathcal M}_{1,1}$, one cannot weaken the hypothesis to the condition that the fibers of $f$ be normal crossings outside of  $\mathbf \Delta$, as one could then take $\mathbf \Delta$ to be empty, and one can easily see that $K_{\overline {\mathcal M}_{1,1}}$ is not big.

We apply \Cref{T:main-pairs}  to integral components of moduli spaces of KSBA stable pairs in the following way, where we interpret an integral component of a moduli space quite loosely to mean essentially any family whose base is integral and proper,  and admits a generically finite morphism to one of Koll\'ar's  moduli spaces \cite[Thm.~8.1]{kollar_families}:

\begin{coralpha}[Viehweg hyperbolicity for KSBA moduli spaces]\label{main-pairs-cor}
Let $\overline{\mathcal M}$ be a proper integral DM stack over $\mathbb C$ admitting a projective coarse moduli space, and a generically finite  morphism $\overline {\mathcal M}\to \mathcal {SP}(\mathbf a,n,\nu)$ to a moduli stack of marked KSBA stable pairs  \cite[Thm.~8.1]{kollar_families} (see also \S \ref{S:SGSF}).
Assume that the  relative snc discriminant (see \Cref{dfn:non-relative snc locus})  $\bar{\mathbf \Delta} \subseteq \overline {\mathcal M}$ of the family of KSBA stable pairs corresponding to the morphism $\overline {\mathcal M}\to \mathcal {SP}(\mathbf a,n,\nu)$ is not equal to the whole of $ \overline {\mathcal M}$, and let $(\mathcal X,\mathbf \Delta)\to (\overline {\mathcal M},\bar{\mathbf \Delta})$ be a log resolution of singularities, with $\mathbf \Delta$ the reduced pre-image of $\bar{\mathbf \Delta}$.  If the family  $f:(\mathcal Y,\mathcal D)\to \mathcal X$ of KSBA stable pairs corresponding to the morphism $\mathcal X \to \overline {\mathcal M}\to \mathcal {SP}(\mathbf a,n,\nu)$ is such that the fibers of $f$ over $\mathcal X-\mathbf \Delta$ are klt pairs, then   $K_{\mathcal X}+ {\mathbf \Delta}$ is big.  
 \qed
\end{coralpha}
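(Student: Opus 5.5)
The plan is to deduce \Cref{main-pairs-cor} directly from \Cref{T:main-pairs} by checking its hypotheses for the resolved family $f:(\mathcal Y,\mathcal D)\to\mathcal X$ and the divisor $\mathbf\Delta$.

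\emph{Step 1: the base.} First we check that $\mathcal X$ is a smooth proper integral DM stack over $\mathbb C$ with projective coarse moduli space. Smoothness, integrality, and the fact that $\mathbf\Delta$ is a (reduced, snc) divisor all come from the definition of a log resolution. Here we use that $\bar{\mathbf\Delta}\neq\overline{\mathcal M}$, so that the reduced preimage of $\bar{\mathbf\Delta}$ is a proper closed subset; after adding exceptional divisors it is a genuine divisor. For properness and projectivity of the coarse space, I would take the log resolution projective over $\overline{\mathcal M}$, for instance via functorial resolution, which is compatible with étale maps and therefore works for DM stacks. The induced map on coarse spaces is then projective over a projective variety.

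\emph{Step 2: maximal variation.} The classifying morphism $\mathcal X\to\overline{\mathcal M}\to\mathcal{SP}(\mathbf a,n,\nu)$ is a composite of a birational morphism and a generically finite one, so it is generically finite. By the characterization in \S\ref{S:variation}, this means $\operatorname{Var}(f)=\dim\mathcal X$.

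\emph{Step 3: the snc condition.} Over $\mathcal X-\mathbf\Delta$, the family is pulled back from $\overline{\mathcal M}-\bar{\mathbf\Delta}$, because the resolution maps $\mathcal X-\mathbf\Delta$ into that open set. By the definition of the relative snc discriminant (\Cref{dfn:non-relative snc locus}), the family over $\overline{\mathcal M}-\bar{\mathbf\Delta}$ is relative snc in the sense of \Cref{D:RelsncDef}. The fibers over $\mathcal X-\mathbf\Delta$ are klt by hypothesis.

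\emph{Step 4: conclusion.} With the hypotheses checked, \Cref{T:main-pairs} gives that $K_{\mathcal X}+\mathbf\Delta$ is big.

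\emph{Main obstacle.} I expect the only delicate point to be Step 3. One must know that relative snc is stable under base change, so that smoothness, the snc property of $\mathcal D$, and smoothness of the strata over the base are all preserved. One must also know that the discriminant is defined so that its complement is exactly an open set over which the family is relative snc; this is presumably built into \Cref{dfn:non-relative snc locus}. The existence of a projective log resolution of a DM stack is standard and is taken as part of the hypotheses.
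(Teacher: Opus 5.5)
Your proposal is correct and follows the paper's route exactly. The paper gives no separate argument for this corollary and treats it as an immediate application of \Cref{T:main-pairs}; your checks are precisely the hypotheses to verify: the log resolution gives a smooth proper integral base with projective coarse space, the composite $\mathcal X\to\overline{\mathcal M}\to\mathcal{SP}(\mathbf a,n,\nu)$ is generically finite so $\operatorname{Var}(f)=\dim\mathcal X$, the family over $\mathcal X-\mathbf\Delta$ is pulled back from the relative snc locus $\overline{\mathcal M}-\bar{\mathbf\Delta}$, and the klt condition is assumed. One small wording point in Step 1: $\mathbf\Delta$ is the reduced preimage of $\bar{\mathbf\Delta}$ itself, which is a divisor by the definition of a log resolution, so you need not (and should not) enlarge it by exceptional divisors lying over $\overline{\mathcal M}-\bar{\mathbf\Delta}$.
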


Our proof of   \Cref{T:main-pairs}  closely follows the strategies developed in \cite{PS17} and \cite{WW23},
which in turn build on the strategies developed by  Viehweg and Zuo in  \cite{Zuo00negativity, VZ01isotriv, VZ02, VZ03}. 
   The first observation is that one can replace $\mathcal X$  by a log resolution (see \Cref{L:sncRdx}), so we may as well assume that the divisor $\mathbf \Delta$ is a normal crossings divisor. 
The main goal is then to establish the existence of a so-called Viehweg--Zuo sheaf, i.e., a sheaf $\mathcal H$ with big determinant that sits in a short exact sequence 
$$
\xymatrix{
0 \ar[r]& \mathcal H \ar[r]& (\Omega^1_{\mathcal X}(\log \mathbf \Delta))^{\otimes s} \ar[r] & \mathcal Q \ar[r]& 0,
}
$$
where  $s$ is a positive integer, 
and for simplicity, one may always replace $\mathcal H$ with its saturation.  It follows in the  case of varieties from 
a theorem of Campana--P\u{a}un  \cite[Thm.~7.6, Thm.~1.2]{CPFol19},   and from   
\cite[Thm.~A]{CMZfoliations} in the case of stacks, that   $\det \mathcal Q$ is pseudo-effective.  
Taking determinants in the short exact sequence above, one obtains
that a positive multiple of $K_{\mathcal X}+\mathbf \Delta$ is ``big plus pseudo-effective'', and therefore is big  (e.g., \cite[Cor.~B]{CMZfoliations}).

\medskip 

   The proof of \Cref{T:main-pairs} is therefore reduced to constructing Viehweg--Zuo sheaves.  The strategy for constructing these sheaves is easiest to explain in the somewhat artificial situation where  the boundary $\mathcal D$ is trivial and  the family $f:\mathcal Y\to \mathcal X$ is smooth (i.e., $\mathbf \Delta =0$).   In this case, one has that $f_* \omega_{\mathcal Y/\mathcal X}$ is a vector bundle, and moreover, from Koll\'ar \cite{kollar_sub}, and  more generally from Kov\'acs--Patakfalvi \cite{KP17proj},  one has that $\det (f_*\omega_{\mathcal Y/\mathcal X}^{\otimes m})$ is big for $m$ sufficiently large.
   Let us also assume that $m=1$; otherwise, one must use a cyclic covering construction, which complicates the discussion.   If $r$ is the rank of $f_* \omega_{\mathcal Y/\mathcal X}$, then for the morphism
\begin{equation}\label{E:IntroFibProdTrick}
\mathcal Y^r:=\mathcal Y\times_{\mathcal X}\cdots\times_{\mathcal X}\mathcal Y \stackrel{f^r}{\longrightarrow } \mathcal X
\end{equation}
one has an inclusion
\begin{equation}\label{E:IntrobigDetTrick}
\xymatrix{
\mathcal A:=\det f_*\omega_{\mathcal Y/\mathcal X} \ar@{^(->}[r]& (f_*\omega_{\mathcal Y/\mathcal X})^{\otimes r}=f^r_*(\omega_{\mathcal Y^r/\mathcal X}),
}
\end{equation}
where the right-most term in \eqref{E:IntrobigDetTrick} is the lowest piece of the Hodge filtration for the middle variation of Hodge structure associated to $f^r$, or equivalently,   the lowest piece of the Hodge filtration for the  Hodge module $$\mathsf M:=\mathcal H^0f^r_+\mathbb Q_{\mathcal Y^r}[\dim \mathcal Y^r].$$
  Letting $(\mathcal M,F_\bullet)$ be the filtered regular holonomic $D$-module underlying $\mathsf M$ on $\mathcal X$, one has the corresponding graded Higgs bundle $\mathcal E_\bullet :=(\operatorname{gr}^F_\bullet \mathcal M,\theta_\bullet)$, where $\theta_k:\operatorname{gr}^F_k \mathcal M\to \operatorname{gr}^F_{k+1} \mathcal M \otimes_{\mathcal O_{\mathcal X}}\Omega^1_{\mathcal X}$ is the induced  Kodaira--Spencer type  map.   

The next key observation is that setting 
$\mathcal K_k(\mathcal M) := \ker \theta_k$, 
then from Griffiths' study of semi-positive metrics on vector bundles obtained as the lowest term in the Hodge filtration of a variation of Hodge structures, as interpreted by \cite{Zuo00negativity} in terms of kernels of Kodaira--Spencer maps (see also \cite{PW16}, and \cite{CMZpositivity} in the case of stacks), one has that $\mathcal K_k(\mathcal M)^\vee$ is weakly positive. 
Considering the diagram
$$
\xymatrix@C=3em{
0\ar[r]& \operatorname{gr}_{-d}^F\mathcal M \ar[r]^<>(0.5){\theta_{-d}}& \operatorname{gr}_{-d+1}^F\mathcal M\otimes_{\mathcal O_{\mathcal X}}\Omega^1_{\mathcal X} \ar[r]^<>(0.5){\theta_{-d+1}\otimes 1}&  \operatorname{gr}_{-d+2}^F\mathcal M\otimes_{\mathcal O_{\mathcal X}}(\Omega^1_{\mathcal X})^{\otimes 2} \ar[r]^<>(0.5) {\theta_{-d+2}\otimes 1}& \cdots 
}
$$
where $d=\dim \mathcal Y-\dim \mathcal X$, and recalling that by definition $\mathcal A\subseteq \operatorname{gr}_{-d}^F\mathcal M$, one sees that there exists a non-negative  integer $s$ such that 
$
\mathcal A \hookrightarrow \mathcal K_{-d+s}(\mathcal M) \otimes (\Omega^1_{\mathcal X})^{\otimes s}$.  
From this we see there is a non-zero morphism $\mathcal A \otimes  \mathcal K_{-d+s}(\mathcal M)^\vee  \to (\Omega^1_{\mathcal X})^{\otimes s}$, so that factoring this morphism into a surjection followed by an inclusion,   we obtain the desired Viehweg--Zuo sheaf $\mathcal H$:
\[
\xymatrix{
\mathcal A \otimes  \mathcal K_{-d+s}(\mathcal M)^\vee \ar@{->>}[r] &  \mathcal{H} \ar@{^(->}[r] &  (\Omega^1_{\mathcal X})^{\otimes s},
}
\]
the key point being that since $\mathcal A$ is big and $\mathcal K_{-d+s}(\mathcal M)^\vee$ is weakly positive, we have that $\mathcal H$ is big (e.g., \cite[Lem.~2.19(1),(2)]{CMZpositivity}), and therefore that the determinant of  $ \mathcal H$ is big (e.g., \cite[Lem.~2.19(3)]{CMZpositivity}).   
Note that this implies $s>0$; if $s=0$, then the above would imply that  $\mathcal O_{\mathcal X}$ was big, giving a contradiction.

\medskip 
For the general case of  \Cref{T:main-pairs}, the construction of the Viehweg--Zuo sheaf  is quite intricate, and we sketch the outline here following \cite{PS17, WW23}, which, again, builds on techniques developed in \cite{VZ01isotriv, VZ02, VZ03}.
The first step  involves taking fibered products as in \eqref{E:IntroFibProdTrick}, and then taking a certain resolution of singularities of the family.  
 This is done in \Cref{P:TWW5.2} and \Cref{L:PWW5.3}, and relies on the positivity results of \cite{KP17proj}. 
 The outcome is that one replaces the original family with a  new morphism  $f:(\mathcal Y,\mathcal D)\to \mathcal X$ with $(\mathcal Y,\mathcal D)$ an snc pair (although now the pair need not be a family of stable pairs), and shows that one has a certain non-zero section of the line bundle $\omega_{\mathcal Y/\mathcal X}^{\otimes m}(m\mathcal D)\otimes f^*\mathcal A^{\otimes -m}$ on $\mathcal Y$, where $m$ is some sufficiently divisible positive integer, and $\mathcal A$ is a line bundle on $\mathcal X$ such that $\mathcal A(-\mathbf \Delta)$ is big.

With the construction in the previous paragraph, one uses the given section  to construct a cyclic cover of $\mathcal Y$, and one then replaces the given  family with a resolution of singularities of this cyclic cover, obtaining a  new morphism  $f:(\mathcal Y,\mathcal D)\to \mathcal X$ with $(\mathcal Y,\mathcal D)$ an snc pair.  
Using the theory of Hodge modules, this cyclic cover allows one to  construct an effective snc divisor $\mathbf E\subseteq \mathcal X$, containing $\mathbf \Delta$, together with a graded logarithmic Higgs bundle on $\mathcal X$
$$
\theta_\bullet: \mathcal E_\bullet \longrightarrow \mathcal E_\bullet \otimes \Omega^1_{\mathcal X}(\log \mathbf E)
$$ 
containing a graded sub-sheaf $\mathcal F_\bullet\subseteq \mathcal E_\bullet$ such that $\theta_p(\mathcal F_p)\subseteq \mathcal F_{p+1}\otimes\Omega^1_{\mathcal X}(\log \mathbf \Delta)$, and such  that the first nonzero sheaf in the filtration $\mathcal F_\bullet$ is a line bundle containing  the big sub-line bundle $\mathcal A(-\mathbf \Delta)$.
This is established  in  \Cref{T:refineHiggs}, which relies on a construction with Hodge modules given in \Cref{T:Existence-of-Hodge-module-and-G-subsheaf-theorem}. 
This step is one  of several  key additions that Popa--Schnell \cite{PS17} made to the strategy for constructing Viehweg--Zuo sheaves, by using  Saito's theory of Hodge modules, and the compatibility of Hodge and
$V$-filtrations, to show  the existence of the  Higgs sheaf $\mathcal F_\bullet$  that is 
logarithmic with respect to the discriminant locus $\mathbf \Delta$, as opposed to the
singular locus of the torsion-free quotient of the Hodge module $\mathcal H^0f^r_+\mathbb Q_{\mathcal Y^r}[\dim \mathcal Y^r]$, where the latter could be bigger.

\smallskip 
This construction with Hodge modules requires the use of the push forward of Hodge modules on DM stacks, which is discussed  in \S \ref{S:PFHMStack}. 
In general, if one wants to
obtain a $6$-functor formalism, there are more sophisticated treatments of push forwards, e.g., \cite{tubach_2024}, or \cite{paulin13} for $D$-modules; however, for our purposes here, where we only need to push forward the trivial 
Hodge module along a schematic morphism, the more elementary approach we take here suffices.  The approach we take is also well-suited to obtaining a result we want, namely, identifying the associated graded of the push forward in terms of Spencer complexes
(\Cref{L:PSP10.2}). 
The connection to the push forward constructed in \cite{tubach_2024} is explained in \S \ref{S:tubachf*}.

 At this point, the proof relies on some of our previous results in \cite{CMZpositivity, CMZfoliations}, generalizing some results of \cite{PW16,PS17} to the case of DM stacks.  
 More precisely, the existence of the  
 graded logarithmic Higgs bundle $\mathcal E_\bullet$ 
  and the graded subsheaf $\mathcal F_\bullet$  
   implies from \cite[Thm.~B]{CMZpositivity} 
   the existence of a Viehweg--Zuo sheaf, completing the construction.

\medskip 
The above results also have implications for the coarse moduli spaces of the stacks.  We explain this briefly in the situation where we have a normal integral proper $\mathbb Q$-factorial DM stack $\mathcal X$ over $\mathbb C$ with projective coarse moduli space $X$, and we denote by $$\pi:\mathcal X\to X$$ the canonical morphism.  
This morphism comes equipped with a so-called ramification divisor $\mathbf R$ on $\mathcal X$, and a branch divisor $R$ on $X$ (a $\mathbb Q$-divisor), with the property that $\mathbf R=\pi^*R$. 
The divisor $\mathbf R$ is the divisorial locus corresponding to points with extra automorphisms; 
moreover,  
$$
K_{\mathcal X} = \pi^*K_X+\mathbf R=\pi^*(K_X+R).
$$
If
$\mathbf R_i$ is an irreducible component of $\mathbf R$, with $r_i$ being  the order of the stabilizer at the generic point of $\mathbf R_i$ (modulo the order of the stabilizer at the generic point of $\mathcal X$), and $R_i$ is  the corresponding irreducible component in the support of $R$, then 
$$
R=\sum_i (1-\frac{1}{r_i})R_i.
$$
In addition, given an effective divisor $\mathbf \Delta\subseteq \mathcal X$, we let $\Delta$ be the $\mathbb Q$-divisor on $X$ such that $\mathbf \Delta=\pi^*\Delta$.  
If 
$\mathbf \Delta_i$ is an irreducible component of $\mathbf \Delta$, with $r_i$ being  the order the stabilizer at the generic point of $\mathbf \Delta_i$ (modulo the order of the stabilizer at the generic point of $\mathcal X$), and $\Delta_i$ is  the corresponding irreducible component in the support of $\Delta$, then 
$$
\Delta =\sum_i \frac{1}{r_i}\Delta_i.
$$
Note in particular that the coefficients of the irreducible components in the sum  of the divisors $R+\Delta$ are at most $1$ (i.e., if $\Delta_i=R_i$ is a component of both $\Delta$ and $R$, then all together we have $(1-\frac{1}{r_i})R_i+\frac{1}{r_i}\Delta_i=\Delta_i$).  
All together, we have
$$
K_{\mathcal X}+\mathbf \Delta = \pi^*(K_X+R+\Delta).
$$
Consequently,  $K_{\mathcal X}+\mathbf \Delta $ is big if and only if $K_X+R+\Delta$ is big  (e.g.,  \cite[Def.~2.4]{CMZpositivity}). 
Moreover, the pair $(X,R+\Delta)$ is log canonical by 
\cite[Rem.~3.1]{CMZfoliations}. 

In the situation we are in, i.e., working with a  normal integral complex projective  $\mathbb Q$-factorial variety $X$ with an effective $\mathbb Q$-divisor $\Delta$ on $X$ with coefficients in $(0,1]$, we will say that the pair $(X,\Delta)$ is of log general type if for any log resolution $\mu:X'\to X$ of the pair $(X,\Delta)$, taking $\Delta'$ to be the strict transform of $\Delta$,  \emph{union the reduced exceptional locus of $\mu$}, one has that $K_{X'}+\Delta'$ is big.  If $(X,\Delta)$ is log canonical and $K_X+\Delta$ is big, then it follows that $(X,\Delta)$ is of log general type (see also \cite[Prop.~1.27]{kollar_singularities_MMP}).
In summary, we have the following:

\begin{coralpha}\label{main-coarseMS-cor}
If $(\mathcal X,\mathbf \Delta)$ is the log resolution of singularities of a
 moduli space of KSBA stable pairs as in \Cref{main-pairs-cor}, then the pair $(X,R+\Delta)$ has log canonical singularities and  $K_X+R+\Delta$ is big; i.e., $(X,R+\Delta)$ is of log general type.
\qed 
\end{coralpha}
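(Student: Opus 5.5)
The plan is to deduce Corollary~\ref{main-coarseMS-cor} formally from Corollary~\ref{main-pairs-cor} and the facts about the coarse moduli map recalled just before the statement. Let $(\mathcal X,\mathbf\Delta)$ be the log resolution from Corollary~\ref{main-pairs-cor}, and let $\pi:\mathcal X\to X$ be the map to its coarse space. By hypothesis $X$ is projective. Since $\mathcal X$ is smooth, $X$ is normal and $\mathbb Q$-factorial, because it is étale locally a quotient of a smooth variety by a finite group. Hence the discussion above applies. We get a ramification divisor $\mathbf R=\pi^*R$ and a $\mathbb Q$-divisor $\Delta$ with $\mathbf\Delta=\pi^*\Delta$, and the identity
\[
K_{\mathcal X}+\mathbf\Delta=\pi^*(K_X+R+\Delta).
\]

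The first step is bigness. Corollary~\ref{main-pairs-cor} says $K_{\mathcal X}+\mathbf\Delta$ is big. By the definition of bigness on stacks \cite[Def.~2.4]{CMZpositivity}, some positive multiple $N(K_{\mathcal X}+\mathbf\Delta)$ descends to a big line bundle on $X$. By the displayed identity, the descended bundle is $N(K_X+R+\Delta)$; this uses that pullback along $\pi$ is injective on $\mathbb Q$-Cartier classes up to torsion. So $K_X+R+\Delta$ is big.

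The second step is log canonicity. We have $\mathcal X$ smooth and $\mathbf\Delta$ reduced snc, and $\mathbf\Delta=\pi^*\Delta$ with $\mathbf R=\pi^*R$. Then \cite[Rem.~3.1]{CMZfoliations} gives that $(X,R+\Delta)$ is log canonical. The underlying computation is the standard finite-quotient formula $K_V+\Delta_V=q^*(K_X+R+\Delta)$, applied to étale-local charts $[V/G]$. Since $(V,\Delta_V)$ is an snc pair and hence lc, the pair $(X,R+\Delta)$ is lc by \cite[Prop.~5.20]{kollar_singularities_MMP}.

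The final step is log general type. The coefficients of $R+\Delta$ lie in $(0,1]$, as computed above, so $(X,R+\Delta)$ fits the setting of the definition. For a log resolution $\mu:X'\to X$, write $\Delta'$ for the strict transform of $R+\Delta$ plus the reduced exceptional divisor. Log canonicity gives
\[
K_{X'}+\Delta'=\mu^*(K_X+R+\Delta)+F
\]
with $F$ effective and $\mu$-exceptional. A big class plus an effective class is big, so $K_{X'}+\Delta'$ is big; this is \cite[Prop.~1.27]{kollar_singularities_MMP}.

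The only real obstacle is the descent and lc bookkeeping for the coarse moduli map: matching the stabilizer orders of $\mathcal X$ relative to its generic stabilizer with the coefficients of $R$ and $\Delta$. I would take this from the recalled facts and \cite[Rem.~3.1]{CMZfoliations} rather than reprove it.
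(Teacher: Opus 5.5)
Your proposal is correct and follows the paper's own route: the identity $K_{\mathcal X}+\mathbf\Delta=\pi^*(K_X+R+\Delta)$ transfers bigness from Corollary~\ref{main-pairs-cor} to $X$ via \cite[Def.~2.4]{CMZpositivity}, log canonicity of $(X,R+\Delta)$ comes from \cite[Rem.~3.1]{CMZfoliations}, and lc plus big gives log general type. Your discrepancy computation, which produces an effective exceptional $F$, just spells out the paper's appeal to \cite[Prop.~1.27]{kollar_singularities_MMP}.
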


With this result, one can clearly explain the difference between our result and the earlier result
 \cite[Thm.~A]{WW23}. 
Assume we are in the situation where $(\mathcal X,\mathbf \Delta)$ is as in 
 \Cref{T:main-pairs} or \Cref{main-pairs-cor}.   Assuming that the stack $\mathcal X$ has trivial automorphisms at the generic point, then $\mathcal X$ and $X$ agree on $X-(R\cup \Delta)$ outside of codimension $2$, and so, using the family over this open sub-variety, the result  
  \cite[Thm.~A]{WW23}  implies that $K_X +\lceil R+\Delta \rceil$ is big.  There seem to be two benefits to the stronger statement in \Cref{main-coarseMS-cor}, i.e.,  regarding the pair $(X,R+\Delta)$ rather than $(X,\lceil R+\Delta \rceil)$.  First, the coefficients of 
$R+\Delta$ may be smaller than those of $\lceil R+\Delta \rceil$, so that one has a more precise statement about how much of the boundary divisor should be  added to the canonical divisor to  obtain a big  divisor.  Second, 
 it seems plausible that there could be situations where $(X,\lceil R+\Delta \rceil)$ is not log canonical, whereas we know from \Cref{main-coarseMS-cor} that  $(X,R+\Delta)$ is. 

A familiar, elementary example makes the difference between the results apparent.  Consider the moduli stack $\overline{\mathcal M}_{1,1}$ of elliptic curves, with coarse moduli space $\overline M_{1,1}$, which is isomorphic to the $j$-line $\mathbb P^1$. 
Viewing the marked point of the genus $1$ curve as having weight $1-\epsilon$ for $0<\epsilon\ll 1$,  one  can view $\overline{\mathcal M}_{1,1}$ as
a moduli space of klt KSBA stable pairs, so that we can apply   \Cref{main-coarseMS-cor}, as well as  \cite[Thm.~A]{WW23}, since the universal family descends over $\mathbb P^1-\{p_0,p_{1728}, p_\infty\}$, where here $p_j$ denotes the point parameterizing elliptic curves with $j$-invariant $j$.  
 In this example, we have 
   $R=(1-\frac{1}{3})p_{0}+ (1-\frac{1}{2})p_{1728}$ and $\Delta = p_\infty$, 
 so that taking sums we have
 $ R+\Delta  \equiv \frac{13}{6}p_0$ and  $\lceil R+\Delta \rceil  \equiv 3p_0$.
From \Cref{main-coarseMS-cor},   the assertion is that
$$
K_{\mathbb P^1}+(R+\Delta) \equiv K_{\mathbb P^1}+\frac{13}{6}p_0\equiv \frac{1}{6}p_0 \quad \text{ is big,}
$$
while the result \cite[Thm.~A]{WW23} implies   that
$$
K_{\mathbb P^1}+\lceil R+\Delta\rceil \equiv K_{\mathbb P^1}+3p_0\equiv p_0 \quad \text{ is big.}
$$
In this case, the two statements, that $\frac{1}{6}p_0$ and $p_0$ are big, are equivalent, but the former shows that it suffices to add only $\frac{13}{6}p_0$ to the canonical bundle to obtain a big line bundle (as opposed to $3p_0$ in the latter).

\subsection*{Acknowledgements}
The first named author thanks Mihnea Popa for  conversations on the topic, which led to this project.  He also thanks Jonathan Wise and David Rydh for conversations about the geometry of stacks, Swann Tubach for conversations on push forwards of Hodge modules, and J\'anos Koll\'ar for comments on an earlier draft of this manuscript.  The second named author thanks his advisor,  Mircea Musta\c{t}\u{a}, for useful discussions and all the support provided. The authors are also grateful to the organizers of the Simons Collaborations on Moduli of Varieties Workshop at the University of Utah in November 2024, where their work on this project began.

\section{Preliminaries}

\subsection{Terminology}
We use the same conventions as in \cite{CMZpositivity, CMZslope_stability, CMZfoliations}.
We work over $\mathbb C$.  
A \emph{variety} is an integral separated scheme of finite type over $\mathbb C$. 
We use the definition of a \emph{Deligne--Mumford (DM) stack} in \cite[Def.~4.1]{LMB}. Note that this differs from the definition in \cite{stacks-project} in that there is the additional hypothesis in \cite[Def.~4.1]{LMB} that the diagonal be representable, separated, and quasi-compact.  We direct the reader to \cite[App.~B]{CMW18} for a discussion of the relationship among various definitions of DM stacks in the literature (see in particular \cite[Fig.~1]{CMW18}).  We emphasize that, with the definition of DM stack that we are using, a morphism from a scheme to a DM stack is schematic (representable by schemes); see e.g., \cite[Lem.~B.20 and Lem.~B.12]{CMW18}.

\subsection{Structure of DM stacks}\label{S:DM-intro}
Again, we use the same conventions as in \cite{CMZpositivity, CMZslope_stability, CMZfoliations}.
  The general set-up will be a smooth proper (resp.~separated) integral DM stack $\mathcal X$ of finite type over $\mathbb C$ with coarse moduli space $\pi: \mathcal X\to X$, with the added assumption that the algebraic space $X$ be a projective (resp.~quasi-projective) variety. 
Recall that such a stack admits a finite flat 
morphism $q:V\to \mathcal X$ from a smooth projective  (resp.~quasi-projective) 
variety $V$  (\cite[Thm.~1]{KV04} and \cite[Thm.~4.4]{kresch09}, see also \cite[\href{https://stacks.math.columbia.edu/tag/03B6}{\S 03B6}]{stacks-project}); note that $q$ is schematic and projective.  
Note also that by the proof in \cite[Thm.~1]{KV04}, one can take $q$ to be \'etale over any given finite collection of points of $\mathcal X$. 
In this situation we have that $X$ is normal, $\mathbb Q$-factorial, with at worst klt singularities.  The morphism $\pi:\mathcal X\to X$ is flat over the smooth locus of $X$; flatness is an \'etale local property, and so it suffices to consider the case $\mathcal X=[U/G]$ for some smooth variety $U$ and a finite group $G$.  Then, from say \cite[Cor.~14.12]{GW20}, it suffices to show that $U\to U/G$ is flat over the smooth locus of the quotient, which follows from the miracle of flatness \cite[Thm.~23.1, p.179]{matsumura}.

For brevity, we will say that such a stack $\mathcal X$ is a global finite quotient stack if there is a smooth projective (resp.~quasi-projective) variety $V$ over $\mathbb C$ and a finite algebraic group $G$ over $\mathbb C$ acting on $V$ such that $\mathcal X\cong [V/G]$; note that this implies that $X=V/G$.  For context, recall that  $\mathcal X$ is a global finite quotient stack  if and only if there exists a smooth projective (resp.~quasi-projective) variety  $V'$ and a finite \emph{\'etale} morphism $q':V'\to \mathcal X$ (see the \emph{proof} of \cite[Thm.~(6.1)]{LMB}).

\subsection{Families of KSBA stable pairs, discriminants, and relative normal crossings}\label{S:SGSF}
We will consider  families of KSBA stable pairs following \cite[Thm.~8.1]{kollar_families}, and will work over the \'etale site of $\mathbb C$-schemes.    Fixing notation, for a coefficient vector $\mathbf a=(a_1,\dots, a_r)\in ([0,1]\cap \mathbb Q)^r$ and a rational number $\nu$, we let $\mathcal S\mathcal P(\mathbf a,n,\nu)$ be the proper DM stack of finite type over $\mathbb C$   consisting of families of $\mathbf a$-marked stable pairs of dimension $n$ and volume $\nu$.  This stack admits a projective coarse moduli space 
 $\mathcal S\mathcal P(\mathbf a,n,\nu) \to SP(\mathbf a,n,\nu)$ \cite[Thm.~1.1]{KP17proj}.   
We denote the universal family by
$$
(\mathcal Y(\mathbf a,n,\nu), a_1\mathcal D_1+\cdots+a_r\mathcal D_r)\to  \mathcal S\mathcal P(\mathbf a,n,\nu).
$$

  For a $\mathbb C$-scheme $X$, we will denote a stable family in  $\mathcal S\mathcal P(\mathbf a,n,\nu)$ over $X$, i.e., a $\mathbb C$-morphism $X\to  \mathcal S\mathcal P(\mathbf a,n,\nu)$, as
$$
f:(Y,a_1D_1+\cdots +a_rD_r)\to X.
$$
For brevity, abusing notation, we will often simply write $D=a_1D_1+\cdots+a_rD_r$, and will write $f:(Y,D)\to X$ for the family.  

For a reduced DM stack $\mathcal X$ of finite type over $\mathbb C$, by a stable family over $\mathcal X$ we mean a $\mathbb C$-morphism $\mathcal X\to  \mathcal S\mathcal P(\mathbf a,n,\nu)$.  Denote the pull back of  the universal family by
$$
f:(\mathcal Y,a_1\mathcal D_1+\cdots+a_r\mathcal D_r)\to \mathcal X.
$$
There is a technical point that one should be careful about what is meant by the pull back of the universal divisor to $\mathcal Y$.  The discussion in \cite[\S 4.1]{kollar_families} can be used to define this precisely.  
The stable family of $\mathcal X$ is equivalent to having for any  \'etale presentation $p:U\to \mathcal X$ from a reduced scheme $U$, that the  pull back of the family gives a stable family over $U$  with descent data.  
 Again for brevity, we will often refer to such families as $f:(\mathcal Y,\mathcal D)\to \mathcal X$.

We note that for any family of KSBA stable pairs  $f:(\mathcal Y,\mathcal D)\to \mathcal X$, the morphism is schematic and projective.  
The reason for this is that the index of $K_{\mathcal Y/\mathcal X}+\mathcal D$ is finite, i.e., there is some positive integer  $m$ such that $m(K_{\mathcal Y/\mathcal X}+\mathcal D)$ is Cartier  
(see, e.g., \cite[Def.-Thm. 4.7]{kollar_families}); 
thus $m(K_{\mathcal Y/\mathcal X}+\mathcal D)$  provides an $f$-ample line bundle on $\mathcal Y$ over $\mathcal X$; i.e., for every $\mathbb C$-morphism $S\to \mathcal X$, one has that the fibered product $\mathcal Y_S=S\times_{\mathcal X}\mathcal Y$ is  a scheme and the pull back of $m(K_{\mathcal Y/\mathcal X}+\mathcal D)$  is relatively ample (see, e.g.,  \cite[Thm.~4.38]{FGAE}).     Equivalently,  for any \'etale cover $p:U\to \mathcal X$ from a smooth variety $U$, if we consider the fibered product diagram,
\begin{equation}\label{E:stab-fam-diag}
\xymatrix{
\mathcal Y_U\ar[r]^{p'} \ar[d]^{f'}& \mathcal Y\ar[d]^f\\
U\ar[r]^p&\mathcal X
}
\end{equation}
we have that $\mathcal Y_U$ is a scheme and $p'^*m(K_{\mathcal Y/\mathcal X}+\mathcal D)$ is an $f'$-ample line bundle.

\begin{dfn}[Relative snc morphism]\label{D:RelsncDef}
Let   $f:(\mathcal Y,\mathcal D)\to \mathcal X$ be a surjective schematic projective  morphism of   smooth separated integral DM stacks of finite type over $\mathbb C$ with quasi-projective coarse moduli spaces, with $\mathcal D$ an effective $\mathbb Q$-divisor on $\mathcal Y$.
We say that \emph{$f$ is relative simple normal crossings (snc)} if $f$ is smooth, the support of $\mathcal D$ is an snc  divisor on $\mathcal Y$, and every stratum of the support of  $\mathcal D$ is smooth over $\mathcal X$.  
\end{dfn}

\begin{rem}\label{R:LogSm}
We believe the correct formulation in \Cref{D:RelsncDef} should be made in terms of log structures, in which case one would replace relative snc morphisms with more general log smooth morphisms, but we do not pursue this further here.
Note that in \cite{WW23} (in the case of varieties), they call a relative snc morphism a ``log smooth morphism'', even though relative snc morphisms are a special class of log smooth morphism (see their definition of log smooth morphism \cite[p.711]{WW23}).    
\end{rem}

\begin{dfn}[Relative snc discriminant]\label{dfn:non-relative snc locus}
Let   $f:(\mathcal Y,\mathcal D)\to \mathcal X$ be a surjective schematic projective  morphism of  separated integral DM stacks of finite type over $\mathbb C$ with quasi-projective coarse moduli space, with $\mathcal D$ an effective $\mathbb Q$-divisor on $\mathcal Y$.  
The locus where $f$ fails to be  relative snc is closed in $\mathcal X$, and  will be called the \emph{relative snc discriminant locus}.
  We denote by $\mathbf \Delta_f\subseteq \mathcal X$ the closed substack with the reduced induced structure on the relative snc discriminant locus, and call it the \emph{relative snc discriminant}.  
\end{dfn}

\begin{exa}
For clarity, let us consider the concrete example of the  Hassett moduli space \cite{hassett_weighted} $f:(\mathcal Y,\mathcal D)\to \overline {\mathcal M}_{0,\mathcal A}$ for $\mathcal A=(1-\epsilon,1-\epsilon,\frac{1}{3},\frac{1}{3})$; i.e.,  the moduli space of genus $0$ curves marked with four points with weights $1-\epsilon$, $1-\epsilon$, $1/3$, and $1/3$, respectively, with $0<\epsilon \ll 1$.   Then $\overline {\mathcal M}_{0,\mathcal A}\cong \overline {M}_{0,\mathcal A}\cong \mathbb P^1$ and  $\mathbf \Delta_f$ consists of three points.  There is one point of $\mathbf \Delta_f$ corresponding to the partition $\{1,3\}\cup \{2,4\}$, which parameterizes the reducible curve consisting of two copies of $\mathbb P^1$ attached at a node, with the first and third point on one component, and the second and fourth on the other.  Hassett denotes this in \cite[\S 7]{hassett_weighted} as $D_{\{1,3\}\cup \{2,4\}}(\mathcal A)$.   Similarly there is a point $D_{\{1,4\}\cup \{2,3\}}(\mathcal A)$ of $\mathbf  \Delta_f$.  Hassett denotes the union as $\nu = D_{\{1,3\}\cup \{2,4\}}(\mathcal A)\cup D_{\{1,4\}\cup \{2,3\}}(\mathcal A)$, for the ``nodal'' divisor; i.e., the part of the discriminant arising from nodes in the fibers of the family.  Then there is also the point $D_{\{1,2\}\cup \{3,4\}}(\mathcal A)$ of $\mathbf \Delta_f$ parameterizing a copy of $\mathbb P^1$ where the third and fourth points have collided, denoted by $\gamma$ in \cite{hassett_weighted}. 
The point $\gamma$  is included in $\mathbf  \Delta_f$ since the stratum of $\mathcal D$ to which this corresponds does not surject onto $\overline{\mathcal M}_{0,\mathcal A}$, and is therefore, not smooth over $\overline{\mathcal M}_{0,\mathcal A}$.  In Hassett's notation, we have $ \mathbf \Delta_f=\delta=\nu\cup \gamma$.   
\end{exa}

\begin{rem}[Total space is integral]\label{R:int-total}
Let $f:(\mathcal Y,\mathcal D)\to \mathcal X$ be  a family of KSBA stable pairs as in \Cref{T:main-pairs}.    Then $\mathcal Y$ is integral with connected fibers. Indeed, the morphism is flat, and the base is irreducible, so all the components appearing in $\mathcal{Y}$ can be detected generically over $\mathcal{X}$. But generically, the map is smooth and so there is only one irreducible component.
\end{rem}

\begin{rem}[Total space is klt]\label{R:klt-total}
Let $f:(\mathcal Y,\mathcal D)\to \mathcal X$ be  a family of KSBA stable pairs as in \Cref{T:main-pairs}.    Then the pair $(\mathcal Y,\mathcal D)$ is klt.  By definition, it suffices to show this after an \'etale base change as in \eqref{E:stab-fam-diag}, in which case this reduces to the case of schemes.  
For this, recall that for a stable family over a smooth base, if the generic fiber of the pair  is klt, then the pair consisting of the total family with the boundary divisor is klt \cite[Cor.~4.56 and Prop.~2.15]{kollar_families} (note that stable implies locally stable \cite[p.149]{kollar_families}).  
\end{rem}

\subsection{Variation}\label{S:variation}

Let $f:(\mathcal Y,\mathcal D)\to \mathcal X$ be a family of KSBA stable pairs  over
a smooth separated integral DM stack $\mathcal X$ of finite type over $\mathbb C$ with quasi-projective coarse moduli space $X$.  This induces a natural morphism of stacks $\mathcal X\to \mathcal {SP}(\mathsf a,n,\nu)$, as well as a morphism of coarse moduli spaces $X\to SP(\mathbf a,n,\nu)$.  The \emph{variation of $f$}, denoted $\operatorname{Var}(f)$, is defined to be the dimension of the image of the morphism $\mathcal X\to \mathcal {SP}(\mathbf a,n,\nu)$, or equivalently, the dimension of the image of the morphism $X\to SP(\mathbf a,n,\nu)$.  This agrees with the definition in \cite[Def.~6.16]{KP17proj}.  
Note that if  $q':V'\to \mathcal X$ is a morphism from a smooth projective variety $V'$, and $f':(Y',D')\to V'$ is the stable family obtained by pull back, then $\operatorname{Var}(f)=\operatorname{Var}(f')$.

\subsection{Reduction to normal crossings pairs}

We recall the following generalization of a standard result for varieties:
\begin{lem}\label{L:sncRdx}
Let $\mu:\mathcal X'\to  \mathcal X$ be a log resolution (see \cite[Thm.~1.3]{CMZslope_stability}) of the pair $(\mathcal X,\mathbf \Delta)$ with $\mathcal X$ a smooth proper DM stack over $\mathbb C$ with projective coarse moduli space and $\mathbf \Delta$ an effective $\mathbb Q$-divisor.
Let 
 $\widetilde {\mathbf \Delta}\subseteq \mathcal X'$ be the strict transform of $\mathbf \Delta$,
let $\mathcal E'\subseteq \mathcal X'$ denote the reduced divisor with support the exceptional locus of $\mu$, and 
let $\mathbf \Delta'\subseteq \mathcal X'$ be any effective $\mathbb Q$-divisor such that $0\le \mathbf \Delta'\le \widetilde{\mathbf \Delta} +\mathcal E'$.   Then if $K_{\mathcal X'}+\mathbf \Delta'$ is big, so is $K_{\mathcal X}+\mathbf \Delta$.
\end{lem}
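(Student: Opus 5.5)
The plan is to compare $K_{\mathcal X'}+\mathbf \Delta'$ with $\mu^*(K_{\mathcal X}+\mathbf \Delta)$ plus an effective exceptional divisor, and then push forward sections. First reduce to the largest allowed boundary. Since $\mathbf \Delta'\le \widetilde{\mathbf \Delta}+\mathcal E'$, the difference $(K_{\mathcal X'}+\widetilde{\mathbf \Delta}+\mathcal E')-(K_{\mathcal X'}+\mathbf \Delta')$ is effective. A big divisor plus an effective one is big: on the coarse space, a multiple descends and the effective part adds sections. So it suffices to treat $\mathbf \Delta'=\widetilde{\mathbf \Delta}+\mathcal E'$. Moreover, if $\mathbf \Delta$ is not reduced we can replace it by a larger divisor; bigness of $K_{\mathcal X}+\mathbf \Delta$ only needs the inequality in the right direction. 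We do not need that step, though, since we will directly compare with $\mathbf \Delta$.

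The key step is the standard discrepancy formula. Because $\mathcal X$ is smooth and $\mu$ is a proper birational morphism of smooth DM stacks, which is an isomorphism over a dense open substack, we have
\[
K_{\mathcal X'}=\mu^*K_{\mathcal X}+\sum_i a_i\mathcal E_i,\qquad a_i\ge 1,
\]
and $\mu^*\mathbf \Delta=\widetilde{\mathbf \Delta}+\sum_i b_i\mathcal E_i$ with $b_i\ge 0$. This can be checked étale locally on $\mathcal X$, where it is the classical statement for smooth varieties; the divisors $\mathcal E_i$ and their coefficients are étale local. Hence
\[
K_{\mathcal X'}+\widetilde{\mathbf \Delta}+\mathcal E'=\mu^*(K_{\mathcal X}+\mathbf \Delta)+\sum_i(a_i+1-b_i)\mathcal E_i .
\]
The coefficients $a_i+1-b_i$ may have either sign, so write the exceptional sum as $P-N$ with $P,N\ge 0$ exceptional and without common components.

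Then push forward sections. Choose $m$ divisible enough that $m(K_{\mathcal X'}+\mathbf \Delta')$, $m\mu^*(K_{\mathcal X}+\mathbf \Delta)$, $mP$ and $mN$ are Cartier, and that these multiples descend to the coarse spaces where needed. Let $\mathcal U\subseteq\mathcal X$ be the open substack over which $\mu$ is an isomorphism; its complement has codimension $\ge 2$. Any section $s$ of $\mathcal O_{\mathcal X'}(mk(K_{\mathcal X'}+\mathbf \Delta'))$ restricts to a section over $\mathcal U$ of $\mathcal O(mk(K_{\mathcal X}+\mathbf \Delta))$, and $\mathcal X$ is smooth, hence normal. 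By Hartogs extension on an étale atlas plus descent, $s$ extends to a global section. This gives an injection
\[
H^0(\mathcal X',mk(K_{\mathcal X'}+\mathbf \Delta'))\hookrightarrow H^0(\mathcal X,mk(K_{\mathcal X}+\mathbf \Delta)).
\]
Equivalently, $\mu_*\mathcal O_{\mathcal X'}(mkD')\subseteq \mathcal O_{\mathcal X}(mkD)$ as reflexive sheaves. Bigness means maximal growth of $h^0$, which for stacks is computed by invariant sections on the coarse space via \cite[Def.~2.4]{CMZpositivity}. So the growth rate transfers, and $K_{\mathcal X}+\mathbf \Delta$ is big.

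The main obstacle is making the bigness/volume comparison rigorous in the stacky sense. Bigness of a stack line bundle is defined by descent of a power to the coarse space, whereas the section argument uses $h^0$ growth on the stack. The fix is to observe that $H^0(\mathcal X,L^{\otimes N})=H^0(X,\pi_*L^{\otimes N})$, and that when $L^{\otimes N}$ descends to $M$ on $X$ this equals $H^0(X,M)$. Volume growth is also insensitive to passing to powers. Alternatively, one can bypass stacks by working on the coarse spaces: $\mu$ induces a birational morphism $X'\to X$ of normal projective varieties, and one pushes sections forward there using normality of $X$.
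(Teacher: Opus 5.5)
Your argument is correct, but it takes a somewhat different and more economical route than the paper.

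The paper's route is to compute the pushforward exactly. It introduces the crepant boundary $\mathbf\Delta_{\mathcal X'}$ with $K_{\mathcal X'}+\mathbf\Delta_{\mathcal X'}=\mu^*(K_{\mathcal X}+\mathbf\Delta)$ and splits it as $\mathbf\Delta^+_{\mathcal X'}-\mathbf\Delta^-_{\mathcal X'}$. It then observes that $\mathbf\Delta'\le\mathbf\Delta^+_{\mathcal X'}+\mathcal E'$. Finally it uses the projection formula together with $\mu_*\mathcal O_{\mathcal X'}(m(\mathbf\Delta^-_{\mathcal X'}+\mathcal E'))=\mathcal O_{\mathcal X}$ to get an \emph{isomorphism} $\mu_*\mathcal O_{\mathcal X'}(m(K_{\mathcal X'}+\mathbf\Delta^+_{\mathcal X'}+\mathcal E'))\cong\mathcal O_{\mathcal X}(m(K_{\mathcal X}+\mathbf\Delta))$, and hence equality of global sections.

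Your route only produces an injection $H^0(\mathcal X',mk(K_{\mathcal X'}+\mathbf\Delta'))\hookrightarrow H^0(\mathcal X,mk(K_{\mathcal X}+\mathbf\Delta))$. You get it by restricting to the isomorphism locus $\mathcal U$, where $\mathbf\Delta'\le\mathbf\Delta$, and extending across the codimension $\ge 2$ complement by Hartogs on an \'etale atlas. An injection is all that bigness requires.

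Consequences of your route:
\begin{itemize}
\item The discrepancy formula and the $P-N$ decomposition are never used and can be deleted. The Hartogs step works for any $\mathbf\Delta'$ with $\mathbf\Delta'|_{\mu^{-1}\mathcal U}\le\mathbf\Delta|_{\mathcal U}$, which is exactly what $\mathbf\Delta'\le\widetilde{\mathbf\Delta}+\mathcal E'$ gives.
\item What the paper's computation buys is the sharper statement that the section rings of $K_{\mathcal X'}+\mathbf\Delta^+_{\mathcal X'}+\mathcal E'$ and $K_{\mathcal X}+\mathbf\Delta$ coincide. The input it relies on, $\mu_*\mathcal O_{\mathcal X'}(E)=\mathcal O_{\mathcal X}$ for effective exceptional $E$, is the same normality/Hartogs fact you use directly.
\item You are more explicit than the paper's one-line ``pushing forward to a point'' about transferring $h^0$-growth between the stack and its coarse space via descent of a common power. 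That is a worthwhile detail to keep.
\end{itemize}

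Two small slips:
\begin{itemize}
\item $\mu_*\mathcal O_{\mathcal X'}(mkD')$ need not be reflexive. It is simply a torsion-free subsheaf of the line bundle $\mathcal O_{\mathcal X}(mkD)$, which is all you need.
\item The aside about replacing $\mathbf\Delta$ by a larger divisor goes in the wrong direction for proving bigness of $K_{\mathcal X}+\mathbf\Delta$. It is good that you discard it.
\end{itemize}
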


\ifArxiv

\begin{proof}
Let $\mathbf \Delta_{\mathcal X'}$ be the $\mathbb Q$-divisor on $\mathcal X'$ such that
$
K_{\mathcal X'}+\mathbf \Delta_{\mathcal X'}=\mu^*(K_{\mathcal X}+\mathbf \Delta)
$
and write
$
\mathbf \Delta_{\mathcal X'}=\mathbf \Delta_{\mathcal X'}^+-\mathbf \Delta_{\mathcal X'}^-
$
with $\mathbf \Delta_{\mathcal X'}^{\pm}$ being effective divisors with disjoint support.  Note that clearly  we have $\widetilde {\mathbf \Delta}\le \mathbf \Delta_{\mathcal X'}^+$, and the support of $\mathbf \Delta_{\mathcal X'}^-$ is contained in the exceptional locus of $\mu$.  
Now, since we have $\widetilde {\mathbf \Delta}\le \mathbf \Delta_{\mathcal X'}^+$, 
we obtain
$
\mathbf \Delta'\le  \widetilde{\mathbf \Delta} +\mathcal E'\le \mathbf \Delta_{\mathcal X'}^+ + \mathcal E'. 
$
Thus,  if $K_{\mathcal X'}+\mathbf \Delta'$ is big, so is $K_{\mathcal X'}+ \mathbf \Delta_{\mathcal X'}^+ + \mathcal E'$. 

Also, for sufficiently divisible integers $m$ we have
\begin{align*}
\mu_*\mathcal O_{\mathcal X'}(m(K_{\mathcal X'}+\mathbf \Delta_{\mathcal X'}^+ + \mathcal E'))& \cong \mu_*\mathcal O_{\mathcal X'}(m(K_{\mathcal X'}+\mathbf \Delta_{\mathcal X'} +\mathbf \Delta_{\mathcal X'}^- + \mathcal E'))\\
  &\cong \mathcal O_{\mathcal X}(m(K_{\mathcal X}+\mathbf \Delta)) \otimes \mu_*\mathcal O_{\mathcal X'}(m(\mathbf \Delta_{\mathcal X'}^- + \mathcal E'))\\
    &\cong  \mathcal O_{\mathcal X}(m(K_{\mathcal X}+\mathbf \Delta)).
\end{align*}
Consequently, pushing forward to a point, global sections of the sheaves  $\mathcal O_{\mathcal X'}(m(K_{\mathcal X'}+\mathbf \Delta_{\mathcal X'}^+ + \mathcal E'))$ and  $ \mathcal O_{\mathcal X}(m(K_{\mathcal X}+\mathbf \Delta))$ agree, and we may conclude that $K_{\mathcal X}+\mathbf \Delta$ is big.
\end{proof}

\else
\begin{proof}
The proof is identical to the proof in the case of varieties, and is left to the reader. 
\end{proof}
\fi

\section{Some geometric constructions}\label{S:GeomConstRoots}

In order to make some geometric constructions with Hodge modules later, we will need some geometric constructions that modify our families to ensure sections of certain line bundles over our families.  This generalizes some results from \cite{PS17, WW23}.

\subsection{A geometric construction for stable pairs}\label{S:geom-con-sp}

Here we use ideas from  \cite[\S 4,5]{WW23} and the results in \cite{KP17proj} to obtain a version of  \cite[Thm.~5.2]{WW23} for DM stacks:

\begin{pro}
\label{P:TWW5.2} 
Let $f:(\mathcal Y,\mathcal D)\to \mathcal X$ and $\mathbf \Delta\subseteq \mathcal X$ be as in \Cref{T:main-pairs}, 
and let $\mathcal A$ be a line bundle on $\mathcal X$. 
There exists  a surjective schematic projective morphism $f^{(r)}:(\mathcal Y^{(r)},\mathcal D^{(r)})\to \mathcal X$ from a smooth proper integral DM stack $\mathcal Y^{(r)}$ over $\mathbb C$ with projective coarse moduli space, with $\mathcal D^{(r)}$ an effective $\mathbb Q$-divisor on $\mathcal Y^{(r)}$ with coefficients in $(0,1)$,  with pair discriminant $\mathbf \Delta_{f^{(r)}}$ contained in $\mathbf \Delta$, with $f^{(r)}$ having connected  fibers, so that setting $\mathcal U=\mathcal X-\mathbf \Delta$, the restriction $f^{(r)}|_{\mathcal U}:(\mathcal Y^{(r)}|_{\mathcal U},\mathcal D^{(r)}|_{\mathcal U})\to \mathcal U$ is relative snc, and  
 such that for the $\mathbb Q$-line bundle 
\begin{equation}\label{E:P:TWW5.2eq1}
\mathcal B:=\omega_{\mathcal Y^{(r)}/\mathcal X}(\mathcal D^{(r)})\otimes f^{(r)*}\mathcal A^{-1}, 
\end{equation}
we have 
\begin{equation*}\label{E:P:TWW5.2eq2}
H^0\left(\mathcal Y^{(r)},\mathcal B^{\otimes m}\right)\ne 0 \ \ \ \text{ for some sufficiently divisible } m\ge 1.
\end{equation*}
\end{pro}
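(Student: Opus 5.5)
We follow the strategy of \cite[Thm.~5.2]{WW23}, reducing the positivity statement to \cite{KP17proj} and checking that each step descends to (or can be carried out on) the DM stack $\mathcal X$. Fix $m_0$ so that $m_0(K_{\mathcal Y/\mathcal X}+\mathcal D)$ is Cartier and relatively very ample with vanishing higher direct images. By \cite{KP17proj}, applied after pulling back along a finite flat cover $q:V\to\mathcal X$ from a smooth projective variety and using that the variation is unchanged (\S\ref{S:variation}), the line bundle $\det f_*\mathcal O_{\mathcal Y}(m(K_{\mathcal Y/\mathcal X}+\mathcal D))$ is big for $m$ sufficiently divisible. Bigness of line bundles on stacks is tested on a positive power descending to the coarse space, and that property is insensitive to finite flat pullback. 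Since $\mathcal X$ has projective coarse space, for $\mathcal A$ fixed and $m$ large we may therefore assume
\[
\det f_*\mathcal O_{\mathcal Y}(m(K_{\mathcal Y/\mathcal X}+\mathcal D))\otimes \mathcal A^{-mr_m}
\]
has a nonzero section, where $r_m$ is the rank of $f_*\mathcal O_{\mathcal Y}(m(K_{\mathcal Y/\mathcal X}+\mathcal D))$ (one first arranges $\det(\cdots)^{\otimes a}\otimes\mathcal A^{-b}$ effective and rescales).

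\textbf{Fibre product trick.} Let $\mathcal Y^{r}$ be the $r=r_m$-fold fibre product over $\mathcal X$, with $\mathcal D^{r}=\sum_i p_i^*\mathcal D$. Since $f$ is flat with Gorenstein-in-codimension-one, $S_2$ fibres, the standard base change gives
\[
f^r_*\mathcal O(m(K_{\mathcal Y^r/\mathcal X}+\mathcal D^r))\cong \bigl(f_*\mathcal O(m(K_{\mathcal Y/\mathcal X}+\mathcal D))\bigr)^{\otimes r}.
\]
This is first checked étale locally on $\mathcal X$ via \eqref{E:stab-fam-diag}, and then descends. The inclusion $\det\hookrightarrow(\cdot)^{\otimes r}$ then produces a nonzero section of $\mathcal O(m(K_{\mathcal Y^r/\mathcal X}+\mathcal D^r))\otimes f^{r*}\mathcal A^{-mr}$, and after replacing $\mathcal A^r$ by $\mathcal A$ (or using $\mathcal A^{\otimes r}$ from the start) we get the desired section on the main component of $\mathcal Y^r$.

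\textbf{Resolution.} Let $\mathcal Y^{(r)}\to\mathcal Y^r$ be a functorial log resolution of the (unique, by \Cref{R:int-total} applied generically) main component. We take it functorial so that it exists on the stack by descent along an étale atlas (cf.\ \cite[Thm.~1.3]{CMZslope_stability}), and so that it is an isomorphism over $\mathcal U$, where $\mathcal Y^r|_{\mathcal U}$ is already relative snc, being a fibre product of relative snc families. Define $\mathcal D^{(r)}$ to be the strict transform of $\mathcal D^r$, with its coefficients $1$ lowered slightly (there are none over $\mathcal U$ by kltness), plus small coefficients on exceptional divisors. Then $f^{(r)}$ is relative snc over $\mathcal U$, has connected fibres, and $\mathbf\Delta_{f^{(r)}}\subseteq\mathbf\Delta$. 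The coarse space is projective because $\mathcal Y^{(r)}\to\mathcal X$ is schematic and projective.

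\textbf{Main obstacle.} The key difficulty is showing that the section survives pullback: we need
\[
\mu^*\bigl(m(K_{\mathcal Y^r/\mathcal X}+\mathcal D^r)\bigr)\le m(K_{\mathcal Y^{(r)}/\mathcal X}+\mathcal D^{(r)})
\]
up to effective divisors. This requires $(\mathcal Y^r,\mathcal D^r)$ to be klt, or at least to have discrepancies $>-1$ that can be absorbed into coefficients in $(0,1)$. We would prove this by the argument of \Cref{R:klt-total}: the fibre product is a stable family with klt generic fibre, which is then inversion-of-adjunction klt \cite[Cor.~4.56]{kollar_families}, checked étale locally. Writing $K_{\mathcal Y^{(r)}}+\mathcal D'=\mu^*(K+\mathcal D^r)$ with $\mathcal D'$ having coefficients $<1$, we take $\mathcal D^{(r)}\ge$ the positive part of $\mathcal D'$ with coefficients in $(0,1)$. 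If some coefficients are zero, we add small exceptional coefficients and absorb them by enlarging $m$ and slightly twisting $\mathcal A$ (using that bigness gives room). This yields $H^0(\mathcal B^{\otimes m})\neq0$.
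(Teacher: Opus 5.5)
Your overall architecture matches the paper's: bigness of $L_m:=\det f_*\mathcal O_{\mathcal Y}(m(K_{\mathcal Y/\mathcal X}+\mathcal D))$ from \cite[Thm.~7.1]{KP17proj} via a finite flat cover, an $r$-fold fibre product, a log resolution that is an isomorphism over $\mathcal U$, and $\mathcal D^{(r)}$ at least the positive part of the crepant boundary, with kltness giving coefficients in $(0,1)$. However, the step that produces the section is wrong.

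You claim that for fixed $\mathcal A$ and large $m$ one may assume $L_m\otimes\mathcal A^{-mr_m}$ is effective. Bigness of $L_m$ only gives that $L_m^{\otimes N}\otimes\mathcal A^{-b}$ is effective for $N\gg 0$, where $N$ depends on $m$, $b$ and $\mathcal A$. This cannot be ``rescaled'' into a statement about a single $L_{m'}$, since $L_{km}\neq L_m^{\otimes k}$.

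The claim is in fact false in general. By Knudsen--Mumford (asymptotic Riemann--Roch), $c_1(L_m)\sim\frac{m^{n+1}}{(n+1)!}f_*\bigl((K_{\mathcal Y/\mathcal X}+\mathcal D)^{n+1}\bigr)$, with $n$ the relative dimension, while $mr_m\sim\frac{m^{n+1}}{n!}\operatorname{vol}$. So if $\mathcal A$ is a large multiple of an ample class, $L_m\otimes\mathcal A^{-mr_m}$ has negative degree on complete intersection curves for all large divisible $m$.

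Your follow-up ``replacing $\mathcal A^r$ by $\mathcal A$'' is not available either. The line bundle $\mathcal A$ is given and arbitrary, and a section of $\mathcal O(m(K_{\mathcal Y^r/\mathcal X}+\mathcal D^r))\otimes f^{r*}\mathcal A^{-mr}$ gives one twisted by $\mathcal A^{-m}$ only if $\mathcal A^{\otimes m(r-1)}$ is effective.

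The missing idea is Viehweg's fibre product trick: let the number of fibre factors absorb the power needed to beat $\mathcal A$.
\begin{itemize}
\item Fix one sufficiently divisible $m$, and let $r_0$ be the rank of $f_*\mathcal O_{\mathcal Y}(m(K_{\mathcal Y/\mathcal X}+\mathcal D))$.
\item Use bigness to choose $N$ with $L_m^{\otimes N}\cong\mathcal A^{\otimes m}\otimes\mathcal O_{\mathcal X}(\mathcal E)$, where $\mathcal E\ge 0$.
\item Take $r=Nr_0$. Then $L_m^{\otimes N}\hookrightarrow\bigl(f_*\mathcal O_{\mathcal Y}(m(K_{\mathcal Y/\mathcal X}+\mathcal D))\bigr)^{\otimes Nr_0}\cong f^r_*\mathcal O_{\mathcal Y^r}(m(K_{\mathcal Y^r/\mathcal X}+\mathcal D^r))$, which gives precisely a nonzero section twisted by $\mathcal A^{-m}$.
\end{itemize}
For the isomorphism, the paper uses reflexive hulls and \cite[(7.16.2) and Lem.~3.6]{KP17proj}; your local-freeness route is also fine. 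This is what the paper does.

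With this repair the rest of your argument goes through. Your transfer to $\mathcal Y^{(r)}$ via $K_{\mathcal Y^{(r)}}+\mathcal D^{(r)}\ge\mu^*(K_{\mathcal Y^r}+\mathcal D^r)$ is equivalent to the paper's isomorphism $\mu_*\mathcal O(m(K_{\mathcal Y^{(r)}/\mathcal X}+\mathcal D^{(r)}))\cong\mathcal O(m(K_{\mathcal Y^r/\mathcal X}+\mathcal D^r))$.

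Two smaller points. First, your initial description of $\mathcal D^{(r)}$ (``small coefficients on exceptional divisors'') is not enough: an exceptional divisor $E$ needs coefficient at least $-a(E)$, which can be close to $1$. Second, no ``absorbing by enlarging $m$ and twisting $\mathcal A$'' is needed, since enlarging $\mathcal D^{(r)}$ by an effective divisor only enlarges $\mathcal B$.
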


\begin{proof}
The first observation is that it follows from \cite[Thm.~7.1]{KP17proj}
 that $\det f_*((\omega_{\mathcal Y/\mathcal X}^{\otimes m}(m \mathcal D))^{\vee \vee})$ is big.  For instance, let $q:V\to \mathcal X$ be a finite flat morphism from a smooth projective variety $V$, let $f_V:Y_V=V\times_{\mathcal X}\mathcal Y\to V$ be the base change of $f$, and let $q':Y_V\to \mathcal Y$ be the base change of $q$.  The pair $(Y_V,D_V:=q'^*\mathcal D)$ is a stable pair by definition of the moduli functor; here we are using  the so-called Weil divisor pull back $q'^*\mathcal D$, or one of the equivalent notions, in \cite[Ch.~4]{kollar_families}. 
 Then as $q$ is flat, we clearly have $ f_{V*}((\omega_{Y_V/V}^{\otimes m}(mD_V))^{\vee \vee})\cong q^*f_*((\omega_{\mathcal Y/\mathcal X}^{\otimes m}(m \mathcal D))^{\vee\vee})$.  The former has big determinant by\cite[Thm.~7.1]{KP17proj}, and so the latter has big determinant on $\mathcal X$, by virtue of  
 \cite[Lem.~2.5]{CMZpositivity} 
 and the functoriality of the determinant for torsion-free sheaves under finite flat pull back (see e.g., \cite[\S 1.4]{CMZslope_stability}).

Now, for any given $r$, we consider the  $r$-fold fiber product $(\mathcal Y ^{r}, \mathcal D^{r})$, where $\mathcal D^r:=\sum_{i}pr_i^*\mathcal  D$ with its induced morphism $f ^{r} : \mathcal Y ^{r}\to \mathcal X$.  
  This family is again a stable family \cite[Prop.~2.12]{BHPS13}, and $\mathcal Y^{r}$ is irreducible; we are assuming that $f$ is generically smooth so that there is only one irreducible component over the locus that $f$ is smooth,  and then use \cite[Lem.~7.11]{KP17proj}.

Let $\mu^{(r)}:\mathcal Y^{(r)} \to \Y^{r}$ be a strong  log resolution of singularities of the pair $(\mathcal Y^{r},f^{r*}\mathbf \Delta+\mathcal D^{r})$, and let $f^{(r)}= f^{r}\circ  \mu^{(r)}: \mathcal Y^{(r)}\to \mathcal X$.  Then $f^{(r)}$ is schematic and projective, $\mathcal Y^{(r)}$ is smooth and $f^{(r)*}\mathbf \Delta +\mu^{(r)*}\mathcal D^{r}$ is an snc divisor, which is relative SNC over $\X- \mathbf\Delta$. For that last part, we are using that we do not make any birational modifications away from fibers over $\mathbf\Delta$ and so it is sufficient to show that the $r$-fold fiber product of a relative SNC family is still relative SNC, which is easy to check locally.  Note that the fibers of $f^{(r)}$ are connected; one can see this by taking, for instance, the Stein factorization (e.g., \cite[Thm.~4.6.14]{AlperStacksBook}).

We have a commutative diagram
\begin{equation}\label{E:Pair-r-(r)}
\xymatrix{
\mathcal Y^{(r)} \ar[r]^{\mu^{(r)}} \ar[d]^{f^{(r)}}& \mathcal Y^{r}\ar[d]^{f^{r}}\\
\mathcal X \ar@{=}[r]& \mathcal X.
}
\end{equation}
Define an snc divisor $\mathcal D_{\mathcal Y^{(r)}}^{(r)}$ on $\mathcal Y^{(r)}$ by requiring
$$
K_{\mathcal Y^{(r)}}+\mathcal D^{(r)}_{\mathcal Y^{(r)}}=\mu^{(r)*}(K_{\mathcal Y^{r}}+\mathcal D^{r}).$$  
Write
$
\mathcal D^{(r)}_{\mathcal Y^{(r)}}=\mathcal D^{(r)+}_{\mathcal Y^{(r)}}-\mathcal D^{(r)-}_{\mathcal Y^{(r)}}
$
with $\mathcal D_{\mathcal Y^{(r)}}^{(r)\pm}$ being effective divisors with disjoint support.  
  We then define\footnote{Wei--Wu \cite{WW23} use the convention of taking the divisor $\mathcal D^{(r)}_{\mathcal Y^{(r)}}-\lfloor -\mathcal D^{(r)-}_{\mathcal Y^{(r)}}\rfloor=\mathcal D^{(r)+}_{\mathcal Y^{(r)}}+ \{\mathcal D^{(r)-}_{\mathcal Y^{(r)}}\}=\mathcal D^{(r)}+  \{\mathcal D^{(r)-}_{\mathcal Y^{(r)}}\}$ in place of the divisor $\mathcal D^{(r)}$, which also suffices for the argument here.} 
$\mathcal D^{(r)}:=\mathcal D^{(r)+}_{\mathcal Y^{(r)}}=\mathcal D^{(r)}_{\mathcal Y^{(r)}}+ \mathcal D^{(r)-}_{\mathcal Y^{(r)}}$.

First note that the pair $(\mathcal Y^r,\mathcal D^r)$ is klt (see \Cref{R:klt-total}).
 Consequently, since $\mathcal D^{(r)+}_{\mathcal Y^{(r)}}$ consists of the strict transform of $\mathcal D^r$, together with some exceptional divisors arising with negative discrepancies, we have from the definition of klt that the coefficients of $\mathcal D^{(r)}:=\mathcal D^{(r)+}_{\mathcal Y^{(r)}}$ are in $(0,1)$.

 In addition, since clearly $\mathcal D^{(r)-}_{\mathcal Y^{(r)}}$ has support in the exceptional locus of $\mu^{(r)}$, for any sufficiently divisible integer  $m>0$ we have
 \begin{align*}
 \mu^{(r)}_*\mathcal O_{\mathcal Y^{(r)}}(m(K_{\mathcal Y^{(r)}}+\mathcal D^{(r)}))= \mu^{(r)}_*\mathcal O_{\mathcal Y^{(r)}}(m(K_{\mathcal Y^{(r)}}+\mathcal D^{(r)}_{\mathcal Y^{(r)}}+ \mathcal D^{(r)-}_{\mathcal Y^{(r)}}))\\
 \cong \mu^{(r)}_*\mathcal O_{\mathcal Y^{(r)}}(m(K_{\mathcal Y^{(r)}}+\mathcal D^{(r)}_{\mathcal Y^{(r)}}))\cong  \mathcal O_{\mathcal Y^r}(m(K_{\mathcal Y^r}+\mathcal D^r)),
 \end{align*}
 where the last isomorphism is by the projection formula. From this we have
 $$
\mu^{(r)}_*\mathcal O_{\mathcal Y^{(r)}}(m(K_{\mathcal Y^{(r)}/\mathcal X}+\mathcal D^{(r)}))\cong \mu^{(r)}_*\mathcal O_{\mathcal Y^{(r)}}(m(K_{\mathcal Y^{(r)}}-\mu^{(r)*}f^{r*}K_{\mathcal X}+\mathcal D^{(r)}))
$$
$$
\cong \mathcal O_{\mathcal Y^r}(m(K_{\mathcal Y^r}-f^{r*}K_{\mathcal X}+\mathcal D^r))
\cong \mathcal O_{\mathcal Y^r}(m(K_{\mathcal Y^r/\mathcal X}+\mathcal D^r)).
$$
 Switching to sheaf notation  and taking tensor powers, for any sufficiently divisible $m>0$, 
 \begin{equation}\label{E:WWKPstep}
 \mu^{(r)}_*\omega_{\mathcal Y^{(r)}/\mathcal X}^{\otimes m}(m\mathcal D^{(r)})\cong \omega_{\mathcal Y^r/\mathcal X}^{\otimes m}(m \mathcal D^r)^{\vee \vee}.
 \end{equation}
 Finally, pushing forward by $f^r_*$ gives (for sufficiently divisible $m>0$) an isomorphism 
  \begin{equation}\label{E:WWKPstepf}
 f^{(r)}_*\omega_{\mathcal Y^{(r)}/\mathcal X}^{\otimes m}(m\mathcal D^{(r)})\cong f^r_*(\omega_{\mathcal Y^r/\mathcal X}^{\otimes m}(m \mathcal D^r)^{\vee \vee}),
 \end{equation}
 where we know that the sheaf on the right is reflexive, as $f^r$ is equidimensional (e.g., \cite[Cor.~1.7]{Hart80}; take an \'etale cover, then use \cite[\href{https://stacks.math.columbia.edu/tag/0339}{Lem.~0339}]{stacks-project} to deduce that the total space of the family is $S_2$, and observe that $S_2$ rather than $R_1+S_2$ is a sufficient hypothesis on the source in  the proof of  \cite[Cor.~1.7]{Hart80}), 
 and therefore the sheaf on the left is reflexive, too, by the isomorphism.

Finally from \eqref{E:WWKPstepf}, 
we find, using the short hand $[\ell]$ for the reflexive hull of the $\ell$-th tensor power,  that 
\begin{equation}\label{E:f'tof(r)'WW}
  (f_*\omega_{\mathcal Y/\mathcal X}^{m}(m \mathcal D))^{[r]}\cong  f^{r}_*(\omega_{\mathcal Y^{r}/\mathcal X}^{[m]}(m \mathcal D^{r})^{\vee\vee}) \cong  f^{(r)}_*\omega_{\mathcal Y^{(r)}/\mathcal X}^{m}(m\mathcal D^{(r)}),
\end{equation}
where the first isomorphism is from  \cite[(7.16.2) and Lem.~3.6]{KP17proj}. 

Denote by $r_0$ the rank of  the reflexive sheaf $f_*\omega_{\mathcal Y/\mathcal X}^{\otimes m}(m\mathcal D)$; as $\det f_*\omega_{\mathcal Y/\mathcal X}^{\otimes m}(m \mathcal D)$ is big,  we have that $f_*\omega_{\mathcal Y/\mathcal X}^{\otimes m}(m\mathcal D)\ne 0$, so that $r_0>0$.  Set 
$r:=N\cdot r_0$.
Then, via the standard arguments regarding determinants and tensor products  of vector bundles, together with push forwards of vector bundles over codimension $2$ loci, 
 there is a natural  inclusion of sheaves
\begin{equation}\label{E:PS-20.1incWW}
(\det f_*\omega_{\mathcal Y/\mathcal X}^{\otimes m}(m \mathcal D))^{\otimes N} \hookrightarrow (f_*\omega_{\mathcal Y/\mathcal X}^{\otimes m}(m\mathcal D))^{[r]}.
\end{equation}
Using that $\det f_*\omega_{\mathcal Y/\mathcal X}^{\otimes m}(m \mathcal D)$ is big, we have that for $N$ sufficiently large we can write
\begin{equation}\label{E:PS-20.1LMB-WW}
(\det f^{(r)}_*\omega_{\mathcal Y^{(r)}/\mathcal X}^{\otimes m}(m \mathcal D^{(r)}))^{\otimes N} \cong \mathcal A^{\otimes m} \otimes \mathcal O_{\mathcal X}(\mathcal E)
\end{equation}
where $\mathcal E$ is an effective divisor on $\mathcal X$. 
By \eqref{E:PS-20.1LMB-WW}, \eqref{E:PS-20.1incWW},  and  \eqref{E:f'tof(r)'WW},  we have an inclusion 
$$
\mathcal A^{\otimes m} \otimes \mathcal O_{\mathcal X}(\mathcal E) \hookrightarrow f^{(r)}_*\omega_{\mathcal Y^{(r)}/\mathcal X}^{\otimes m}(m\mathcal D^{(r)})
$$
showing that
$$
H^0(\mathcal X,f^{(r)}_*\omega_{\mathcal Y^{(r)}/\mathcal X}^{\otimes m}(m\mathcal D^{(r)}) \otimes \mathcal A^{\otimes -m})\ne 0.
$$
\end{proof}

\subsubsection{A further log resolution of the base for the pairs case}

For the arguments later with Hodge modules and Higgs bundles, we will actually need to make a further modification of the family.  The following lemma generalizes \cite[Prop.~5.3]{WW23} to the setting here:

\begin{lem}

\label{L:PWW5.3}
In the same situation as \Cref{P:TWW5.2},  fix the line bundle $\mathcal A$, fix the morphism $$f^{(r)}:\mathcal Y^{(r)}\to \mathcal X,$$ fix the divisor  $\mathcal D^{(r)}$, and fix a nonzero section $s\in H^0(\mathcal Y^{(r)},\mathcal B^{\otimes m})$ for some sufficiently divisible $m\ge 1$, where $\mathcal B:=\omega_{\mathcal Y^{(r)}/\mathcal X}( \mathcal D^{(r)} )\otimes f^{(r)*}\mathcal A^{-1}$ is as in \eqref{E:P:TWW5.2eq1}.

Let $\mathcal S\subseteq \mathcal X$ be an effective divisor,  and let $\sigma:\widetilde {\mathcal X}\to \mathcal X$ be a strong log resolution of the pair $(\mathcal X,\mathbf \Delta+\mathcal S)$.  Denote by $\mathbf \Lambda\subseteq \mathcal X$ the (codimension at least $2$) locus where $\mathbf \Delta+\mathcal S$ is not snc, and let $\mathcal X^\circ:=\mathcal X-\mathbf \Lambda$. Also, let  $\widetilde {\mathcal X}^\circ:=\sigma^{-1}(\mathcal X^\circ)\cong \mathcal X^\circ$.  Then there is a commutative diagram
$$
\xymatrix{
(\widetilde {\mathcal Y}^{(r)},\widetilde {\mathcal D}^{(r)}) \ar[r]^{\tilde \sigma } \ar[d]^{\tilde f^{(r)}}& (\mathcal Y^{(r)},\mathcal D^{(r)}) \ar[d]^{f^{(r)}}\\
\widetilde {\mathcal X}\ar[r]^\sigma&\mathcal X
}
$$
that satisfies the following conditions:

\begin{enumerate}[label=(\alph*)]
\item  The morphism  $\tilde f^{(r)}:\tilde {\mathcal Y}^{(r)}\to \widetilde {\mathcal X}$ is  schematic and projective with connected fibers, from a smooth proper integral DM stack $\widetilde {\mathcal Y}^{(r)}$ over $\mathbb C$ with projective coarse moduli space.

\item  The divisor 
 $\widetilde {\mathcal D}^{(r)}$ is an effective snc $\mathbb Q$-divisor with coefficients in $[0,1)$.

\item The morphism 
$\tilde \sigma $ is schematic projective and birational, 
 and   setting $\widetilde {\mathcal Y}^{(r)\circ}:=(\tilde f^{(r)})^{-1}(\widetilde {\mathcal X}^{(r)\circ})$ and $\mathcal Y^{(r)\circ}:=(f^{(r)})^{-1}(\mathcal X^\circ)$, we have that $\tilde \sigma  $ restricts to an isomorphism 
 $
 \tilde \sigma |_{\widetilde {\mathcal Y}^{(r)\circ}}: \widetilde {\mathcal Y}^{(r)\circ} \stackrel{\sim}{\longrightarrow} \mathcal Y^{(r)\circ}
 $
 over $\mathcal X^\circ$, and  
 $  \widetilde {\mathcal D}^{(r)}
 |_{\widetilde {\mathcal Y}^{(r)\circ}} =\mathcal D^{(r)}|_{\mathcal Y^{(r)\circ}}$.

\item Let $\tilde{\mathcal A}=\sigma^*\mathcal A$, and define 
\begin{align*}
\widetilde {\mathcal B}:=\omega_{\widetilde {\mathcal Y}^{(r)}/\widetilde {\mathcal X}}(  \widetilde{\mathcal D}^{(r)})\otimes_{\mathcal O_{\widetilde {\mathcal Y}^{(r)}}} \tilde f^{(r)*}\tilde {\mathcal A}^{-1}.
\end{align*}
There is an identification $\widetilde {\mathcal B}|_{\widetilde {\mathcal Y}^{(r)\circ}}= \tilde \sigma|_{\widetilde {\mathcal Y}^{(r)\circ}}^*(\mathcal B|_{\mathcal Y^{(r)\circ}})$, and 
there exists a section $\tilde s\in H^0(\widetilde {\mathcal Y}^{(r)}, \widetilde{\mathcal B}^{\otimes m})$ such that 
$
\tilde s|_{\widetilde {\mathcal Y}^{(r)\circ}}= \tilde \sigma|_{\widetilde {\mathcal Y}^{(r)\circ}}^*(s|_{\mathcal Y^{(r)\circ}})$.

\item  Denote by $\tilde {\mathbf \Delta}\subseteq \widetilde {\mathcal X}$ the reduced pre-image of $\mathbf \Delta$.  
Setting $\widetilde {\mathcal U}:=\widetilde {\mathcal X}-\tilde {\mathbf \Delta}$, the restriction $\widetilde {\mathcal Y}|_{\widetilde {\mathcal U}}\to \widetilde {\mathcal U}$ is smooth and all of the strata of the snc divisor $\widetilde {\mathcal D}|_{\widetilde {\mathcal U}}$ are smooth over $\widetilde {\mathcal U}$.  

\end{enumerate}
Moreover, if $\mathcal A$ was chosen so that $\mathcal A(-\mathbf \Delta)$ is big, then $\tilde {\mathcal A}(-\tilde {\mathbf \Delta})$ is big. 
\end{lem}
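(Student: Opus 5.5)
The plan is to follow the proof of \cite[Prop.~5.3]{WW23}: first take the main component of the base change, then resolve it, then choose the boundary exactly as in the proof of \Cref{P:TWW5.2} so that pluricanonical sections extend.

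\textbf{Construction.} Form the fiber product $\mathcal Y^{(r)}\times_{\mathcal X}\widetilde{\mathcal X}$ and let $\mathcal Y'$ be its main component, i.e., the closure of the preimage of $\widetilde{\mathcal X}^\circ$. This is integral because $\mathcal Y^{(r)}$ is integral. Over $\widetilde{\mathcal X}^\circ\cong\mathcal X^\circ$ the stack $\mathcal Y'$ agrees with $\mathcal Y^{(r)\circ}$, which is smooth. Next take a strong log resolution $\nu:\widetilde{\mathcal Y}^{(r)}\to\mathcal Y'$ of the pair consisting of $\mathcal Y'$ and the preimages of $\mathcal D^{(r)}$, $\tilde{\mathbf\Delta}$ and the $\sigma$-exceptional divisor; we use \cite[Thm.~1.3]{CMZslope_stability} for resolutions of DM stacks. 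Being strong, $\nu$ is an isomorphism over $\mathcal Y^{(r)\circ}$ where everything is already snc. Note that over $\mathcal X^\circ$ the divisor $f^{(r)*}(\mathbf\Delta)+\mathcal D^{(r)}$ is snc by the construction of $\mathcal Y^{(r)}$ in \Cref{P:TWW5.2}.

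Set $\tilde\sigma$ to be the composite map and $\tilde f^{(r)}$ the induced map. Both are schematic and projective, being compositions and base changes of such maps. The fibers are connected by Stein factorization, as in \Cref{P:TWW5.2}. The coarse space of $\widetilde{\mathcal Y}^{(r)}$ is projective because it is projective over a stack with projective coarse space. This gives (a) and (c).

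\textbf{Boundary and section.} Define $\widetilde{\mathcal D}^{(r)}$ as the part of $\tilde\sigma^{-1}_*\mathcal D^{(r)}$ and of the exceptional divisors determined by
\[
K_{\widetilde{\mathcal Y}^{(r)}}+\mathcal D'=\tilde\sigma^*(K_{\mathcal Y^{(r)}}+\mathcal D^{(r)}),
\]
taking $\widetilde{\mathcal D}^{(r)}:=\{\mathcal D'^{+}\}$, or more precisely the components of $\mathcal D'^{+}$ with coefficients truncated into $[0,1)$, following WW's choice. By construction this is snc with coefficients in $[0,1)$, which gives (b).

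For (d), I would argue as follows. The relative canonical bundles satisfy
\[
\omega_{\widetilde{\mathcal Y}^{(r)}/\widetilde{\mathcal X}}=\omega_{\widetilde{\mathcal Y}^{(r)}}\otimes\tilde f^{(r)*}\omega_{\widetilde{\mathcal X}}^{-1},
\qquad
\omega_{\widetilde{\mathcal X}}=\sigma^*\omega_{\mathcal X}(E_\sigma)\ \text{with } E_\sigma\ge 0 .
\]
Hence $\tilde\sigma^*\mathcal B^{\otimes m}$ differs from $\widetilde{\mathcal B}^{\otimes m}$ by a divisor supported over $\sigma$-exceptional or $\tilde\sigma$-exceptional loci. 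The section $\tilde\sigma^*s$ is therefore a rational section of $\widetilde{\mathcal B}^{\otimes m}$ that is regular on $\widetilde{\mathcal Y}^{(r)\circ}$ and agrees with $s$ there. We must show it has no poles.

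This is the main obstacle. The pole multiplicity along each exceptional prime $P$ is controlled by
\[
\operatorname{mult}_P(\tilde f^{(r)*}E_\sigma)-a_P,
\]
where $a_P$ is the discrepancy. Following \cite[Prop.~5.3]{WW23}, the key point is that the subtracted term $\tilde f^{(r)*}E_\sigma$ is compensated: $K_{\widetilde{\mathcal Y}^{(r)}}\ge \tilde\sigma^*K_{\mathcal Y^{(r)}}+\tilde f^{(r)*}E_\sigma$ up to the boundary. I would establish this using log canonicity of $(\mathcal Y^{(r)},\mathcal D^{(r)}+f^{(r)*}\mathbf\Delta)$ near the snc locus, together with the relative canonical divisor of the base change. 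This needs that the boundary includes the round-down correction, which explains WW's footnoted convention.

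If this fails for the precise choice of $\widetilde{\mathcal D}^{(r)}$, the fallback is to allow $\widetilde{\mathcal D}^{(r)}$ to absorb fractional parts of exceptional components, keeping coefficients in $[0,1)$. Since everything is \'etale local, it suffices to verify the statement after pulling back along an \'etale atlas, where it is exactly \cite[Prop.~5.3]{WW23}. Descent of the section then follows from uniqueness, since the section is determined on the dense open set $\widetilde{\mathcal Y}^{(r)\circ}$.

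\textbf{Property (e) and bigness.} For (e), away from $\tilde{\mathbf\Delta}$ the map $\sigma$ is an isomorphism off $\sigma^{-1}(\mathbf\Lambda)$, but $\mathbf\Lambda$ may meet $\mathcal U$ through $\mathcal S$. Over $\mathcal U$ the family $f^{(r)}$ is relative snc, so its base change to $\widetilde{\mathcal U}$ is relative snc and in particular smooth. Thus the main component equals the whole base change there and no resolution is needed, because we chose a strong resolution that is an isomorphism over the smooth snc locus.

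Finally, $\tilde{\mathcal A}(-\tilde{\mathbf\Delta})\ge\sigma^*(\mathcal A(-\mathbf\Delta))$, because $\sigma^*\mathbf\Delta\ge\tilde{\mathbf\Delta}$. Bigness is preserved under birational pullback, by descending to the coarse spaces as in \cite[Def.~2.4]{CMZpositivity}, so $\tilde{\mathcal A}(-\tilde{\mathbf\Delta})$ is big.
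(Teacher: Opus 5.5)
Your construction of $\widetilde{\mathcal Y}^{(r)}$, parts (a), (c), (e), and the bigness statement are fine. There is, however, a genuine gap in (d), and it lies in your choice of boundary, not only in its justification.

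With $\mathcal D'$ defined by $K_{\widetilde{\mathcal Y}^{(r)}}+\mathcal D'=\tilde\sigma^*(K_{\mathcal Y^{(r)}}+\mathcal D^{(r)})$, you need $a_P\ge\operatorname{mult}_P(\tilde f^{(r)*}E_\sigma)$, up to a boundary coefficient $<1$. Nothing you know about $\mathcal Y^{(r)}$ gives this. Take $Y=\mathbb A^2\to X=\mathbb A^2$, $(u,v)\mapsto(u^2,v^2)$, which is \'etale off the snc divisor $\mathbf\Delta=\{xy=0\}$, and let $\sigma$ be the blow-up of the origin (an allowed center if $\mathcal S$ is, say, the diagonal).
\begin{itemize}
\item The normalized main component is the blow-up of $Y$ at the origin, with exceptional curve $E'$ and $\tilde f^*E_\sigma=2E'$.
\item Hence $K_{\widetilde Y/\widetilde X}=\tilde\sigma^*K_{Y/X}-E'$, so pulled-back sections acquire a pole of order $m$ along $E'$. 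No boundary coefficient $<1$ can absorb this.
\item This happens even though $(Y,(f^*\mathbf\Delta)_{\mathrm{red}})$ is snc. So log canonicity on $\mathcal Y^{(r)}$ cannot be the source of your inequality. Moreover, $(\mathcal Y^{(r)},\mathcal D^{(r)}+f^{(r)*}\mathbf\Delta)$ is not even lc once $f^{(r)*}\mathbf\Delta$ is non-reduced.
\end{itemize}
Your fallback of absorbing fractional parts presupposes that every pole has order $<m$, which is exactly what must be shown. Checking regularity of $\tilde\sigma^*s$ on an \'etale atlas via \cite[Prop.~5.3]{WW23} only works for a boundary pulled back from the stack that agrees with theirs, and $\{\mathcal D'^{+}\}$ does not.

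The missing idea is to work with the stable family $f^r\colon(\mathcal Y^r,\mathcal D^r)\to\mathcal X$ that $\mathcal Y^{(r)}$ resolves, not with $\mathcal Y^{(r)}$ itself.
\begin{itemize}
\item Its base change $\widetilde{\mathcal Y}^r:=\widetilde{\mathcal X}\times_{\mathcal X}\mathcal Y^r$ is again stable, hence klt over the smooth base $\widetilde{\mathcal X}$. Also $\sigma^{r*}\omega^{[m]}_{\mathcal Y^r/\mathcal X}(m\mathcal D^r)\cong\omega^{[m]}_{\widetilde{\mathcal Y}^r/\widetilde{\mathcal X}}(m\widetilde{\mathcal D}^r)$ by \cite[Rem.~3.8]{KP17proj}.
\item The paper resolves $\widetilde{\mathcal X}\times_{\mathcal X}\mathcal Y^{(r)}$, obtaining $\tilde\mu^{(r)}\colon\widetilde{\mathcal Y}^{(r)}\to\widetilde{\mathcal Y}^r$. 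It takes $\widetilde{\mathcal D}^{(r)}$ to be the positive part of the crepant boundary $\widetilde{\mathcal D}^{(r)}_{\widetilde{\mathcal Y}^{(r)}}$ of $\tilde\mu^{(r)*}(K_{\widetilde{\mathcal Y}^r}+\widetilde{\mathcal D}^r)$. The klt bound $a(P;\widetilde{\mathcal Y}^r,\widetilde{\mathcal D}^r)>-1$ gives coefficients in $(0,1)$.
\item As in \eqref{E:WWKPstepf}, $\tilde f^{(r)}_*\omega^{\otimes m}_{\widetilde{\mathcal Y}^{(r)}/\widetilde{\mathcal X}}(m\widetilde{\mathcal D}^{(r)})\cong\tilde f^r_*(\omega^{\otimes m}_{\widetilde{\mathcal Y}^r/\widetilde{\mathcal X}}(m\widetilde{\mathcal D}^r))^{\vee\vee}$.
\item The base change map $\sigma^*f^r_*\to\tilde f^r_*$, after reflexive hulls and twisting by $\mathcal A^{-m}$, then yields $\tilde s=\sigma^*s$ on global sections. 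No pole analysis is needed.
\end{itemize}
Comparing the two crepant pullbacks gives $\widetilde{\mathcal D}^{(r)}_{\widetilde{\mathcal Y}^{(r)}}=\mathcal D'+\tilde f^{(r)*}E_\sigma-\tilde\sigma^*\mathcal D^{(r)-}$. So your $\mathcal D'$ is missing precisely the term $\tilde f^{(r)*}E_\sigma$ that produces your poles. The $\mathcal D^{(r)-}$ term is harmless, since $s$ vanishes to order at least $m\mathcal D^{(r)-}$.
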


\begin{proof}
We start in the proof of  \Cref{P:TWW5.2} with diagram \eqref{E:Pair-r-(r)}, and construct a diagram 

\begin{equation}\label{E:Pair-r-(r)2}
\xymatrix@C=1em@R=1em{
&\mathcal Y^{(r)} \ar[rr]^{\mu^{(r)}} \ar@{-}[d]^{f^{(r)}}&& \mathcal Y^{r}\ar[dd]^{f^{r}}\\
\widetilde {\mathcal Y}^{(r)}\ar[rr]_<>(0.25){\tilde \mu^{(r)}} \ar[dd]_{\tilde f^{(r)}} \ar[ru]^{\tilde \sigma }&\ar[d]&\widetilde {\mathcal Y}^r \ar[dd]^<>(0.25){\tilde f^r} \ar[ru]^{\sigma^r}&\\
&\mathcal X \ar@{=}[r]& \ar@{=}[r]& \mathcal X\\
\widetilde {\mathcal X}\ar@{=}[rr] \ar[ru]^\sigma&&\widetilde {\mathcal X} \ar[ru]_\sigma& \\
}
\end{equation} 
In the above $\widetilde {\mathcal Y}^r:=\widetilde {\mathcal X}\times_{\mathcal X}\mathcal Y^r$ is defined to be the stable family obtained by pull back, with boundary divisor $\widetilde {\mathcal D}^r$ obtained by pull back, and we take $\widetilde {\mathcal Y}^{(r)}$ to be a strong resolution of singularities of the fibered product $\widetilde {\mathcal X}\times_{\mathcal X}\mathcal Y^{(r)}$ with respect to the pull backs of all of the divisors in question so that 
$\tilde f^{(r)*} ({\mathbf \Delta_{\tilde f^{(r)}}} +\sigma^*\mathbf \Delta_{f^{(r)}})+ \tilde \sigma ^*\mathcal D^{(r)}$, together with the exceptional divisors for $\widetilde \mu^{(r)}$, is an snc divisor.   

As before, define an snc divisor $\widetilde {\mathcal D}_{\widetilde {\mathcal Y}^{(r)}}^{(r)}$ on $\widetilde {\mathcal Y}^{(r)}$ by requiring
\begin{equation}\label{E:TDY(r)def}
K_{\widetilde {\mathcal Y}^{(r)}}+\widetilde {\mathcal D}^{(r)}_{\widetilde {\mathcal Y}^{(r)}}=\widetilde \mu^{(r)*}(K_{\widetilde {\mathcal Y}^{r}}+\widetilde {\mathcal D}^{r}).
\end{equation}
Write
$
\widetilde {\mathcal D}^{(r)}_{\widetilde {\mathcal Y}^{(r)}}=\widetilde {\mathcal D}^{(r)+}_{\widetilde {\mathcal Y}^{(r)}}-\widetilde {\mathcal D}^{(r)-}_{\widetilde {\mathcal Y}^{(r)}}
$
with $\widetilde {\mathcal D}_{\widetilde {\mathcal Y}^{(r)}}^{(r)\pm}$ being effective divisors with disjoint support.  We then define
$\widetilde {\mathcal D}^{(r)}:=\widetilde {\mathcal D}^{(r)+}_{\widetilde {\mathcal Y}^{(r)}}=\widetilde {\mathcal D}^{(r)}_{\widetilde {\mathcal Y}^{(r)}}+ \widetilde {\mathcal D}^{(r)-}_{\widetilde {\mathcal Y}^{(r)}}$.
Arguing as before, we have that the coefficients of $\widetilde {\mathcal D}^{(r)}$ are all in $(0,1)$, and  
 \begin{equation}\label{E:WWKPstep-2-2}
 \tilde f^{(r)}_*\omega_{\widetilde {\mathcal Y}^{(r)}/\widetilde {\mathcal X}}^{\otimes m}(m\widetilde {\mathcal D}^{(r)})\cong \tilde f^{r}_*(\omega_{\widetilde {\mathcal Y}^{r}/\widetilde {\mathcal X}}^{\otimes m}(m \widetilde {\mathcal D}^{r}))^{\vee \vee},
\end{equation}
 where, as before, the sheaf on the right is reflexive, hence, so is the sheaf on the left. 
To see that \eqref{E:WWKPstep-2-2} is well defined, note  that we assumed that $m(K_{{\mathcal Y}^{r}}+\mathcal D^r)$ was integral and Cartier; by base change for stable families, we   therefore have   $m(K_{\widetilde {\mathcal Y}^{r}}+\widetilde {\mathcal D}^{r})$ is integral and Cartier, making sense of the right hand side of \eqref{E:WWKPstep-2-2}.  
At the same time,  this together with  \eqref{E:TDY(r)def} then implies that $m(K_{\widetilde {\mathcal Y}^{(r)}}+\widetilde {\mathcal D}^{(r)}_{\widetilde {\mathcal Y}^{(r)}})$ is integral and Cartier, as well.  In particular, the left hand side of \eqref{E:WWKPstep-2-2} is also well defined.  Note further that because  $\widetilde {\mathcal Y}^{(r)}$ is smooth, we have that $K_{\widetilde {\mathcal Y}^{(r)}}$ is integral, so that $m \widetilde {\mathcal D}^{(r)}_{\widetilde {\mathcal Y}^{(r)}}$ is integral, and therefore that $\widetilde {\mathcal D}_{\widetilde {\mathcal Y}^{(r)}}^{(r)\pm}$ are integral.  In particular, $m\widetilde {\mathcal D}^{(r)}$ is integral.

 Note also that if $\mathcal A(-\mathbf \Delta)$ is big, then $\sigma^*\mathcal A(-\mathbf \Delta)= \tilde {\mathcal A}(-\sigma^*\mathbf \Delta)$ is big.  Consequently, since $0\le \tilde {\mathbf \Delta}\le \sigma^{-1}\mathbf \Delta$, we have that $\tilde{\mathcal A}(-\tilde {\mathbf \Delta})$ is big.
  Thus the morphism $\tilde f^{(r)}$ and the divisor $\widetilde {\mathcal D}^{(r)}$ satisfy all of the conditions of the lemma, except perhaps the condition on global sections. 
 
The key observation is that, being a base change of stable families, there is an isomorphism (e.g., \cite[Rem.~3.8]{KP17proj}) 

$$
\sigma^{r*}\omega^{\otimes m}_{\mathcal Y^r/\mathcal X}(m\mathcal D^r)\cong \omega^{\otimes m}_{\widetilde {\mathcal Y}^r/\widetilde {\mathcal X}}(m\widetilde {\mathcal D}^r).
$$
We then consider the natural morphism 
$$
\sigma^*f^r_*\omega^{\otimes m}_{\mathcal Y^r/\mathcal X}(m\mathcal D^r) \to \tilde f^r_* \omega^{\otimes m}_{\widetilde {\mathcal Y}^r/\widetilde {\mathcal X}}(m\widetilde {\mathcal D}^r),
$$
which is generically an isomorphism (i.e., an isomorphism over $\mathcal X^\circ$), and take reflexive hulls.  Using the isomorphisms \eqref{E:WWKPstepf} and \eqref{E:WWKPstep-2-2}, and tensoring with $\tilde {\mathcal A}=\sigma^*\mathcal A$,  we obtain a morphism
$$
\sigma^*(f^{(r)}_*\omega^{\otimes m}_{\mathcal Y^{(r)}/\mathcal X}(m\mathcal D^{(r)})\otimes \mathcal A^{-m})
\to 
 \tilde f^{(r)}_*\omega_{\widetilde {\mathcal Y}^{(r)}/\widetilde {\mathcal X}}^{\otimes m}(m\widetilde {\mathcal D}^{(r)})\otimes \tilde {\mathcal A}^{-m},
$$
which is an isomorphism over $\mathcal X^\circ$.    

Taking global sections gives a morphism
$$
\sigma^*: H^0(\mathcal X, f^{(r)}_*\omega^{\otimes m}_{\mathcal Y^{(r)}/\mathcal X}(m\mathcal D^{(r)})\otimes \mathcal A^{-m})
\to 
 H^0(\widetilde {\mathcal X}, \tilde f^{(r)}_*\omega_{\widetilde {\mathcal Y}^{(r)}/\widetilde {\mathcal X}}^{\otimes m}(m\widetilde {\mathcal D}^{(r)})\otimes \tilde {\mathcal A}^{-m}).
$$
Therefore, given our section $s\in H^0(\mathcal X, f^{(r)}_*\omega^{\otimes m_0}_{\mathcal Y^{(r)}/\mathcal X}(m\mathcal D^{(r)})\otimes \mathcal A^{-m})$, 
which is by assumption non-zero over a dense open substack of $\mathcal X^\circ$, we have that $\sigma^*s=\tilde s$ satisfies the conditions of the lemma.
\end{proof}

\begin{rem}[Replacing $\mathcal D^{(r)}$ with $\lceil \mathcal D^{(r)}\rceil$]\label{R:L([D])section}
We make the following observation for later.  
In the notation of \Cref{P:TWW5.2}, so that we may assume that 
 $H^0(\mathcal{Y}^{(r)}, \omega^{\otimes m}_{\mathcal{Y}^{(r)}/\mathcal{X}}(m \mathcal D^{(r)})\otimes f^{(r)*}\m{A}^{\otimes -m}) \ne 0$,
define 
\begin{equation}\label{E:HdgMdLL'def}
\m{L}':=\omega_{\mathcal{Y}^{(r)}/\mathcal{X}}( \lceil{\mathcal D^{(r)}} \rceil )\otimes f^{(r)*}\m{A}^{-1},
\end{equation}
and fix an  $m$ such that $\mathcal L'^{\otimes m}$ has a non-trivial section $s$ 
with zero divisor $(s)$ containing the support of $\mathcal D^{(r)}$.  Clearly such an $m$ exists since $$\mathcal L'^{\otimes m}=\omega^{\otimes m}_{\mathcal{Y}^{(r)}/\mathcal{X}}(m\mathcal D^{(r)})\otimes f^{(r)*}\m{A}^{\otimes -m} \otimes \mathcal O_{\mathcal Y^{(r)}}(m \lceil{\D}^{(r)}  \rceil -m\mathcal D^{(r)}),$$ so that if $t_1$ is a non-trivial section of $\omega^{\otimes m}_{\mathcal{Y}^{(r)}/\mathcal{X}}(m\mathcal D^{(r)})\otimes f^{(r)*}\m{A}^{\otimes -m}$  and $t_2$ is a non-trivial section of the line bundle $\mathcal O_{\mathcal Y^{(r)}}(m \lceil {\mathcal D}^{(r)} \rceil -m\mathcal D^{(r)})$ determined by the effective integral divisor $m\left \lceil{\mathcal D}^{(r)}\right \rceil -m\mathcal D^{(r)}=m(\left \lceil{\mathcal D}^{(r)}\right \rceil -\mathcal D^{(r)})$, then $s= t_1t_2$ is  a non-trivial section of $\mathcal L'^{\otimes m}$ such that $(s)$ contains the support of $\mathcal D^{(r)}$.
 Now, in the notation of \Cref{L:PWW5.3}, 
define
$$
\tilde{\mathcal L} :=\omega_{\widetilde {\mathcal{Y}}^{(r)}/\widetilde {\mathcal{X}}}(\lceil{\widetilde {\mathcal D}}^{(r)}\rceil )\otimes_{\mathcal O_{\widetilde {\mathcal Y}^{(r)}}} \tilde f^{(r)*}\widetilde{\m{A}}^{\otimes -1}.
$$
 By \Cref{L:PWW5.3}, $t_1$ extends to a global section $\tilde t_1\in H^0(\omega_{\widetilde {\mathcal{Y}}^{(r)}/\widetilde {\mathcal{X}}}(\widetilde {\mathcal D}^{(r)})\otimes_{\mathcal O_{\widetilde {\mathcal Y}^{(r)}}} \tilde f^{(r)*}\widetilde{\m{A}}^{\otimes m})$; i.e., 
$
\tilde t_1|_{\widetilde {\mathcal Y}^\circ}=\tilde \sigma^*(s'|_{\mathcal Y^\circ})$. 
Now let $\tilde t_2$ be the section of $\mathcal O_{\widetilde {\mathcal Y}^{(r)}}(m\lceil\widetilde {\mathcal D}^{(r)} \rceil -m\widetilde {\mathcal D}^{(r)})$ determined by the effective integral divisor $m \lceil\widetilde {\mathcal D}^{(r)} \rceil -m\widetilde {\mathcal D}^{(r)}=m( \lceil\widetilde {\mathcal D}^{(r)} \rceil -\widetilde {\mathcal D}^{(r)})$; then $\tilde s= \tilde t_1\tilde t_2$ is  a non-trivial section of $\tilde{\mathcal L}^{\otimes m}$ such that $(s)$ contains the support of $\widetilde {\mathcal D}^{(r)}$.
In addition, let $\widetilde {\mathcal E}$ be the exceptional locus of  $\tilde \sigma$, which we note is contained in $\tilde f^{(r)*}\sigma^{-1}(\mathcal S)$.  Therefore, by construction, we have that $
\tilde s |_{\widetilde {\mathcal Y}^{(r)}-\widetilde {\mathcal E}}=\tilde \sigma^*s|_{\widetilde {\mathcal Y}^{(r)}-\widetilde {\mathcal E}}$.

\end{rem}

\section{Push forward of Hodge modules on stacks} \label{S:PFHMStack}

In this section we consider the push forward of Hodge modules on stacks. 
In general, if one wants to
obtain a $6$-functor formalism, there are more sophisticated treatments, e.g., \cite{tubach_2024}, or \cite{paulin13} for $D$-modules; however, for our purposes here, where we only need to push forward the trivial 
Hodge module along schematic morphisms, the more elementary approach we take here,
closely following say the treatment of $D$-modules on stacks in \cite{BDstacks}, suffices.
  In fact, our end goal is to use explicit descriptions of associated graded modules of push forwards of filtered $D$-modules underlying push forwards of Hodge modules, in terms of push forwards of relative Spencer complexes (e.g., \Cref{L:PSP10.2}), and the approach we take here is  well suited to this.  
  The connection to the push forward constructed in \cite{tubach_2024} is explained in \S \ref{S:tubachf*}.
Throughout this section, 
let $\mathcal X$ be a smooth separated integral DM stack locally of finite type over $\mathbb C$.

\subsection{Preliminaries on $D$-modules and Hodge modules}\label{S:prelim-Dmod}

We use the definition of $D$-modules and Hodge modules on $\mathcal X$  from \cite{CMZpositivity}. 
Recall that these are taken to be \'etale sheaves of the given objects on the small \'etale site of $\mathcal X$.  
We denote by $\mathsf {Mod_{rh}}(D_{\mathcal X})$ the category of (left) regular holonomic $D_{\mathcal X}$-modules on $\mathcal X$, and by $\mathsf {HM^p}(\mathcal X,w)$ the category of polarizable Hodge modules of weight $w$ on $\mathcal X$.  When $(\Y, \D)$ is a smooth log pair, we denote the graded algebras
$$\mathscr{A}^\bullet_{\Y}(-\log \D):=\operatorname{Sym}^\bullet(\mathcal {T}_{\Y}(-\log \D)) \subseteq \mathscr{A}^\bullet_{\Y}:=\textup{Sym}^\bullet(\m{T}_{\Y}),$$
where $\mathcal T_{\mathcal Y}(-\log \mathcal D):=(\Omega^1_{\mathcal Y}(\log \mathcal D))^\vee$.  

We briefly mention that in \cite{CMZpositivity} we confirmed that Hodge modules on stacks form an abelian category, that the torsion sub-sheaf of a Hodge module is a sub-Hodge module, and that there is a notion of  minimal extension of Hodge modules, which can be characterized as in \cite[Lem.~3.17]{CMZpositivity}. Also, the singular locus
 of a pure Hodge module with full support is pure of codimension-$1$, or empty (e.g., \cite[Prop.~3.21]{CMZpositivity}).

\subsection{Spencer complexes of $D$-modules}\label{S:dR+SC}

Given a \emph{right} $D_{\mathcal X}$-module $\mathcal M$, the \emph{Spencer complex of $\mathcal M$}, denoted  $\operatorname{Sp}^\bullet(\mathcal M)$, or $(\operatorname{Sp}^\bullet (\mathcal M),\delta)$,  is the bounded complex with entries that are $\mathcal O_{\mathcal X}$-modules, and morphisms that are  $\mathbb C$-linear: 
$$
\operatorname{Sp}^\bullet(\mathcal M):= \left[
\xymatrix@R=0em{
\cdots 0 \ar[r] &\displaystyle \mathcal M\otimes_{\mathcal O_{\mathcal X}}\bigwedge^{n}\mathcal T_{\mathcal X}  \ar[r]^<>(0.5)\delta & \displaystyle \mathcal M \otimes_{\mathcal O_X}\bigwedge ^{n-1}\mathcal T_{\mathcal X} \ar[r]^<>(0.5)\delta & \cdots \ar[r]^<>(0.5)\delta& \mathcal M \ar[r] & 0\cdots\\
&&&&0
} \right]
$$
with $\mathcal M$ in degree $0$, and $n:=\dim \mathcal X$.  The $\mathbb C$-linear morphisms $\delta$ are defined \'etale locally, and so are given as in the case of varieties; see \cite[Def.~8.4.3]{MHM-project} or \cite[Lem.~1.5.27]{HTT08}.  The fact that $\delta\circ \delta = 0$ follows from the case of varieties \cite[Lem.~1.5.27]{HTT08}.  
If, in addition to being a right $D_{\mathcal X}$-module,  $\mathcal M$ is also a left $D_{\mathcal X}$-module, then $\operatorname{Sp}^\bullet(\mathcal M)$ is naturally a complex with entries that are \emph{left} $\mathcal D_{\mathcal X}$-modules.

\begin{exa}[Spencer complex for $D_{\mathcal X}$]
If we consider the Spencer complex associated  to the right and left $D_{\mathcal X}$-module $D_{\mathcal X}$, we have a quasi-isomorphism of left $D_{\mathcal X}$-modules:  
\begin{equation}\label{E:SpQI-OO}
\operatorname{Sp}^\bullet (D_{\mathcal X})\simeq \mathcal O_{\mathcal X}.
\end{equation}
Indeed, the complex of left $D_{\mathcal X}$-modules and $\mathbb C$-linear morphisms 
$$
\xymatrix{
\cdots 0 \ar[r] &\displaystyle D_{\mathcal X}\otimes_{\mathcal O_{\mathcal X}}\bigwedge^{n}\mathcal T_{\mathcal X}  \ar[r]^<>(0.5)\delta & \displaystyle D_{\mathcal X} \otimes_{\mathcal O_X}\bigwedge ^{n-1}\mathcal T_{\mathcal X} \ar[r]^<>(0.5)\delta & \cdots \ar[r]^<>(0.5)\delta& D_{\mathcal X} \ar[r] & \mathcal O_{\mathcal X}\ar[r]& 0\cdots 
}
$$
is exact.  The $\mathbb C$-linear morphism  $D_{\mathcal X} \to \mathcal O_{\mathcal X}$ is the canonical morphism $P\mapsto P(1)$; i.e., the natural projection splitting the inclusion $\mathcal O_X\to D_{\mathcal X}$.  The exactness of the complex can be checked \'etale locally, and  then follows from the case of varieties; e.g.,  \cite[Lem.~1.5.27]{HTT08}.
\end{exa}

Let $f:\mathcal X\to \mathcal Y$ be a schematic morphism of  
smooth separated integral DM stacks locally of finite type over $\mathbb C$.  For a \emph{right} $D_{\mathcal X}$-module $\mathcal M$, 
the \emph{relative Spencer complex of $\mathcal M$}, denoted  $\operatorname{Sp}^\bullet_{\mathcal X\to \mathcal Y}(\mathcal M)$, or $(\operatorname{Sp}^\bullet _{\mathcal X\to \mathcal Y}(\mathcal M),\delta_{\mathcal X\to \mathcal Y})$,  is the bounded complex with entries that are right $f^{-1}D_{\mathcal Y}$-modules, and morphisms that are  $\mathbb C$-linear: 
$$
\operatorname{Sp}_{\mathcal X\to \mathcal Y}^\bullet(\mathcal M):= 
$$

$$
\left[\xymatrix@C=1em@R=0em{
\cdots 0 \ar[r] &\displaystyle \mathcal M\otimes_{\mathcal O_{\mathcal X}}\bigwedge^{n}\mathcal T_{\mathcal X} \otimes_{f^{-1}\mathcal O_{\mathcal Y}}f^{-1}D_{\mathcal Y} \ar[r]^<>(0.5){\delta_{\mathcal X\to \mathcal Y}} & \displaystyle \mathcal M \otimes_{\mathcal O_X}\bigwedge ^{n-1}\mathcal T_{\mathcal X} \otimes_{f^{-1}\mathcal O_{\mathcal Y}}f^{-1}D_{\mathcal Y}  \ar[r]^<>(0.5){\delta_{\mathcal X\to \mathcal Y}} & \cdots & \\
&\ 
}\right.
$$
$$
\left.\xymatrix@C=1em@R=0em{
\cdots \ar[r]^<>(0.5){\delta_{\mathcal X\to \mathcal Y}}&\displaystyle \mathcal M \otimes_{\mathcal O_X}\bigwedge ^{1}\mathcal T_{\mathcal X} \otimes_{f^{-1}\mathcal O_{\mathcal Y}}f^{-1}D_{\mathcal Y}  \ar[r]^<>(0.5){\delta_{\mathcal X\to \mathcal Y}} & \mathcal M \otimes_{f^{-1}\mathcal O_{\mathcal Y}}f^{-1}D_{\mathcal Y}  \ar[r] & 0\cdots \\ 
&&0
}\right]
$$
with $\mathcal M \otimes_{f^{-1}\mathcal O_{\mathcal Y}}f^{-1}D_{\mathcal Y}$ in degree $0$, and $n:=\dim \mathcal X$.  The $\mathbb C$-linear morphisms $\delta_{\mathcal X\to \mathcal Y}$ are defined \'etale locally, and so are given as in the case of varieties; see \cite[Exe.~8.40]{MHM-project}.  The fact that $\delta_{\mathcal X\to \mathcal Y}\circ \delta_{\mathcal X\to \mathcal Y} = 0$ can be checked \'etale locally, and so also follows from the case of varieties \cite[Exe.~8.40]{MHM-project}.  
If, in addition to being a right $D_{\mathcal X}$-module, $\mathcal M$ is also a left $D_{\mathcal X}$-module, then $\operatorname{Sp}^\bullet_{\mathcal X\to \mathcal Y}(\mathcal M)$ is naturally a complex with entries that are \emph{left} $D_{\mathcal X}$-modules.  

There is an isomorphism  \cite[Exe.~8.40]{MHM-project} 
$\operatorname{Sp}^\bullet _{\mathcal X\to \mathcal Y} (\mathcal M)\simeq \mathcal M\otimes_{D_{\mathcal X}}\operatorname{Sp}^\bullet_{\mathcal X\to \mathcal Y}(D_{\mathcal X})$.
The key observation is then that 
\begin{equation}\label{E:RelSpVersion}
\operatorname{Sp}^\bullet _{\mathcal X\to \mathcal Y} (\mathcal M)\simeq \mathcal M\otimes_{D_{\mathcal X}}\operatorname{Sp}^\bullet_{\mathcal X\to \mathcal Y}(D_{\mathcal X}) \simeq \mathcal M\otimes_{D_{\mathcal X}}\operatorname{Sp}^\bullet(D_{\mathcal X}) \otimes_{f^{-1}\mathcal O_{\mathcal Y}}f^{-1}D_{\mathcal Y}.
\end{equation}
Recalling the notation $D_{\mathcal X\to \mathcal Y}:=\mathcal O_{\mathcal X}\otimes_{f^{-1}\mathcal O_{\mathcal Y}}f^{-1}D_{\mathcal Y}$ (e.g., \cite[\S 4.1.1]{CMZpositivity}) and combining it with the quasi-isomorphism $\operatorname{Sp}^\bullet (D_{\mathcal X})\simeq \mathcal O_{\mathcal X}$ of \eqref{E:SpQI-OO}, we see also that
\begin{equation}\label{E:SpDXY}
\operatorname{Sp}^\bullet _{\mathcal X\to \mathcal Y} (\mathcal M)\simeq \mathcal M\otimes_{D_{\mathcal X}}^LD_{\mathcal X\to \mathcal Y}.
\end{equation}
The benefit of \eqref{E:SpDXY} is that $\operatorname{Sp}^\bullet _{\mathcal X\to \mathcal Y} (\mathcal M)$ provides an explicit complex to work with for push forwards, as we discuss in \S \ref{S:adhoc-push-D}, below.

\subsection{\emph{Ad hoc} push forward of $D$-modules}\label{S:adhoc-push-D}
Recall that for a proper morphism $f:X\to Y$ of smooth varieties, one has a push forward (denoted by $_Df_*$ in \cite[Def.~8.7.3]{MHM-project} and $\int_f$ in \cite[p.41]{HTT08}) 
$$
f_+:\operatorname{Mod}(D_X)^{\operatorname{right}}\longrightarrow D^b(D_Y)^{\operatorname{right}} 
$$
\begin{equation}\label{E:D:push-with-spence}
M\mapsto Rf_*\operatorname{Sp}^\bullet_{X\to Y}(M)
\end{equation}
taking right $D_X$-modules to objects in the bounded derived category of right $D_Y$-modules; note that $Rf_*$ is the derived functor $Rf_*:D^b(f^{-1}D_Y)\to D^b(D_Y)$; see e.g., \cite[p.40]{HTT08}, and note that  one wants to use the quasi-isomorphism  \eqref{E:SpDXY} to compare. 

For our purposes, rather than delving in to issues of derived categories on stacks, we make the following \emph{ad hoc} definition for the push forward of a $D$-module on a stack, which is sufficient for our purposes, and from which it will be  easy, later,  to derive the geometric consequences we wish. 

To start, we consider a fibered product diagram
\begin{equation}\label{E:PresDpf-var}
\xymatrix{
U_X \ar[r]^{p_X}\ar[d]_{f'}& X \ar[d]_f\\
U \ar[r]^p& Y
}
\end{equation}
where $p:U\to Y$ is an \'etale morphism of smooth varieties, and observe that for all right $D_X$-modules $M$, and  for all $i$, there is a natural isomorphism of right $D_U$-modules
\begin{equation}\label{E:DescDataDPush}
p^*\mathcal H^i(f_+M) \stackrel{\sim}{\longrightarrow}\mathcal H^i(f'_+p_X^*M).
\end{equation}
\ifArxiv
While this is standard, and can essentially be derived from the presentation in say \cite[\S1.5]{HTT08}, for clarity, and to compare to other related natural isomorphisms, we review this in the appendix, in  \Cref{L:CmodDescADmodHi}, as well as what we mean precisely by naturality.
\else
This is standard, and can essentially be derived from the presentation in say \cite[\S1.5]{HTT08}.
\fi

Now, let $f: \mathcal X\to \mathcal Y$ be a schematic proper morphism of smooth separated integral DM stacks locally of finite type over $\mathbb C$, and let  $\mathcal M$ be a \emph{right}  $D_{\mathcal X}$-module on $\mathcal X$.  
For every \'etale $p:U\to \mathcal Y$ from a smooth variety $U$, we consider a fiber product diagram:
\begin{equation}\label{E:TubPresDpf0}
\xymatrix{
 U_{\mathcal X} \ar[r]^{p_{\mathcal X}} \ar[d]_{f'}& \mathcal X \ar[d]_f\\
 U \ar[r]^p& \mathcal Y
}
\end{equation}
and for a fixed choice of $i$, consider the collection of $D_U$-modules $\mathcal H^i(f'_+\mathcal M|_{U_{\mathcal X}})$.  For every  fibered product diagram
\begin{equation}\label{E:TubPresDpf}
\xymatrix{
U''_{\mathcal X}\ar[r]^{\phi'_{\mathcal X}} \ar[d]_{f'''}& U'_{\mathcal X}\ar[r]^{\phi_{\mathcal X}} \ar[d]_{f''}& U_{\mathcal X} \ar[r]^{p_X} \ar[d]_{f'}& \mathcal X \ar[d]_f\\
U''\ar[r]^{\phi'} & U'\ar[r]^\phi & U \ar[r]^p& \mathcal Y
}
\end{equation}
where the bottom row consists of \'etale morphisms from smooth varieties, we have, 
using \eqref{E:DescDataDPush} applied to the $D$-module $\mathcal M|_{U_{\mathcal X}}$ on the variety $U_{\mathcal X}$, natural isomorphisms of right $D$-modules  on $U'$
$$
\tau_\phi: \phi^*\mathcal H^i(f'_+\mathcal M|_{U_{\mathcal X}})\stackrel{\sim}{\to} \mathcal H^i(f''_+\mathcal M|_{ U'_{\mathcal X}}).
$$
The fact that the isomorphisms in \eqref{E:DescDataDPush} are natural   implies 
\ifArxiv
(see, e.g., \Cref{C:PervDescADmod})
\fi
 that 
\begin{equation}\label{E:cocycle-push}
\tau_{\phi\circ \phi'}= \tau_{\phi'}\circ \phi'^*\tau_\phi.\end{equation}
In other words,  the collection of $D$-modules $\mathcal H^i(f'_+\mathcal M|_{U_{\mathcal X}})$ together with the $\tau_\phi$ give descent data, and therefore define a  $D$-module on $\mathcal Y$, which we denote
\begin{equation}\label{D:Hif*M}
\mathcal H^i(f^{ah}_+\mathcal M).
\end{equation}
In fact, one can check that the construction above defines a 
\ifArxiv
functor (see, e.g.,  \Cref{C:PervDescADmod} for details):
\else 
functor:
\fi

\begin{dfn}[\emph{Ad hoc} push forward of $D$-modules]
\label{D:ADPFDmodMain}
Let $f: \mathcal X\to \mathcal Y$ be a schematic proper morphism of smooth separated integral DM stacks locally of finite type over $\mathbb C$ (or the analytification of such a morphism). 
The construction above (i.e., culminating in \eqref{D:Hif*M}) defines a functor on $D$-modules:
\begin{equation}\label{D:Hif*MFun}
{\mathcal H^i}(f^{ah}_+(-)): \operatorname{Mod}(D_{\mathcal X})^{\operatorname{right}} \longrightarrow \operatorname{Mod}(D_{\mathcal Y})^{\operatorname{right}}
\end{equation}
$$
\mathcal M\mapsto {\mathcal{H}^i}(f^{ah}_+\mathcal M),
$$
which we call the ($i$-th) \emph{ad hoc} push forward. 
\end{dfn}

\begin{rem}
 We use the term \emph{ad hoc} to emphasize that, in order to avoid issues of descent for derived categories on stacks,  we did \emph{not} define a derived push forward ``$f_+\mathcal M$'' on the derived categories of the stacks. 
 Nevertheless, for every diagram as in \eqref{E:TubPresDpf0}, we have
$$
{\mathcal{H}^i}(f^{ah}_+\mathcal M)|_U= R^if'_*((\operatorname{Sp}^\bullet_{\mathcal X\to \mathcal Y}(\mathcal M))|_{U_X})=\mathcal H^i(f'_+\mathcal M|_{\mathcal U_{\mathcal X}}),
$$
where $R^if'_*$ is the $i$-th derived functor of  $Rf'_*:D^b(f'^{-1}D_U)\to D^b(D_{U})$. 
\ifArxiv
In the appendix, in \S \ref{S:A:BDpushforward},   we recall a  treatment of the push forward at the level of derived categories for  $D$-modules on stacks following Beilinson--Drinfeld \cite{BDstacks}, and explain 
 the connection to the \emph{ad hoc} push forward we construct here (see \Cref{C:Hif+=Hifah+}).
\fi 
\end{rem}

 Given a \emph{left}  $D_{\mathcal X}$-module $\mathcal M$ on $\mathcal X$, we define the \emph{ad hoc} push forward of $\mathcal M$ as 
\begin{equation}\label{E:D:AdHocPFDR}
\mathcal H^i(f_+^{ah}\mathcal M):= (\mathcal H^i(f_+^{ah}\mathcal M^{\operatorname{right}}))\otimes_{\mathcal O_{\mathcal Y}} \omega_{\mathcal Y}^{-1}.
\end{equation}

\subsection{Spencer complexes  for filtered $D$-modules} \label{S:dR+SCFilt}
We now discuss the Spencer complexes for filtered $D$-modules. 
Given a filtered \emph{right} $D_{\mathcal X}$-module $(\mathcal M,F_\bullet)$, we define the \emph{filtered Spencer complex of $(\mathcal M,F_\bullet)$} to be the filtered complex
$(\operatorname{Sp}^\bullet(\mathcal M),\delta,F_\bullet)$ defined by
$$
F_k\operatorname{Sp}^\bullet(\mathcal M):= 
$$
$$
\left[\xymatrix@R=0em{
\cdots 0 \ar[r] &\displaystyle F_{k-n}\mathcal M\otimes_{\mathcal O_{\mathcal X}}\bigwedge^{n}\mathcal T_{\mathcal X}  \ar[r]^<>(0.5)\delta & \displaystyle F_{k-n+1}\mathcal M \otimes_{\mathcal O_X}\bigwedge ^{n-1}\mathcal T_{\mathcal X} \ar[r]^<>(0.5)\delta & \cdots \ar[r]^<>(0.5)\delta& \mathcal F_{k}\mathcal M \ar[r] & 0\cdots \\
&&&&0
}\right]
$$

We also put a filtration on the relative Spencer complex via the tensor product as filtered complexes:
$$
(\operatorname{Sp}^\bullet_{\mathcal X\to \mathcal Y} (\mathcal M ),F_\bullet):=(\operatorname{Sp}^\bullet (\mathcal M),\delta,F_\bullet)\otimes_{f^{-1}\mathcal O_{\mathcal Y}} (f^{-1}D_{\mathcal Y},f^{-1}F_\bullet)
$$
where $(f^{-1}D_{\mathcal Y},f^{-1}F_\bullet)
$ is given the filtration defined by applying $f^{-1}$ to the standard filtration on $D_{\mathcal Y}$.  
In concrete terms
\begin{equation}\label{E:Filt-on-Rel-Sp}
F_k\operatorname{Sp}_{\mathcal X\to \mathcal Y}^\bullet(\mathcal M)=
\end{equation}

$$
\left[\xymatrix@C=1em@R=0em{
\cdots 0 \ar[r] &\displaystyle (\sum_{i+j=k-n}F_i\mathcal M\otimes_{\mathcal O_{\mathcal X}}\bigwedge^{n}\mathcal T_{\mathcal X} \otimes_{f^{-1}\mathcal O_{\mathcal Y}}f^{-1}F_jD_{\mathcal Y}) \ar[r]^<>(0.5){\delta_{\mathcal X\to \mathcal Y}} 
& \cdots & \\
&\ 
}\right.
$$
$$
\left.\xymatrix@C=1em@R=0em{
\cdots \ar[r]^<>(0.5){\delta_{\mathcal X\to \mathcal Y}}&\displaystyle \sum_{i+j =k-1}(F_i\mathcal M \otimes_{\mathcal O_X}\bigwedge ^{1}\mathcal T_{\mathcal X} \otimes_{f^{-1}\mathcal O_{\mathcal Y}}f^{-1}F_jD_{\mathcal Y})  \ar[r]^<>(0.5){\delta_{\mathcal X\to \mathcal Y}} & \displaystyle \sum_{i+j=k}(F_i\mathcal M \otimes_{f^{-1}\mathcal O_{\mathcal Y}}f^{-1}F_jD_{\mathcal Y})  \ar[r] & 0\cdots \\ 
&&0
}\right]
$$
with $ \sum_{i+j=k}(F_i\mathcal M \otimes_{f^{-1}\mathcal O_{\mathcal Y}}f^{-1}F_jD_{\mathcal Y})$ in degree $0$, and $n:=\dim \mathcal X$.

In the same way that we used the relative Spencer complex to define push forwards for $D$-modules, we will use the filtered relative Spencer complex to push forward filtered $D$-modules.  While one would like to do this by simply pushing  forward the filtered pieces of the filtered relative Spencer complex, these pieces only have an $\mathcal O_{\mathcal X}$-structure (not a $D_{\mathcal X}$-structure, as the differential operators move one between the filtered pieces).  For this reason, we will utilize the standard trick of using Rees modules.  We review this  in the next subsection.

\subsection{Rees rings and modules}\label{S:Tilde-notation-from-MHM-project}

We briefly review some of the standard treatment of Rees modules, following the presentation in \cite[\S 5]{MHM-project}.  The starting point is a $\mathbb Z$-filtered $\mathbb C$-algebra $(A,F_\bullet)$, with an increasing filtration $F_\bullet$ of sub-$\mathbb C$-vector spaces such that $\bigcup_{p\in \mathbb Z} F_pA=A$ and such that the image of the structure morphism for the algebra $\mathbb C\to A$ is contained in $F_pA$ for all $p\in \mathbb Z_{\ge 0}$. 
By convention, the filtration satisfies $F_pA\cdot F_qA \subseteq F_{p+q}A$ for all $p,q \in \mathbb Z$; in particular, $F_0A$ is a sub-$\mathbb C$-algebra of $A$.  

 We will be interested in $\mathbb Z$-filtered $(A,F_\bullet)$-modules $(M,F_\bullet)$, where $F_\bullet$ is an increasing filtration of $\mathbb C$-subspaces  such that $F_pA\cdot F_qM \subseteq F_{p+q}M$ for all $p,q \in \mathbb Z$; note that this gives all of the $F_qM$ the structure of an $F_0A$-module.   Morphisms are defined in the obvious way. 

We then consider the associated Rees ring, 
\begin{equation}\label{E:Rees-Ring}
\widetilde A:=\bigoplus_{p\in  \mathbb Z}F_pA \cdot z^p\subseteq A\otimes_{\mathbb C}\mathbb C[z,z^{-1}],
\end{equation}
 which we view as a graded ring in the obvious way.  We note that with our conventions, we naturally have an inclusion of graded rings $\mathbb C[z]\subseteq \widetilde A$.  We also obtain the Rees module associated with $(M,F_\bullet)$:
\begin{equation}\label{E:Rees-Module}
\widetilde M:=\bigoplus_{p\in \mathbb Z}F_pM \cdot z^p\subseteq M\otimes_{\mathbb C} \mathbb C[z,z^{-1}],
\end{equation}
which is naturally a graded $\widetilde A$-module. 

At the same time, given a graded $\widetilde A$-module $\widetilde N=\bigoplus_{p\in \mathbb Z} N_p$,   we are free to write it as $\widetilde N=\bigoplus_{p\in \mathbb Z} N_p\cdot z^p$. Each of the $N_p$ is an $F_0A$-module, and the inclusion $\mathbb C[z]\subseteq \widetilde {A}$ induces, by multiplication by $z$, a $\mathbb C$-linear morphism $N_p\to N_{p+1}$, for all $p\in \mathbb Z$,  which one can check is in fact $F_0A$-linear.
We say that $\widetilde N$ is \emph{strict} if all of these morphisms are injective; this is equivalent to the standard definition that $\widetilde N$ has no torsion as a $\mathbb C[z]$-module.  
   The $F_0A$-module $\varinjlim N_p$ is naturally an $A$-module, with a filtration $F_\bullet \varinjlim N_p$ given by the images of the canonical morphisms 
\begin{equation}\label{E:FiltOnInjLim}
 F_q \varinjlim N_p :=\operatorname{Im}\left(N_q\to \varinjlim N_p\right).
\end{equation}
      The main point is that $\widetilde N$ is strict if and only if it is the Rees module of some filtered $(A,F_\bullet)$-module $(M,F_\bullet)$; in this case one has $\varinjlim N_p=M$, and the morphisms in \eqref{E:FiltOnInjLim} are injective for all $q$, so that $ F_q \varinjlim N_p  = N_q= F_qM$  (e.g., \cite[Prop.~5.1.9]{MHM-project}).
   
 Finally, for a filtered $(A,F_\bullet)$-module $(M,F_\bullet)$, one has the identities \cite[Exe.~5.2(5)]{MHM-project}

\begin{equation}\label{E:zTrick}
M=\widetilde M/(1-z)\widetilde M, \quad \operatorname{gr}^F_\bullet M = \widetilde M/z\widetilde M=\operatorname{gr}\widetilde M.
\end{equation}

We will use the same notation for Rees algebras and modules in the setting of sheaves, as well as in the setting of complexes of sheaves.

\subsection{\emph{Ad hoc} push forward of filtered $D$-modules}
Recall that for a proper morphism $f:X\to Y$ of smooth varieties, one has a push forward $f_+$ (denoted by $_Df_*$ in \cite[Def.~8.7.3]{MHM-project}) of  $\widetilde {D_X}$-modules

\begin{equation}\label{E:Rf*tildeMdef}
f_+\widetilde{M}:=Rf_*({\operatorname{Sp}^\bullet_{X\to Y}(\widetilde M)})
\end{equation}
where   $Rf_*$ is the derived functor $Rf_*: D^b(f^{-1}\widetilde {D_Y})\to D^b(\widetilde {D_Y})$, and we refer the reader to \cite[8.7.a]{MHM-project} for the definition of ${\operatorname{Sp}^\bullet_{X\to Y}(\widetilde M)}$.  Note that for our purposes, we will always be in the situation where   $\widetilde M$ is the Rees module of a filtered module $(M,F_\bullet)$, in which case we have the identification $\operatorname{Sp}^\bullet_{X\to Y}(\widetilde M) =\widetilde{\operatorname{Sp}^\bullet_{X\to Y}(M)}$, where  the latter complex is given by the Rees construction applied to  the filtration $F_\bullet \operatorname{Sp}^\bullet_{X\to Y}(M)$ defined in \eqref{E:Filt-on-Rel-Sp}. 

To convert $f_+\widetilde{M}$ from the category of $\widetilde D_Y$-modules to the category of filtered $D_Y$-modules, we can take limits as described in \S \ref{S:Tilde-notation-from-MHM-project}, above; however, to avoid taking limits in derived categories, we prefer here to take cohomology, and use $i$-th derived push forward functors.  In other words,  we obtain a  push forward on filtered $D$-modules \begin{equation}\label{E:push-filt-lim}
 \mathcal H^i f_+(M,F_\bullet):= (\varinjlim (R^if_*(\widetilde{\operatorname{Sp}^\bullet_{X\to Y}(M)}))_k,F_\bullet)
\end{equation}
where the limit and the filtration are described in the previous sub-section in \eqref{E:FiltOnInjLim} in the context of Rees modules.

We want to describe \eqref{E:push-filt-lim} more concretely.  To begin,  the terms on the right hand side of  \eqref{E:push-filt-lim} can be described as:
\begin{equation}\label{E:filt-is-naive-OO}
(R^if_*(\widetilde{\operatorname{Sp}^\bullet_{X\to Y}(M)}))_k = R^if_*(F_k\operatorname{Sp}^\bullet_{X\to  Y}( M)),
\end{equation}
where on the right in \eqref{E:filt-is-naive-OO}, the functor $R^if_*$ is the $i$-th derived functor  
\ifArxiv
$Rf_*:D^b(\mathcal O_X)\to D^b(\mathcal O_Y)$  
(see \Cref{FilteredDmodPush}).
\else
$Rf_*:D^b(\mathcal O_X)\to D^b(\mathcal O_Y)$.
\fi
Then using the fact that, in the situation here, colimits commute with higher direct images 
\ifArxiv
(e.g., \cite[\href{https://stacks.math.columbia.edu/tag/07TA}{\S 07TA}]{stacks-project}; see \Cref{L:limRif=Rif} for the details in this setting), 
\else
(e.g., \cite[\href{https://stacks.math.columbia.edu/tag/07TA}{\S 07TA}]{stacks-project}), 
\fi
we have an identification $\varinjlim (R^if_*(\widetilde{\operatorname{Sp}^\bullet_{X\to Y}(M)}))_k= R^if_*\operatorname{Sp}^\bullet_{X\to Y}(M)$.
Moreover, there is a canonical identification of $R^if_*\operatorname{Sp}^\bullet_{X\to Y}(M)$ as the underlying $\mathcal O_Y$-module of the $D_Y$-module $R^if_*\operatorname{Sp}^\bullet_{X\to Y}(M)$ where here,  in this second expression, we mean the $i$-th derived functor  
\ifArxiv
$Rf_*:D^b(f^{-1}D_Y)\to D^b(D_Y)$ (see \Cref{FilteredDmodPush}).
\else
$Rf_*:D^b(f^{-1}D_Y)\to D^b(D_Y)$.
\fi 

In summary,  \eqref{E:push-filt-lim}  is defining a filtration on the $D_Y$-module $\mathcal H^if_+M$ of \Cref{D:ADPFDmodMain}, where, considering the Rees module construction of the filtration on the limit (e.g., \eqref{E:FiltOnInjLim}), the filtration on $\mathcal H^if_+M$ in \eqref{E:push-filt-lim} 
is given by the image of the natural map 
\begin{equation}\label{E:concrete-filt-f+i}
F_k\mathcal H^i(f_+(M,F_\bullet)):=\operatorname{Im}\left(R^if_*(F_k\operatorname{Sp}^\bullet_{ X\to  Y}( M))\rightarrow R^if_*(\operatorname{Sp}^\bullet_{ X\to  Y}( M))=\mathcal H^i f_+M\right),
\end{equation}
where, here, again,  $R^if_*$ is the push forward at the level of $\mathcal O_X$-modules. 

In other words, denoting by $MF(D_X)$  the category of pairs $(M,F_\bullet)$ such that $M$ is a $D_{X}$-module and $F_\bullet$ is a filtration on $\mathcal M$, 
then if $f:X\to  Y$ is  schematic proper morphism  of varieties,  then for each $i$ we have a functor
\begin{equation*}
\mathcal H^if_+: MF(D_{\mathcal X})\longrightarrow MF(D_{\mathcal Y})
\end{equation*}
\begin{equation}\label{E:PushFiltDBody}
\mathcal H^if_+(M,F_\bullet):=(\mathcal H^if_+M,F_\bullet)
\end{equation}   
where  the filtration on $\mathcal H^if_+M$ is defined in \eqref{E:concrete-filt-f+i}.  
We note that if $(M,F_\bullet)$ is coherent (resp.~regular holonomic), then so is $\mathcal H^if_+(M,F_\bullet)$.

\begin{rem}
There are constructions of  push forwards $f_+(M,F_\bullet)$ at the level of derived categories, e.g., \cite[\S 10.4]{MHM-project}, so that the image is a filtered object in the derived category of $D_Y$-modules.  One then has that the $i$-th cohomology of that object agrees with $\mathcal H^if_+(M,F_\bullet)$ as defined above. 
Concretely, one defines this as $Rf_*\operatorname{Sp}^\bullet _{  X\to   Y}(  M^{\operatorname{right}})$ where here $Rf_*$ is a functor $Rf_*:D^b(Ff^{-1}D_Y)\to D^b(FD_Y)$  between  categories as described in  \cite[\S 10.4]{MHM-project}.
 For our purposes, where we will be working on stacks, it will be easier to work directly as above, with $\mathcal H^if_+(M,F_\bullet)$, rather than at the level of derived categories.  
\end{rem}

\begin{rem}\label{R:Hodge-Strict-Saito}
From the definition of strictness (see \S \ref{S:Tilde-notation-from-MHM-project}), for a filtered $D_X$-module $(M,F_\bullet)$, we see that  $\mathcal H^i(f_+\widetilde M)$ being strict is equivalent to the morphisms $(\mathcal H^i(f_+\widetilde M))_q\to \varinjlim (\mathcal H^i(f_+\widetilde M))_p = \mathcal H^if_+M $ being injective.  Putting together
\eqref{E:filt-is-naive-OO} with \eqref{E:concrete-filt-f+i}, we see that 
this is equivalent to the morphisms in  \eqref{E:concrete-filt-f+i} being injective,  i.e.,  equivalent to 
$$
F_k\mathcal H^i(f_+(M,F_\bullet))=R^if_*(F_k\operatorname{Sp}^\bullet_{ X\to  Y}( M)).
$$
This last equality does not always hold. However, it is known to hold when $(M,F_{\bullet})$ underlies a Hodge module \cite[Thm. 1(i)]{Saito}.
\end{rem}

To construct an \emph{ad hoc} push forward of filtered $D$-modules on stacks, we
start by considering diagram  \eqref{E:PresDpf-var}, and observe that for all right  $\widetilde {D_X}$-modules $\widetilde M$, and  for all $i$, there is a natural isomorphism of right $\widetilde{D_Y}$-modules
\begin{equation}\label{E:DescDataDPushTilde}
p^*\mathcal H^i(f_+\widetilde{M}) \stackrel{\sim}{\longrightarrow}\mathcal H^i(f'_+p_X^*(\widetilde{M})).
\end{equation}

One establishes this in the same way as for \eqref{E:DescDataDPush}, but now in the Rees module setting.  
Moreover, for filtered right  $D_X$-modules $(M,F_\bullet)$, and  for all $i$, this induces natural isomorphism
 of filtered right ${D_Y}$-modules
\begin{equation}\label{E:DescDataDPushFilt}
p^*\mathcal H^i(f_+(M,F_\bullet)) \stackrel{\sim}{\longrightarrow}\mathcal H^i(f'_+p_X^*(M,F_\bullet))
\end{equation}
 by taking limits using the isomorphism of Rees modules \eqref{E:DescDataDPushTilde},  and then using \eqref{E:concrete-filt-f+i} for the statement on the filtrations.

Now, let $f: \mathcal X\to \mathcal Y$ be a schematic proper morphism of smooth separated integral DM stacks locally of finite type over $\mathbb C$ (or the analytification of such a morphism), and let  $(\mathcal M,F_\bullet)$ be a filtered \emph{right}  $D_{\mathcal X}$-module on $\mathcal X$.  
We consider a fiber product diagram as in \eqref{E:TubPresDpf0}, 
and for a fixed choice of $i$, consider the collection of $D$-modules $\mathcal H^i(f'_+(\mathcal M,F_\bullet)|_{U_{\mathcal X}})$.  For every  fibered product diagram as in \eqref{E:TubPresDpf}, 
we have, 
using \eqref{E:DescDataDPushFilt} applied to the filtered $D$-module $(\mathcal M,F_\bullet)|_{U_{\mathcal X}}$ on the variety $U_{\mathcal X}$, natural isomorphisms of filtered right $D$-modules  on $U'$
$$
\tau_\phi: \phi^*\mathcal H^i(f'_+(\mathcal M,F_\bullet)|_{U_{\mathcal X}})\stackrel{\sim}{\to} \mathcal H^i(f''_+(\mathcal M,F_\bullet) |_{ U'_{\mathcal X}}).
$$
The fact that the isomorphisms \eqref{E:DescDataDPushFilt} are canonical implies 
\ifArxiv
 (see, e.g., \Cref{R:FiltahPF} for details) 
\fi
that  $ \tau_{\phi\circ \phi'}= \tau_{\phi'}\circ \phi'^*\tau_\phi$. 
In other words,  the collection of filtered $D$-modules $\mathcal H^i(f'_+(\mathcal M,F_\bullet)|_{U_{\mathcal X}})$ together with the $\tau_\phi$ give descent data, and therefore define a filtered  $D$-module on 
 $\mathcal Y$, which we denote
\begin{equation}\label{D:Hif*MFilt}
\mathcal H^if^{ah}_+(\mathcal M,F_\bullet).
\end{equation}
In fact, we obtain a 
\ifArxiv
functor (see e.g., \eqref{E:PushFiltD} and \Cref{R:FiltahPF} for details):
\else
functor:
\fi 

\begin{dfn} [\emph{Ad hoc} push forward of filtered $D$-modules]
 \label{E:AHPFfdm}
 Let $f: \mathcal X\to \mathcal Y$ be a schematic proper morphism of smooth separated integral DM stacks locally of finite type over $\mathbb C$ (or the analytification of such a morphism).
 The construction above (i.e., culminating in \eqref{D:Hif*MFilt}) defines a functor on filtered $D$-modules:
\begin{equation*}
\mathcal H^if^{ah}_+: MF(D_{\mathcal X})^{\operatorname{right}}\longrightarrow MF(D_{\mathcal Y})^{\operatorname{right}}
\end{equation*}
$$
(\mathcal M,F_\bullet) \mapsto {\mathcal{H}^i}f^{ah}_+(\mathcal M,F_\bullet),
$$
which we call the  ($i$-th)  \emph{ad hoc} push forward. 
\end{dfn}

 Given a \emph{left}  $D_{\mathcal X}$-module $\mathcal M$ on $\mathcal X$, we define the  ($i$-th)  \emph{ad hoc}  push forward of $\mathcal M$ as 
\begin{equation}\label{E:D:AdHocPFDRFilt}
\mathcal H^i(f_+^{ah}(\mathcal M ,F_\bullet)):= \mathcal H^i(f_+^{ah}(\mathcal M^{\operatorname{right}},F_\bullet))\otimes_{\mathcal O_{\mathcal Y}} \omega_{\mathcal Y}^{-1}.
\end{equation}

If we consider the category of filtered regular holonomic $D$-modules, then the \emph{ad hoc} push forward of filtered $D$-modules defines a functor on filtered regular holonomic $D$-modules:
\begin{equation}\label{D:Hif*MFunRH}
{\mathcal H^i}(f^{ah}_+(-)): MF_{{rh}}(D_{\mathcal X})^{\operatorname{right}} \longrightarrow MF_{{rh}}(D_{\mathcal Y})^{\operatorname{right}},
\end{equation}
and similarly for left filtered regular holonomic $D$-modules. 

\subsection{\emph{Ad hoc} push forward of Hodge modules}
Recall that for a proper morphism $f:X\to Y$ of smooth varieties, one has a push forward  (denoted by $f_\star$ in  \cite[p.233]{HTT08}) 
$$
f_+:D^b(\operatorname{MHM}(X))\longrightarrow D^b(\operatorname{MHM}(Y))
$$
on the bounded derived categories of mixed Hodge modules.  If $\mathsf M$ is a pure Hodge module of weight $w$ on $X$ with underlying filtered $D_X$-module $(M,F_\bullet)$, then the underlying filtered $D_Y$-module of $f_+\mathsf M$ is given by the filtered $D_Y$-module $f_+(M,F_\bullet)$ described above, e.g., \cite[p.222]{HTT08}. 
Recall also from \cite[(4.3.3) and (4.5.2)]{Saito1990} that if $\mathsf M$ is a pure Hodge module of weight $w$ on $X$, then for each $i$ one has that $\mathcal H^if_+\mathsf M$ is a pure Hodge module of weight $w+i$.

Now, considering any diagram as in \eqref{E:PresDpf-var}, we observe there is a natural isomorphism of pure Hodge modules of weight $w+i$:
\begin{equation}\label{E:DescDataDPushHdg}
p^*\mathcal H^i(f_+\mathsf M) \stackrel{\sim}{\longrightarrow}\mathcal H^i(f'_+p_X^*\mathsf M).
\end{equation}
\ifArxiv
While this is standard,  for clarity, and to compare to other related natural isomorphisms, we review this in the appendix, in  \Cref{R:FiltahPFHodge}. 
\fi 

Now, let $f: \mathcal X\to \mathcal Y$ be a schematic proper morphism of smooth separated integral DM stacks locally of finite type over $\mathbb C$.
We consider a fiber product diagram as in \eqref{E:TubPresDpf0}, 
and for a fixed choice of $i$, consider the collection of pure Hodge modules $\mathcal H^i(f'_+\mathsf M|_{U_{\mathcal X}})$ of weight $w+i$.  For every  fibered product diagram as in \eqref{E:TubPresDpf}, 
we have, 
using \eqref{E:DescDataDPushHdg} applied to the pure Hodge module $\mathsf M|_{U_{\mathcal X}}$  on the variety $U_{\mathcal X}$, natural isomorphisms of pure Hodge modules  on $U'$
$$
\tau_\phi: \phi^*\mathcal H^i(f'_+\mathsf M|_{U_{\mathcal X}})\stackrel{\sim}{\to} \mathcal H^i(f''_+\mathsf M |_{ U'_{\mathcal X}}).
$$
The fact that the isomorphisms \eqref{E:DescDataDPushHdg} are canonical implies 
\ifArxiv
(see, e.g., \Cref{R:FiltahPFHodge})
\fi
that 
$
\tau_{\phi\circ \phi'}= \tau_{\phi'}\circ \phi'^*\tau_\phi$.  
In other words,  the collection of pure Hodge modules  $\mathcal H^i(f'_+\mathsf M|_{U_{\mathcal X}})$ together with the $\tau_\phi$ give descent data, and therefore define a pure Hodge module of weight $w+i$ on 
 $\mathcal Y$, which we denote
\begin{equation}\label{D:Hif*MHdg}
\mathcal H^i(f^{ah}_+\mathsf M),
\end{equation}
and call the ($i$-th) \emph{ad hoc} push forward of  $\mathsf M$.

In fact, we obtain a 
\ifArxiv
functor (see e.g., \eqref{E:Hif+Hodge} and \Cref{R:FiltahPFHodge} for details):
\else
functor:
\fi 

\begin{dfn} [\emph{Ad hoc} push forward of Hodge modules]
 \label{D:AHPFGHodge}
 Let $f: \mathcal X\to \mathcal Y$ be a schematic proper morphism of smooth separated integral DM stacks locally of finite type over $\mathbb C$. 
 The construction above (i.e., culminating in \eqref{D:Hif*MHdg}) defines a functor on pure Hodge modules:
\begin{equation*}
\mathcal H^if^{ah}_+: \mathsf {HM}(\mathcal X,w)\longrightarrow \mathsf {HM}(\mathcal Y,w+i)
\end{equation*}
$$
\mathsf  M\mapsto {\mathcal{H}^i}f^{ah}_+\mathsf M,
$$
which we call the  ($i$-th)  \emph{ad hoc} push forward. 
\end{dfn}

\begin{rem}
By construction, if $(\mathcal M,F_\bullet)$ is  the underlying filtered regular holonomic $D$-module  of $\mathsf M$, then  the underlying filtered regular holonomic $D$-module of the  ($i$-th)  \emph{ad hoc} push forward, $\mathcal H^if^{ah}_+\mathsf M$, is given by $\mathcal H^i(f^{ah}_+(\mathcal M, F_\bullet))$ of \Cref{E:AHPFfdm}.
\end{rem}

\subsubsection{Connection to the push forward functor in \cite{tubach_2024}}\label{S:tubachf*}

Swann Tubach explained to us the following connection between our \emph{ad hoc} push forward and the push forward he constructs in \cite{tubach_2024}. 
The starting point is that for a scheme $X$, there are two  $t$-structures on  Saito's bounded derived category of mixed Hodge modules $D^b(\mathsf {MHM}(X))$ on $X$,
namely, Saito's perverse $t$-structure, as well as what Tubach calls the standard (i.e., constructible) $t$-structure.
Denoting by $\mathsf {MHM}_{\operatorname{std}}(X)$ the heart of this standard $t$-structure, there is a canonical equivalence $D^b(\mathsf {MHM}_{\operatorname{std}}(X))\simeq D^b(\mathsf {MHM}(X))$; see \cite[Thm.~2.11]{tubach_2024}. We emphasize that this standard $t$-structure, in general, is not the same as the perverse $t$-structure. 

Tubach then defines a functor on $\mathbb C$-schemes, $\operatorname{D}^b_{H,c}(-)$, as the stable $\infty$-categorical bounded derived category associated to the heart of the standard $t$-structure; i.e., on a scheme $X$, the $\infty$-category $\operatorname{D}^b_{H,c}(X):=D^b_\infty(\mathsf {MHM}_{\operatorname{std}}(X))$ can be  modeled by the sub-$\infty$-category consisting of the bounded objects in the $\infty$-derived category 
$N_{dg}(\operatorname{Ch}_{dg}(\mathsf {MHM}_{\operatorname{std}}(X)))[W^{-1}]$. 
Recall that the $\infty$-homotopy category $K_\infty(\mathsf {MHM}_{\operatorname{std}}(X))$ can be modeled on $N_{dg}(\operatorname{Ch}_{dg}(\mathsf {MHM}_{\operatorname{std}}(X)))$, the differential graded nerve of the differential graded category of chain complexes of objects in $\mathsf {MHM}_{\operatorname{std}}(X)$, and there is a canonical equivalence (e.g., \cite[Rem.~1.3.1.11]{LurieHA}) $hK_\infty(\mathsf {MHM}_{\operatorname{std}}(X))\simeq K(\mathsf {MHM}_{\operatorname{std}}(X))$ of the homotopy category of the $\infty$-homotopy category with the classical homotopy category.    
The $\infty$-derived category $D_\infty(\mathsf {MHM}_{\operatorname{std}}(X))$ can be modeled on $N_{dg}(\operatorname{Ch}_{dg}(\mathsf {MHM}_{\operatorname{std}}(X)))[W^{-1}]$, the localization of the $\infty$-homotopy category $K_\infty(\mathsf {MHM}_{\operatorname{std}}(X))$ at the
 collection of quasi-isomorphisms $W$, and since taking homotopy categories commutes with localization (e.g., \cite[\href{https://kerodon.net/tag/01MV}{Rem.~01MV}]{kerodon}), one has a canonical equivalence $h\operatorname{D}_{H,c}(X)\simeq D(\mathsf {MHM}_{\operatorname{std}}(X))$.  The bounded objects in the $\infty$-derived category  are the complexes with cohomology in only finitely many degrees.
Consequently, together with Tubach's result above, 
one has canonical equivalences
\begin{equation}\label{E:hDbHcDbDb}
h\operatorname{D}^b_{H,c}(X)\simeq D^b(\mathsf {MHM}_{\operatorname{std}}(X))\simeq D^b(\mathsf {MHM}(X)).
\end{equation}

The functor $\operatorname{D}^b_{H,c}(-)$ defines an \'etale hypersheaf (even $h$-hypersheaf) on the category of $\mathbb C$-schemes (see \cite[\S 4.1]{tubach_2024}). Considering the category of $\mathbb C$-schemes as a full subcategory of the category of DM stacks over $\mathbb C$, one can take the right Kan extension  of the functor $\operatorname{D}^b_{H,c}(-)$ to obtain a functor, which we also denote by $\operatorname{D}^b_{H,c}(-)$, on the category of DM stacks over $\mathbb C$. Concretely, in the situation of a smooth separated DM stack $\mathcal X$ of finite type over $\mathbb C$, restricting $\operatorname{D}^b_{H,c}(-)$ to the category of smooth varieties, \'etale over $\mathcal X$, and using either $f^!$ or $f^*$ functoriality in \cite[Thm.~4.1.8]{tubach_2024}, we have (e.g., \cite[\href{https://kerodon.net/tag/02Y9}{Def.~02Y9}]{kerodon}) that $\operatorname{D}^b_{H,c}(\mathcal X)$ is the $\infty$-categorical limit of the $\operatorname{D}^b_{H,c}(U)$ for $U\to \mathcal X$ \'etale; i.e., given an \'etale presentation $p:U\to \mathcal X$, we have (e.g., \cite[Thm.~A.6]{aoki23}) 
\begin{equation}\label{E:DbHclim}
\operatorname{D}^b_{H,c}(\mathcal X)=\lim_{\Delta}\operatorname{D}^b_{H,c}(U_\bullet),
\end{equation} 
where $U_n=U\times_{\mathcal X}\cdots \times_{\mathcal X}U$ is the $n$-fold fibered product.   
One can therefore identify $\operatorname{D}^b_{H,c}(\mathcal X)$ with the category of compatible systems of objects in $\operatorname{D}^b_{H,c}(U)$; for example one may use the fact that the limit may be computed as the cocartesian sections of the associated cofibration (e.g., \cite[\href{https://kerodon.net/tag/02TK}{Cor.~02TK}]{kerodon}). 

Using \eqref{E:hDbHcDbDb}, it is clear that for a scheme $X$ there are two natural $t$-structures on $\operatorname{D}^b_{H,c}( X)$, namely the one given by the standard  $t$-structure  on $h\operatorname{D}^b_{H,c}(X)\simeq D^b(\mathsf {MHM}(X))$, and the one given by the perverse $t$-structure. 
Since a limit of stable $\infty$-categories with $t$-structures and $t$-exact
transition functors is endowed with a canonical $t$-structure (e.g., \cite[Lem.~3.2.18]{RS20Intersection}), we see from \eqref{E:DbHclim} that the $t$-structures defined for schemes naturally induce $t$-structures on $\operatorname{D}^b_{H,c}(\mathcal X)$ (all covers are \'etale and so the pull backs are $t$-exact for both $t$-structures), which we call the standard and perverse $t$-structures.   
The two halves of the induced
$t$-structure are detected component-wise (e.g.,  \cite[Lem.~3.2.18(i)]{RS20Intersection});  i.e., an object
$
K=(K_n)
$ 
in 
$\lim_{\Delta}\operatorname D^b_{H,c}(U_\bullet)$ (viewed as an object in $h\operatorname{D}^b_{H,c}(\mathcal X)$) 
lies in the perverse heart
 if and only if
$K_n$ is in $\mathsf{MHM}(U_n)$ 
for every $n$.  Consequently, 
\begin{equation}\label{E:limMHMdesc}
(h\operatorname D^b_{H,c}(\mathcal X))^{\heartsuit_p}
\simeq
\lim_{\Delta}\mathsf{MHM}(U_\bullet).
\end{equation}
The right-hand side is precisely the category $\mathsf {MHM}(\mathcal X)$ of mixed Hodge modules on
$\mathcal X$ defined by \'etale descent (see, e.g., the proof of \cite[\href{https://stacks.math.columbia.edu/tag/0D7I}{Lem.~0D7I}]{stacks-project}). 
 Consequently, taking perverse cohomology, one obtains for each $i$ a functor 
 \begin{equation}\label{E:HiTubach}
 \mathcal H^i:h\operatorname{D}^b_{H,c}(\mathcal X) \to \mathsf {MHM}(\mathcal X).
\end{equation}

In \cite[Thm.~4.1.8]{tubach_2024}, Tubach also provides a $6$-functor formalism in the $\infty$-category setting.  For this he introduces a related  functor \cite[Def.~4.1.5]{tubach_2024}, denoted $\operatorname{D}_{H,c}(-)$, of cohomologically constructible \emph{unbounded} complexes, and in particular, defines push forward functors for $\operatorname{D}_{H,c}(-)$ under representable morphisms.  For a schematic morphism, one can check on charts that this push forward takes bounded complexes to bounded complexes, and so restricts to give a push forward for the functor  $\operatorname{D}^b_{H,c}(-)$.
In other words, in the situation in \Cref{D:AHPFGHodge}, where one is given 
a schematic proper morphism $f:\mathcal X\to \mathcal Y$ of smooth separated integral DM stacks locally of finite type over $\mathbb C$, there is a push forward functor $$ f_*:\operatorname{D}^b_{H,c}(\mathcal X)\longrightarrow \operatorname{D}^b_{H,c}(\mathcal Y), $$ which induces a push forward functor on homotopy categories $$ hf_*:h\operatorname{D}^b_{H,c}(\mathcal X)\longrightarrow h\operatorname{D}^b_{H,c}(\mathcal Y). $$ In the situation where $f:X\to Y$ is a proper morphism of varieties, then $hf_*$ agrees with Saito's push forward functor; i.e., after the canonical  identifications $D^b(\mathsf {MHM}(X))\simeq h\operatorname{D}^b_{H,c}(X)$ and $D^b(\mathsf {MHM}(Y))\simeq h\operatorname{D}^b_{H,c}(Y)$, one has a canonical identification of functors
\begin{equation}\label{E:hf*=f+} 
hf_*=f_+:D^b(\mathsf{MHM}(X))\longrightarrow D^b(\mathsf{MHM}(Y)). 
\end{equation} 
Viewing $\mathsf {HM}(\mathcal X)$ as a subcategory of $h\operatorname{D}^b_{H,c}(\mathcal X)$, we then have a functor 
$$ 
\mathcal H^ihf_*:\mathsf {HM}(\mathcal X)\longrightarrow \mathsf {MHM}(\mathcal Y),
$$ 
and, in fact, one has a canonical identification of functors 
\begin{equation}\label{E:Hihf*=Hifah+}
\mathcal H^ihf_*(-)= \mathcal H^if^{ah}_+(-): \mathsf {HM}(\mathcal X,w)\longrightarrow \mathsf {HM}(\mathcal Y,w+i),
\end{equation}
 i.e., the \emph{ad hoc} push forward $\mathcal H^if^{ah}_+(-)$ on Hodge modules that we constructed in \Cref{D:AHPFGHodge} agrees with Tubach's push forward $\mathcal H^ihf_*(-)$, when restricted to Hodge modules. 

We sketch the argument here.  
Given an \'etale presentation 
$
p:U\rightarrow\mathcal Y
$
and the cartesian square \eqref{E:TubPresDpf0}, 
then for  $\mathsf M$ in $\mathsf{HM}(\mathcal X)$, one has canonical isomorphisms
\begin{equation}\label{E:CanIsofhf+}
p^*\mathcal H^i(hf_*\mathsf M)
\cong
\mathcal H^i(p^*hf_*\mathsf M)
\cong
\mathcal H^i(hf'_*p_{\mathcal X}^*\mathsf M)\cong \mathcal H^i(f'_+\mathsf M|_{U_{\mathcal X}}),
\end{equation}
where the  first isomorphism follows from the perverse $t$-exactness of \'etale
pull back, the second follows from \'etale base change, and the last is from the identification in \eqref{E:hf*=f+}.

As $U\to\mathcal Y$ varies, the objects $ \mathcal H^i(f'_+\mathsf M|_{U_{\mathcal X}})$ in $
\mathsf{MHM}(U)$
form a compatible system, determining the object
$\mathcal H^i(hf_*\mathsf M)$.
The $ \mathcal H^i(f'_+\mathsf M|_{U_{\mathcal X}})$ all in fact lie in $\mathsf {HM}(U)$, and so the compatible system can also be identified with   
an
 object of $\lim_{(U\to\mathcal Y)_{\mathrm{\acute et}}}
\mathsf{HM}(U)$.    Under the equivalence $
\mathsf{HM}(\mathcal Y)
\simeq
\lim_{(U\to\mathcal Y)_{\mathrm{\acute et}}}
\mathsf{HM}(U),
$
we see that 
$\mathcal H^i(hf_*\mathsf M)$ determines a  Hodge module on $\mathcal Y$.  
The $ \mathcal H^i(f'_+\mathsf M|_{U_{\mathcal X}})$ also give the compatible system used in the definition of the \emph{ad hoc} 
pushforward $
\mathcal H^i f^{ah}_+(\mathsf M)$ in \Cref{D:AHPFGHodge}. 
Therefore, there is a canonical isomorphism 
$\mathcal H^i(hf_*\mathsf M)\cong 
\mathcal H^i f^{ah}_+(\mathsf M)$. A similar argument shows the compatibility of $\mathcal H^i(hf_*(-))$ and $ 
\mathcal H^i f^{ah}_+(-)$ on morphisms.

\subsection{Associated graded Spencer complexes} 
The associated graded Spencer complex of a  \emph{right}  $D_{\mathcal X}$-module $\mathcal M$, namely $\operatorname{gr}_\bullet^F \operatorname{Sp}^\bullet (\mathcal M)$, has  $k$-th graded piece given by 
$$
\operatorname{gr}_k^F \operatorname{Sp}^\bullet (\mathcal M)=
\left[
\xymatrix@C=.8em@R=0em{
\cdots 0 \ar[r] &\displaystyle \operatorname{gr}^F_{k-n}\mathcal M\otimes_{\mathcal O_{\mathcal X}}\bigwedge^{n}\mathcal T_{\mathcal X}  \ar[r]^<>(0.5)\delta & \displaystyle \operatorname{gr}^F_{k-n+1} {\mathcal M} \otimes_{\mathcal O_X}\bigwedge ^{n-1}\mathcal T_{\mathcal X} \ar[r]^<>(0.5)\delta & \cdots \ar[r]^<>(0.5)\delta& \operatorname{gr}_k^F\mathcal M \ar[r] 
& 0\cdots \\
&&&&0
}\right],
$$
with $n=\dim \mathcal X$.

For the associated graded of the relative Spencer complex,  there is a quasi-isomorphism (see \cite[Exe.~8.55(1)]{MHM-project})
\begin{equation}\label{E:MHMp8.55}
\operatorname{gr}_\bullet^F\operatorname{Sp}^\bullet_{\mathcal X\to \mathcal Y}(\mathcal M)\simeq \operatorname{gr}_\bullet^F \mathcal M\otimes^L_{\mathcal A_{\mathcal X}}f^{*}\mathscr A_{\mathcal Y}.
\end{equation}
Note that in the expression above on the right, we are using the identification between quasi-coherent sheaves on the tangent stack $T_{\mathcal X}$ and quasi-coherent $\mathscr A_{\mathcal X}$-sheaves on $\mathcal X$ to obtain the derived tensor product over $\mathscr A_{\mathcal X}$ (there is a derived tensor product on $T_{\mathcal X}$ over $\mathcal O_{T_{\mathcal X}}$; see e.g., \cite[Prop.~(13.2.6)]{LMB}).

While the associated graded relative Spencer complex  may be somewhat difficult to describe concretely from  \eqref{E:MHMp8.55},  it has a simpler form in the special case where there is an integer $d$ such that 
$$
F_k\mathcal M=\left\{
\begin{array}{ll}
0 & k<d,\\
\mathcal M & d\le k,
\end{array}
\right.
$$
so that $\operatorname{gr}^F_\bullet\mathcal M$ is equal to $\mathcal M$ in degree $d$; in this case we get, in concrete terms
\begin{equation}\label{E:grSpRelM}
\operatorname{gr}^F_k\operatorname{Sp}_{\mathcal X\to \mathcal Y}^\bullet(\mathcal M)=
\end{equation}

$$
\left[\xymatrix@C=1.5em@R=0em{
\cdots 0 \ar[r] &\displaystyle \mathcal M\otimes_{\mathcal O_{\mathcal X}} f^*\mathscr A_{\mathcal Y}^{d+k-n} \otimes_{\mathcal O_{\mathcal X}}\bigwedge^{n}\mathcal T_{\mathcal X} \ar[r]^<>(0.5){\delta_{\mathcal X\to \mathcal Y}} 
& \displaystyle \mathcal M  \otimes_{\mathcal O_{\mathcal X}} f^*\mathscr A_{\mathcal Y}^{d+k-n+1}\otimes_{\mathcal O_{\mathcal X}}  \bigwedge ^{n-1}\mathcal T_{\mathcal X}   \ar[r]^<>(0.5){\delta_{\mathcal X\to \mathcal Y}} 
& \cdots & \\
&\ 
}\right.
$$
$$
\left.\xymatrix@C=1.5em@R=0em{
\cdots \ar[r]^<>(0.5){\delta_{\mathcal X\to \mathcal Y}}&\displaystyle \mathcal M \otimes_{\mathcal O_{\mathcal X}} f^*\mathscr A_{\mathcal Y}^{d+k-1}  \otimes_{\mathcal O_X}\bigwedge ^{1}\mathcal T_{\mathcal X}   \ar[r]^<>(0.5){\delta_{\mathcal X\to \mathcal Y}} & \displaystyle \mathcal M \otimes_{\mathcal O_{\mathcal X}} f^*\mathscr A_{\mathcal Y}^{d+k}   \ar[r] & 0\cdots \\ 
&&0
}\right]
$$
with $\mathcal M \otimes_{\mathcal O_{\mathcal X}} f^*\mathscr A_{\mathcal Y}^{d+k}$ in degree $0$, and $n:=\dim \mathcal X$.
Note that the differential $\delta_{\mathcal X\to \mathcal Y}$ in the complex above agrees with the one induced by the natural map $\mathcal T_{\mathcal X}\to f^*\mathcal T_{\mathcal Y}$.

For later use, following \cite[\S 2.3]{PS17} we define a complex of graded $f^*\mathscr A_{\mathcal Y}$-modules: 
\begin{equation}\label{D:CXXYY}
C_{\mathcal X\to \mathcal Y,k}:= \operatorname{gr}^F_{k-m+n}\operatorname{Sp}_{\mathcal X\to \mathcal Y}^\bullet(\mathcal O _{\mathcal X})=
\end{equation}
\begin{equation*}
\left[
\xymatrix@C=1em@R=0em{
\cdots 0 \ar[r] &\displaystyle f^*\mathscr A^{k-m}_{\mathcal Y}\otimes_{\mathcal O_{\mathcal X}}\bigwedge^{n}\mathcal T_{\mathcal X}  \ar[r]^<>(0.5){\delta_{\mathcal X\to \mathcal Y}} & \displaystyle f^*\mathscr A^{k-m+1}_{\mathcal Y} \otimes_{\mathcal O_X}\bigwedge ^{n-1}\mathcal T_{\mathcal X} \ar[r]^<>(0.5){\delta_{\mathcal X\to \mathcal Y}} & \cdots \ar[r]^<>(0.5){\delta_{\mathcal X\to \mathcal Y}}& f^*\mathscr A^{k-m+n}_{\mathcal Y} \ar[r] 
& 0\cdots \\
&&&&0
}\right]
\end{equation*}
with $ f^*\mathscr A^{k-m+n}_{\mathcal Y} $ in degree $0$, $n=\dim \mathcal X$, and $m=\dim \mathcal Y$.

\begin{rem}
Considering \eqref{E:grSpRelM} and \eqref{D:CXXYY},
the indexing is set up so that if we start with a filtered \emph{left} $D_{\mathcal X}$-module $(\mathcal M,F_\bullet)$ with $\operatorname{gr}^F_\bullet \mathcal M$ \emph{concentrated in degree $0$}, and recall that for $(\mathcal M^{\operatorname{right}},F_\bullet)$ we have 
$F_\bullet  \mathcal  M^{\operatorname{right}} := \omega_{  \mathcal X}\otimes_{ \mathcal  O_{ \mathcal  X}}F_{\bullet +\dim   \mathcal X}  \mathcal M$, then we have: 
\begin{equation}\label{E:grSpCXY}
\operatorname{gr}_k^F\operatorname{Sp}^\bullet_{  \mathcal X\to  \mathcal  Y} (  \mathcal M^{\operatorname{right}})=   \mathcal M^{\operatorname{right}}\otimes_{\mathcal O_{  \mathcal X}} C_{ \mathcal  X\to  \mathcal  Y,k+m}.
\end{equation}
\end{rem}

Similarly, given a normal crossings divisor $\mathcal D\subseteq \mathcal X$, following  \cite{WW23} we define a complex of graded $f^*\mathscr A_{\mathcal Y}$-modules for the pair $(\mathcal X,\mathcal D)$ as:
\begin{equation}\label{D:CXXYYlog}
C_{(\mathcal X,\mathcal D)\to \mathcal Y,k}:=
\end{equation}
$$
\left[\xymatrix@C=1.5em@R=0em{
\cdots 0 \ar[r] &\displaystyle f^*\mathscr A_{\mathcal Y}^{k-m} \otimes_{\mathcal O_{\mathcal X}}\bigwedge^{n}\mathcal T_{\mathcal X}(-\log \mathcal D) \ar[r]^<>(0.5){\delta_{\mathcal X\to \mathcal Y}} 
& \displaystyle  f^*\mathscr A_{\mathcal Y}^{k-m+1}\otimes_{\mathcal O_{\mathcal X}}  \bigwedge ^{n-1}\mathcal T_{\mathcal X}(-\log \mathcal D)   \ar[r]^<>(0.5){\delta_{\mathcal X\to \mathcal Y}} 
& \cdots & \\
&\ 
}\right.
$$
$$
\left.\xymatrix@C=1.5em@R=0em{
\cdots \ar[r]^<>(0.5){\delta_{\mathcal X\to \mathcal Y}}&\displaystyle f^*\mathscr A_{\mathcal Y}^{k-m+n-1}  \otimes_{\mathcal O_X}\bigwedge ^{1}\mathcal T_{\mathcal X}(-\log \mathcal D)   \ar[r]^<>(0.5){\delta_{\mathcal X\to \mathcal Y}} & \displaystyle  f^*\mathscr A_{\mathcal Y}^{k-m+n}   \ar[r] & 0\cdots \\ 
&&0
}\right]
$$
with $ f^*\mathscr A^{k-m+n}_{\mathcal Y} $ in degree $0$, $n=\dim \mathcal X$, and $m=\dim \mathcal Y$.  Recall  $\mathcal T_{\mathcal X}(-\log \mathcal D):=(\Omega_{\mathcal X}^1(\log \mathcal D))^\vee$.

\subsection{Associated graded of the push forward of Hodge modules}
Let $f:X\to Y$ be a schematic projective  morphism of smooth varieties and consider a  Hodge module $\mathsf M$ on $X$ with associated regular holonomic filtered \emph{left} $D_{ X}$-module $(M,F_\bullet)$.  Let $(M^{\operatorname{right}},F_\bullet)$ be the associated \emph{right} $D_{X}$-module.

A theorem of Saito \cite[Thm.~2.14]{Saito1990} implies that the filtration on the derived push forward is strict, and one can then check that  the associated graded and derived  push forward commute. 
Consequently, assuming that $X$ and $Y$ are varieties, then  we have   (see also \cite[Exe.~8.55(2)]{MHM-project}) 
\begin{align}\label{E:MHM-E8.55(2)}
\begin{split}    
\operatorname{gr}_\bullet^Ff_+M^{\operatorname{right}} &=\operatorname{gr}_\bullet^FRf_*\widetilde{\operatorname{Sp}^\bullet _{  X\to   Y}(  M^{\operatorname{right}})} \\
& = Rf_*\operatorname{gr}_\bullet^F\operatorname{Sp}^\bullet_{  X\to   Y}(  M^{\operatorname{right}})\\
& \simeq Rf_*(\operatorname{gr}_\bullet^F   M^{\operatorname{right}}\otimes^L_{  \mathscr A_{  X}}f^{*} \mathscr A_{  Y}),
\end{split}
\end{align}
where the last isomorphism comes from \eqref{E:MHMp8.55}.  Note that 
the first $Rf_*$ is  the derived functor $Rf_*:D^b(\widetilde{f^{-1}D_Y})\to D^b(\widetilde{D_{Y}})$, whereas the last two are the derived  functor  $Rf_*:D^b(\mathcal O_X)\to D^b(\mathcal O_Y)$.  

Converting this into  a statement for the filtered left $D_{  X}$-module $(  M,F_\bullet)$, and recalling that we have 
$
F_\bullet   M^{\operatorname{right}} := \omega_{  X}\otimes_{ \mathcal  O_{  X}}F_{\bullet +\dim   X}  M
$, 
one obtains from \eqref{E:MHM-E8.55(2)} \emph{Laumon's formula} (see  \cite[Exe.~8.55(2)]{MHM-project}, as well as \cite[Thm.~2.4]{PopaSchnell2013}) 
\begin{equation}\label{E:Laumon}
\operatorname{gr}_\bullet^F f_+  M\simeq Rf_*(\omega_{  X/  Y} \otimes_{ \mathcal  O_{  X}} \operatorname{gr}^F_{\bullet +\dim   X-\dim   Y}  M \otimes^L_{  \mathscr A_{  X}}f^{*}  \mathscr A_{  Y}).
\end{equation}

If $\operatorname{gr}_\bullet^F  M$ is \emph{concentrated in degree $0$}, then we can describe things explicitly in terms of the complexes $C_{X\to Y,\bullet}$ \eqref{D:CXXYY}.   
Using the identification $\operatorname{gr}_k^F\operatorname{Sp}^\bullet_{  X\to   Y} (  M^{\operatorname{right}})=   M^{\operatorname{right}}\otimes_{\mathcal O_{  \mathcal X}} C_{  X\to   Y,k+m}$ from \eqref{E:grSpCXY} together with  the identification $\operatorname{gr}_\bullet^FRf_*\operatorname{Sp}^\bullet_{  X\to   Y}(  M^{\operatorname{right}})=  Rf_*\operatorname{gr}_\bullet ^F\operatorname{Sp}^\bullet_{  X\to   Y}(  M^{\operatorname{right}})$ from \cite[Thm.~2.14]{Saito1990} 
we have canonical identifications 
$$
\operatorname{gr}^F_\bullet f_+  M^{\operatorname{right}} = \operatorname{gr}_\bullet^FRf_*\operatorname{Sp}^\bullet_{  X\to   Y}(  M^{\operatorname{right}}) = Rf_*(  M^{\operatorname{right}}\otimes_{\mathcal O_{  X}}C_{  X\to   Y,\bullet +m}),
$$
  and converting to the statement for the left $D_{  X}$-module, we have a canonical identification 
\begin{equation}\label{E:PS-Prop10.2}
\operatorname{gr}^F_\bullet f_+  M= Rf_*(\omega_{  X/  Y}\otimes_{\mathcal O_{  X}}  M\otimes_{\mathcal O_{  X}} C_{  X\to   Y,\bullet}).
\end{equation}
 Taking cohomology of the complexes, and using that strictness implies that taking cohomology commutes with taking the associated graded,  we have a canonical identification
\begin{equation}\label{E:PS-Prop10.2Coh}
\operatorname{gr}^F_\bullet\mathcal H^i f_+  M= \mathcal H^i\operatorname{gr}^F_\bullet f_+  M=  R^if_*(\omega_{  X/  Y}\otimes_{\mathcal O_{  X}}  M\otimes_{\mathcal O_{  X}} C_{  X\to   Y,\bullet}),
\end{equation}
where again, the $\mathcal H^if_+$ on the left is the push forward for filtered $D$-modules and the $R^if_*$ on the right is the derived push forward of coherent sheaves.

We would like to have a version of this in the situation where $f:\mathcal X\to \mathcal Y$ is a schematic projective  morphism of  
smooth separated integral DM stacks locally of finite type over $\mathbb C$, and $\mathsf M$ is a pure Hodge module  on $\mathcal X$ with associated regular holonomic filtered \emph{left} $D_{\mathcal X}$-module $(\mathcal M,F_\bullet)$, and $\operatorname{gr}^F_\bullet \mathcal M$ concentrated in degree $0$:

\begin{lem}\label{L:PSP10.2}
Let $f:\mathcal X\to \mathcal Y$ be a schematic projective morphism of  smooth separated integral DM stacks locally of finite type over $\mathbb C$, and with $n=\dim \mathcal X$, let $\mathsf M$ be a pure polarized Hodge module on $\mathcal X$ with underlying filtered left regular holonomic $D$-module $(\mathcal M_{\mathcal X},F_\bullet)$, and $\operatorname{gr}^F_\bullet\mathcal M$ concentrated in degree $0$.  With respect to the \emph{ad hoc} push forward 
$\mathcal H^i(f^{ah}_+\mathsf M)$  (\Cref{D:AHPFGHodge})
with  underlying filtered regular holonomic $D$-module $\mathcal H^i(f^{ah}_+(\mathcal M, F_\bullet))$  (\Cref{E:AHPFfdm}), 
there  is a canonical isomorphism of graded $\mathscr A_{\mathcal Y}$-modules 
$$
\operatorname{gr}^F_\bullet {\mathcal{H}^i}(f^{ah}_+ (\mathcal M,F_\bullet)) \cong 
R^if_*(\omega_{\mathcal X/\mathcal Y}\otimes_{\mathcal O_{\mathcal X}} \mathcal M\otimes_{\mathcal O_{\mathcal X}} C_{\mathcal X\to \mathcal Y,\bullet}),
$$
where $Rf_*$ on the right is the derived push forward 
$Rf_*:D^b(\mathcal O_{\mathcal X})\to D^b(\mathcal O_{\mathcal Y})$.
\end{lem}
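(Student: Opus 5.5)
The plan is to reduce to the case of varieties, where the identification is \eqref{E:PS-Prop10.2Coh}, and then glue by \'etale descent. Both sides of the claimed isomorphism are quasi-coherent graded $\mathscr A_{\mathcal Y}$-modules on $\mathcal Y$, i.e., sheaves on the small \'etale site. It therefore suffices to produce, for every \'etale morphism $p:U\to\mathcal Y$ from a smooth variety $U$ with fiber square \eqref{E:TubPresDpf0}, an isomorphism on $U$, and to check that these isomorphisms are compatible with the transition isomorphisms for diagrams \eqref{E:TubPresDpf}.

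\emph{Left-hand side on a chart.} By construction of the \emph{ad hoc} push forward (\Cref{E:AHPFfdm} and \eqref{D:Hif*MFilt}), the restriction $p^*\mathcal H^i(f^{ah}_+(\mathcal M,F_\bullet))$ is identified with $\mathcal H^i f'_+((\mathcal M,F_\bullet)|_{U_{\mathcal X}})$ as a filtered $D_U$-module. Taking $\operatorname{gr}^F$ commutes with \'etale pullback, because $p$ is flat and $F_k$ pulls back to $F_k$. Since $\mathsf M|_{U_{\mathcal X}}$ is a pure Hodge module on the smooth variety $U_{\mathcal X}$, whose graded piece is still concentrated in degree $0$, Saito's strictness and \eqref{E:PS-Prop10.2Coh} give a canonical isomorphism
$$\operatorname{gr}^F_\bullet\mathcal H^if'_+(\mathcal M|_{U_{\mathcal X}})\cong R^if'_*\bigl(\omega_{U_{\mathcal X}/U}\otimes \mathcal M|_{U_{\mathcal X}}\otimes C_{U_{\mathcal X}\to U,\bullet}\bigr).$$

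\emph{Right-hand side on a chart.} By flat base change for the schematic projective morphism $f$, we have $p^*R^if_*(\mathcal G)\cong R^if'_*(p_{\mathcal X}^*\mathcal G)$. Since $p$ and $p_{\mathcal X}$ are \'etale, the sheaves $\omega_{\mathcal X/\mathcal Y}$, $\mathcal T_{\mathcal X}$, $f^*\mathscr A_{\mathcal Y}$ and the differentials $\delta_{\mathcal X\to\mathcal Y}$ all pull back to their counterparts on $U_{\mathcal X}\to U$. Hence $p_{\mathcal X}^*C_{\mathcal X\to\mathcal Y,\bullet}=C_{U_{\mathcal X}\to U,\bullet}$ and $p_{\mathcal X}^*\omega_{\mathcal X/\mathcal Y}=\omega_{U_{\mathcal X}/U}$. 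Together with the previous paragraph this gives the isomorphism on each chart $U$.

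\emph{Compatibility.} The main obstacle is the cocycle check. For an \'etale $\phi:U'\to U$, one must verify that the chart isomorphism on $U'$ agrees with $\phi^*$ of the one on $U$, after composing with $\tau_\phi$ on the left and the base-change isomorphism on the right. I would argue this by unwinding \eqref{E:DescDataDPushFilt}. The map $\tau_\phi$ is induced from flat base change for $R^if_*$ applied to the filtered pieces $F_k\operatorname{Sp}^\bullet_{X\to Y}$, using \eqref{E:filt-is-naive-OO} and \eqref{E:concrete-filt-f+i}; strictness makes $F_k$ equal to $R^if_*F_k\operatorname{Sp}$ itself, not just its image. Passing to $\operatorname{gr}$ gives flat base change for $R^if_*\operatorname{gr}_k^F\operatorname{Sp}$. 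The identification \eqref{E:grSpCXY}, $\operatorname{gr}^F_k\operatorname{Sp}=\mathcal M^{\operatorname{right}}\otimes C_{\bullet+m}$, is an equality of complexes that is local and compatible with \'etale pullback. So both routes are the same base-change map applied to the same complex, and naturality of flat base change under composition yields the required compatibility. Finally, the $\mathscr A_{\mathcal Y}$-module structures agree because they agree on each chart, which proves the lemma.
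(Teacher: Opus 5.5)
Your proposal is correct and takes the same route as the paper: work on an \'etale chart $p:U\to\mathcal Y$, apply \eqref{E:PS-Prop10.2Coh} to $f':U_{\mathcal X}\to U$, identify $\omega_{U_{\mathcal X}/U}$ and $C_{U_{\mathcal X}\to U,\bullet}$ with the pullbacks of $\omega_{\mathcal X/\mathcal Y}$ and $C_{\mathcal X\to\mathcal Y,\bullet}$, and glue by descent. Your explicit cocycle check, via flat base change on the filtered Spencer pieces made compatible with $\operatorname{gr}$ by strictness, spells out what the paper compresses into the remark that the identification is canonical and therefore respects the descent data.
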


\begin{proof}

  Consider the diagram \eqref{E:TubPresDpf}.  Considering the pull back $p_{\mathcal X}^*\mathcal M$, and \eqref{E:PS-Prop10.2Coh}, we have a canonical identification 
\begin{align*}
\operatorname{gr}^F_\bullet\mathcal H^i f'_+  p_{\mathcal X}^*\mathcal M &=  R^if'_*(\omega_{  U_{\mathcal X}/ U}\otimes_{\mathcal O_{  U_{\mathcal X}}}  p_{\mathcal X}^*\mathcal M\otimes_{\mathcal O_{  U_{\mathcal X}}} C_{  U_{\mathcal X}\to   U,\bullet})\\
&=  R^if'_*p_{\mathcal X}^*(\omega_{  \mathcal X/  \mathcal Y}\otimes_{\mathcal O_{  \mathcal X}}  \mathcal M\otimes_{\mathcal O_{  \mathcal X}} C_{  \mathcal X\to   \mathcal Y,\bullet}),
\end{align*}
where we are using in the second equality that since, in the notation of diagram \eqref{E:TubPresDpf},  $p$ is  \'etale, we have $p_{\mathcal X}^*\omega_{\mathcal X/\mathcal Y}= \omega_{  U_{\mathcal X}/ U}$ and $p_{\mathcal X}^*C_{\mathcal X\to \mathcal Y,\bullet}= C_{  U_{\mathcal X}\to   U,\bullet}$.  As the identification above is canonical, it respects the descent data for the two sheaves, and so gives an isomorphism of sheaves on the stacks:
$$
\operatorname{gr}^F_\bullet\mathcal H^i (f^{ah}_+\mathcal M)  \cong  R^if_*(\omega_{  \mathcal X/  \mathcal Y}\otimes_{\mathcal O_{  \mathcal X}}  \mathcal M\otimes_{\mathcal O_{  \mathcal X}} C_{  \mathcal X\to   \mathcal Y,\bullet}).
$$
\end{proof}

\ifArxiv
\subsection{De Rham complexes}\label{S:dR+SC'} Although our presentation focuses on Spencer complexes, we briefly include some discussion of de Rham complexes for completeness.  
Given a \emph{left} $D_{\mathcal X}$-module $\mathcal M$, following the conventions of  \cite{MHM-project}, the \emph{de Rham complex of $\mathcal M$}, denoted  $\operatorname{DR}^\bullet(\mathcal M)$, or $(\operatorname{DR}^\bullet (\mathcal M),\nabla)$,  is the bounded complex with entries that are $\mathcal O_{\mathcal X}$-modules, and morphisms that are  $\mathbb C$-linear: 
$$
\operatorname{DR}^\bullet(\mathcal M):= \left[
\xymatrix@R=0em{
\cdots 0 \ar[r] & \mathcal M \ar[r]^<>(0.5)\nabla & \Omega^1_{\mathcal X}\otimes_{\mathcal O_X}\mathcal M \ar[r]^<>(0.5)\nabla & \cdots \ar[r]^<>(0.5)\nabla& \Omega^{n}_{\mathcal X}\otimes_{\mathcal O_{\mathcal  X}}\mathcal M \ar[r] & 0\cdots \\
&0
}\right]
$$
with $\mathcal M$ in degree $0$, and $n:=\dim \mathcal X$.  The $\mathbb C$-linear morphisms $\nabla$ are defined \'etale locally, and so are given as in the case of varieties; see \cite[p.104]{HTT08}.  The fact that $\nabla\circ \nabla = 0$ can be checked locally, and so also follows from the case of varieties.   Putting things another way, the de Rham complex realizes  the standard equivalence between left $D_{\mathcal X}$-modules and $\mathcal O_{\mathcal X}$-modules with the data of a flat connection.

We are following the minus sign conventions of \cite{MHM-project} for our complexes, and  so, following \cite[Def.~8.4.1]{MHM-project}, we define the \emph{shifted de Rham complex of $\mathcal M$} to be $$^p\operatorname{DR}^\bullet (\mathcal M):=(\operatorname{DR}^\bullet (\mathcal M)[\dim \mathcal X],(-1)^{\dim \mathcal X}\nabla),$$ so that now $\Omega^{n}_{\mathcal X}\otimes_{\mathcal O_{\mathcal X}}\mathcal M$ is in degree $0$.  
If, in addition to being a left $D_{\mathcal X}$-module,  $\mathcal M$ is also a right $D_{\mathcal X}$-module, then $\operatorname{DR}^\bullet(\mathcal M)$ is naturally a complex with entries that are \emph{right} $D_{\mathcal X}$-modules.  

\begin{exa}[De Rham complex for $D_{\mathcal X}$]
If we consider the de Rham complex associated  to the left and right $D_{\mathcal X}$-module $D_{\mathcal X}$, we have a quasi-isomorphism of complexes of right $D_{\mathcal X}$-modules:
$$
{}^p\operatorname{DR}^\bullet (D_{\mathcal X})\simeq \omega_{\mathcal X}.
$$
Indeed, the complex of right $D_{\mathcal X}$-modules and $\mathbb C$-linear morphisms 
$$
\xymatrix{
\cdots 0 \ar[r] & D_{\mathcal X} \ar[r]^<>(0.5)\nabla & \Omega^1_{\mathcal X}\otimes_{\mathcal O_X} D_{\mathcal X} \ar[r]^<>(0.5)\nabla & \cdots \ar[r]^<>(0.5)\nabla& \Omega^{\dim \mathcal X}_{\mathcal X}\otimes_{\mathcal O_{\mathcal  X}} D_{\mathcal X} \ar[r] &\omega_{\mathcal X}\ar[r]& 0\cdots 
}
$$
is exact.  The $\mathbb C$-linear morphism  $\Omega^{\dim \mathcal X}_{\mathcal X}\otimes_{\mathcal O_{\mathcal  X}} D_{\mathcal X} \to \omega_{\mathcal X}$ 
is  defined \'etale locally, and so is given as in the case of varieties; see \cite[Lem.~1.5.27]{HTT08}. The exactness of the complex then follows from the case of varieties; e.g.,  \cite[Lem.~1.5.27]{HTT08}.
\end{exa}

\begin{rem}
The de Rham and Spencer complexes are closely related.  
Given a left $D_{\mathcal X}$-module $\mathcal M$, with associated right $D_{\mathcal X}$-module $\mathcal M^{\operatorname{right}}:=\mathcal M \otimes _{\mathcal O_{\mathcal X}}\omega_{\mathcal X}$, there is an isomorphism (see \cite[(8.4.8) and Exe.~8.26]{MHM-project}) 
\begin{equation}\label{E:dR-SpComp}
\operatorname{Sp}^\bullet (\mathcal M^{\operatorname{right}}) \stackrel{\sim}{\longrightarrow} \  ^p\operatorname{DR}^\bullet (\mathcal M),
\end{equation}
which is term-wise $\mathcal O_{\mathcal X}$-linear. 
\end{rem}

\subsubsection{De Rham complexes  for filtered $D$-modules} 

Given a filtered \emph{left} $D_{\mathcal X}$-module $(\mathcal M,F_\bullet)$, we define the 
 \emph{shifted filtered de Rham complex} by setting 
 $$
F_k{}^p\operatorname{DR}^\bullet(\mathcal M):= 
$$
$$
\left[
\xymatrix@C=2.5em@R=0em{
\cdots 0 \ar[r] & F_k\mathcal M \ar[r]^<>(0.5){(-1)^n\nabla}& \Omega^1_{\mathcal X}\otimes_{\mathcal O_X}\mathcal F_{k+1}M \ar[r]^<>(0.5){(-1)^n\nabla} & \cdots \ar[r]^<>(0.5){(-1)^n\nabla}& \Omega^{n}_{\mathcal X}\otimes_{\mathcal O_{\mathcal  X}}F_{k+n}\mathcal M \ar[r] & 0\cdots \\
&&&&0
}\right]
$$
 where we recall that $n=\dim \mathcal X$ and  $ \Omega^{n}_{\mathcal X}\otimes_{\mathcal O_{\mathcal  X}}F_{k+n}\mathcal M$ is in degree $0$.

\begin{rem}[Comparing shifted de Rham and Spencer] 
For a \emph{left} $D_{\mathcal X}$-module $\mathcal M$, from the comparison \eqref{E:dR-SpComp}, and using the filtration 
$
F_\bullet \mathcal M^{\operatorname{right}} := \omega_{\mathcal X}\otimes_{\mathcal O_{\mathcal X}}F_{\bullet +n}\mathcal M
$, 
we see that we have 
\begin{equation}\label{E:dR-SpCompF}
(\operatorname{Sp}^\bullet (\mathcal M^{\operatorname{right}}),\delta,F_\bullet) \stackrel{\sim}{\longrightarrow} \  (^p\operatorname{DR}^\bullet (\mathcal M),(-1)^{n}\nabla, F_\bullet),
\end{equation}
which  is term-wise $\mathcal O_{\mathcal X}$-linear. 
\end{rem}

\else
\fi

\section{Geometric construction of Hodge modules}

In this section we generalize some geometric constructions  from \cite{PS17,WW23} to the case of stacks, which allow us to construct certain Hodge modules. 

\subsection{The set-up}\label{S:set-up}
In order to perform the geometric constructions  that will allow us to construct certain Hodge modules, we need to make certain assumptions on the families we consider.  As the assumptions are fairly lengthy, we lay them out in detail in this subsection.   The results in \S \ref{S:GeomConstRoots} will be used in the proofs of the main theorems in \S \ref{S:ProofMain} to ensure that these assumptions hold for the families that we will need to consider, at least after some birational modifications.

\subsubsection{Initial family for the constructions} \label{S:InnitialFamConst}

Throughout this section we assume that 
\begin{equation}\label{E:infamf}
f\colon (\mathcal{Y},\D)\to \mathcal{X}
\end{equation}
 is a surjective schematic projective morphism between smooth separated integral DM stacks of finite type over $\mathbb C$, admitting quasi-projective coarse moduli spaces, that   $\mathcal D$ is an effective normal crossing $\mathbb Q$-divisor on $\mathcal Y$ with
 coefficients in $(0,1)$ on $\mathcal{Y}$, that 
\begin{equation}\label{E:infamdelta}
\mathbf \Delta\subseteq \mathcal X
\end{equation}
is a normal crossings divisor containing the relative snc discriminant  $\mathbf \Delta_f$ (\Cref{dfn:non-relative snc locus}), and that the fibers of $f$ are connected.

We also assume that we have an inclusion $\mathcal X\subseteq \overline{\mathcal X}$ of $\mathcal X$ into a smooth proper integral DM stack $\overline{\mathcal X}$ over $\mathbb C$ with projective coarse moduli space, and that the codimension of the complement is at least $2$. We denote by $\bar{\mathbf \Delta}$ the closure of $\mathbf \Delta$ in $\overline{\mathcal X}$.

\subsubsection{Existence of non-zero sections for a cyclic covering construction}\label{S:non-zero-section}

We fix a line bundle  $\bar {\m{A}}$ on  $\overline{\mathcal{X}}$ such that $\bar{\mathcal A}(-\bar {\mathbf \Delta})$ is big, and setting 
\begin{equation}\label{E:infamA}
\mathcal A:=\bar{\mathcal A}|_{\mathcal X},
\end{equation}
 we assume that for some  sufficiently divisible $m$,
\begin{equation}\label{as:sections'}
 H^0(\mathcal{Y}, \omega^{\otimes m}_{\mathcal{Y}/\mathcal{X}}(m\D)\otimes f^*\m{A}^{\otimes -m}) \ne 0.
\end{equation}
We then set 
\begin{equation}\label{E:HdgMdLLdef}
\m{L}:=\omega_{\mathcal{Y}/\mathcal{X}}( \lceil{\D} \rceil )\otimes f^*\m{A}^{\otimes -1},
\end{equation}
and fix an  $m$ such that $\mathcal L^{\otimes m}$ has a non-trivial section 
\begin{equation}\label{E:HdgMdsdef}
s\in H^0(\mathcal Y,\mathcal L^{\otimes m})
\end{equation}
with zero set containing the support of $\mathcal D$  (see \Cref{R:L([D])section}), 
and we require that  $s$ does not admit a root (i.e., it is not the power of a section of a smaller tensor power of $\mathcal L$).  This implies that when we take the $m$-th cyclic covering $\pi_m: \mathcal{Y}_m\to \mathcal{Y}$ determined by the section $s$, where we use the conventions in \cite[\S 3.5]{EVvan92} and note that the construction there carries over directly to the case of stacks, we have that $\mathcal Y_m^\circ$ is irreducible \cite[Lem.~3.15(a)]{EVvan92}.   One can, for instance, insure  that $s$ does not admit a root by taking $m$ minimal such that $\mathcal L^{\otimes m}$ has a non-trivial global section  with zero set containing the support of $\mathcal D$. 

\subsubsection{Non-recursive hypothesis: further log resolution of the family}
\label{S:FLRA}

The previous subsections, \S \ref{S:InnitialFamConst} and \S \ref{S:non-zero-section}, outline hypotheses we will use recursively.  In this subsection we introduce a hypothesis we will only use once, and only in the situation where $\mathcal X=\overline{\mathcal X}$: here we want to assume that  given any  divisor  $\mathcal S\subseteq {\mathcal X}=\overline{\mathcal X}$, there  is an effective divisor $\mathcal S\subseteq \mathcal S'\subseteq \mathcal X$ containing $\mathcal S$, as well as a log resolution  $\sigma: \widetilde {\mathcal X}\to {\mathcal{X}}$  of the pair $({\mathcal{X}},{\mathbf \Delta}+\mathcal S')$ with centers contained in $\mathcal S'$, 
and a closed substack $\widetilde {\mathbf T}\subseteq \widetilde {\mathcal X}$ of codimension at least $2$, 
such that there is a commutative diagram
\begin{equation}\label{E:infamTY}
\xymatrix{
(\widetilde{\mathcal Y},\widetilde {\mathcal D}) \ar[r]^<>(0.5){\tilde \sigma} \ar[d]^{\tilde f}& ({\mathcal{Y}},\mathcal D) \ar[d]^{f}\\
\widetilde{\mathcal X}\ar[r]^{\sigma}&{\mathcal{X}}
}
\end{equation}
 that satisfies the following conditions, where we fix the notation  $\widetilde {\mathcal X}^\circ:=\widetilde {\mathcal X}-\widetilde {\mathbf T}$, $\widetilde {\mathcal Y}^\circ:=\widetilde {\mathcal Y}-\tilde f^{-1}(\widetilde {\mathbf T})$, $\widetilde {\mathcal D}^\circ=\widetilde {\mathcal D}|_{\mathcal Y^\circ}$, and $\tilde f^\circ =\tilde f|_{\widetilde {\mathcal Y}^\circ}$:
 
\begin{enumerate}[label=(\arabic*)]

\item \label{S:FLRX-1}
Letting $\tilde {\mathbf \Delta}$ be the reduced snc divisor with support  $\sigma^{-1}\mathbf \Delta$, the morphism $\tilde f:(\widetilde {\mathcal Y},\widetilde {\mathcal D})\to \widetilde {\mathcal X}$ and the divisor $\tilde {\mathbf \Delta}$ satisfy the conditions in \S \ref{S:InnitialFamConst}, and if $\mathcal D=0$, then $\widetilde {\mathcal D}=0$. 

\item \label{S:FLRX-2} 
Letting $\tilde {\mathcal A}:=\sigma^*\mathcal A$,  $\tilde {\m{L}}:=\omega_{\widetilde {\mathcal{Y}}/\widetilde {\mathcal{X}}}(\lceil\widetilde {\D} \rceil )\otimes 
\tilde f^*\tilde {\m{A}}^{\otimes -1}$, and $\tilde {\mathcal L}^\circ:=\tilde{\mathcal L}|_{\mathcal Y^\circ}$, we observe that $\tilde {\mathcal A}(-\tilde{\mathbf \Delta})$ is big, and we  assume there is a section 
$$\tilde s\in H^0(\widetilde {\mathcal Y}^\circ,\tilde {\mathcal L}^{\circ \otimes m})$$ 
that satisfies the conditions in \S \ref{S:non-zero-section} for the morphism $\tilde f^\circ:(\widetilde {\mathcal Y}^\circ,\widetilde {\mathcal D}^\circ)\to \widetilde {\mathcal X}^\circ$.

 \item \label{S:FLRX-3}

The morphism 
$\tilde \sigma $ is schematic projective and birational with divisorial exceptional locus, and setting $\widetilde {\mathcal E}\subseteq \widetilde {\mathcal X}$ to be the exceptional locus of $\sigma$ (which by construction is contained in $\sigma^{-1}(\mathcal S'$)), $\mathbf Z:= \sigma(\widetilde {\mathcal E})\subseteq \mathcal X$,  $\widetilde {\mathcal U}:=\widetilde {\mathcal X}-\widetilde {\mathcal E}$, $\mathcal U:=\mathcal X-\mathbf Z$,  $\widetilde {\mathcal Y}_{\widetilde {\mathcal U}}:=\tilde f^{-1}(\widetilde {\mathcal U})$, and $\mathcal Y_{\mathcal U}:=f^{-1}(\mathcal U)$, we have that $\tilde \sigma  $ restricts to an isomorphism 
 $
 \tilde \sigma |_{\widetilde {\mathcal Y}_{\widetilde {\mathcal U}}}: \widetilde {\mathcal Y}_{\widetilde {\mathcal U}} \stackrel{\sim}{\rightarrow} \mathcal \mathcal Y_{\mathcal U}
 $
 over  $\sigma|_{\widetilde {\mathcal U}}: \widetilde {\mathcal U}\stackrel{\sim}{\to} \mathcal U$.
 
 \item \label{S:FLRX-4}
 Under the identifications  of \ref{S:FLRX-3}, we have   that  
 $  \widetilde {\mathcal D}|_{\widetilde {\mathcal Y}_{\widetilde {\mathcal U}}} =\mathcal D|_{\mathcal Y_{\mathcal U}}$, from which it follows that $  \tilde {\mathcal L}|_{\widetilde {\mathcal Y}_{\widetilde {\mathcal U}}}=\mathcal L|_{\mathcal Y_{\mathcal U}}$, and we assume that

\begin{equation}\label{E:infamTs}
 \tilde s|_{\widetilde {\mathcal Y}^\circ-\tilde f^{-1}(\sigma^{-1}\mathcal S')}=\tilde \sigma^*s|_{\widetilde {\mathcal Y}^\circ-\tilde f^{-1}(\sigma^{-1}\mathcal S')}.
 \end{equation}

\end{enumerate}

 \begin{rem} 
\label{R:S:FLRX-2} 
 If in  \ref{S:FLRX-2} we only assumed, \emph{a priori}, that the section $\tilde s$ satisfies the conditions in \S \ref{S:non-zero-section} except possibly for the condition that it not admit a root, then, \emph{a posteriori},  from \ref{S:FLRX-4} it would follow that $\tilde s$ did not admit a root.  Indeed, if $\tilde s$ admitted a root, then $s|_{\widetilde {\mathcal Y}^\circ-\tilde f^{-1}(\sigma^{-1}\mathcal S')}$ would admit a root, which would imply that the irreducible cover $\pi_m:\mathcal Y_m\to \mathcal Y$, when restricted to $\mathcal Y^\circ-\mathcal S'$ was reducible, a contradiction. 
\end{rem}
 
 \begin{rem}
 In this paper we will always be able to take $\widetilde {\mathbf T}=\emptyset$ and $\mathcal S'=\mathcal S$; we include the slightly more complicated set-up here for use in forthcoming work.
\end{rem}

\subsection{Singular cotangent vectors and conormal substacks}
We take a brief digression to discuss singular cotangent vectors and conormal substacks.  
Let $f:\mathcal Y\to \mathcal X$ be a 
surjective schematic projective morphism between smooth proper DM stacks over $\mathbb C$.
We have a diagram
$$
\xymatrix{
\mathcal Y \ar[d]_f& \mathcal  Y   \times _{\mathcal X} T^\vee \mathcal X \ar[l]_<>(0.5){p_1} \ar[d]^{p_2} \ar[r]^<>(0.5){df}& T^\vee\mathcal Y\\
\mathcal X & \ar[l]_p T^\vee\mathcal X
}
$$
where here $p$ denotes the structure morphism for the cotangent bundle, and the $p_i$ denote the projections.  
Inside the cotangent bundle $T^\vee{\mathcal X}$ of $\X$, we define the \emph{locus of singular cotangent vectors}  to be
\begin{equation}\label{E:DefSf}
S_{f}:=p_{2}\left(d f^{-1}(0)\right) \subseteq {T}^{\vee} {\X}.
\end{equation}

Given an integral substack $\mathcal Z\subseteq \mathcal X$, we define the \emph{conormal substack $T_{\mathcal Z}^\vee\mathcal X \subseteq T^\vee\mathcal X$ to $\mathcal Z$}  be the sub-stack of $T^\vee\mathcal X$ defined by the closure of the conormal bundle to the smooth locus of $\mathcal Z$.

\subsection{Hodge module construction}

The goal of this subsection is to prove the following theorem generalizing  \cite[Thm.~2.2]{PS17} and \cite[\S 6]{WW23}.
Note that for a coherent graded $\mathscr{A}_{\X}$-module $
\m{G}_{\bullet}=\bigoplus_{k \in \mathbb{Z}} \m{G}_{k}$, we use the symbol $\m{G}$, without the dot, to denote the associated coherent sheaf on $T^{\vee} \X$; it has the property that $p_{*} \m{G} \simeq \m{G}_{\bullet}$ as modules over $\mathscr{A}_{\X}$, where $p:T^\vee\mathcal X\to \mathcal X$ is the structure map.

\begin{teo}

\label{T:Existence-of-Hodge-module-and-G-subsheaf-theorem}
Assume one is in the  situation described in  \S \ref{S:InnitialFamConst} and \S\ref{S:non-zero-section}, 
and in particular, one is given  the data in \eqref{E:infamf}, \eqref{E:infamdelta}, \eqref{E:infamA}, and \eqref{E:HdgMdsdef}.  
There exists a Hodge module $\mathsf M$ on $\mathcal X$ with strict support $\mathcal X$, and a graded $\mathscr{A}_{\XX}$-module $\m{G}_{\bullet}$ that is coherent over $\mathscr{A}_{\XX}$, which  have the following properties:
 
 \begin{enumerate}[label=(\alph*)]
\item
\label{T:I:EHMGa}
  If $\mathcal D=0$, then, as a coherent sheaf on the cotangent bundle, one has  $\operatorname{Supp} \mathcal {G} \subseteq S_{f}\subseteq T^\vee\mathcal X$.

\item \label{T:I:EHMGb}
 $\m{G}_{0} \cong \m{A}$.

\item 
\label{T:I:EHMGc}
For all $k\gg 0$ one has $\mathcal G_k|_{\mathcal X-\mathbf \Delta}=0$, and if $\mathcal D=0$, then  for all $k$ one has  $\m{G}_{k}|_{\mathcal X-\mathbf \Delta}$ is torsion-free.

\item 
\label{T:I:EHMGd}
For  $\left(\mathcal{M}, F_{\bullet} \mathcal{M}\right)$ the regular holonomic filtered $D_{\mathcal X}$-module  underlying  $\mathsf M$, 
there is an inclusion of graded $\mathcal{A}_{\X}$-modules $\m{G}_{\bullet} \subseteq \operatorname{gr}_{\bullet}^{F} \mathcal{M}$.

\item \label{T:I:EHMGe}

$F_{k} \mathcal{M}=0$ for $k<0$.
\end{enumerate}
\end{teo}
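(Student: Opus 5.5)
The plan follows the construction of \cite[Thm.~2.2]{PS17} and \cite[\S 6]{WW23}, carried out on \'etale charts and descended, using the \emph{ad hoc} push forward of \S \ref{S:PFHMStack}. First, the section $s\in H^0(\mathcal Y,\mathcal L^{\otimes m})$ of \eqref{E:HdgMdsdef} determines the $m$-th cyclic covering $\pi_m:\mathcal Y_m\to \mathcal Y$. We take a resolution $\mu:\mathcal Z\to \mathcal Y_m$ that is an isomorphism over the locus where $\mathcal Y_m$ is smooth, so that $\mathcal Z$ is a smooth DM stack, integral by \cite[Lem.~3.15(a)]{EVvan92}, with $h=f\circ\pi_m\circ\mu:\mathcal Z\to \mathcal X$ schematic and projective. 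Over $\mathcal X-\mathbf\Delta$ we arrange, as in \cite{WW23}, that the branch divisor (which contains $\operatorname{Supp}\mathcal D$) together with $\mathcal D$ is relative snc. Using the Esnault--Viehweg description of $\pi_{m*}\mathcal O_{\mathcal Y_m}$, the summand $\mathcal L^{-1}$ appears with fractional twist $\lceil \mathcal D\rceil$, and one obtains a morphism
\[
\mathcal L^{-1}\otimes \omega_{\mathcal Y}\to (\pi_m\circ\mu)_*\omega_{\mathcal Z},
\]
i.e.\ a nonzero map $f^*\mathcal A\otimes\omega_{\mathcal Y/\mathcal X}\to(\pi_m\mu)_*\omega_{\mathcal Z/\mathcal X}$ twisted by $\omega_{\mathcal Y/\mathcal X}(\lceil\mathcal D\rceil)^{-1}$. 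All of these steps are \'etale local on $\mathcal X$ and canonical, so they descend.

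Next, let $\mathsf M$ be the strict-support-$\mathcal X$ summand of the pure Hodge module $\mathcal H^0(h^{ah}_+\mathbb Q^H_{\mathcal Z}[\dim\mathcal Z])$ from \Cref{D:AHPFGHodge}. This summand exists by the decomposition by strict support, which is canonical and therefore descends from charts. Its underlying filtered module $(\mathcal M,F_\bullet)$ is a direct summand of $\mathcal H^0 h^{ah}_+(\mathcal O_{\mathcal Z},F)$. By \Cref{L:PSP10.2} we have
\[
\operatorname{gr}^F_\bullet\mathcal H^0h^{ah}_+\mathcal O_{\mathcal Z}\cong R^0h_*(\omega_{\mathcal Z/\mathcal X}\otimes C_{\mathcal Z\to\mathcal X,\bullet}).
\]
Since $C_{\mathcal Z\to\mathcal X,k}$ vanishes for $k<0$ after the shift, this gives \ref{T:I:EHMGe}, and the lowest piece is $h_*\omega_{\mathcal Z/\mathcal X}$; the lowest piece is generically nonzero, so it lies in the $\mathcal X$-supported summand. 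Following \cite{PS17}, we define $\mathcal G_\bullet$ as the image of the graded $\mathscr A_{\mathcal X}$-linear map
\[
R^0f_*\big(\mathcal L^{-1}\otimes\omega_{\mathcal Y/\mathcal X}\otimes C_{(\mathcal Y,\lceil\mathcal D\rceil)\to\mathcal X,\bullet}\big)\otimes\mathcal A\longrightarrow \operatorname{gr}^F_\bullet\mathcal M,
\]
obtained by comparing the log complex \eqref{D:CXXYYlog} with $C_{\mathcal Z\to\mathcal X}$ via the cyclic cover (pull back of log forms along the branched cover), followed by projection to the strict support summand. Coherence over $\mathscr A_{\mathcal X}$ is automatic from properness of $f$.

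The properties then follow as in the variety case. For \ref{T:I:EHMGd}, $\mathcal G_\bullet\subseteq\operatorname{gr}^F_\bullet\mathcal M$ holds by construction. For \ref{T:I:EHMGb}, in degree $0$ the source is $f_*(\mathcal L^{-1}\otimes\omega_{\mathcal Y/\mathcal X})\otimes\mathcal A$ twisted appropriately, which equals $f_*\mathcal O_{\mathcal Y}(-\lceil\mathcal D\rceil)$ up to the $\mathcal A$ factor. Since the fibers are connected, this is $\mathcal O_{\mathcal X}$ generically, giving $\mathcal G_0\cong\mathcal A$ after checking injectivity into $\operatorname{gr}_0^F\mathcal M$ (this uses that $s$ has no root). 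For \ref{T:I:EHMGa}, when $\mathcal D=0$, the support of $R^0f_*$ of the Koszul-type complex $C_{\mathcal Y\to\mathcal X}$ lies in $S_f$, by the local computation of \cite[Prop.~2.11]{PS17}. For \ref{T:I:EHMGc}, over $\mathcal X-\mathbf\Delta$ the family is relative snc, so the log complex becomes the relative log de Rham complex, whose graded pieces vanish beyond the relative dimension. Torsion-freeness when $\mathcal D=0$ comes from $R^0$ of a complex of locally free sheaves over a smooth morphism, which is computed by variation of Hodge structure. Each of these statements is \'etale local on $\mathcal X$, so it suffices to verify them on charts $U\to\mathcal X$, where they are exactly \cite[Thm.~2.2]{PS17} and \cite[\S 6]{WW23}.

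The main obstacle is making the map from the log complex to $\operatorname{gr}^F_\bullet\mathcal M$ canonical enough to descend, and checking the degree-zero identification \ref{T:I:EHMGb}. For descent, I would rely on the canonical isomorphisms in the proof of \Cref{L:PSP10.2}, together with the fact that the cyclic cover, the functorial resolution, and the strict support decomposition all commute with \'etale base change. The degree-zero identification requires tracking the fractional part $\lceil\mathcal D\rceil-\mathcal D$ through the eigensheaf decomposition, as in \cite[Lem.~6.2]{WW23}.
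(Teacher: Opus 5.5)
Your architecture matches the paper's. You take a cyclic cover, take $\mathsf M$ to be the strict-support-$\mathcal X$ part of $\mathcal H^0(h^{ah}_+\mathbb Q^H_{\mathcal Z}[\dim\mathcal Z])$ (equivalent to the paper's torsion-free quotient), define $\mathcal G_\bullet$ as the image of a log Koszul-type complex, and reduce each property to \cite{PS17,WW23} on \'etale charts. However, the twist you use to define $\mathcal G_\bullet$ is wrong, and with it \ref{T:I:EHMGb} fails in exactly the new (pairs) case. Since $\mathcal L^{-1}\otimes\omega_{\mathcal Y/\mathcal X}\cong f^*\mathcal A(-\lceil\mathcal D\rceil)$, the lowest graded piece of your source $R^0f_*(\mathcal L^{-1}\otimes\omega_{\mathcal Y/\mathcal X}\otimes C_{(\mathcal Y,\lceil\mathcal D\rceil)\to\mathcal X,\bullet})\otimes\mathcal A$ is $\mathcal A^{\otimes 2}\otimes f_*\mathcal O_{\mathcal Y}(-\lceil\mathcal D\rceil)$. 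You claim $f_*\mathcal O_{\mathcal Y}(-\lceil\mathcal D\rceil)$ is generically $\mathcal O_{\mathcal X}$ by connectedness of fibres. This is false as soon as $\mathcal D$ has a horizontal component: a nonzero effective divisor on a connected proper fibre admits no nonzero section of $\mathcal O(-D)$, so this sheaf is $0$. Moreover, every component of $\mathcal D$ meeting $f^{-1}(\mathcal X-\mathbf\Delta)$ dominates $\mathcal X$, since its strata are smooth and proper, hence surjective, over $\mathcal X-\mathbf\Delta$. So for any family with nonzero boundary your $\mathcal G_0$ vanishes. Even for $\mathcal D=0$, the extra $\otimes\mathcal A$ gives $\mathcal A^{\otimes 2}$ rather than $\mathcal A$.

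The correct normalization, used in the paper following \cite[Lem.~6.1]{WW23}, is the twist $\mathcal L^{-1}\otimes\omega_{\mathcal Y/\mathcal X}(\lceil\mathcal D\rceil)\cong f^*\mathcal A$, with no further factor of $\mathcal A$. Because $\omega_{\mathcal Y}(\lceil\mathcal D\rceil)\otimes\bigwedge^j\mathcal T_{\mathcal Y}(-\log\lceil\mathcal D\rceil)\cong\Omega^{\dim\mathcal Y-j}_{\mathcal Y}(\log\lceil\mathcal D\rceil)$, the terms of the complex are $\mathcal L^{-1}$-twisted log forms. Their poles are killed on $\mathcal Z$ by $\phi^*\mathcal L^{-1}\hookrightarrow\mathcal O_{\mathcal Z}(-\mathcal D_{\mathcal Z})$ as in \eqref{eq:tautoeq'}; this is precisely where the hypothesis that the zero set of $s$ contains $\operatorname{Supp}\mathcal D$ enters. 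The lowest piece of the source is then $\mathcal A\otimes f_*\mathcal O_{\mathcal Y}\cong\mathcal A$. No eigensheaf bookkeeping of $\lceil\mathcal D\rceil-\mathcal D$ is needed, since that was handled when producing $s$ (\Cref{R:L([D])section}); note also that \cite[Lem.~6.2]{WW23} is the vanishing statement behind \ref{T:I:EHMGc}, not the degree-zero identification. Injectivity into $F_{\rm low}\mathcal M\cong h_*\omega_{\mathcal Z/\mathcal X}$ (\Cref{C:10.6-PS}) holds because a nonzero map from a line bundle into a torsion-free sheaf is injective. The no-root condition is used for irreducibility of $\mathcal Y_m$, not here.

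Two smaller points:
\begin{itemize}
\item You cannot in general arrange the branch divisor to be relative snc over $\mathcal X-\mathbf\Delta$; for example, $(s)$ can contain whole fibres over divisors of $\mathcal X-\mathbf\Delta$. This is not needed, though, since \ref{T:I:EHMGc} only involves the $\mathcal Y$-side complex.
\item $\mathcal G_k$ being the image of a locally free sheaf does not by itself give torsion-freeness; that requires the argument of \cite[Prop.~2.12]{PS17}.
\end{itemize}
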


The rest of this subsection is devoted to proving the theorem; the proof is summarized in \S \ref{S:PfThmHdgGG}. 

\subsubsection{Cyclic covering construction}\label{S:CycCovConst}

Following \cite[\S 2.3]{PS17}, and \cite[\S 6]{WW23} in the case of pairs,  we take the $m$-th cyclic covering $\pi_m\colon \mathcal{Y}_m\to \mathcal{Y}$ determined by the section $s$. 
As mentioned before, we use the conventions in \cite[\S 3.5]{EVvan92} and note that the construction there carries over directly to the case of stacks.   
We note that the conditions on the diagonal of the stack $\mathcal Y_m$ hold by say 
\cite[Lem.~B.25 (2) and (4)]{CMW18}, and the fact that the coarse moduli space $Y_m$ is projective follows since the induced morphism on coarse moduli spaces $Y_m\to Y$ is a dominant finite morphism of proper algebraic spaces, with $Y$ projective, and so the pull back of an ample on $Y$ is ample on the algebraic space $Y_m$ (see, e.g., \cite[Lem.~A.1]{CMZslope_stability}).

Let $\nu\colon \Z \to \mathcal{Y}_m$ be a strong log resolution of the pullback of $\D$ to the normalization of $\mathcal{Y}_m$. Set $\phi=\pi\circ\nu$ and $h=f\circ\phi$. By the cyclic covering construction, the pull back $\phi^*\frac{1}{m}(s)$ of the $\mathbb Q$-divisor $\frac{1}{m}(s)$ is an effective divisor on $\Z$ that contains $\D_{\mathcal Z}$, the support of $\phi^*(\D)$. By construction, $\D_{\Z}$ is normal crossing.

The following diagram summarizes the situation:
\begin{equation}\label{Geometry-Hodge-theory-construction}
\begin{tikzcd}
\m{Z} \arrow[r, "\nu"'] \arrow[rrd, "h", bend right] \arrow[rr, "\phi", bend left] & \m{Y}_m \arrow[r, "\pi_m"'] & \m{Y} \arrow[d, "f"] \\
                                                                                      &                           & \X                  
\end{tikzcd}
\end{equation}
Note that all of the morphisms are schematic, surjective,  and projective, that $\mathcal Z$ is a smooth proper DM stack over $\mathbb C$ with projective coarse moduli space, and that the fibers of $h$ are connected.

\subsubsection{Constructing the Hodge module $\mathcal M$ and the graded  sheaf $\mathcal G_\bullet$}

Let ${\mathcal{H}^0}(h_+^{ah}\mathbb Q^H_{\mathcal Z}[\dim \mathcal Z])$ 
be the polarizable Hodge module of weight $\dim \mathcal Z$ on $\mathcal X$, as defined in 
\eqref{D:Hif*MHdg} via the \emph{ad hoc} push forward of the constant Hodge module on $\mathcal Z$.  
When restricted to the smooth locus of $h$, and say, an \'etale cover of $\mathcal X$, this is just the polarizable variation of Hodge structure on the middle cohomology of the fibers. Let $\mathsf M$ be the polarizable Hodge module of weight $\dim \mathcal Z$ on $\mathcal X$ defined as the torsion-free quotient of  ${\mathcal{H}^0}(h_+^{ah}\mathbb Q^H_{\mathcal Z}[\dim \mathcal Z])$ (see, e.g., \cite[Lem.~3.16]{CMZpositivity}).
\begin{equation}\label{E:PS-MHdgMd-def}
{\mathcal{H}^0}(h_+^{ah}\mathbb Q^H_{\mathcal Z}[\dim \mathcal Z])\twoheadrightarrow \mathsf M;
\end{equation}
$\mathsf M$ has strict support $\X$ (see \cite[Def.~3.15]{CMZpositivity}). Let $\mathcal{M}$ denote the underlying regular holonomic left $D_{\X}$-module, and $F_{\bullet} \mathcal{M}$ its Hodge filtration. Since $F_{\bullet} \mathcal{M}$ is a good filtration, the associated graded $\mathcal{A}_{\X}$-module
$$
\operatorname{gr}_{\bullet}^{F} \mathcal{M}=\bigoplus_{k \in \mathbb{Z}} \operatorname{gr}_{k}^{F} \mathcal{M}
$$
is coherent over $\mathscr{A}_{\X}=\operatorname{Sym}^\bullet \m{T}_{\X}$. One has the following more concrete description of $\operatorname{gr}_{\bullet}^{F} \mathcal{M}$, generalizing \cite[Prop.~2.4]{PS17}:
\begin{pro}

\label{P:Generalization-prop10.2-PS} In the category of graded $\mathscr{A}_{\X}$-modules, using the complex $C_{\mathcal Z\to \mathcal X,\bullet}$ defined in \eqref{D:CXXYY}, there is a surjection
 \begin{equation}\label{E:Generalization-prop10.2-PS}
 R^0h_* (\omega_{\mathcal Z/\mathcal X}\otimes_{\mathcal O_{\mathcal Z}}C_{\mathcal Z\to \mathcal X,\bullet})\twoheadrightarrow \operatorname{gr}_{\bullet}^{F} \mathcal{M}.
 \end{equation}

\end{pro}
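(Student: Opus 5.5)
The plan is to combine \Cref{L:PSP10.2}, applied to the constant Hodge module on $\mathcal Z$, with the fact that the torsion-free quotient \eqref{E:PS-MHdgMd-def} is a surjection of Hodge modules, so that it is strict with respect to the Hodge filtrations and hence surjective on associated graded pieces.

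First, apply \Cref{L:PSP10.2} to the schematic projective morphism $h:\mathcal Z\to\mathcal X$ with $\mathsf M_{\mathcal Z}=\mathbb Q^H_{\mathcal Z}[\dim\mathcal Z]$, taking $i=0$. The underlying filtered left $D_{\mathcal Z}$-module of $\mathbb Q^H_{\mathcal Z}[\dim \mathcal Z]$ is $(\mathcal O_{\mathcal Z},F_\bullet)$ with $\operatorname{gr}^F_\bullet\mathcal O_{\mathcal Z}$ concentrated in degree $0$. This is checked étale locally, using the usual convention for the indexing of the Hodge filtration of the trivial Hodge module, which is the convention under which \eqref{E:grSpCXY} and \eqref{E:PS-Prop10.2} are set up. The lemma then gives a canonical isomorphism of graded $\mathscr A_{\mathcal X}$-modules
$$\operatorname{gr}^F_\bullet\mathcal H^0(h^{ah}_+(\mathcal O_{\mathcal Z},F_\bullet))\cong R^0h_*(\omega_{\mathcal Z/\mathcal X}\otimes_{\mathcal O_{\mathcal Z}}C_{\mathcal Z\to\mathcal X,\bullet}).$$
By the remark following \Cref{D:AHPFGHodge}, the left-hand side is the associated graded of the filtered $D$-module underlying $\mathcal H^0(h^{ah}_+\mathbb Q^H_{\mathcal Z}[\dim\mathcal Z])$.

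Second, show that the quotient map in \eqref{E:PS-MHdgMd-def}, being a morphism of polarizable Hodge modules, is strictly compatible with the Hodge filtrations. Strictness means $F_k\mathsf M$ is the image of $F_k$ of the source, so the induced map on $\operatorname{gr}^F_\bullet$ is surjective. Strictness and surjectivity on graded pieces are étale local properties. After pulling back along an étale presentation $U\to\mathcal X$, the map becomes a morphism of polarizable Hodge modules on a variety; there, strictness holds by Saito's theory, since morphisms of mixed Hodge modules are strict. Surjectivity of the graded map of sheaves then descends to $\mathcal X$. The map is also $\mathscr A_{\mathcal X}$-linear, since it comes from a $D$-module morphism compatible with filtrations. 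Composing with the isomorphism from the first step gives \eqref{E:Generalization-prop10.2-PS}.

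The main obstacle is bookkeeping rather than substance. One must make sure the indexing shifts are consistent: left versus right modules, the shift by $\dim\mathcal Z$, and the normalization of $C_{\mathcal Z\to\mathcal X,\bullet}$. With those consistent, the degree-$0$ term lands in the stated grading, matching \cite[Prop.~2.4]{PS17}. One must also check that the torsion-free quotient taken in \cite[Lem.~3.16]{CMZpositivity} is étale locally the usual Hodge-module quotient, so that Saito's strictness applies chart by chart. I would first verify the index convention on the smooth locus of $h$. There, $R^0h_*$ of the degree-zero piece should recover the lowest Hodge piece $h_*\omega_{\mathcal Z/\mathcal X}$, which fixes the normalization.
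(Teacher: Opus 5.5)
Your proposal is correct and follows the paper's proof: apply \Cref{L:PSP10.2} to $h$ with $i=0$ and the constant Hodge module to get $\operatorname{gr}^F_\bullet\mathcal H^0(h^{ah}_+(\mathcal O_{\mathcal Z},F_\bullet))\cong R^0h_*(\omega_{\mathcal Z/\mathcal X}\otimes_{\mathcal O_{\mathcal Z}}C_{\mathcal Z\to\mathcal X,\bullet})$, then pass to the torsion-free quotient \eqref{E:PS-MHdgMd-def}. The one difference is that you state explicitly, and check \'etale locally, the strictness of morphisms of Hodge modules that is needed to get surjectivity on $\operatorname{gr}^F_\bullet$; the paper uses this only implicitly when it says that $\operatorname{gr}^F_\bullet\mathcal M$ is a quotient.
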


\begin{proof}
This follows from 
\Cref{L:PSP10.2}; 
 there is an isomorphism of graded $\mathscr A_{\mathcal X}$-modules 
$$
\operatorname{gr}^F_\bullet {\mathcal{H}^0}(h^{ah}_* (\mathcal O_{\mathcal Z},F_\bullet)) \cong
R^0h_* (\omega_{\mathcal Z/\mathcal X}\otimes_{\mathcal O_{\mathcal Z}}C_{\mathcal Z\to \mathcal X,\bullet}).
$$
By construction, $\mathcal M$ is the torsion-free  quotient of ${\mathcal{H}^0}(h^{ah}_* (\mathcal O_{\mathcal Z},F_\bullet))$, so that 
$\operatorname{gr}^F_\bullet \mathcal M$ is a quotient of  $\operatorname{gr}^F_\bullet {\mathcal{H}^0}(h^{ah}_* (\mathcal O_{\mathcal Z},F_\bullet))$, completing the proof.
\end{proof}

From the cyclic covering construction, we have an induced inclusion 
\[
\phi^*\m{L}^{-1}\hookrightarrow \m{O}_{\Z}(-\D_{\Z}),
\]
where $\mathcal L=\omega_{\mathcal{Y}/\mathcal{X}}( \lceil{\D} \rceil )\otimes f^*\m{A}^{\otimes -1}$ is defined in \eqref{E:HdgMdLLdef}.
After combining this with the inclusion $\phi^*\Omega^{k}_{\mathcal{Y}}(\log \lceil \mathcal D\rceil )\hookrightarrow \Omega^k_{\Z}(\log \D_{\Z})$, we hence obtain injective morphisms
\begin{equation}
\label{eq:tautoeq'}
\phi^*\big(\m{L}^{-1}\otimes\Omega^{k}_{\mathcal{Y}}(\log \lceil \mathcal D\rceil )\big) \hookrightarrow \phi^*\mathcal L^{-1}\otimes \Omega^k_{\Z}(\log \D_{\Z})\hookrightarrow\Omega^k_{\Z}(\log \D_{\Z})(-\D_{\Z})\hookrightarrow \Omega^k_{\Z}
\end{equation}
for $k=0,1,\dots,\dim \mathcal Z$.

Recalling the definition of the complexes $ C_{\mathcal Z\to \mathcal X, \bullet}$ \eqref{D:CXXYY} and $C_{(\mathcal{Y},\lceil \mathcal D\rceil)\to \mathcal X,\bullet}$  \eqref{D:CXXYYlog}, we use \eqref{eq:tautoeq'} to give the following generalization of  \cite[Prop.~2.8]{PS17} and \cite[Lem.~6.1]{WW23}: 

\begin{lem}

\label{L:Generalization-prop2.8-PS}
The inclusion \eqref{eq:tautoeq'} induces a non-trivial morphism of complexes of graded $\mathscr{A}_{\mathcal{X}}^\bullet$-modules
 \begin{equation}\label{E:Generalization-prop2.8-PS}
{R}^0f_*\big( \m{L}^{-1}\otimes_{\mathcal O_{\mathcal Y}} \omega_{\mathcal{Y}/\mathcal{X}}( \left \lceil \D \rceil \right)\otimes_{\mathcal O_{\mathcal Y}} C_{(\mathcal{Y},\lceil \mathcal D\rceil)\to \mathcal X,\bullet}\big)\to{ R}^0h_*\big(\omega_{\Z/\mathcal{X}}\otimes_{\mathcal O_{\mathcal Z}} C_{\mathcal Z\to \mathcal X, \bullet}\big). 
\end{equation}
\end{lem}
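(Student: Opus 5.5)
The plan is to follow \cite[Prop.~2.8]{PS17} and \cite[Lem.~6.1]{WW23}: build the morphism at the level of complexes on $\mathcal Z$ and $\mathcal Y$, push forward, and check non-triviality. Everything is defined étale locally on $\mathcal X$ and is compatible with étale base change, so the argument reduces to the variety case in diagram \eqref{E:TubPresDpf0}. Once the maps are shown to be canonical, descent glues them on the stack.

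\emph{Step 1 (termwise map).} The $j$-th term of $C_{(\mathcal Y,\lceil\mathcal D\rceil)\to\mathcal X,k}$ is $f^*\mathscr A^{k-m+n-j}_{\mathcal X}\otimes\bigwedge^j\mathcal T_{\mathcal Y}(-\log\lceil\mathcal D\rceil)$. Here $n=\dim\mathcal Y=\dim\mathcal Z$, since $\phi$ is generically finite. Using $\bigwedge^j\mathcal T_{\mathcal Y}(-\log\lceil\mathcal D\rceil)\cong\Omega^{n-j}_{\mathcal Y}(\log\lceil\mathcal D\rceil)\otimes\omega_{\mathcal Y}(\lceil\mathcal D\rceil)^{-1}$, the $j$-th term of the source complex becomes
\[
\mathcal L^{-1}\otimes\Omega^{n-j}_{\mathcal Y}(\log\lceil\mathcal D\rceil)\otimes f^*\omega_{\mathcal X}^{-1}\otimes f^*\mathscr A^{k-m+n-j}_{\mathcal X}.
\]
On the other side, $\omega_{\mathcal Z/\mathcal X}\otimes\bigwedge^j\mathcal T_{\mathcal Z}\cong\Omega^{n-j}_{\mathcal Z}\otimes h^*\omega_{\mathcal X}^{-1}$. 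Pulling back by $\phi$ and applying \eqref{eq:tautoeq'} with $k=n-j$ gives injections
\[
\phi^*(\text{source term}_j)\hookrightarrow \omega_{\mathcal Z/\mathcal X}\otimes C_{\mathcal Z\to\mathcal X,k}^{\,j}.
\]
Composing with adjunction $\mathrm{id}\to\phi_*\phi^*$ gives a map from the source term to $\phi_*$ of the target term.

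\emph{Step 2 (compatibility with differentials).} The differentials of both complexes are induced by contraction along $\mathcal T\to f^*\mathcal T_{\mathcal X}$, or equivalently by wedging with pulled-back forms from $\mathcal X$ under the identification with forms. Pullback of logarithmic forms commutes with wedge product and with pullback of forms from $\mathcal X$. Multiplication by the section $\phi^*\mathcal L^{-1}\hookrightarrow\mathcal O_{\mathcal Z}(-\mathcal D_{\mathcal Z})$ is $\mathcal O$-linear and involves no derivatives, so it also commutes with the differentials. These are local identities, so they can be checked étale locally as in the variety case. This gives a morphism of complexes of graded $f^*\mathscr A_{\mathcal X}$-modules, linear over $\mathscr A_{\mathcal X}$:
\[
\mathcal L^{-1}\otimes\omega_{\mathcal Y/\mathcal X}(\lceil\mathcal D\rceil)\otimes C_{(\mathcal Y,\lceil\mathcal D\rceil)\to\mathcal X,\bullet}\longrightarrow \phi_*\bigl(\omega_{\mathcal Z/\mathcal X}\otimes C_{\mathcal Z\to\mathcal X,\bullet}\bigr).
\]
Applying $Rf_*$, using $Rf_*\circ\phi_*\to Rh_*$, and taking $R^0$ (hypercohomology in degree $0$) yields \eqref{E:Generalization-prop2.8-PS}.

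\emph{Step 3 (non-triviality).} This is the main point to verify. I would check it in graded degree $k=0$. There the source complex reduces to its degree-$0$ term in the lowest slot, and one gets $f_*(\mathcal L^{-1}\otimes\omega_{\mathcal Y/\mathcal X}(\lceil\mathcal D\rceil))=f_*f^*\mathcal A\cong\mathcal A$, using that $f$ has connected fibers. The target is $h_*\omega_{\mathcal Z/\mathcal X}$. The induced map is the pushforward of the nonzero injection $\phi^*(\mathcal L^{-1}\otimes\omega_{\mathcal Y}(\lceil\mathcal D\rceil))\hookrightarrow\omega_{\mathcal Z}$ twisted by $h^*\omega_{\mathcal X}^{-1}$. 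It is injective because $h_*$ is left exact and $\mathcal A\to f_*\phi_*\phi^*f^*\mathcal A$ is injective, $\phi$ being surjective.

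The main obstacle I expect is the bookkeeping in degree $0$. One must confirm that hypercohomology $R^0$ in the lowest graded piece is genuinely the $h_*$ of the top term, with no contributions from other terms. In that graded degree the other terms $f^*\mathscr A^{<0}$ vanish, so the complexes collapse to a single sheaf. One must also confirm that the indexing conventions in \eqref{D:CXXYY} and \eqref{D:CXXYYlog} line up.
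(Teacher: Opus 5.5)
Your construction of the morphism is the paper's: termwise via $\bigwedge^j\mathcal T_{\mathcal Y}(-\log\lceil\mathcal D\rceil)\otimes\omega_{\mathcal Y}(\lceil\mathcal D\rceil)\cong\Omega^{n-j}_{\mathcal Y}(\log\lceil\mathcal D\rceil)$, the inclusion \eqref{eq:tautoeq'}, adjunction for $\phi$, and compatibility with the $\mathcal O$-linear Koszul-type differentials. The only difference is non-triviality.

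\begin{itemize}
\item The paper just reduces \'etale locally to \cite[Prop.~2.8]{PS17} and \cite[Lem.~6.1]{WW23}.
\item You check it directly on the stack in the lowest graded piece, where the map is $\mathcal A\cong f_*f^*\mathcal A\to h_*\omega_{\mathcal Z/\mathcal X}$.
\end{itemize}

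Your version is self-contained. It actually gives injectivity in that degree, which is the content the paper later obtains (again by citation) in \Cref{P:PS2.9}.

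One correction is needed: you have the wrong graded degree. With the unshifted indexing of \eqref{D:CXXYY} and \eqref{D:CXXYYlog}, the lowest graded degree is $k=\dim\mathcal X-\dim\mathcal Y$, which is $k=m-n$ in your notation, not $k=0$. By your own formula $f^*\mathscr A^{k-m+n-j}_{\mathcal X}$, at $k=0$ every term with $0\le j\le n-m$ survives. So the two complexes collapse to a single sheaf in degree $0$ only at $k=m-n$. The shift that moves this piece to degree $0$ happens only later, in \S\ref{S:PfThmHdgGG}. With that fix, your Step 3 goes through as written.
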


\begin{proof}
The construction of the morphism is identical to that in  \cite[Prop.~2.8]{PS17} and \cite[Lem.~6.1]{WW23}, which we sketch here for convenience. 
 It suffices by adjunction to construct a morphism of complexes  
$
\phi^*\big( \m{L}^{-1}\otimes_{\mathcal O_{\mathcal Y}} \omega_{\mathcal{Y}/\mathcal{X}}( \left \lceil \D \rceil \right)\otimes_{\mathcal O_{\mathcal Y}} C_{(\mathcal{Y},\lceil \mathcal D\rceil)\to \mathcal X,\bullet}\big)
\to \omega_{\Z/\mathcal{X}}\otimes_{\mathcal O_{\mathcal Z}} C_{\mathcal Z\to \mathcal X, \bullet}
$.  To this end, using that $\mathcal T_{\mathcal Y}\left(-\log  \lceil \D \rceil \right)= (\Omega_{\mathcal Y}(\log  \lceil \D \rceil ))^\vee$, one constructs for each $k$ a natural isomorphism of sheaves (where the superscript on the complex $C_{(\mathcal{Y},\lceil \mathcal D\rceil)\to \mathcal X,\bullet}$ indicates the term in the complex) 
$$
\phi^*\big( \m{L}^{-1}\otimes_{\mathcal O_{\mathcal Y}} \omega_{\mathcal{Y}/\mathcal{X}} \left( \lceil \D \rceil \right)\otimes_{\mathcal O_{\mathcal Y}} C^{k-d}_{(\mathcal{Y},\lceil \mathcal D\rceil)\to \mathcal X,\bullet}\big)
$$
$$
\to \phi^*\left(\mathcal L^{-1}\otimes_{\mathcal O_{\mathcal Y}}f^*\omega_{\mathcal X}^{-1}\otimes_{\mathcal O_{\mathcal Y}}\Omega^{k}_{\mathcal Y} \left( \log \lceil \D \rceil \right)\otimes_{\mathcal O_{\mathcal Y}} f^*\mathscr A^{\bullet -n-k}_{\mathcal X}\right).
$$
Using the morphism in \eqref{eq:tautoeq'}, one obtains a morphism from the sheaf on the right above to $h^*\omega_{\mathcal X}^{-1}\otimes_{\mathcal O_{\mathcal Z}}h^*\mathscr A_{\mathcal X}^{\bullet -n-k}\otimes \Omega^k_{\mathcal Z}$, which is isomorphic to $\omega_{\mathcal Z/\mathcal X}\otimes_{\mathcal O_{\mathcal Z}}C^{k-d}_{\mathcal Z\to \mathcal X,\bullet}$, which completes the construction of the morphism.  One then checks that these morphisms of the terms in the complex in fact give a morphism on the complexes.   
  The proof that the morphism is non-trivial can be checked \'etale locally, and so follows directly from  \cite[Prop.~2.8]{PS17} and \cite[Lem.~6.1]{WW23}.
\end{proof}

We now define a graded $\mathcal A_{\mathcal X}$-module $\mathcal G_{\bullet}$ to be the image of the composition of \eqref{E:Generalization-prop2.8-PS} and \eqref{E:Generalization-prop10.2-PS}; in other words, we have a commutative diagram
\begin{equation}\label{E:PS-GG-def}
\xymatrix@C=1.5em{
{R}^0f_*\big( \m{L}^{-1}\otimes_{\mathcal O_{\mathcal Y}} \omega_{\mathcal{Y}/\mathcal{X}}( \left \lceil \D \rceil \right)\otimes_{\mathcal O_{\mathcal Y}} C_{(\mathcal{Y},\lceil \mathcal D\rceil) \to \mathcal X,\bullet}\big) \ar[r] \ar@{->>}[rrd]& { R}^0h_*\big(\omega_{\Z/\mathcal{X}}\otimes_{\mathcal O_{\mathcal Z}} C_{\mathcal Z\to \mathcal X, \bullet}\big) \ar@{->>}[r] & \operatorname{gr}^F_\bullet \mathcal M\\
&&\mathcal G_\bullet  \ar@{^(->}[u]
}
\end{equation}

\begin{rem}
Note that in addition to the morphism  $f\colon (\mathcal{Y},\D)\to \mathcal{X}$, the construction of  $\m{G}_{\bullet}$ depends on the line bundle $\mathcal A$, as well as on the chosen section $s$ of $\mathcal L^{\otimes m}$.
\end{rem}

The remainder of the subsection will be devoted to showing that $\mathcal M$ and $\mathcal G_\bullet$ satisfy the conditions of \Cref{T:Existence-of-Hodge-module-and-G-subsheaf-theorem}; the proof is then summarized in \S \ref{S:PfThmHdgGG}.

\subsubsection{Properties of $\operatorname{gr}^F_\bullet \mathcal M$ and $\mathcal G_\bullet$} 

We now establish some properties of $\operatorname{gr}^F_\bullet \mathcal M$ and $\mathcal G_\bullet$, following \cite{PS17,WW23}.  Most of these can be reduced to the case of a variety after taking an \'etale cover; we include proofs when this is not the case. 

We start with properties of $\mathcal M$.  First we generalize \cite[Cor.~2.5]{PS17}: 

\begin{cor}

\label{C:10.6-PS} One has $\operatorname{gr}_{k}^{F} \mathcal{M}=0$ for $k<\dim \mathcal X-\dim \mathcal Y$, whereas $\operatorname{gr}_{k}^{F} \mathcal{M} \cong h_{*} \omega_{\m{Z} / \X}$ for $k= \dim \mathcal X-\dim \mathcal Y$.
\end{cor}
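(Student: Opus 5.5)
The plan is to read off the lowest graded pieces from the surjection of \Cref{P:Generalization-prop10.2-PS}, and then argue that this surjection is an isomorphism in the lowest degree.

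\textbf{Step 1: vanishing of the complex.} Write $n=\dim \mathcal Z=\dim\mathcal Y$ and $d=\dim \mathcal X$. By \eqref{D:CXXYY}, the term of $C_{\mathcal Z\to \mathcal X,k}$ in cohomological degree $-j$ is $h^*\mathscr A^{k-d+n-j}_{\mathcal X}\otimes\bigwedge^j\mathcal T_{\mathcal Z}$. For $k<d-n$ every exponent $k-d+n-j$ is negative, so the whole complex vanishes. Hence $R^0h_*(\omega_{\mathcal Z/\mathcal X}\otimes C_{\mathcal Z\to\mathcal X,k})=0$, and the surjection gives $\operatorname{gr}^F_k\mathcal M=0$.

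\textbf{Step 2: the first nonzero degree.} For $k=d-n$, only the degree-$0$ term $h^*\mathscr A^0_{\mathcal X}=\mathcal O_{\mathcal Z}$ survives. So $C_{\mathcal Z\to\mathcal X,d-n}=\mathcal O_{\mathcal Z}$ in degree $0$, and the source of the surjection is $h_*\omega_{\mathcal Z/\mathcal X}$. This yields a surjection $h_*\omega_{\mathcal Z/\mathcal X}\twoheadrightarrow \operatorname{gr}^F_{d-n}\mathcal M$.

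\textbf{Step 3: injectivity, the main obstacle.} It remains to show this surjection is injective; equivalently, that the lowest Hodge piece of the torsion quotient in \eqref{E:PS-MHdgMd-def} vanishes. I would reduce to the variety case étale locally: all of the identifications (\Cref{L:PSP10.2}, the torsion-free quotient, and the Hodge filtration) are compatible with étale pullback, and injectivity of a sheaf map is étale local. So one may pull back along an étale $p:U\to\mathcal X$ from a smooth variety, where the statement is exactly \cite[Cor.~2.5]{PS17}.

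For completeness, the underlying argument there is as follows. $h_*\omega_{\mathcal Z/\mathcal X}$ is torsion-free, since $\mathcal Z$ is integral and $h$ is surjective. The kernel of the map is supported on a proper closed subset, because over the smooth locus of $h$ the Hodge module $\mathcal H^0h_+$ is already the variation of Hodge structure, with no torsion. A torsion-free sheaf has no nonzero subsheaf with proper support, so the kernel is zero.

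The point to check carefully is that the kernel is indeed torsion. This holds because the torsion part of $\mathcal H^0(h_+^{ah}\mathbb Q^H_{\mathcal Z}[\dim\mathcal Z])$ is a Hodge module supported on a proper closed substack, so its graded pieces are torsion sheaves. Combining this with torsion-freeness of $h_*\omega_{\mathcal Z/\mathcal X}$ gives $\operatorname{gr}^F_{d-n}\mathcal M\cong h_*\omega_{\mathcal Z/\mathcal X}$.
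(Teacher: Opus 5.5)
Your proposal is correct and follows the paper's proof. The paper likewise reads off the vanishing and the lowest term $h_*\omega_{\mathcal Z/\mathcal X}$ from the surjection of \Cref{P:Generalization-prop10.2-PS}. It then gets injectivity from $0\to F\mathcal M'\to h_*\omega_{\mathcal Z/\mathcal X}\to F\mathcal M\to 0$, where $\mathcal M'$ is the torsion submodule, together with torsion-freeness of $h_*\omega_{\mathcal Z/\mathcal X}$. The only difference is that the paper runs this torsion argument directly on the stack, so your \'etale-local reduction to \cite[Cor.~2.5]{PS17} is unnecessary; the underlying argument you sketch already works verbatim on $\mathcal X$.
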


\begin{proof}
For this we use \Cref{P:Generalization-prop10.2-PS}, and in particular the surjection \eqref{E:Generalization-prop10.2-PS}. 
 The first assertion is then clear because $C_{\Z\to \mathcal X, k}=0$ for $k<\dim \mathcal X-\dim \mathcal Z$ (see \eqref{D:CXXYY}). To prove the second assertion, observe that the complex $\omega_{\mathcal Z/\mathcal X}\otimes_{\mathcal O_{\mathcal Z}} C_{\Z\to \mathcal X, k}$, for $k=\dim \mathcal X-\dim \mathcal Y$, is just $\omega_{\mathcal Z/\mathcal X}$ in degree $0$, so that again by  \eqref{E:Generalization-prop10.2-PS}, we have that $\operatorname{gr}^F_{\dim \mathcal X-\dim \mathcal Y}\mathcal M=F_{\dim \mathcal X-\dim \mathcal Y}\mathcal M$ is a quotient of $h_*\omega_{\mathcal Z/\mathcal X}$.  
 
 Now recall that $(\mathcal M,F_\bullet)$ is defined as the torsion-free quotient of ${\mathcal{H}^0}(h^{ah}_* (\mathcal O_{\mathcal Z},F_\bullet))$; denote by $(\mathcal M',F_\bullet)$ the torsion submodule of ${\mathcal{H}^0}(h^{ah}_* (\mathcal O_{\mathcal Z},F_\bullet))$.   Then we have a short exact sequence, recalling $n=\dim \mathcal X$ and $d=\dim \mathcal Y$, 
$$
0 \to F_{n-d}\mathcal M'\to F_{n-d}{\mathcal{H}^0}(h^{ah}_* (\mathcal O_{\mathcal Z},F_\bullet)) = h_*\omega_{\mathcal Z/\mathcal X}\to F_{n-d}\mathcal M\to 0.
$$
Since $h_* \omega_{\mathcal Z/\mathcal X}$ is torsion-free, we are done.
\end{proof}

We also have the following, generalizing  \cite[Prop.~2.6]{PS17} and \cite[Cor.~2.7]{PS17}:

\begin{pro}

\label{P:support}
In the notation above: 
\begin{enumerate}[label=(\alph*)]
\item  The support of $C_{\Z\ra\X,\bullet }$ is equal to $d h^{-1}(0) \subseteq \mathcal Z\times _{\mathcal X} T^{\vee}{\X}$.

\item The support of $\operatorname{gr}_{\bullet}^F{\mathcal M}$ is a union of irreducible components of $S_{h}$, the locus of singular cotangent vectors.
\end{enumerate}

\end{pro}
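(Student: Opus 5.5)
Both assertions are \'etale local on $\mathcal X$, so the plan is to reduce to the case of varieties treated in \cite[Prop.~2.6, Cor.~2.7]{PS17}. Fix an \'etale morphism $p:U\to\mathcal X$ from a smooth variety, and let $h':\mathcal Z_U:=U\times_{\mathcal X}\mathcal Z\to U$ be the base change; since $h$ is schematic and projective, $\mathcal Z_U$ is a smooth quasi-projective variety and $h'$ is projective. The formation of all the relevant objects is compatible with \'etale base change. Since $p$ is \'etale, $dp$ identifies $T^\vee U$ with $U\times_{\mathcal X}T^\vee\mathcal X$. The complex $C_{\mathcal Z\to\mathcal X,\bullet}$ pulls back to $C_{\mathcal Z_U\to U,\bullet}$, as noted in the proof of \Cref{L:PSP10.2}. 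The differential $dh$ pulls back to $dh'$, so $dh^{-1}(0)$ pulls back to $dh'^{-1}(0)$ and $S_h$ pulls back to $S_{h'}$. Finally, the support of a coherent sheaf on $T^\vee\mathcal X$ (or on $\mathcal Z\times_{\mathcal X}T^\vee\mathcal X$) is the closed substack whose pull back along the \'etale cover is the support of the pulled back sheaf. Thus a closed-set equality or containment can be checked on an \'etale cover.

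For (a), the equality is then the variety statement \cite[Prop.~2.6]{PS17}. Concretely, $C_{\mathcal Z_U\to U,\bullet}$ is the Koszul complex on $\mathcal Z_U\times_U T^\vee U$ of the section induced by $dh':h'^*\Omega^1_U\to\Omega^1_{\mathcal Z_U}$. Its cohomology is supported exactly on the zero locus of that section, which is $dh'^{-1}(0)$. Descending along the cover gives (a).

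For (b), I would first show that the support of $\operatorname{gr}^F_\bullet\mathcal M$ is contained in $S_h$. By \Cref{P:Generalization-prop10.2-PS}, $\operatorname{gr}^F_\bullet\mathcal M$ is a quotient of $R^0h_*(\omega_{\mathcal Z/\mathcal X}\otimes C_{\mathcal Z\to\mathcal X,\bullet})$. Viewed on $T^\vee\mathcal X$, this is $R^0p_{2*}$ of a complex whose cohomology sheaves are supported on $dh^{-1}(0)$, by (a). The push forward along the proper map $p_2$ is supported on $p_2(dh^{-1}(0))=S_h$.

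It remains to see that the support is a \emph{union of irreducible components} of $S_h$. Here I would use, \'etale locally, that the support of $\operatorname{gr}^F_\bullet$ of a holonomic module is its characteristic variety, which is pure of dimension $\dim\mathcal X$ and conic Lagrangian. On the other side, $S_{h'}$ has dimension at most $\dim U$, by the standard Kashiwara estimate used in \cite[Cor.~2.7]{PS17}. A closed subset of pure dimension $\dim U$ contained in a set of dimension at most $\dim U$ is a union of irreducible components of it. Irreducible components are also compatible with \'etale pull back, which lets the conclusion descend.

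I expect the main obstacle to be purely formal: making precise that supports and irreducible components of substacks of $T^\vee\mathcal X$ descend along \'etale covers. This uses that $T^\vee U\to T^\vee\mathcal X$ is \'etale, that such covers are open on underlying topological spaces, and that they preserve dimension.
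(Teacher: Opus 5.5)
Your proposal is correct and follows the same route as the paper: both use the fact that supports are compatible with surjective \'etale base change to reduce to the variety case, and then invoke \cite[Prop.~2.6, Cor.~2.7]{PS17}. Your extra material only unpacks the cited results: the Koszul-complex description of $C_{\mathcal Z\to\mathcal X,\bullet}$, and the comparison of the pure $\dim\mathcal X$-dimensional characteristic variety with $S_h$, which has dimension at most $\dim \mathcal X$. That dimension comparison also handles the descent of the ``union of irreducible components'' property, which the paper leaves implicit.
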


\begin{proof} 
As the support of sheaves can be checked after a surjective \'etale base change, this reduces to the case of varieties, and so follows from \cite[Prop.~2.6]{PS17} and \cite[Cor.~2.7]{PS17}.
\end{proof}

We now move on to properties of $\mathcal G_\bullet$.
We start with a generalization of \cite[Prop.~2.9.]{PS17} and \cite[(6.3)]{WW23}: 

\begin{pro}\label{P:PS2.9}

One has $\m{G}_{k}=0$ for $k<\dim \mathcal X-\dim 
\mathcal Y$, whereas $\m{G}_{k} \cong  \m{A} \otimes 
f_{*} \m{O}_{\mathcal Y}\cong \mathcal A$ for $k=\dim \mathcal X-\dim 
\mathcal Y$.
\end{pro}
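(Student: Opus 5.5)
The plan is to compute the low-degree pieces of $\mathcal G_\bullet$ directly from the diagram \eqref{E:PS-GG-def}, following \cite[Prop.~2.9]{PS17} and \cite[(6.3)]{WW23}. Set $n=\dim\mathcal X$ and $d=\dim\mathcal Y$. The argument can be done globally on the stack, since every ingredient is a sheaf-level statement. Where an identification needs checking, such as the non-vanishing or the connectedness of the fibers, I would verify it after an \'etale base change $U\to\mathcal X$, which reduces to the case of varieties.

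\emph{Vanishing for $k<n-d$.} By the definition \eqref{D:CXXYYlog}, the degree-$j$ term of $C_{(\mathcal Y,\lceil\mathcal D\rceil)\to\mathcal X,k}$ is $f^*\mathscr A_{\mathcal X}^{k-n+d+j}\otimes\bigwedge^{-j}\mathcal T_{\mathcal Y}(-\log\lceil\mathcal D\rceil)$, for $-d\le j\le 0$. All these terms vanish once $k-n+d<0$. So the source of the composition in \eqref{E:PS-GG-def} is zero in those degrees, and hence $\mathcal G_k=0$. Alternatively, this also follows from $\mathcal G_\bullet\subseteq\operatorname{gr}^F_\bullet\mathcal M$ together with \Cref{C:10.6-PS}.

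\emph{The degree $k=n-d$.} Here the complex $C_{(\mathcal Y,\lceil\mathcal D\rceil)\to\mathcal X,n-d}$ reduces to $\bigwedge^d\mathcal T_{\mathcal Y}(-\log\lceil\mathcal D\rceil)=\omega_{\mathcal Y}(\lceil\mathcal D\rceil)^{-1}$ placed in degree $-d$. Its $R^0$ is therefore computed by the $R^{-d}$-type hypercohomology of a single sheaf in degree $-d$; with the paper's indexing this is just $f_*$ of that sheaf. Tensoring with $\mathcal L^{-1}\otimes\omega_{\mathcal Y/\mathcal X}(\lceil\mathcal D\rceil)$ gives
\[
\mathcal L^{-1}\otimes\omega_{\mathcal Y/\mathcal X}(\lceil\mathcal D\rceil)\otimes\omega_{\mathcal Y}(\lceil\mathcal D\rceil)^{-1}\otimes f^*\omega_{\mathcal X}\text{-twist}\cong f^*\mathcal A,
\]
where I am matching the $\omega_{\mathcal X}^{\pm1}$ factors exactly as in the proof of \Cref{L:Generalization-prop2.8-PS}. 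So the source in degree $n-d$ is $f_*f^*\mathcal A=\mathcal A\otimes f_*\mathcal O_{\mathcal Y}$. Since $f$ is proper with connected fibers and $\mathcal Y$ is normal, $f_*\mathcal O_{\mathcal Y}=\mathcal O_{\mathcal X}$. By \Cref{C:10.6-PS}, the target in this degree is $h_*\omega_{\mathcal Z/\mathcal X}$.

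The map in degree $n-d$ is $f_*$ of the composite $\phi^*\mathcal L^{-1}\otimes\phi^*\omega_{\mathcal Y}(\lceil\mathcal D\rceil)\hookrightarrow\omega_{\mathcal Z}$ coming from \eqref{eq:tautoeq'} with $k=d$, twisted by $h^*\omega_{\mathcal X}^{-1}$. This composite is injective, so its push forward $\mathcal A\to h_*\omega_{\mathcal Z/\mathcal X}$ is a nonzero map from a line bundle into a torsion-free sheaf, and is therefore injective. Hence the image $\mathcal G_{n-d}$ is isomorphic to $\mathcal A$.

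\emph{Main obstacle.} The delicate step is the bookkeeping. I must check that the degree-$(n-d)$ piece of $R^0f_*$ of the complex is exactly $f_*$ of the lowest term, with no contribution from higher cohomology, and that the twists by $\omega_{\mathcal X}$ and $\omega_{\mathcal Y/\mathcal X}$ cancel to leave $f^*\mathcal A$. I would handle this by writing out the indexing of \eqref{D:CXXYYlog} against \eqref{D:CXXYY} as in the proof of \Cref{L:Generalization-prop2.8-PS}.
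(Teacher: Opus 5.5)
Your outline matches the paper's: vanishing for $k<n-d$, identifying the degree-$(n-d)$ source of \eqref{E:PS-GG-def} as $\mathcal A\otimes f_*\mathcal O_{\mathcal Y}$, injectivity into $\operatorname{gr}^F_{n-d}\mathcal M\cong h_*\omega_{\mathcal Z/\mathcal X}$, then $f_*\mathcal O_{\mathcal Y}\cong\mathcal O_{\mathcal X}$. The vanishing argument is fine. However, the degree-$(n-d)$ computation, which you flag as the delicate step, is wrong as written.

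By your own formula, the degree-$j$ term of $C_{(\mathcal Y,\lceil\mathcal D\rceil)\to\mathcal X,n-d}$ is $f^*\mathscr A^{j}_{\mathcal X}\otimes\bigwedge^{-j}\mathcal T_{\mathcal Y}(-\log\lceil\mathcal D\rceil)$. This is zero for $j<0$. So the complex is $\mathcal O_{\mathcal Y}$ in degree $0$, not $\bigwedge^d\mathcal T_{\mathcal Y}(-\log\lceil\mathcal D\rceil)$ in degree $-d$; that term only appears once $k\ge n$. Both steps you build on the misidentification fail:
\begin{itemize}
\item $R^0f_*$ of a sheaf placed in degree $-d$ is $R^df_*$ of that sheaf, not $f_*$.
\item Since $\mathcal L^{-1}\otimes\omega_{\mathcal Y/\mathcal X}(\lceil\mathcal D\rceil)$ is already $f^*\mathcal A$, the extra factor $\omega_{\mathcal Y}(\lceil\mathcal D\rceil)^{-1}$ from your term survives. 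Nothing in the construction supplies a twist by $f^*\omega_{\mathcal X}$ that cancels it; the vague ``$f^*\omega_{\mathcal X}$-twist'' in your display is covering exactly this mismatch.
\end{itemize}

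The correct bookkeeping is short and is the paper's. By the definition \eqref{E:HdgMdLLdef} of $\mathcal L$, the complex $\mathcal L^{-1}\otimes\omega_{\mathcal Y/\mathcal X}(\lceil\mathcal D\rceil)\otimes C_{(\mathcal Y,\lceil\mathcal D\rceil)\to\mathcal X,n-d}$ is $f^*\mathcal A$ in degree $0$. So the degree-$(n-d)$ source is $f_*f^*\mathcal A\cong\mathcal A\otimes f_*\mathcal O_{\mathcal Y}$ by the projection formula. There are no higher direct images and no $\omega_{\mathcal X}$ factors.

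This is also the only identification consistent with your own description of the map. Since $\omega_{\mathcal Y/\mathcal X}(\lceil\mathcal D\rceil)\cong f^*\omega_{\mathcal X}^{-1}\otimes\Omega^d_{\mathcal Y}(\log\lceil\mathcal D\rceil)$, the degree-$0$ term $\mathcal O_{\mathcal Y}$ is the one sent by the top-form piece $\phi^*(\mathcal L^{-1}\otimes\Omega^d_{\mathcal Y}(\log\lceil\mathcal D\rceil))\hookrightarrow\Omega^d_{\mathcal Z}$ of \eqref{eq:tautoeq'}, which is the piece you invoke. A term $\bigwedge^d\mathcal T_{\mathcal Y}(-\log\lceil\mathcal D\rceil)$ would instead go with the $\Omega^0$ piece.

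With this corrected, your injectivity argument is valid and more direct than the paper's. The unit $f^*\mathcal A\to\phi_*\phi^*f^*\mathcal A$ is injective because $\phi$ is surjective and $\mathcal Y$ is integral. $\phi_*$ of the injection $h^*\mathcal A\hookrightarrow\omega_{\mathcal Z/\mathcal X}$ is injective, and $f_*$ is left exact. Finally, $h_*\omega_{\mathcal Z/\mathcal X}\to\operatorname{gr}^F_{n-d}\mathcal M$ is an isomorphism by the proof of \Cref{C:10.6-PS}.

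The paper instead checks \'etale locally that the surjection $\mathcal A\otimes f_*\mathcal O_{\mathcal Y}\twoheadrightarrow\mathcal G_{n-d}$ is an isomorphism and quotes \cite[Prop.~2.9]{PS17}. Your route avoids the reduction to varieties at this step, at the cost of spelling out the adjunction that the citation hides.
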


\begin{proof} We make use of \Cref{C:10.6-PS}. The first assertion is clear because $\operatorname{gr}_{k}^{F} \mathcal{M}=$ 0 for $k<\dim \mathcal X-\dim \mathcal Y$. For the second assertion, we note that for $k=n-d = \dim \mathcal X-\dim \mathcal Y$,  then by construction, $\m{G}_{n-d}$ is a quotient of the $\m{O}_{\X}$-module
$$
{R}^0f_*\big( \m{L}^{-1}\otimes_{\mathcal O_{\mathcal Y}} \omega_{\mathcal{Y}/\mathcal{X}}( \left \lceil \D \rceil \right)\otimes_{\mathcal O_{\mathcal Y}} C_{(\mathcal{Y},\lceil \D \rceil) \to \mathcal X,n-d}\big) \cong  
\m{A} \otimes_{\mathcal O_{\mathcal X}} R^{0} f_{*} C_{(\Y,\lceil \D \rceil)\ra \X, n-d} \cong  \m{A} \otimes_{\mathcal O_{\mathcal X}} f_{*} \m{O}_{\Y},
$$
where the first isomorphism comes from the definition \eqref{E:HdgMdLLdef}  of $\mathcal L$ and adjunction, and the second comes from inspection of the definition \eqref{D:CXXYYlog} of $ C_{(\Y,\lceil \D \rceil)\ra \X, n-d} $.
As to checking whether the surjection $\m{A} \otimes_{\mathcal O_{\mathcal X}} f_{*} \m{O}_{\Y}\twoheadrightarrow \m{G}_{n-d}$ is an isomorphism, this can be done \'etale locally and
so reduces to the case of varieties, and therefore to  \cite[Prop.~2.9.]{PS17}. 

Now we use the hypothesis that  $f:\mathcal Y\to \mathcal X$ has connected fibers to conclude that the canonical morphism $\mathcal O_{\mathcal X}\to f_*\mathcal O_{\mathcal Y}$ is an isomorphism;  this can be checked \'etale locally, and so reduces to the case of varieties as our morphism is assumed to be schematic.  
\end{proof}

Next we give a generalization of 
\cite[Prop.~2.10., Lem.~2.11., Prop.~2.12.]{PS17}:

\begin{pro}

\label{P:Strong-coherence-of-Higgs}
In the notation above, if $\mathcal D=0$: 
\begin{enumerate}[label=(\alph*)]
\item      We have $\operatorname{Supp} \m{G} \subseteq S_{f}\subseteq T^\vee\mathcal X$.
\label{P:Strong-coherence-of-Higgs(1)}

\item 
 Every irreducible component of $\operatorname{Supp} \m{G}$ in $T^\vee\mathcal X$ is the conormal substack of some integral substack  of $\X$.
 \label{P:Strong-coherence-of-Higgs(2)}
 
 \item
  For every $k \in \mathbb{Z}$, the sheaf $\m{G}_{k}$ is torsion-free on $\X - \mathbf{\Delta}_f$. 
  \label{P:Torsion-freeness-of-G-sm-locus} 
 
\end{enumerate}

\end{pro}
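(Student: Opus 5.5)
The plan is to reduce all three assertions to the case of varieties, \cite[Prop.~2.10, Lem.~2.11, Prop.~2.12]{PS17}, by passing to an \'etale cover $p:U\to\mathcal X$ from a smooth variety. Supports of coherent sheaves on $T^\vee\mathcal X$, the property of being a conormal variety, and torsion-freeness are all \'etale local. So the only real work is to check that everything in the construction of $\mathcal G_\bullet$ commutes with \'etale base change.

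First I form the fibered products $\mathcal Y_U$, $\mathcal Y_{m,U}$ and $\mathcal Z_U$ over $U$. Since $p$ is \'etale and all morphisms are schematic, these are schemes. Moreover:
\begin{itemize}
\item the cyclic cover of $\mathcal Y_U$ defined by $p^*s$ is the base change of $\pi_m$;
\item $\mathcal Z_U\to\mathcal Y_{m,U}$ is a strong log resolution;
\item $p^*\mathcal L$ and $p^*\mathcal A$ are the corresponding bundles on $\mathcal Y_U$ and $U$.
\end{itemize}
Flat base change for $R^0f_*$ and $R^0h_*$ gives $p^*$ of the source and target of \eqref{E:Generalization-prop2.8-PS}, and $p^*C_{\mathcal Z\to\mathcal X,\bullet}=C_{\mathcal Z_U\to U,\bullet}$ as in the proof of \Cref{L:PSP10.2}. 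Next, \Cref{L:PSP10.2} identifies $p^*\operatorname{gr}^F_\bullet\mathcal H^0 h^{ah}_+$ with $\operatorname{gr}^F_\bullet\mathcal H^0 h_{U+}$ compatibly with the surjection \eqref{E:Generalization-prop10.2-PS}. The torsion-free quotient also commutes with \'etale pullback, because torsion is detected \'etale locally (\cite[Lem.~3.16]{CMZpositivity}). Since $p^*$ is exact, $p^*\mathcal G_\bullet$ is exactly the sheaf $\mathcal G_{U,\bullet}$ that Popa--Schnell construct for $h_U$. I should also check that the construction of the map \eqref{E:Generalization-prop2.8-PS} is canonical, so that its pullback is the analogous map for $U$; this follows from its term-by-term local description.

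With this in hand the three parts follow. For (a), I use that $T^\vee U=U\times_{\mathcal X}T^\vee\mathcal X$ and $S_{f_U}=S_f\times_{\mathcal X}U$, since $df$ is compatible with \'etale base change. Then $\operatorname{Supp}\mathcal G$ pulls back to $\operatorname{Supp}\mathcal G_U\subseteq S_{f_U}$, and surjectivity of $p$ gives $\operatorname{Supp}\mathcal G\subseteq S_f$.

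For (b), let $W$ be an irreducible component of $\operatorname{Supp}\mathcal G$. Its preimage in $T^\vee U$ is a union of components of $\operatorname{Supp}\mathcal G_U$, and each of these is a conormal variety $T^\vee_{Z_i}U$. The images $p(Z_i)$ all lie in a single integral substack $\mathcal Z'$: take $\mathcal Z'$ to be the closure of the image of $W$ under $T^\vee\mathcal X\to\mathcal X$. Conormal spaces are compatible with \'etale maps, because smooth loci and conormal bundles pull back. Hence $W$ and $T^\vee_{\mathcal Z'}\mathcal X$ agree after pullback, so they agree.

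For (c), the snc discriminant is compatible with \'etale base change, i.e.\ $p^{-1}\mathbf\Delta_f=\mathbf\Delta_{f_U}$. Torsion-freeness of $\mathcal G_k|_{\mathcal X-\mathbf\Delta_f}$ then follows from the variety statement \cite[Prop.~2.12]{PS17}.

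The main obstacle I expect is the precise identification $p^*\mathcal G_\bullet\cong\mathcal G_{U,\bullet}$. This needs:
\begin{itemize}
\item the compatibility of the ad hoc push forward's graded pieces with restriction (built into \Cref{L:PSP10.2} through the descent data);
\item the compatibility of torsion-free quotients of Hodge modules with \'etale pullback;
\item irreducibility of $\mathcal Y_{m,U}$. If $U$ is not connected, or if $\mathcal Y_{m,U}$ becomes reducible, I would note that the Popa--Schnell arguments for (a)--(c) are local on the base and work componentwise, so irreducibility is not needed.
\end{itemize}
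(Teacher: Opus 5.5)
Your proposal is correct and takes the same route as the paper. The paper simply notes that supports and torsion subsheaves can be checked after a surjective \'etale base change, and then cites \cite[Prop.~2.10, Prop.~2.12]{PS17}. Your check that the construction of $\mathcal G_\bullet$ commutes with \'etale pullback (the cyclic cover, the resolution, the torsion-free quotient, and the maps \eqref{E:Generalization-prop2.8-PS} and \eqref{E:Generalization-prop10.2-PS}), together with your descent argument for the conormal components in (b), fills in details the paper leaves implicit.
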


\begin{proof}
As the support of sheaves can be checked after a surjective \'etale base change, \ref{P:Strong-coherence-of-Higgs(1)} and \ref{P:Strong-coherence-of-Higgs(2)}  reduce to the case of varieties, and so follow from \cite[Prop.~2.10 and Prop.~2.12]{PS17}. As torsion sub-sheaves can be detected after a surjective \'etale base change, \ref{P:Torsion-freeness-of-G-sm-locus} reduces to the case of varieties, and so follows from \cite[Prop.~2.12.]{PS17}.
\end{proof}

In the case where $\mathcal D\ne 0$, we will need a replacement later for \Cref{P:Strong-coherence-of-Higgs}\ref{P:Strong-coherence-of-Higgs(1)} (i.e., \cite[Prop.~2.10]{PS17}).  
Following \cite{WW23}, we will do this by investigating the term on the left in \eqref{E:PS-GG-def}, in the definition of $\mathcal G_\bullet$.  To that end, we define
\begin{equation}\label{E:tildeGGdef}
\check {\mathcal G}^0_\bullet = {R}^0f_*\big( \m{L}^{-1}\otimes_{\mathcal O_{\mathcal Y}} \omega_{\mathcal{Y}/\mathcal{X}}( \left \lceil \D \rceil \right)\otimes_{\mathcal O_{\mathcal Y}}C_{(\mathcal{Y}, \lceil \D \rceil) \to \mathcal X,\bullet}\big).
\end{equation}

While for each fixed $k$, one has that $\check {\mathcal G}^0 _k$ is a coherent $\mathcal O_{\mathcal X}$-module,  \emph{a priori}, from the  definition, the full graded module $\check {\m{G}}^0_\bullet$ is not necessarily a coherent $\m{O}_\mathcal{X}$-module. However, it is coherent on the complement of the pair discriminant locus,  as follows from \Cref{L:coherence}, below, which generalizes  \cite[Lem.~6.2]{WW23}.

\begin{lem}

\label{L:coherence}
In the notation above, we have 
\[\check{\m{G}}^0_m|_{\m{U}_s}=0\]
for every $m\gg0$, where $\m{U}_s :=(\mathcal X-\mathbf \Delta_f) \subseteq \mathcal{X}$ is  the complement of  the pair discriminant locus $\mathbf \Delta_f$ of $f$.
\end{lem}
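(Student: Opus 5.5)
The plan is to reduce to the case of varieties by étale descent and then rely on \cite[Lem.~6.2]{WW23}, supplying the one genuinely new input, namely that the statement is étale local on $\mathcal X$.

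First, the reduction. Choose an étale surjection $p:U\to \mathcal X$ from a smooth variety $U$, and form the fibered product $f_U:(Y_U,D_U)\to U$ with $D_U:=p'^*\mathcal D$. Here $Y_U$ is a scheme because $f$ is schematic. For each fixed $k$, the sheaf $\check{\mathcal G}^0_k$ is $R^0f_*$ of a single coherent term of the complex. Since $p$ is étale, hence flat, flat base change gives
$$p^*\check{\mathcal G}^0_k\cong R^0f_{U*}\big(\mathcal L_U^{-1}\otimes\omega_{Y_U/U}(\lceil D_U\rceil)\otimes C_{(Y_U,\lceil D_U\rceil)\to U,k}\big).$$
This uses three compatibilities: $p'^*\omega_{\mathcal Y/\mathcal X}=\omega_{Y_U/U}$, $p'^*\mathcal T_{\mathcal Y}(-\log\lceil\mathcal D\rceil)=\mathcal T_{Y_U}(-\log\lceil D_U\rceil)$, and $p^*\mathscr A_{\mathcal X}=\mathscr A_U$. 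These are the same identities used in the proof of \Cref{L:PSP10.2}.

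Next, the discriminant is compatible with this base change. Being relative snc is étale local on the base, since smoothness, the snc condition, and smoothness of strata over the base can all be checked after an étale base change. Hence $p^{-1}(\mathbf\Delta_f)=\Delta_{f_U}$ and $p^{-1}(\mathcal U_s)=U-\Delta_{f_U}$.

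Finally, vanishing of a coherent sheaf can be detected after a surjective étale pullback. So it suffices to show $\check G^0_m|_{U-\Delta_{f_U}}=0$ for all $m\gg0$, with one bound uniform in $m$. I would cover $U$ by finitely many affine pieces, using quasi-compactness of $\mathcal X$, so that one bound works everywhere.

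On $U$ this is exactly the situation of \cite[Lem.~6.2]{WW23}: $f_U$ is projective, relative snc over $U-\Delta_{f_U}$, and $\mathcal L_U$ is a line bundle. For the reader, I would sketch their argument. Over the relative snc locus one has the exact sequence
$$0\to\mathcal T_{Y_U/U}(-\log\lceil D_U\rceil)\to\mathcal T_{Y_U}(-\log\lceil D_U\rceil)\to f_U^*\mathcal T_U\to 0,$$
where surjectivity is precisely the relative snc hypothesis. This makes the complex $C_{(Y_U,\lceil D_U\rceil)\to U,\bullet}$ a Koszul-type complex. Its cohomology in high degrees is controlled by $f_U^*\mathscr A_U$-modules built from $\Omega^p_{Y_U/U}(\log\lceil D_U\rceil)$ with only finitely many $p$, since $p\le\dim Y_U-\dim U$. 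As a result, the graded module is killed in degrees beyond roughly $\dim Y_U-\dim U$ plus a constant, so $R^0f_{U*}$ vanishes in large degree.

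The main obstacle is the surjectivity $\mathcal T_{Y_U}(-\log)\to f^*\mathcal T_U$. This is where the full relative snc hypothesis (strata smooth over the base) is needed, and it is exactly what fails under weaker fiberwise hypotheses. Since I am invoking \cite{WW23} for it, the real work on the stack level is only checking that the relative snc discriminant, the log tangent sheaves and the complexes all commute with étale base change, which is routine.
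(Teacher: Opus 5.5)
Your proposal is correct and follows the same route as the paper, which likewise notes that the vanishing of $\check{\mathcal G}^0_m|_{\mathcal U_s}$ can be checked after a surjective \'etale base change and then invokes \cite[Lem.~6.2]{WW23} for varieties. One small refinement: you only need the inclusion $\Delta_{f_U}\subseteq p^{-1}(\mathbf\Delta_f)$, which always holds because relative snc is preserved by \'etale pullback; the equality you assert can fail if $\mathcal D$ has a normal-crossings (but not snc) component that splits into snc components after an \'etale cover.
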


\begin{proof}
As the triviality of $\check{\m{G}}^0_m|_{\m{U}_s}$ can  be checked after a surjective \'etale base change, this reduces to the case of varieties, and so follows from \cite[Lem.~6.2]{WW23}.
\end{proof}

\begin{rem}\label{R:relsnc}
    \Cref{L:coherence} is the only place in the paper where we use 
    the assumption that $f$ be a relatively snc morphism over an open substack, rather than just assuming the weaker condition that the general fiber of $f$ be an snc pair.   
    The point is that the proof of  \Cref{L:coherence}, or rather, the proof of \cite[Lem.~6.2]{WW23} in the case of varieties,   boils down to being able to re-write the logarithmic de Rham complex of a Hodge module in an easier way in the case where one has a relative snc morphism; see \cite[Prop. 2.3]{PTW18}.

\end{rem}

\subsubsection{Proof of \Cref{T:Existence-of-Hodge-module-and-G-subsheaf-theorem}}\label{S:PfThmHdgGG}
The proof of  \Cref{T:Existence-of-Hodge-module-and-G-subsheaf-theorem} is obtained by putting together the results about the graded $\mathcal{A}_{\X}$-module $\m{G}_{\bullet}$ 
that we have established so far.  We take $\mathsf M$ and $\mathcal G_\bullet$ to be the Hodge module and graded $\mathcal A_{\mathcal X}$-module defined in 
\S \ref{S:CycCovConst} in  \eqref{E:PS-MHdgMd-def} and \eqref{E:PS-GG-def}; let $\mathcal M$ be the underlying regular holonomic $D_{\mathcal X}$-module of $\mathsf M$.  
To align the indexing with that in \Cref{T:Existence-of-Hodge-module-and-G-subsheaf-theorem}, we replace  $\mathsf M$ (of weight $d=\dim \mathcal Y$)  with its Tate twist $\mathsf M(d-n)$  (of weight $2n-d$); 
this leaves the underlying regular holonomic ${D}_{\mathcal X}$-module $\mathcal{M}$ unchanged,
but replaces the filtration $F_{\bullet} \mathcal{M}$ by the shift $F_{\bullet+n-d} \mathcal{M}$.
Similarly, we replace $\m{G}_{\bullet}$ by the shift $\m{G}_{\bullet+n-d}$. 

With this set-up, the assertions \ref{T:I:EHMGd} and \ref{T:I:EHMGe} hold by construction.
The assertion \ref{T:I:EHMGa} is established in 
\Cref{P:Strong-coherence-of-Higgs}\ref{P:Strong-coherence-of-Higgs(1)}.  The assertion \ref{T:I:EHMGb} follows from  \Cref{P:PS2.9}.
The assertion \ref{T:I:EHMGc} is established in \Cref{P:Strong-coherence-of-Higgs}\ref{P:Torsion-freeness-of-G-sm-locus} in the case where $\mathcal D=0$, and in \Cref{L:coherence} in general. \qed

\subsection{Resolution of the singular locus of the Hodge module} In \Cref{T:Existence-of-Hodge-module-and-G-subsheaf-theorem} we have no control over the singular locus of the Hodge module $\mathsf M$.  For the Higgs bundle construction in \S\ref{S:HiggConstSection}, we will want the singular locus of the Hodge module, together with the boundary divisor, to form an snc divisor.  
At the expense of taking a further birational base change, we can achieve this:  

\begin{teo}\label{T:EHM+GFB}

Assume one is in the  situation described in
  \S \ref{S:InnitialFamConst}, \S\ref{S:non-zero-section}, and \S \ref{S:FLRA}. 
In particular, one is given  the data in \eqref{E:infamf}, \eqref{E:infamdelta}, \eqref{E:infamA}, and \eqref{E:HdgMdsdef}, 
and one assumes that $\mathcal X$ is proper over $\mathbb C$. There exists a blow-up $\sigma:\widetilde {\mathcal X}\to \mathcal X$ and a closed substack $\widetilde {\mathbf T}\subseteq \widetilde {\mathcal X}$ so that setting $\widetilde {\mathcal X}^\circ:=\widetilde {\mathcal X}-\widetilde {\mathbf T}$, one has the following.     There is a  Hodge module $\widetilde {\mathsf M}$ on $\widetilde {\mathcal X}$ with strict support $\widetilde {\mathcal X}$ and a graded ${\mathscr{A}}_{\widetilde {\XX}^\circ}$-module $\widetilde {\m{G}}^\circ_{\bullet}$ that is coherent over $\mathscr{A}_{\widetilde {\XX}^\circ}$, so that $\widetilde {\mathsf M}^\circ :=\widetilde {\mathsf M}|_{\widetilde {\mathcal X}^\circ}$ and  $\widetilde {\m{G}}^\circ_{\bullet}$ satisfy  conditions \ref{T:I:EHMGb}--\ref{T:I:EHMGe} of \Cref{T:Existence-of-Hodge-module-and-G-subsheaf-theorem} on $\widetilde {\mathcal X}^\circ$ with $\mathcal A$ replaced with $\tilde {\mathcal A}^\circ:=\tilde{\mathcal A}|_{\widetilde {\mathcal X}^\circ}$, where $\tilde {\mathcal A}:= \sigma^*\mathcal A$.   Moreover, 
letting $\tilde {\mathbf \Delta}$ be the reduced snc divisor with support  $\sigma^{-1}\mathbf \Delta$ and 
 letting $\widetilde {\mathcal S}$ be the singular locus of $\widetilde {\mathsf M}$,  the divisor $\tilde{\mathsf \Delta}\cup \widetilde {\mathcal S}$ is snc and $\tilde {\mathcal A}(-\tilde {\mathbf \Delta})$ is big.

\end{teo}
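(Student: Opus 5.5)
First apply \Cref{T:Existence-of-Hodge-module-and-G-subsheaf-theorem} on $\mathcal X$ to the data \eqref{E:infamf}, \eqref{E:infamdelta}, \eqref{E:infamA}, \eqref{E:HdgMdsdef}. This produces a Hodge module $\mathsf M$ with strict support $\mathcal X$. By \cite[Prop.~3.21]{CMZpositivity}, its singular locus $\mathcal S\subseteq \mathcal X$ is either empty or pure of codimension one, so it is a divisor. Now feed this $\mathcal S$ into the non-recursive hypothesis of \S\ref{S:FLRA}, which applies since $\mathcal X=\overline{\mathcal X}$ is proper. It gives:
\begin{itemize}
\item an effective divisor $\mathcal S'\supseteq\mathcal S$;
\item a log resolution $\sigma:\widetilde{\mathcal X}\to\mathcal X$ of $(\mathcal X,\mathbf\Delta+\mathcal S')$ with centers in $\mathcal S'$;
\item a codimension $\ge 2$ closed substack $\widetilde{\mathbf T}$;
\item the diagram \eqref{E:infamTY} with a section $\tilde s$ on $\widetilde{\mathcal Y}^\circ$ satisfying \ref{S:FLRX-1}--\ref{S:FLRX-4}.
\end{itemize}
Bigness of $\tilde{\mathcal A}(-\tilde{\mathbf\Delta})$ is recorded in \ref{S:FLRX-2}; it follows as in the end of the proof of \Cref{L:PWW5.3}, since $\sigma^*(\mathcal A(-\mathbf\Delta))$ is big and $\tilde{\mathbf\Delta}\le\sigma^*\mathbf\Delta$.

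Next, run the construction of \Cref{T:Existence-of-Hodge-module-and-G-subsheaf-theorem} a second time, for the family $\tilde f^\circ:(\widetilde{\mathcal Y}^\circ,\widetilde{\mathcal D}^\circ)\to\widetilde{\mathcal X}^\circ$ with the section $\tilde s$ and the line bundle $\tilde{\mathcal A}^\circ$. The hypotheses of \S\ref{S:InnitialFamConst} and \S\ref{S:non-zero-section} hold by \ref{S:FLRX-1}, \ref{S:FLRX-2}, and \Cref{R:S:FLRX-2}. Here $\widetilde{\mathcal X}^\circ$ sits in the proper stack $\widetilde{\mathcal X}$ with complement of codimension $\ge 2$. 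This produces a Hodge module $\widetilde{\mathsf M}^\circ$ on $\widetilde{\mathcal X}^\circ$ with strict support, together with a graded module $\widetilde{\mathcal G}^\circ_\bullet$ satisfying \ref{T:I:EHMGb}--\ref{T:I:EHMGe} with $\mathcal A$ replaced by $\tilde{\mathcal A}^\circ$. Since the complement of $\widetilde{\mathcal X}^\circ$ has codimension $\ge2$, let $\widetilde{\mathsf M}$ be the minimal (intermediate) extension of $\widetilde{\mathsf M}^\circ$ to $\widetilde{\mathcal X}$, using \cite[Lem.~3.17]{CMZpositivity}. This is a Hodge module with strict support $\widetilde{\mathcal X}$ that restricts to $\widetilde{\mathsf M}^\circ$. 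Because its singular locus is pure of codimension one, it equals the closure of the singular locus of $\widetilde{\mathsf M}^\circ$.

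The main step is to show that the singular locus $\widetilde{\mathcal S}$ lies in $\sigma^{-1}(\mathcal S')\cup\tilde{\mathbf\Delta}$; the snc property of $\tilde{\mathbf\Delta}\cup\widetilde{\mathcal S}$ then follows because $\sigma$ is a log resolution of $\mathbf\Delta+\mathcal S'$. To prove the containment, compare the two constructions over $\widetilde{\mathcal U}^\circ:=\widetilde{\mathcal X}^\circ-\sigma^{-1}(\mathcal S')$, where $\sigma$ is an isomorphism. By \ref{S:FLRX-3} and \ref{S:FLRX-4}, over this open set the family $\tilde f$ agrees with $f$, $\widetilde{\mathcal D}=\mathcal D$, $\tilde{\mathcal L}=\mathcal L$, and $\tilde s=\tilde\sigma^*s$. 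So the two cyclic covers agree over $\widetilde{\mathcal U}^\circ$. Their strong log resolutions $\widetilde{\mathcal Z}$ and $\mathcal Z$ may differ, but they are birational over this base. The torsion-free part of $\mathcal H^0 h^{ah}_+$ depends only on the generic variation of Hodge structure: it is the minimal extension of the VHS on the smooth locus, via the decomposition theorem and \'etale-local reduction to varieties. Hence $\widetilde{\mathsf M}|_{\widetilde{\mathcal U}^\circ}\cong\mathsf M|_{\mathcal U}$, and this restriction is smooth away from $\mathcal S$. I expect this identification to be the most delicate point: the choice of resolution of the cyclic cover, and the use of the torsion-free quotient rather than all of $\mathcal H^0$, have to be handled carefully, checked \'etale locally. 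Granting it, $\widetilde{\mathcal S}\subseteq\sigma^{-1}(\mathcal S')$, which is more than enough.

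Finally, take $\widetilde{\mathcal G}^\circ_\bullet$ as the graded module in the statement. Conditions \ref{T:I:EHMGb}--\ref{T:I:EHMGe} on $\widetilde{\mathcal X}^\circ$ hold by the second application of \Cref{T:Existence-of-Hodge-module-and-G-subsheaf-theorem}, and $\widetilde{\mathsf M}|_{\widetilde{\mathcal X}^\circ}=\widetilde{\mathsf M}^\circ$ by construction.
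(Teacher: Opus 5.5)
Your architecture is the paper's: apply \Cref{T:Existence-of-Hodge-module-and-G-subsheaf-theorem} on $\mathcal X$, feed $\mathcal S$ into \S\ref{S:FLRA}, apply the theorem again to $\tilde f^\circ$ over $\widetilde{\mathcal X}^\circ$, take the minimal extension, and trap the pure codimension-one locus $\widetilde{\mathcal S}$ in the complement of $\mathcal V:=\widetilde{\mathcal X}^\circ-\sigma^{-1}\mathcal S'$.

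The gap is in the one step with real content, the identification $\widetilde{\mathsf M}^\circ|_{\mathcal V}\cong\mathsf M|_{\mathcal V}$. You argue two things. First, the two resolutions of the cyclic cover are birational over $\mathcal V$. Second, the torsion-free quotient of $\mathcal H^0h^{ah}_+\mathbb Q^H_{\mathcal Z}[\dim\mathcal Z]$ is the minimal extension of the generic VHS. The second statement is true, but the generic VHS $R^dh_*\mathbb Q$ ($d$ the relative dimension) is not a birational invariant of the family.

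Suppose that over $\mathcal V$ the resolution $\widetilde{\mathcal Z}$ differs from $\mathcal Z$ by a further blow-up along a smooth center $C$. Take $C$ inside the exceptional locus of $\nu$ and having normal crossings with the boundary, so the result is still a strong log resolution. Then, over the locus where everything is smooth,
$R^d\tilde h_*\mathbb Q\cong R^dh_*\mathbb Q\oplus\bigoplus_{i=1}^{c-1}R^{d-2i}(h|_C)_*\mathbb Q(-i)$, with $c$ the codimension of $C$ in the fibres. The extra summands can have monodromy around a divisor $B\subseteq\mathcal V$ over which the family of centers $C\to\mathcal V$ degenerates. In that case $B\subseteq\widetilde{\mathcal S}$ but $B\not\subseteq\sigma^{-1}\mathcal S'$, and the snc conclusion can fail. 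The lowest Hodge piece $h_*\omega_{\mathcal Z/\mathcal X}$ is a birational invariant, but it does not detect the singular locus.

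The fix uses the freedom in the existence statement: make the two constructions compatible. For instance, in both applications take the resolution of the normalized cyclic cover to be a functorial strong log resolution. Functorial resolution commutes with open immersions and \'etale maps, and it is how resolutions of DM stacks are produced in the first place. By \ref{S:FLRX-3}, \ref{S:FLRX-4} and \eqref{E:infamTs}, the following all agree over $\mathcal V$:
\begin{itemize}
\item the families and boundary divisors;
\item the line bundles and sections;
\item the cyclic covers and their normalizations.
\end{itemize}
Hence $\widetilde{\mathcal Z}$ and $\mathcal Z$ agree over $\mathcal V$. Since the \emph{ad hoc} push forward and the torsion-free quotient commute with restriction to open substacks, $\widetilde{\mathsf M}^\circ|_{\mathcal V}=\mathsf M|_{\mathcal V}$ on the nose, with no decomposition-theorem argument needed. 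This is what the paper means by ``all the ingredients of the construction of $\mathsf M$ and $\widetilde{\mathsf M}^\circ$ agree on $\mathcal V$.'' With that choice, the rest of your argument, including taking $\widetilde{\mathcal S}$ as the closure of its trace on $\widetilde{\mathcal X}^\circ$ across the codimension-two $\widetilde{\mathbf T}$, goes through and coincides with the paper's proof.
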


\begin{proof}
Assume one is in the  situation described in  \S \ref{S:InnitialFamConst}, \S\ref{S:non-zero-section}, and \S \ref{S:FLRA}, in particular where one assumes that $\mathcal X$ is proper over $\mathbb C$.   Applying  \Cref{T:Existence-of-Hodge-module-and-G-subsheaf-theorem} to the morphism $f:(\mathcal Y,\mathcal D)\to \mathcal X$, one obtains 
a  Hodge module $ {\mathsf M}$ on $ {\mathcal X}$ with strict support $\mathcal X$ and a graded ${\mathscr{A}}_{ {\mathcal X}}$-module $ {\mathcal {G}}_{\bullet}$ that is coherent over $\mathscr{A}_{{\mathcal X}}$ (satisfying conditions \ref{T:I:EHMGa}--\ref{T:I:EHMGe} of \Cref{T:Existence-of-Hodge-module-and-G-subsheaf-theorem}).  Let $\mathcal S$ be the singular locus of $\mathsf M$.    

Now let  $\mathcal S'$ be a choice of effective snc divisor on $\mathcal X$ containing $\mathcal S$ with the properties described in \S \ref{S:FLRA}.  In particular, let $\sigma: \widetilde {\mathcal X}\to \mathcal X$ be a choice of  corresponding log resolution of the pair $(\mathcal X,\mathbf \Delta+\mathcal S')$ yielding a diagram  \eqref{E:infamTY} with the properties in \S \ref{S:FLRA}.  
The conditions \S \ref{S:FLRA} \ref{S:FLRX-1} and \ref{S:FLRX-2} allow us to apply  \Cref{T:Existence-of-Hodge-module-and-G-subsheaf-theorem} to the morphism $\tilde f^\circ:(\widetilde {\mathcal Y}^\circ,\widetilde {\mathcal D}^\circ)\to \widetilde {\mathcal X}^\circ$.  From this we obtain 
a  Hodge module $\widetilde {\mathsf M}^\circ$ on $\widetilde {\mathcal X}^\circ$ with strict support $\widetilde {\mathcal X}^\circ$ and a graded ${\mathscr{A}}_{\widetilde {\XX}^\circ}$-module $\widetilde {\m{G}}^\circ_{\bullet}$ that is coherent over $\mathscr{A}_{\widetilde {\XX}^\circ}$, satisfying conditions \ref{T:I:EHMGb}--\ref{T:I:EHMGe} of \Cref{T:Existence-of-Hodge-module-and-G-subsheaf-theorem} on $\widetilde {\mathcal X}^\circ$ with $\mathcal A$ replaced with $\tilde {\mathcal A}^\circ$.
By construction we have that 
the divisor $\tilde{\mathsf \Delta}\cup \sigma^{-1}{\mathcal S}$ is snc and $\tilde {\mathcal A}(-\tilde {\mathbf \Delta})$ is big.  

Now let $j:\widetilde {\mathcal X}^\circ \to \widetilde {\mathcal X}$  be the natural inclusion, and let $\widetilde {\mathsf M}:=j_{!*}\widetilde {\mathsf M}^\circ$ be the minimal extension  of $\widetilde {\mathsf M}^\circ$ to $\widetilde {\mathcal X}$ (see \S \ref{S:prelim-Dmod}).  The only thing to check now is that, defining $\widetilde {\mathcal S}$ to be the singular locus of $\widetilde {\mathsf M}$, we have that  $\tilde{\mathsf \Delta}\cup \widetilde {\mathcal S}$ is snc.  For this we use the conditions \S \ref{S:FLRA}\ref{S:FLRX-3} and \ref{S:FLRX-4}.  These conditions, in particular \eqref{E:infamTs}, tell us that all the ingredients of the construction of $\mathsf M$ and $\widetilde {\mathsf M}^\circ$ in \Cref{T:Existence-of-Hodge-module-and-G-subsheaf-theorem} agree on the common  dense open substack $\mathcal V:={\widetilde {\mathcal X}^\circ-\sigma^{-1}\mathcal S'} = \mathcal X-(\sigma (\widetilde {\mathbf T})\cup \mathcal S')$ of both $\widetilde {\mathcal X}$ and $\mathcal X$.
  So in fact we have
that
$\mathsf M$ and $\widetilde {\mathsf M}^\circ$ agree on  $\mathcal V$.  By definition $\mathsf M$ cannot have singular locus contained in $\mathcal V$, so neither can $\widetilde {\mathsf M}$.  Therefore,  $\widetilde {\mathcal S}$, which is of pure codimension $1$, must be contained in the complement of $\mathcal V$, whose divisorial locus is $\sigma^{-1}\mathcal S'$.  As  $\tilde{\mathsf \Delta}\cup \sigma^{-1}{\mathcal S}'$ is snc, we have that  $\tilde{\mathsf \Delta}\cup \widetilde {\mathcal S}$ is snc. 
\end{proof}

\section{Higgs bundle construction}\label{S:HiggConstSection}

The goal of this section is to prove the following theorem about graded logarithmic Higgs bundles, which builds off of the Hodge module construction in the previous subsection, and generalizes  \cite[Thm.~2.3]{PS17} and \cite[Prop.~6.3]{WW23}. We refer the reader to \cite[\S 7]{CMZpositivity}, where we discuss graded logarithmic Higgs bundles on stacks.

\begin{teo}

\label{T:refineHiggs}
Let $\mathcal X$ be a smooth proper integral DM stack over $\mathbb C$ with projective coarse moduli space, let $\mathsf \Delta$ be an snc divisor, let $\mathcal A$ be a line bundle on $\mathcal X$,  let $ {\mathbf T}\subseteq  {\mathcal X}$ be a closed substack of codimension at least $2$, and  set ${\mathcal X}^\circ:= {\mathcal X}- {\mathbf T}$.      Assume there  is a  Hodge module ${\mathsf M}$ on $ {\mathcal X}$ with strict support $ {\mathcal X}$ and a graded ${\mathscr{A}}_{{\XX}^\circ}$-module $ {\m{G}}^\circ_{\bullet}$ that is coherent over $\mathscr{A}_{ {\XX}^\circ}$, so that ${\mathsf M}^\circ := {\mathsf M}|_{ {\mathcal X}^\circ}$ and  ${\m{G}}^\circ_{\bullet}$ satisfy  conditions \ref{T:I:EHMGb}--\ref{T:I:EHMGe} of \Cref{T:Existence-of-Hodge-module-and-G-subsheaf-theorem} on $ {\mathcal X}^\circ$ with $\mathcal A$ replaced with ${\mathcal A}^\circ:={\mathcal A}|_{ {\mathcal X}}$.   Moreover,  
 letting ${\mathcal S}$ be the singular locus of $ {\mathsf M}$,  assume the divisor $\mathbf E:={\mathbf \Delta}\cup {\mathcal S}$ is snc.

There exists a graded $\mathscr{A}^\bullet_{ {\mathcal X}}(-\log  \mathbf \Delta)$-module $\m{F}_{\bullet}$  with the  following properties:
 \begin{enumerate}[label=(\alph*)]
\item
\label{P:I:HiggsFa}
$\mathcal F_0$ is a line bundle and  $\mathcal A(-{\mathbf \Delta}) \subseteq \mathcal F_0 $.

\item
\label{P:I:HiggsFb} $\mathcal F_k$ is a reflexive coherent $\mathcal O_{\mathcal X}$-module for each $k\ge 0$.

\item
\label{P:I:HiggsFc} 
There exists a (graded logarithmic) Higgs bundle $\mathcal E_\bullet$ on ${\mathcal X}$ with Higgs field
$$
\theta_\bullet :\mathcal E_\bullet \longrightarrow \Omega^1_{{\mathcal X}}(\log { {\mathbf E}}) \otimes\mathcal E_{\bullet +1}
$$ 
such that $\mathcal F_\bullet \subseteq \mathcal E_\bullet$ and $\theta_\bullet (\mathcal F_\bullet)\subseteq \Omega^1_{{\mathcal X}}(\log  \mathbf \Delta) \otimes\mathcal F_{\bullet +1}$.

\item
\label{P:I:HiggsFd} 
The pair $(\mathcal E_\bullet ,\theta_\bullet)$ 
is the (graded logarithmic) Higgs bundle associated to the Deligne extension with eigenvalues in $[0,1)$ of a  polarizable variation of Hodge structure on $\mathcal U:= {\mathcal X}- {\mathbf E}\subseteq \mathcal X$,  and $\mathcal E_k=0$ for $k<0$. 

\end{enumerate}
\end{teo}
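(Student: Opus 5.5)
The plan is to follow the proof of \cite[Thm.~2.3]{PS17} and \cite[Prop.~6.3]{WW23}, reducing to the case of varieties on an \'etale (or finite flat) cover wherever a local statement is needed, and working globally on $\mathcal X$ only for reflexive hulls and descent.

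First, construct $(\mathcal E_\bullet,\theta_\bullet)$. Since $\mathsf M$ has strict support $\mathcal X$ and singular locus $\mathcal S\subseteq \mathbf E$, its restriction to $\mathcal U=\mathcal X-\mathbf E$ is a polarizable variation of Hodge structure $\mathbb V$. Take the Deligne extension of $\mathbb V$ with eigenvalues in $[0,1)$, which is a vector bundle $\mathcal V$ with logarithmic connection along the snc divisor $\mathbf E$. Extend the Hodge filtration by the usual $j_*\cap$ recipe. The Deligne extension is canonical, so it is compatible with \'etale pullback and descends; this is as in \cite[\S 7]{CMZpositivity}. Then set $\mathcal E_k:=\operatorname{gr}^F_k\mathcal V$ with the induced log Higgs field. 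The vanishing $\mathcal E_k=0$ for $k<0$ follows from \ref{T:I:EHMGe}, because $F_k\mathcal M\cap \mathcal V = F_k\mathcal V$ on the locus where $\mathcal M$ is the minimal extension. This settles \ref{P:I:HiggsFd}.

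Next, compare with $\operatorname{gr}^F\mathcal M$ and define $\mathcal F_\bullet$. By Saito's description of the minimal extension via the $V$-filtration, \'etale locally there are natural maps $F_k\mathcal V\to F_k\mathcal M$ that are isomorphisms over $\mathcal U$. Compatibility of the Hodge and $V$-filtrations, as used in \cite[\S 9]{PS17}, then shows the following. On $\mathcal X^\circ$, after removing a codimension-$2$ locus where $\mathbf E$ has at most one branch, $\operatorname{gr}^F\mathcal M$ restricted to $\mathscr A(-\log \mathbf E)$ receives a map from $\mathcal E_\bullet$ that is injective in a suitable sense. I would follow Popa--Schnell: let $\mathcal G'_\bullet$ be the preimage, or image, of $\mathcal G^\circ_\bullet$ in $\mathcal E_\bullet$ over the big open set, twist by $\mathcal O(-\mathbf\Delta)$ as appropriate, and take $\mathcal F_k$ to be the reflexive hull, i.e. $j_*$ from the big open set, of the $\mathscr A(-\log \mathbf\Delta)$-submodule it generates. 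Reflexivity \ref{P:I:HiggsFb} is then automatic, because $\mathcal E_k$ are vector bundles and the complement has codimension $\ge 2$. By \ref{T:I:EHMGb}, $\mathcal G_0\cong\mathcal A$; mapping into the line bundle $\mathcal E_0$ (rank $1$, since it equals $F_0$ of the lowest piece) gives $\mathcal A(-\mathbf\Delta)\subseteq\mathcal F_0$, which proves \ref{P:I:HiggsFa}. Taking saturation in $\mathcal E_0$ ensures $\mathcal F_0$ is a line bundle.

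The main obstacle is \ref{P:I:HiggsFc}: showing that $\theta$ maps $\mathcal F_\bullet$ into $\Omega^1(\log\mathbf\Delta)\otimes\mathcal F_{\bullet+1}$ rather than merely into $\Omega^1(\log \mathbf E)\otimes\mathcal F_{\bullet+1}$. The required poles are only along $\mathbf\Delta$, not along $\mathcal S-\mathbf\Delta$. This is where \ref{T:I:EHMGc} enters: $\mathcal G_k|_{\mathcal X-\mathbf\Delta}$ vanishes for $k\gg0$, so $\mathcal G$ is annihilated there by powers of vector fields and is supported near the zero section. Together with the $V$-filtration analysis along components of $\mathcal S$ not in $\mathbf\Delta$, as in \cite[Prop.~6.3]{WW23} and \cite[\S 9]{PS17}, this shows that log vector fields along $\mathbf E$ that are non-log along $\mathcal S-\mathbf\Delta$ act holomorphically on the image of $\mathcal G$. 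All of these statements are local and can be checked after an \'etale base change, so I would reduce them verbatim to the variety case. The one genuinely stacky point is that the constructions (the Deligne extension, the map $\mathcal E\to\operatorname{gr}\mathcal M$, and the generated submodule) are canonical and so satisfy descent. Finally, the inclusion is extended over $\mathbf T$ and over the codimension-$2$ strata by reflexivity of both sides.
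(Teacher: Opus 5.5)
Your overall architecture is the paper's. You descend Deligne's lattice $\widetilde{\mathcal V}^{\ge 0}$ and its Hodge filtration to the stack, use \cite[Lem.~2.13]{PS17} \'etale locally to get $\mathcal E_k|_{\mathcal X^\circ}\subseteq \operatorname{gr}^F_k\mathcal M|_{\mathcal X^\circ}$, build $\mathcal F_\bullet$ from $\mathcal G^\circ_\bullet$ and $\mathcal E_\bullet$ with reflexive hulls across $\mathbf T$, and check (c) \'etale locally via \cite[Prop.~2.14]{PS17} and \cite[Prop.~6.3]{WW23}. The step that fails is your proof of (a).

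First, $\mathcal E_0=F_0\widetilde{\mathcal V}^{\ge 0}$ is not a line bundle. Its rank is that of the lowest Hodge piece $F_0\mathcal V$, and nothing in the hypotheses forces this to be one. In the application, $F_0\mathcal M\cong h_*\omega_{\mathcal Z/\mathcal X}$ (\Cref{C:10.6-PS}, after the shift) has rank equal to the geometric genus of a general fiber of $h$.

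Second, $\mathcal G_0$ does not ``map into'' $\mathcal E_0$: both are subsheaves of $F_0\mathcal M$. The real content of (a) is that $\mathcal G_0(-\mathbf\Delta)\subseteq\mathcal E_0$ at the generic point of every component of $\mathbf E$.
\begin{itemize}
\item Along $\mathbf\Delta$, this follows from Saito's description $F_0\mathcal M=j_*F_0\mathcal V\cap\widetilde{\mathcal V}^{>-1}$ of the lowest piece of the minimal extension, together with $t\,\widetilde{\mathcal V}^{>-1}\subseteq\widetilde{\mathcal V}^{\ge 0}$.
\item Along a component $\{t=0\}$ of $\mathcal S$ not contained in $\mathbf\Delta$, no twist is available, and you need the same $V$-filtration argument you sketch for (c). By \ref{T:I:EHMGc}, $\xi_t^N$ kills the class of a local section $s$ of $\mathcal G_0$ for $N\gg 0$, i.e.\ $\partial_t^N s\in F_{N-1}\mathcal M$. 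Compatibility of $F$ with $V$ makes $\partial_t^N\colon F_p\operatorname{gr}_V^\alpha\to F_{p+N}\operatorname{gr}_V^{\alpha-N}$ an isomorphism for $-1<\alpha<0$. Since $F_{-1}\mathcal M=0$, this forces $s\in V^{0}=\widetilde{\mathcal V}^{\ge 0}$.
\end{itemize}
This is \cite[Prop.~2.15]{PS17} and \cite[p.~736]{WW23}, which the paper invokes \'etale locally. The rank statement should come, as in the paper, from $\mathcal F_0$ being generically contained in $\mathcal G_0^\circ\cong\mathcal A^\circ$. So $\mathcal F_0$ is reflexive of rank at most one, hence (as it contains $\mathcal A(-\mathbf\Delta)$) a line bundle.

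A smaller point concerns your definition of $\mathcal F_k$ as the hull of ``the $\mathscr A(-\log\mathbf\Delta)$-submodule generated'' by the image of $\mathcal G^\circ_\bullet$, with an unspecified twist. This front-loads the hard step. $\mathcal E_\bullet$ only carries an $\mathscr A(-\log\mathbf E)$-action, and the submodule generated inside $\operatorname{gr}^F\mathcal M$ has no reason to lie in $\mathcal E_\bullet$.

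The paper instead sets, with no twist, $\mathcal F_k:=\bigl(i_*(\mathcal G_k^\circ\cap\mathcal E_k|_{\mathcal X^\circ})^{\vee\vee}\bigr)^{\vee\vee}$, with the intersection taken in $\operatorname{gr}^F_k\mathcal M|_{\mathcal X^\circ}$. Then (b) holds by construction. The $\mathscr A(-\log\mathbf\Delta)$-module structure is exactly the statement $\theta(\mathcal F_k)\subseteq\Omega^1_{\mathcal X}(\log\mathbf\Delta)\otimes\mathcal F_{k+1}$. Since both sides are reflexive, this is checked on $\mathcal X^\circ$ \'etale locally, and there your mechanism for (c) is the right one. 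Note also that $j_*$ from a big open set gives the reflexive hull only if the sheaf is already reflexive there.
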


The rest of this section is devoted to proving the theorem; the proof is summarized in \S \ref{S:ProofRefine}.  The meaning in \Cref{T:refineHiggs}\ref{P:I:HiggsFd} of the Deligne extension on the stack is explained in \S \ref{S:ConstHiggs}.

\subsection{Construction of the Higgs bundle}\label{S:ConstHiggs}

    Denote by $\mathcal{V}$ the polarizable variation of Hodge structure obtained by restricting $\mathsf M$ to the substack $$\mathcal U:=\X - \mathbf E\subseteq \mathcal X.$$
      In other words, for each \'etale cover $p:U\to{\mathcal X}$, the restriction of the \'etale sheaf $\mathsf M$ to $U$ defines a polarizable variation of Hodge structure over $U_{\mathcal U}:=p^{-1}(\mathcal U)\subseteq U$.

On each \'etale  cover $U\ra {\mathcal X}$, let $\widetilde{\mathcal{V}}_U$ be the canonical meromorphic extension (e.g., \cite[ Prop. ~II.2.18]{Deligne1970}) of the flat bundle $(\mathcal{V}_{U_{\mathcal U}}, \nabla_{U_{\mathcal U}})$ on  $U_{\mathcal U}\subseteq U$ to $U$, and let $\widetilde{\mathcal{V}}^{\geq \alpha}_U$ and $\widetilde{\mathcal{V}}^{>\alpha}_U$ be  Deligne's canonical lattices with eigenvalues contained in the intervals $[\alpha, \alpha+1)$ and $(\alpha, \alpha+1]$, 
respectively \cite[Prop. ~I.5.4]{Deligne1970}. Note that the flat connection on $\mathcal{V}_{U}$ extends uniquely to a logarithmic connection on $\widetilde{\mathcal{V}}^{\geq \alpha}_U$ and $\widetilde{\mathcal{V}}^{>\alpha}_U$. By a local computation, the residue map commutes with \'etale pullback and hence for any map of \'etale covers $e:U'\ra U$ of $\X$, the eigenvalues are still contained within the 
intervals $[\alpha, \alpha+1)$ and $(\alpha, \alpha+1]$, respectively. By \cite[Thm.~5.2.17]{HTT08}, it follows 
that there is a unique identification between $e^*\widetilde{\mathcal{V}}^{\geq\alpha}_U$ and $\widetilde{\mathcal{V}}^{\geq\alpha}_{U'}$ extending the 
identification between $e^*\widetilde{\mathcal{V}}_U$ and $\widetilde{\mathcal{V}}_{U'}$. The uniqueness of such an extension (there are no possible automorphisms of  $\widetilde{\mathcal{V}}^{\geq\alpha}_{U'}$ over  $\widetilde{\mathcal{V}}_{U'}$) implies that the collection  $\widetilde{\mathcal{V}}^{\geq\alpha}_{U'}$ gives descent data, which is what we mean by a \emph{logarithmic 
connection on the stack $\mathcal X$}.  Similarly, the collection of meromorphic connections give descent data, which is what we mean by a  meromorphic connection on the stack $\mathcal X$.  Consequently, on  ${\mathcal X}$, we get
\begin{equation}\label{logarithmic-conn-V>=0}
\nabla: \widetilde{\mathcal{V}}^{ \geq 0} \rightarrow \Omega_{\mathcal X}^{1}(\log \mathbf E) \otimes  \widetilde{\mathcal{V}}^{ \geq 0}.
\end{equation}

On each \'etale cover of ${\mathcal X}$, as a consequence of Schmid's nilpotent orbit theorem, the Hodge filtration $F_{\boldsymbol{\bullet}} \mathcal{V}$ extends to a filtration of $\widetilde{\mathcal{V}}^{ \geq 0}$ with locally free subquotients; see \cite[(3.10.7)]{Saito1990} for a discussion of this point. Let $j: \mathcal U   \hookrightarrow {\mathcal X}$ be the inclusion. Because the Hodge filtration is given by
\begin{equation}\label{F-filtr-on-log-0-VHS}
F_{k} \widetilde{\mathcal{V}} ^{\geq 0}=\widetilde{\mathcal{V}}^{ \geq 0} \cap j_{*} F_{k} \mathcal{V} 
\end{equation}
it follows that $\widetilde{\mathcal{V}}^{ \geq 0}$ is filtered at the level of the stack.

 On the associated graded with respect to the Hodge filtration, the connection then induces an $\m{O}_{{\mathcal X }}$-linear operator
\begin{equation}\label{E:Higgs-structure}
\theta_\bullet : \operatorname{gr}_{\bullet}^{F} \widetilde{\mathcal{V}}^{\ge 0} \rightarrow \Omega_{\mathcal X }^{1}(\log \mathbf E) \otimes_{\mathcal O_{\mathcal X }}\operatorname{gr}_{\bullet+1}^{F} \widetilde{\mathcal{V}}^{\geq 0}
\end{equation}
with the property that $\theta_\bullet \wedge \theta_\bullet =0$. 
The Higgs field in \eqref{E:Higgs-structure} has been described explicitly in local coordinates, i.e., \'etale locally,  in  \cite[p.694]{PS17}.
Setting
\begin{equation}\label{E:DefHiggsEE}
\m{E}_{\bullet}=\operatorname{gr}^{F}_{\bullet} \widetilde{\mathcal{V}}^{\geq 0}
\end{equation}
we therefore obtain a (graded logarithmic) Higgs bundle  $\left(\m{E}_{\bullet}, \theta _\bullet \right)$ on $\mathcal X $. Since $F_{k} \mathcal{V}=0$ for $k<0$, it is clear that we have 
\begin{equation}\label{E:EEk=0}
\mathcal {E}_{k}=0, \ \ k<0.
\end{equation}

To connect this Higgs bundle $(\mathcal E_\bullet, \theta_\bullet)$ to the Hodge module $\mathsf M$, and its associated regular holonomic filtered $D_{\mathcal X}$-module $(\mathcal M,F_\bullet)$, 
 we have the following generalization of  \cite[Lem.~2.13]{PS17}: 

\begin{lem}

    For every $k \in \mathbb{Z}$, we have an inclusion $$\m{E}_{k}|_{\mathcal X^\circ}=\operatorname{gr}_{k}^{F} \widetilde{\mathcal{V}}^{\geq 0}|_{\mathcal X^\circ} \subseteq \operatorname{gr}_{k}^{F} \mathcal{M}|_{\mathcal X^\circ}.$$
\end{lem}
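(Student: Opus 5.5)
The plan is to reduce to the case of varieties by étale descent, and there invoke \cite[Lem.~2.13]{PS17}. Both sides are coherent sheaves on $\mathcal X^\circ$ defined by descent. The left side $\operatorname{gr}^F_k\widetilde{\mathcal V}^{\ge 0}$ was built from Deligne lattices on étale charts, glued via the unique identifications of \cite[Thm.~5.2.17]{HTT08}. The right side $\operatorname{gr}^F_k\mathcal M$ comes from the Hodge module $\mathsf M$, which is an étale sheaf of filtered $D$-modules. An inclusion of sheaves on the stack is the same as a compatible system of inclusions on an étale cover; compatibility means the inclusions commute with the descent isomorphisms $e^*(-)_U\cong(-)_{U'}$ for every étale $e:U'\to U$ over $\mathcal X^\circ$. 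It therefore suffices to construct the inclusions \emph{canonically} on each chart $p:U\to\mathcal X^\circ$ from a smooth variety, and to check that they are functorial under étale pullback.

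On a chart $U$, the restriction $\mathsf M|_U$ is a Hodge module on $U$ with strict support $U$. Its singular locus is contained in the snc divisor $E_U:=p^{-1}\mathbf E$, since the singular locus of a Hodge module is detected étale locally \cite[Prop.~3.21]{CMZpositivity}. Its restriction to $U-E_U$ is the variation $\mathcal V_U$. By Saito's description of the filtered $D$-module underlying the minimal extension of a VHS across an snc divisor, we get two facts. First, $\mathcal M|_U=D_U\cdot\widetilde{\mathcal V}_U^{>-1}$. Second, $F_k\mathcal M|_U=\sum_i F_iD_U\cdot F_{k-i}\widetilde{\mathcal V}_U^{>-1}$ \cite[(3.10.12)]{Saito1990}. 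This yields a natural filtered morphism $(\widetilde{\mathcal V}_U^{\ge0},F)\hookrightarrow(\widetilde{\mathcal V}^{>-1}_U,F)\to(\mathcal M|_U,F)$. Following \cite[Lem.~2.13]{PS17}, the induced map on associated graded pieces $\operatorname{gr}^F_k\widetilde{\mathcal V}^{\ge0}_U\to\operatorname{gr}^F_k\mathcal M|_U$ is injective. The argument for injectivity is a local computation in coordinates adapted to $E_U$, using strictness of the filtration and the fact that $F_k\widetilde{\mathcal V}^{\ge 0}=\widetilde{\mathcal V}^{\ge0}\cap F_k\mathcal M$; I take this from \cite{PS17} unchanged.

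Compatibility with étale pullback then comes from naturality. For $e:U'\to U$ étale, the Deligne lattices, the Hodge filtration \eqref{F-filtr-on-log-0-VHS}, and Saito's formula all commute with $e^*$. The lattices commute because residues do (as noted in \S\ref{S:ConstHiggs}). The Hodge filtration commutes because $e$ is flat and $j_*$ commutes with flat base change. Saito's formula commutes because it is local. The inclusion $\widetilde{\mathcal V}^{\ge0}_U\subseteq\mathcal M|_U$ is moreover determined by the identity on $U-E_U$, since both sheaves are torsion-free there. Hence the identification $e^*\widetilde{\mathcal V}^{\ge0}_U\cong\widetilde{\mathcal V}^{\ge0}_{U'}$ (unique by \cite[Thm.~5.2.17]{HTT08}) and the descent isomorphism for $\mathcal M$ are intertwined. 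The same holds after passing to $\operatorname{gr}^F$. So the chart-wise inclusions descend to an inclusion on $\mathcal X^\circ$.

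The main obstacle I expect is this compatibility check. Concretely, the descent data for $\mathsf M$ as an étale sheaf must restrict on $\mathcal U$ to the same identifications used to glue $\widetilde{\mathcal V}^{\ge0}$. This holds by construction, because $\mathcal V$ was defined as $\mathsf M|_{\mathcal U}$. Injectivity itself is étale local and is inherited directly from the variety case.
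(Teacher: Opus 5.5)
Your proposal is correct and follows the same route as the paper, whose entire proof is that the inclusion can be checked étale locally and therefore follows from \cite[Lem.~2.13]{PS17}. Your additional check that the chart-wise inclusions are compatible with the descent data just makes explicit what the paper leaves implicit. That check rests on two facts: the identifications of the Deligne lattices under étale pullback are unique, and $\mathcal M$ has no sections supported on $\mathbf E$ because of strict support.
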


\begin{proof}
This can be checked \'etale locally and therefore follows by \cite[Lem.~2.13]{PS17}. 
\end{proof}

We can now construct a collection of reflexive subsheaves $\m{F}_{k} \subseteq \m{E}_{k}$ by intersecting the sheaves $\mathcal {G}_{k}^\circ \subseteq \operatorname{gr}_{k}^{F} \mathcal{M}|_{\mathcal X^\circ}$ and $\m{E}_{k}|_{\mathcal X^\circ} \subseteq \operatorname{gr}_{k}^{F} \mathcal{M}|_{\mathcal X^\circ}$ inside the  coherent sheaf $\operatorname{gr}_{k}^{F} \mathcal{M}|_{\mathcal X^\circ}$. Since the intersection may not be reflexive, and the intersection is only defined on $\mathcal X^\circ$, denoting by $i:\mathcal X^\circ \hookrightarrow \mathcal X$ the inclusion, we actually define
\begin{equation}\label{E:DefHiggsFF}
\m{F}_{k}:=\left(i_*\left(\m{G}_{k}^\circ \cap \m{E}_{k}|_{\mathcal X^\circ}\right)^{\vee \vee} \right)^{\vee \vee} \subseteq \m{E}_{k}
\end{equation}
as the reflexive hull of the push forward of the reflexive hull of the intersection.  Note we are using that $\mathcal E_k$ is a vector bundle to identify $(i_*(\mathcal E_k|_{\mathcal X^\circ}))^{\vee \vee} = \mathcal E_k$.

\subsection{Properties of the Higgs bundle $\mathcal E_\bullet$ and the reflexive sub-sheaf $\mathcal F_\bullet$ on $\mathcal X$}\label{S:propertyHiggs}

We start with the following generalization of \cite[Prop.~2.14]{PS17}:

\begin{pro}

\label{P:F_0-only-log-pole-along-D_f}
   The Higgs field $\theta_\bullet$ of $\mathcal E_\bullet$  maps the subsheaf $\m{F}_{k}$ into $\Omega_{\mathcal X }^{1}\left(\log \mathbf \Delta  \right) \otimes \m{F}_{k+1}$.
   \end{pro}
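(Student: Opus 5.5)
The statement is local on $\mathcal X$ in the étale topology once one reduces to a statement on an open substack whose complement has codimension at least $2$. The plan is therefore to reduce it to the variety case \cite[Prop.~2.14]{PS17} (and its pairs analogue \cite[Prop.~6.3]{WW23}) on an étale cover, and then extend across codimension $2$ using reflexivity.

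First, note that $\mathcal F_{k+1}$ is reflexive by construction \eqref{E:DefHiggsFF}, and $\Omega^1_{\mathcal X}(\log\mathbf\Delta)$ is locally free. Hence $\Omega^1_{\mathcal X}(\log \mathbf \Delta)\otimes\mathcal F_{k+1}$ is a reflexive subsheaf of the locally free sheaf $\Omega^1_{\mathcal X}(\log\mathbf E)\otimes \mathcal E_{k+1}$. Since $\mathcal F_k$ is torsion-free, it suffices to check the containment $\theta_k(\mathcal F_k)\subseteq \Omega^1_{\mathcal X}(\log \mathbf \Delta) \otimes\mathcal F_{k+1}$ on the complement of a closed substack of codimension $\ge 2$. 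Indeed, a section of a locally free sheaf lying in a reflexive subsheaf away from codimension $2$ lies in it everywhere, because the quotient of a locally free sheaf by a reflexive saturated subsheaf is torsion-free, and $\mathcal F_{k+1}$ is saturated in $\mathcal E_{k+1}$ by its definition as a double dual of an intersection. We may thus restrict to $\mathcal X^\circ$, and further discard the codimension $2$ locus where $\mathcal G^\circ_k\cap\mathcal E_k|_{\mathcal X^\circ}$ fails to be locally free. On that open set, $\mathcal F_k$ agrees with $\mathcal G^\circ_k\cap \mathcal E_k$.

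Next, the key point is that $\theta$ on $\mathcal E_\bullet$ and the $\mathscr A_{\mathcal X}$-module structure on $\operatorname{gr}^F_\bullet\mathcal M$ are compatible under the inclusion $\mathcal E_\bullet|_{\mathcal X^\circ}\subseteq\operatorname{gr}^F_\bullet\mathcal M|_{\mathcal X^\circ}$ of the preceding lemma. Meanwhile, $\mathcal G_\bullet$ is an $\mathscr A_{\mathcal X}$-submodule. All three objects, together with the intersection, are formed compatibly with étale pullback, as $\mathcal E_\bullet$ and $\mathcal M$ are defined by descent. It therefore suffices to prove the inclusion after pulling back along an étale cover $p:U\to\mathcal X$ by a variety. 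The containment of subsheaves can then be checked étale locally: a sub-$\mathcal O$-module inclusion holds iff it holds after faithfully flat pullback. On $U$ we are exactly in the setting of \cite[Prop.~2.14]{PS17} and \cite[Prop.~6.3]{WW23}, with the pulled-back Hodge module, the pulled-back $\mathcal G$, and snc divisor $p^*\mathbf E\supseteq p^*\mathbf\Delta$. Their argument applies verbatim. Away from $\mathbf\Delta$, $\theta$ is dual to the action of $\mathcal T_{\mathcal X}$, which preserves $\mathcal G$ since it is an $\mathscr A$-module. Along components of $\mathbf E$ not in $\mathbf\Delta$, one uses \ref{T:I:EHMGc}: $\mathcal G_k$ is torsion-free (or vanishes for $k\gg0$) off $\mathbf\Delta$, and this is used to show that the residue-type pole along $\mathcal S-\mathbf\Delta$ does not appear.

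The main obstacle is this last local point. One must verify that the argument of \cite{PS17,WW23} genuinely only uses the properties \ref{T:I:EHMGb}--\ref{T:I:EHMGe}, which are preserved under étale pullback. In the pairs case ($\mathcal D\ne0$) only \Cref{L:coherence} is available in place of torsion-freeness, so I would follow \cite[Prop.~6.3]{WW23} there. That argument uses the explicit local description of $\theta$ on $\operatorname{gr}^F\widetilde{\mathcal V}^{\ge0}$ \cite[p.694]{PS17} together with the fact that $\mathcal T_{\mathcal X}(-\log\mathbf\Delta)$ acts on $\mathcal G$.
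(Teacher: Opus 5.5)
Your proposal follows the paper's proof: reflexivity reduces the containment to $\mathcal X^\circ$, the containment is checked \'etale locally, and one cites \cite[Prop.~2.14]{PS17} when $\mathcal D=0$ and the argument of \cite[Prop.~6.3]{WW23} in general, where \Cref{L:coherence} substitutes for the support statement \cite[Prop.~2.10]{PS17} (i.e., \Cref{P:Strong-coherence-of-Higgs}\ref{P:Strong-coherence-of-Higgs(1)}) rather than for torsion-freeness. One small fix: extending across codimension $2$ needs only that $\Omega^1_{\mathcal X}(\log\mathbf\Delta)\otimes\mathcal F_{k+1}$ is reflexive, so its sections extend over codimension $2$, and that the ambient locally free sheaf is torsion-free; it does not need $\mathcal F_{k+1}$ to be saturated in $\mathcal E_{k+1}$, which does not follow from its definition.
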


\begin{proof}
As  $\m{F}_{k}$ and $\Omega_{\mathcal X }^{1}\left(\log \mathbf \Delta  \right) \otimes \m{F}_{k+1}$ are reflexive sheaves, it suffices to check this on $\mathcal X^{\circ}$.  
Moreover, this can be checked \'etale locally and therefore is reduced to the case of varieties.  In the case where $\mathcal D=0$, this then follows from  \cite[Prop.~2.14.]{PS17}.  In the  general case, one uses the argument in the proof of \cite[Prop.~6.3 and p.736]{WW23}; the key point is to replace the use of \cite[Prop.~2.10]{PS17} (i.e., \Cref{P:Strong-coherence-of-Higgs}\ref{P:Strong-coherence-of-Higgs(1)}) in the proof of \cite[Prop.~2.14.]{PS17}, with \cite[Lem.~6.2]{WW23} (i.e., \Cref{L:coherence}). 
\end{proof}

We also have the following generalization of  \cite[Prop.~2.15.]{PS17}:

\begin{pro}

\label{P:Big-line-bundle-in-F_0}

    We have that $\mathcal F_0$ is a line bundle and \begin{equation}
\label{E:P:Big-line-bundle-in-F_0}
    \mathcal A\left(-\mathbf \Delta \right) \subseteq \m{F}_{0}.
\end{equation}
\end{pro}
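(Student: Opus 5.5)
The plan is to reduce to the case of varieties treated in \cite[Prop.~2.15]{PS17} and \cite[Prop.~6.3]{WW23}, using that the sheaves involved are reflexive. The statement then becomes an inclusion of reflexive sheaves, which may be checked away from codimension $2$ and étale locally. We first argue that $\mathcal F_0$ has rank one. On $\mathcal X^\circ$, condition \ref{T:I:EHMGb} gives $\mathcal G^\circ_0\cong \mathcal A^\circ$, a line bundle. Hence $\mathcal G^\circ_0\cap \mathcal E_0|_{\mathcal X^\circ}$ is a subsheaf of a rank one sheaf, and it is nonzero because $\mathcal E_0$ and $\operatorname{gr}^F_0\mathcal M$ agree generically on $\mathcal U$. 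By \eqref{E:DefHiggsFF}, $\mathcal F_0$ is then a reflexive rank one sheaf on the smooth stack $\mathcal X$, and so it is a line bundle. That it is a line bundle can be checked on an étale atlas, where reflexive rank one on a smooth variety means invertible.

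For the inclusion \eqref{E:P:Big-line-bundle-in-F_0}, since both sides are line bundles it suffices to produce an injection $\mathcal A(-\mathbf\Delta)\to\mathcal F_0$ over $\mathcal X^\circ$ minus a codimension $2$ locus, compatible with the inclusions into $\mathcal E_0$. This reduces to showing $\mathcal A(-\mathbf\Delta)|_{\mathcal X^\circ}\subseteq \mathcal G^\circ_0\cap\mathcal E_0|_{\mathcal X^\circ}$ inside $\operatorname{gr}^F_0\mathcal M$. The inclusion into $\mathcal G_0^\circ\cong\mathcal A^\circ$ is automatic. What has to be shown is that sections of $\mathcal A(-\mathbf\Delta)$, viewed in $\operatorname{gr}^F_0\mathcal M$, lie in the Deligne lattice $\operatorname{gr}^F_0\widetilde{\mathcal V}^{\ge 0}$.

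This is a statement about sheaves, so it can be tested after pulling back along an étale cover $U\to\mathcal X$. The pullbacks are compatible with all the constructions: $\mathcal E$ via the descent argument for Deligne lattices, $\mathcal G$ and $\mathcal M$ via \Cref{L:PSP10.2}, and the cyclic cover via base change. We are therefore on a variety, in the setting of \cite[Prop.~2.15]{PS17} (and \cite[Prop.~6.3]{WW23} for pairs). Away from codimension $2$, work at a general point of a component of $\mathbf E$. There are two cases.
\begin{enumerate}
\item At a component of $\mathcal S$ not contained in $\mathbf\Delta$, the family is relative snc. The lowest Hodge piece $F_0\mathcal M = h_*\omega_{\mathcal Z/\mathcal X}$ then coincides with $F_0\widetilde{\mathcal V}^{\ge0}$, by Kollár/Saito's description of the lowest piece as the canonical extension with eigenvalues in $[0,1)$. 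Hence $\mathcal G_0\subseteq \mathcal E_0$ there.
\item At a component of $\mathbf\Delta$, one uses that $F_0\mathcal M\subseteq j_*F_0\mathcal V$ and that twisting by $-\mathbf\Delta$ moves a section of the meromorphic extension into $\widetilde{\mathcal V}^{\ge 0}$ whenever it already lies in $\widetilde{\mathcal V}^{\ge -1}$. The latter holds for $F_0\mathcal M$ by the compatibility of the Hodge and $V$-filtrations, i.e.\ $F_0\mathcal M\subseteq V^{>-1}$.
\end{enumerate}

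The step I expect to be the main obstacle is the justification of this local inclusion $F_0\mathcal M\subseteq \widetilde{\mathcal V}^{>-1}$ near $\mathbf\Delta$. This is where Saito's theory enters, and I will simply import it from \cite[Prop.~2.15]{PS17} after the étale reduction. The only stack-specific input needed is that the Deligne lattices and the identification of $\operatorname{gr}^F_0$ are compatible with étale pullback, which was established in \S\ref{S:ConstHiggs}.
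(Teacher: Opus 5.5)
Your reduction is the one in the paper. $\mathcal F_0$ is reflexive of rank one, hence a line bundle. You bound the rank from below using $\mathcal E_0|_{\mathcal U}=\operatorname{gr}^F_0\mathcal M|_{\mathcal U}$, while the paper uses the containment itself; either works. The inclusion $\mathcal A(-\mathbf\Delta)\subseteq\mathcal F_0$ is then checked on $\mathcal X^\circ$, inside $F_0\mathcal M$, \'etale locally, by appeal to \cite[Prop.~2.15]{PS17} and \cite[p.736]{WW23}. Provided you cite that result for the \emph{whole} local inclusion, the argument is complete, and your case (2) correctly describes what happens along $\mathbf\Delta$.

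Your case (1), however, is false and should not be offered as the reason.
\begin{itemize}
\item The Koll\'ar--Saito description of the lowest Hodge piece is $F_0\mathcal M=h_*\omega_{\mathcal Z/\mathcal X}=F_0\widetilde{\mathcal V}^{>-1}$, the extension with eigenvalues in $(-1,0]$. That is the fact you use in case (2); it is not $F_0\widetilde{\mathcal V}^{\ge 0}$, and the two agree only where the local monodromy is unipotent.
\item The description makes no use of $f$ being relative snc. If it gave $F_0\widetilde{\mathcal V}^{\ge 0}$, no twist by $-\mathbf\Delta$ would be needed anywhere.
\item Components of $\mathcal S$ off $\mathbf\Delta$ come from the cyclic cover, where the monodromy is in general of finite order $>1$.
\end{itemize}

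For instance, suppose $\mathcal D=0$, and over a small neighbourhood $V$ of a general point of such a component $B=\{t=0\}$ one has $\mathcal L|_{f^{-1}V}\cong\mathcal O$ and $(s)=a\,f^*B$ with $m\nmid a$. Then $\mathcal Z$ is there a union of copies of the base change of $\mathcal Y$ by $t=\tau^{m'}$, where $m'=m/\gcd(a,m)>1$. Take $\eta$ a section of $f_*\omega_{\mathcal Y/\mathcal X}$ not vanishing on $B$. Then $h_*\omega_{\mathcal Z/\mathcal X}$ contains the sections $\tau^{-j}\eta$ for $0<j<m'$, on which $t\nabla_{\partial_t}$ has eigenvalue $-j/m'<0$. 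These are not in $\mathcal E_0$.

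What is true is that the much smaller sheaf $\mathcal G_0$ lies in $\mathcal E_0$ there. It is the image of $\mathcal A\otimes f_*\mathcal O_{\mathcal Y}$ under a map factoring through multiplication by the tautological root $w$ of $s$, via $\phi^*\mathcal L^{-1}\hookrightarrow\mathcal O_{\mathcal Z}(-\mathcal D_{\mathcal Z})$. Since $w$ vanishes along the ramification (in the example $w=\tau^{a'}\cdot\text{unit}$ with $a'=a/\gcd(a,m)$), the sections of $\mathcal G_0$ have eigenvalues $\ge a'/m'>0$. This uses the specific shape of $\mathcal G_0$ and is part of what \cite[Prop.~2.15]{PS17} supplies; it does not follow from comparing $F_0\mathcal M$ with $\mathcal E_0$.
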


\begin{proof}
As both $    \mathcal A\left(-\mathbf \Delta \right)$ and $ \m{F}_{0}$ are reflexive, it suffices to show that $\mathcal F_0$ is rank $1$  (e.g., \cite[Prop.~1.9]{Hart80}), and to check the inclusion \eqref{E:P:Big-line-bundle-in-F_0} on $\mathcal X^\circ$.  
By assumption (\Cref{T:Existence-of-Hodge-module-and-G-subsheaf-theorem}\ref{T:I:EHMGb}), we have an isomorphism  $\mathcal A^\circ \cong \mathcal G_0^\circ$, and by construction we have that $\mathcal G^\circ_0$ and $\mathcal F_0|_{\mathcal X^\circ}\subseteq \mathcal E_0|_{\mathcal X^\circ}$ are both contained in $F_0\mathcal M|_{\mathcal X_0}$.   In other words, $\mathcal A(-\mathbf \Delta)|_{\mathcal X^\circ}$ and $\mathcal F_0|_{\mathcal X^\circ}$ can both be viewed as subsheaves of the same sheaf $F_0\mathcal M|_{\mathcal X_0}$.  As such, 
 the containment \eqref{E:P:Big-line-bundle-in-F_0}  can be checked on $\mathcal X^\circ$  \'etale locally.
     Hence, the containment  follows from the case of varieties, which is given in \cite[Prop.~2.15]{PS17} and \cite[p.736]{WW23}.

It remains to show that $\operatorname{rk}\mathcal F_0=1$.
As this can be checked \'etale locally, this also follows from  \cite[Prop.~2.15]{PS17} and \cite[p.736]{WW23}.
  More directly, from the containment \eqref{E:P:Big-line-bundle-in-F_0}, we only have to show $\operatorname{rk}\mathcal F_0\le1$.  
As mentioned above, by assumption, we have an isomorphism  $\mathcal A^\circ \cong \mathcal G_0^\circ$.      From \eqref{E:DefHiggsFF} we have that $\mathcal F_0$ is generically contained in $\mathcal G^\circ_0$, so that 
      $\mathcal F_0$ is generically contained in $ \mathcal A$, therefore $\operatorname{rk}\mathcal F_0\le 1$.     
\end{proof}

\subsection{Proof of \Cref{T:refineHiggs}}\label{S:ProofRefine}

\begin{proof}[Proof of \Cref{T:refineHiggs}]
The proof  of \Cref{T:refineHiggs} is obtained by putting together the results in \S \ref{S:ConstHiggs}--\ref{S:propertyHiggs} about the 
 graded $\mathcal{A}_{ {\mathcal X}}(-\log  \mathbf \Delta)$-module $\m{F}_{\bullet}$ 
that we have established so far.  We take $(\mathcal E_\bullet,\theta_\bullet)$ and $\mathcal F_\bullet$ to be the graded logarithmic Higgs bundle and  graded $\mathcal{A}_{ {\mathcal X}}(-\log  \mathbf \Delta)$-module defined in \eqref{E:DefHiggsEE} and \eqref{E:DefHiggsFF}.
With this set-up, the assertions  \ref{P:I:HiggsFb} and \ref{P:I:HiggsFd}  of  \Cref{T:refineHiggs} hold by construction (see \eqref{E:EEk=0} for the vanishing).  
The assertion \ref{P:I:HiggsFa} is established in \Cref{P:Big-line-bundle-in-F_0}, and the assertion
\ref{P:I:HiggsFc} is established in \Cref{P:F_0-only-log-pole-along-D_f}.
\end{proof}

\section{Proof of the Main Theorem}\label{S:ProofMain}

\begin{proof}[Proof of \Cref{T:main-pairs}]

We start with a family $f:(\mathcal Y,\mathcal D)\to \mathcal X$ of stable pairs satisfying the conditions in  \Cref{T:main-pairs}. 
Let $\widetilde {\mathcal X}\to \mathcal X$ be a log resolution of  $(\mathcal X, \mathbf \Delta)$, and let $(\widetilde {\mathcal Y},\widetilde {\mathcal D})\to \widetilde {\mathcal X}$ be the family of stable pairs obtained by base change.  
Let $\tilde {\mathbf \Delta}\subseteq \widetilde {\mathcal X}$ be the reduced pre-image of $\mathbf \Delta$.  
Let $\mathcal U=\mathcal X-\mathbf \Delta$ be the complement of the divisor $\mathbf \Delta$; since the morphism $\widetilde {\mathcal X}\to \mathcal X$ is an isomorphism on $\mathcal U$, and $f|_{\mathcal U}:\mathcal Y|_{\mathcal U}\to \mathcal U$ is smooth  (we assumed $\mathbf \Delta_f\subseteq \mathbf \Delta$), 
 it follows that $\mathbf \Delta_{\tilde f}\subseteq \widetilde {\mathcal X}$ is contained in $\tilde {\mathbf \Delta}$.  It follows from \Cref{L:sncRdx} that in order to prove that $K_{\mathcal X}+\mathbf \Delta$ is big,  it suffices to prove that $K_{\widetilde {\mathcal X}}+\tilde {\mathbf \Delta}$ is big.  In other words, it suffices to prove  \Cref{T:main-pairs} under the assumption that $\mathbf \Delta$ is an snc divisor. Moreover, going forward we will continue to use  \Cref{L:sncRdx} without mention to replace $\mathcal X$ with a blow-up, and $\mathbf \Delta$ with its reduced pre-image.

Fix a line bundle $\mathcal A$ on $\mathcal X$ such that $\mathcal A(-\mathbf \Delta)$ is big.  
 The assertions of \Cref{P:TWW5.2} imply that, after a further blow-up of $\mathcal X$, we may replace the family of stable pairs $f:(\mathcal Y,\mathcal D)\to \mathcal X$ with a new morphism $f:(\mathcal Y,\mathcal D)\to \mathcal X$  satisfying the hypotheses of  \S \ref{S:InnitialFamConst} and \S\ref{S:non-zero-section}. 
 In particular, we now assume that $\mathcal Y$ is smooth and no longer require that $(\mathcal Y,\mathcal D)$ be a family of stable pairs over $\mathcal X$.
 \Cref{L:PWW5.3}  (see also \Cref{R:L([D])section}) implies that $f:(\mathcal Y,\mathcal D)\to \mathcal X$ satisfies \S \ref{S:FLRA}, as well; see especially \Cref{R:S:FLRX-2} regarding \S \ref{S:FLRA}\ref{S:FLRX-2}. Although we will not need this, observe that, in the notation of  \S \ref{S:FLRA}, one may take the divisor $\mathcal S'=\mathcal S$, and one may take the codimension at least $2$ substack $\widetilde {\mathbf T}$ to be empty. 
    Note that in addition to the morphism $f:(\mathcal Y,\mathcal D)\to \mathcal X$ and the snc divisor $\mathbf \Delta$, we now have  the additional data of a  line bundle $\mathcal A$ on $\mathcal X$ such that $\mathcal A(-\mathbf \Delta)$ is big.  

Now that we have $f:(\mathcal Y,\mathcal D)\to \mathcal X$ satisfying the conditions in
   \S \ref{S:InnitialFamConst}, \S\ref{S:non-zero-section}, and \S \ref{S:FLRA},
 \Cref{T:EHM+GFB}
implies that, after again replacing $\mathcal X$ with a further blow-up,  there is a closed substack $ {\mathbf T}\subseteq  {\mathcal X}$ of codimension at least $2$ (which one may take to be empty), so that   setting ${\mathcal X}^\circ:= {\mathcal X}- {\mathbf T}$,        there  is a  Hodge module ${\mathsf M}$ on $ {\mathcal X}$ with strict support $ {\mathcal X}$ and a graded ${\mathscr{A}}_{{\XX}^\circ}$-module $ {\m{G}}^\circ_{\bullet}$ that is coherent over $\mathscr{A}_{ {\XX}^\circ}$, so that ${\mathsf M}^\circ := {\mathsf M}|_{ {\mathcal X}^\circ}$ and  ${\m{G}}^\circ_{\bullet}$ satisfy  conditions \ref{T:I:EHMGb}--\ref{T:I:EHMGe} of \Cref{T:Existence-of-Hodge-module-and-G-subsheaf-theorem} on $ {\mathcal X}^\circ$ with $\mathcal A$ replaced with ${\mathcal A}^\circ:={\mathcal A}|_{ {\mathcal X}^\circ}$.   Moreover,  $\mathcal A(-\mathbf \Delta)$ is big, and  
 letting ${\mathcal S}$ be the singular locus of $ {\mathsf M}$,  the divisor $\mathbf E:={\mathbf \Delta}\cup {\mathcal S}$ is snc.

We may therefore employ \Cref{T:refineHiggs}, 
and obtain graded sheaves $\mathcal F_\bullet\subseteq  \mathcal E_\bullet$ on $ {\mathcal X}$, with $\mathcal F_\bullet$ a graded sub-module of a  graded logarithmic Higgs bundle $\mathcal E_\bullet$ that extends a variation of Hodge structure on ${\mathcal X}-{\mathbf E}$, with the property that $\theta_\bullet (\mathcal F_\bullet)\subseteq \Omega^1_{{\mathcal X}}(\log {\mathbf \Delta}) \otimes \mathcal F_{\bullet +1}$, and $\mathcal F_0$ is a big line bundle.  This data allows us to use \cite[Thm.~B]{CMZpositivity} to obtain a Viehweg--Zuo sheaf, i.e., a coherent sheaf $\mathcal H$ on ${\mathcal X}$ with big determinant, and for some $s\geq 1,$ an inclusion 
$$
\xymatrix{
\mathcal H \ar@{^(->}[r]& \Omega^1_{ {\mathcal X}}(\log  {\mathbf \Delta})^{\otimes s}.
}
$$
 Finally, taking the saturation of $\mathcal H$, then from \cite[Cor.~B]{CMZfoliations}, such an inclusion implies $K_{ {\mathcal X}}+ {\mathbf \Delta}$ is big, completing the proof. 
\end{proof}


\ifArxiv

\appendix 

\section{\emph{Ad hoc} push forward and fake derived categories}
\label[appendix]{S:A:AdHocGen}

We will be considering descent arguments for proper push forwards of sheaves in various categories.  While these descent techniques we will use are not well suited to push forward at the level of derived categories because derived categories of sheaves do not satisfy descent in general,  the techniques do still provide push forwards at the level of $i$-th derived functors, in a way that we will explain here.    These techniques also provide information about push forwards of objects in derived categories where one simply keeps track of the descent data, and we describe this in terms of what we call, for convenience,  ``fake" derived categories.

\subsection{Category of sheaves, proper push forward, and \'etale pull back} 
\label{S:PPFs}
We will assume that given a space $\mathcal X$ (a variety, complex analytic space, or DM stack) with some Grothendieck topology, we have fixed some abelian category of sheaves $\mathsf A_{\mathcal X}$ on $\mathcal X$, with derived category $\mathsf D_{\mathcal X}:=D(\mathsf A_{\mathcal X})$.  Typically $\mathsf A_X$ will be something like the category of $D$-modules on a variety $X$, and $\mathsf D_X$ will be the associated derived category. 

For morphisms $f:X\to Y$ of \emph{varieties or complex analytic spaces}, if $f$ is \'etale (for that class of spaces) then we will assume that there is a pull back functor $f^*:\mathsf A_Y\to \mathsf A_X$ (resp.~$f^*:\mathsf D_Y\to \mathsf D_X$).  On the other hand, if  $f$ is proper then we will assume there is a push forward functor  $f_*:\mathsf A_X\to \mathsf A_Y$ (resp.~$Rf_*:\mathsf D_X\to \mathsf D_Y$).   We will use $\mathcal H^i:\mathsf D_X\to \mathsf A_X$ to denote the $i$-th cohomology functor with respect to a fixed $t$-structure on $\mathsf D_X$, which will be the standard $t$-structure unless stated otherwise; we will assume that $\mathcal H^i$ commutes with \'etale pull back.

\subsection{Category of objects with descent data and fake derived categories}\label{S:ConcreteDesc}

We now recall one of the standard concrete descriptions of categories of objects with descent data that will be convenient for us to work with.

An object  $\mathsf M$ of $\mathsf A$ (resp.~$\mathsf D$) with descent data on $\mathcal X$ is the following collection of data.  First, for every \'etale morphism $$p:U\to \mathcal X,$$  i.e., for every element $(U,p: U\to \mathcal X)$ of $\mathcal X_{\text{\'etale}}$, one has an object 
\begin{equation}\label{E:MUdescData}
\mathsf M_U=\mathsf M_{(U,p:U\to \mathcal X)}
\end{equation}
 of $\mathsf A_U$ (resp.~$\mathsf D_U$) on $U$.  

Second, we consider every diagram
\begin{equation}\label{E:ConcreteDesc1step}
\xymatrix{ U' \ar[r]^{\phi } & U  \ar[r]^{p } & \mathcal X,
}
\end{equation}
where precisely we mean that we have a morphism  $(\phi,\alpha):(U,p:U\to \mathcal X)\to (U',p':U'\to \mathcal X)$ in $\mathcal X_{\text{\'etale}}$, meaning concretely that $\phi:U'\to U$ is an \'etale morphism and   $\alpha: p'\stackrel{\sim}{\to}p\circ \phi$ is an isomorphism.  
For every such diagram, one has isomorphisms 
\begin{equation}\label{E:ThetaPhidescData}
\theta_{\phi }=\theta_{(\phi,\alpha)}: \phi ^* \mathsf M_{ U }\stackrel{\sim}{\to}  \mathsf M_{U' }.
\end{equation}

Finally, we consider every diagram  of the form 
\begin{equation}\label{E:ConcreteDesc}
\xymatrix{
U''\ar[r]^{\phi' }& U' \ar[r]^{\phi } & U  \ar[r]^{p } & \mathcal X,
}
\end{equation}
where, precisely, we mean that  we have a commutative diagram of morphisms 
$$
\xymatrix{
(U'',p'')\ar[rr]^{(\phi',\alpha')} \ar[rd]_{(\phi'',\alpha'')}&&(U',p')\ar[ld]^{(\phi,\alpha)}\\
&(U,p)&
}
$$
in $\mathcal X_{\text{\'etale}}$, 
meaning concretely that $\phi''=\phi\circ \phi'$, and $\alpha''=\phi'^*\alpha \circ \alpha'$, i.e., the following diagram commutes:
$$
\xymatrix{
p'' \ar[r]^{\alpha''} \ar[d]_{\alpha'}& p\circ \phi'' \ar@{=}[d]\\
p'\circ \phi' \ar[r]^{\phi'^*\alpha}& p\circ \phi\circ \phi'.
}
$$
We require that for every diagram \eqref{E:ConcreteDesc}, the isomorphisms  
\begin{equation}\label{E:MMDescExample}
\theta_{\phi }: \phi ^* \mathsf M_{ U }\stackrel{\sim}{\to}  \mathsf M_{U' }, \quad \theta_{\phi '}: \phi '^* \mathsf M_{ U' }\stackrel{\sim}{\to}  \mathsf M_{U'' }, \quad \theta_{\phi \circ \phi '}: (\phi \circ \phi ')^* \mathsf M_{ U }\stackrel{\sim}{\to}  \mathsf M_{U'' }, 
\end{equation}
satisfy the compatibility condition 
\begin{equation}\label{E:CoCycledescData}
\theta_{\phi \circ \phi' }= \theta_{\phi' }\circ \phi '^*\theta_{\phi },
\end{equation}
i.e., the following diagram of isomorphisms commutes: 
\begin{equation}\label{E:DescDataDiagMMUUA}
\xymatrix@C=4em{
 \phi '^*\phi ^* \mathsf M_{ U }\ar[r]^<>(0.5){\phi '^*\theta_{\phi }} \ar@{=}[d]& \phi '^* \mathsf M_{ U' } \ar[d]^{\theta_{\phi' } }\\
(\phi \circ \phi ')^* \mathsf M_{ U } \ar[r]^<>(0.5){\theta_{\phi \circ \phi '}}&  \mathsf M_{U'' }.
}
\end{equation}

In summary,  an object  $\mathsf M$ of $\mathsf A$ (resp.~$\mathsf D$) on $\mathcal X$ with  descent data is the collection of data 
$$
\{\mathsf M_{U},\phi_\theta\}
$$
with the $\mathsf M_U$ as in \eqref{E:MUdescData}, and the $\theta_\phi$ as in \eqref{E:ThetaPhidescData}, all satisfying the compatibility condition of \eqref{E:CoCycledescData}.

A morphism of objects with descent data
$$
\psi:\{\mathsf M,\theta_\phi\}\longrightarrow \{\mathsf N,\tau_\phi\}
$$
is the data of morphisms $\psi_U:\mathsf M_U\to \mathsf N_U$ such that for every diagram \eqref{E:ConcreteDesc1step}, one has 
$$
\tau_\phi\circ \phi^*\psi_U = \psi_{U'}\circ \theta_\phi;
$$
i.e., the following diagram commutes:
\begin{equation}\label{E:DescDataDiagMorph}
\xymatrix{
\phi^*\mathsf M_U\ar[r]^{\phi^*\psi_U} \ar[d]_{\theta_\phi}& \phi^*\mathsf N_U \ar[d]^{\tau_\phi}\\
\mathsf M_{U'}\ar[r]^{\psi_{U'}}& \mathsf N_{U'}.
}
\end{equation}

We note that this implies that the following diagram commutes:
\begin{equation*}
\xymatrix@C=.7em@R=1em{
& \phi '^*\phi ^* \mathsf N_{ U }\ar[rr]^<>(0.5){\phi '^*\tau_{\phi }} \ar@{=}[d]&& \phi '^* \mathsf N_{ U' } \ar[dd]^{\tau_{\phi' } }\\
 \phi '^*\phi ^* \mathsf M_{ U }\ar[rr]^<>(0.75){\phi '^*\theta_{\phi }} \ar@{=}[dd] \ar[ru]^{\phi'^*\phi^*\psi_U}&\ar@{=}[d]& \phi '^* \mathsf M_{ U' } \ar[dd]^<>(0.25){\theta_{\phi' } } \ar[ru]^<>(0.25){\phi'^*\psi_{U'}}&\\
&(\phi \circ \phi ')^* \mathsf N_{ U } \ar@{-}[r]^<>(0.5){\tau_{\phi \circ \phi '}}&\ar[r]&  \mathsf N_{U'' }\\
(\phi \circ \phi ')^* \mathsf M_{ U } \ar[rr]^<>(0.5){\theta_{\phi \circ \phi '}} \ar[ru]^{(\phi\circ \phi')^*\psi_U}&&  \mathsf M_{U'' }\ar[ru]_{\psi_{U''}}&
}
\end{equation*}
Indeed, the left face obviously commutes.  The front and back face commute due to \eqref{E:DescDataDiagMMUUA}.  The bottom and right face commute due to \eqref{E:DescDataDiagMorph}, and the top face commutes by applying $\phi'^*$ to \eqref{E:DescDataDiagMorph}.

We will assume that $\mathsf A_{\mathcal X}$ is equivalent to the category of 
 objects of   $\mathsf A$ with descent data on $\mathcal X$.  On the other hand, we will use the notation 
 \begin{equation}\label{E:genDefDfake}
\mathsf D_{\mathcal X,fake}
 \end{equation}
 for the category  of  objects of $\mathsf D$ with descent data on $\mathcal X$, and will call this the \emph{fake derived category} on $\mathcal X$. 
 There is a natural functor $\operatorname{Ch}(\mathsf A_{\mathcal X})\to \mathsf D_{\mathcal X,fake}$, from the category of chain complexes of objects of $\mathsf A$ on $\mathcal X$, since such chain complexes come equipped with descent data.  This functor induces a functor on the homotopy category, and the derived category, since homotopies and quasi-isomorphisms give homotopies and quasi-isomorphisms locally; in other words there is a natural functor 
\begin{equation}\label{E:genCompareDfake}
\mathsf D_{\mathcal X}:=D(\mathsf A_{\mathcal X})\longrightarrow  \mathsf D_{\mathcal X,fake}.
\end{equation}
In general this may not be an equivalence, which we explain in \Cref{R:FakeNoGood} together with other issues that make the category $\mathsf D_{\mathcal X,fake}$ unsatisfactory; however, for our purposes, it is convenient, and suffices for what we need.

\subsection{\emph{Ad hoc} derived push forward}
\label{S:PPDiagsVDer}

We will use the following set up.  
Given  a proper morphism of smooth varieties (resp.~smooth complex analytic spaces) $f:X\to Y$, we will consider the  fibered product diagram
\begin{equation}\label{E:PresDpf-varA}
\xymatrix{
U_X \ar[r]^{p_X}\ar[d]_{f'}& X \ar[d]_f\\
U \ar[r]^p& Y
}
\end{equation}
where $p:U\to Y$ is an \'etale morphism of smooth varieties (resp.~smooth complex analytic spaces). 

Using the notation in \S \ref{S:PPFs},  given an object $\mathsf M$ in $\mathsf D_X$,  we will assume there is a natural isomorphism in $\mathsf D_U$:  
\begin{equation}\label{E:DefPushPullalpha}
\alpha_p=\alpha_{p,\mathsf M}: p^*Rf_*\mathsf M \stackrel{\sim}{\longrightarrow} Rf'_*p_X^*\mathsf M. 
\end{equation}
In particular, we will assume these isomorphisms satisfy the following compatibility properties.  First, the isomorphisms $\alpha$ are functorial in $\mathsf M$ in the sense that given a morphism  $\beta:\mathsf M\to \mathsf N$ of objects in $\mathsf D_X$, we have $Rf'_*(p_X^*\beta) \circ \alpha_{p,\mathsf M}= \alpha_{p,\mathsf N} \circ p^*(Rf_*\beta)$; i.e., we have 
 a commutative diagram
\begin{equation}\label{E:NatProp1MN}
\xymatrix@C=3em{
 p^*Rf_*\mathsf M \ar[r]^{p^*(Rf_*\beta)} 
 \ar[d]^{\alpha_{p,\mathsf M}}&  p^*Rf_*\mathsf N  \ar[d]^{\alpha_{p,\mathsf N}} \\
Rf'_*p_X^*\mathsf M \ar[r]^{Rf'_*(p_X^*\beta)}
 & Rf'_*p_X^*\mathsf N.
 }
  \end{equation}
  
  In addition, we assume they are compatible with further pull back, in the sense that if $\phi:U'\to U$ is another \'etale morphism of smooth varieties (resp.~smooth complex analytic spaces), so that we  have the  fibered product  diagram 
  \begin{equation}\label{E:PresDpf-var2}
\xymatrix{
U_X'\ar[r]^{\phi_X} \ar[d]_{f''}& U_X \ar[r]^{p_X}\ar[d]_{f'}& X \ar[d]_f\\
U'\ar[r]^{\phi}&U \ar[r]^p& Y
}
\end{equation}
then we have $\alpha_{p\circ \phi,\mathsf M}= \alpha_{\phi,p_X^*\mathsf M}\circ \phi^*(\alpha_{p,\mathsf M})$, i.e., we have a 
 a commutative diagram
\begin{equation}\label{E:PresDpf-var3}
\xymatrix@C=3em@R=2em{
\phi^*p^*Rf_*\mathsf M \ar@{=}[dd] \ar[r]^{\phi^*(\alpha_{p,\mathsf M})}& \phi^*Rf'_*p_X^*\mathsf M \ar[d]^{\alpha_{\phi,p_X^*\mathsf M}}\\
& Rf''_*\phi_X^*p_X^*\mathsf M \ar@{=}[d] \\
(p\circ \phi)^*Rf_*\mathsf M\ar[r]^<>(0.5){\alpha_{p\circ \phi,\mathsf M}}& Rf''_*(p_X\circ \phi_X)^*\mathsf M
}
\end{equation}

\begin{dfn}[\'Etale base change for proper push forwards]\label{D:EBCPPF}
We say that \emph{proper push forwards of objects of $\mathsf D$ satisfy \'etale base change for smooth varieties (resp.~smooth complex analytic spaces)} if for every proper morphism $f:X\to Y$ of smooth varieties (resp.~smooth complex analytic spaces), every object $\mathsf M$ in $\mathsf D_X$, and every \'etale morphism of smooth varieties (resp.~smooth complex analytic spaces)  $p:U\to Y$, there are natural isomorphisms $\alpha_p$ as in \eqref{E:DefPushPullalpha} satisfying  \eqref{E:PresDpf-var3} and  \eqref{E:NatProp1MN}.  
\end{dfn}

Assuming proper push forwards of objects of $\mathsf D$ satisfy \'etale base change for smooth varieties (resp.~smooth complex analytic spaces), then one can define an \emph{ad hoc} push forward on stacks. 
To explain this, we start by fixing some notation.
Given   a schematic proper morphism  $f: \mathcal X\to \mathcal Y$ of smooth separated integral DM stacks  locally of finite type over $\mathbb C$ (resp.~the analytification of such a morphism), and a sequence of \'etale morphisms 
$
\xymatrix@C=1em{
U''\ar[r]^{\phi'} & U'\ar[r]^\phi & U \ar[r]^p& \mathcal Y
}
$ with $U$, $U'$, and $U''$  smooth varieties (resp.~smooth complex analytic spaces), we 
will consider the $2$-categorical  fibered product diagram
\begin{equation}\label{E:TubPresDpf'}
\xymatrix{
U''_{\mathcal X}\ar[r]^{\phi'_{\mathcal X}} \ar[d]_{f'''}& U'_{\mathcal X}\ar[r]^{\phi_{\mathcal X}} \ar[d]_{f''}& U_{\mathcal X} \ar[r]^{p_{\mathcal X}} \ar[d]_{f'}& \mathcal X \ar[d]_f\\
U''\ar[r]^{\phi'} & U'\ar[r]^\phi & U \ar[r]^p& \mathcal Y.
}
\end{equation}
We  have the following lemma:

\begin{lem}\label{L:GenDesc}
Let $f:\mathcal X\to \mathcal Y$ be a proper morphism of smooth separated integral DM stacks of finite type over $\mathbb C$ (or the analytification of such a morphism).  
Assume that proper push forwards of objects of $\mathsf D$ satisfy \'etale base change for smooth varieties (resp.~smooth complex analytic spaces); see \Cref{D:EBCPPF}. Then there is an \emph{ad hoc} push forward  functor
$$
Rf_*^{ah}:\mathsf D_{\mathcal X,fake}\longrightarrow \mathsf D_{\mathcal Y,fake}
$$
defined as follows.
Given  $\mathsf M$  in $\mathsf D_{\mathcal X,fake}$, 
i.e., an object $\{\mathsf M_U,\theta_\phi\}$ in $\mathsf D$ with descent data on $\mathcal X$ (\S \ref{S:ConcreteDesc}), then, in the notation of   \eqref{E:TubPresDpf'} and \eqref{E:DefPushPullalpha}, the data
$$
\{Rf'_*\mathsf M_{U_{\mathcal X}},\tau_\phi:=\alpha_\phi \circ f''_*(\theta_{\phi_{\mathcal X}})\}
$$
defines an object in $\mathsf D$ with descent data on $\mathcal Y$, i.e., an object of $\mathsf D_{\mathcal Y,fake}$, which we define to be  $Rf^{ah}_*\mathsf M$ and call   the \emph{ad hoc} push forward.

\end{lem}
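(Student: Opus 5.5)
The plan is to verify directly that the proposed data satisfies the axioms of \S\ref{S:ConcreteDesc}, and then to check functoriality on morphisms.

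First we make sense of $\tau_\phi$. Fix a diagram \eqref{E:TubPresDpf'}. Since $\theta_{\phi_{\mathcal X}}:\phi_{\mathcal X}^*\mathsf M_{U_{\mathcal X}}\stackrel{\sim}{\to}\mathsf M_{U'_{\mathcal X}}$ is part of the descent data of $\mathsf M$, we form the composite
$$
\tau_\phi: \phi^*Rf'_*\mathsf M_{U_{\mathcal X}}\xrightarrow{\alpha_{\phi,\mathsf M_{U_{\mathcal X}}}} Rf''_*\phi_{\mathcal X}^*\mathsf M_{U_{\mathcal X}}\xrightarrow{Rf''_*(\theta_{\phi_{\mathcal X}})} Rf''_*\mathsf M_{U'_{\mathcal X}},
$$
which is an isomorphism in $\mathsf D_{U'}$. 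We note that $U_{\mathcal X}$ is a smooth variety because $f$ is schematic, and $f'$ is proper, so the base change isomorphism $\alpha_\phi$ of \Cref{D:EBCPPF} applies.

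Next we check the cocycle condition $\tau_{\phi\circ\phi'}=\tau_{\phi'}\circ\phi'^*\tau_\phi$. Expanding, the right-hand side is
$$
Rf'''_*(\theta_{\phi'_{\mathcal X}})\circ\alpha_{\phi'}\circ\phi'^*Rf''_*(\theta_{\phi_{\mathcal X}})\circ\phi'^*\alpha_\phi .
$$
Using the naturality square \eqref{E:NatProp1MN} for $\alpha_{\phi'}$ applied to the morphism $\beta=\theta_{\phi_{\mathcal X}}$, we commute $\alpha_{\phi'}$ past $\phi'^*Rf''_*(\theta_{\phi_{\mathcal X}})$. This turns it into $Rf'''_*(\phi'^*_{\mathcal X}\theta_{\phi_{\mathcal X}})\circ\alpha_{\phi',\phi_{\mathcal X}^*\mathsf M_{U_{\mathcal X}}}$. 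Two identifications then finish the comparison. The cocycle condition \eqref{E:CoCycledescData} for $\mathsf M$ combines $\theta_{\phi'_{\mathcal X}}\circ\phi'^*_{\mathcal X}\theta_{\phi_{\mathcal X}}=\theta_{\phi_{\mathcal X}\circ\phi'_{\mathcal X}}$. The transitivity \eqref{E:PresDpf-var3} combines $\alpha_{\phi'}\circ\phi'^*\alpha_\phi=\alpha_{\phi\circ\phi'}$. The result is exactly $\tau_{\phi\circ\phi'}$.

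The main obstacle is bookkeeping rather than mathematics. The $2$-isomorphisms $\alpha$ in morphisms of $\mathcal X_{\text{\'etale}}$ must be tracked, and the canonical identifications $(p\circ\phi)_{\mathcal X}\cong p_{\mathcal X}\circ\phi_{\mathcal X}$ of iterated $2$-fibered products must be compatible. I would handle this by fixing choices of fibered products once and for all and noting that all the identifications are canonical, so the relevant diagrams commute.

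Finally, for functoriality, take a morphism $\psi=\{\psi_U\}:\mathsf M\to\mathsf N$ in $\mathsf D_{\mathcal X,fake}$. We set $(Rf^{ah}_*\psi)_U:=Rf'_*(\psi_{U_{\mathcal X}})$. The square \eqref{E:DescDataDiagMorph} for these morphisms follows from naturality \eqref{E:NatProp1MN} of $\alpha_\phi$ together with $Rf''_*$ applied to the square \eqref{E:DescDataDiagMorph} for $\psi$. Compatibility with identities and composition is immediate because $Rf'_*$ is a functor.
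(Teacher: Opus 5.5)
Your proposal is correct and is essentially the paper's proof: the paper inserts $\alpha_{\phi',\phi_{\mathcal X}^*\mathsf M_{U_{\mathcal X}}}$ into the diagram and verifies the cocycle identity with the same three ingredients, namely naturality \eqref{E:NatProp1MN} applied to $\theta_{\phi_{\mathcal X}}$, the cocycle condition \eqref{E:CoCycledescData} for $\mathsf M$ after push forward, and the transitivity \eqref{E:PresDpf-var3}. Your additional check of functoriality on morphisms, which the paper leaves implicit, is a welcome supplement.
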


\begin{proof}
Consider  $Rf'_*\mathsf M_{U_{\mathcal X}}$, $Rf''_*\mathsf M_{U'_{\mathcal X}}$, and $Rf'''_*\mathsf M_{U''_{\mathcal X}}$ in $\mathsf D_{U}$, $\mathsf D_{U'}$, and $\mathsf D_{U''}$, respectively. 
Pushing forward the isomorphisms in \eqref{E:MMDescExample}, and then using  the data \eqref{E:DefPushPullalpha}, we obtain isomorphisms:

\begin{equation}\label{E:tauDescData}
\xymatrix@R=.5em@C=2em{
\tau_\phi:\phi^*Rf'_* \mathsf M_{ U_{\mathcal X}} \ar[r]^{\alpha_{\phi}} &Rf''_* \phi_{\mathcal X}^*\mathsf M_{ U_{\mathcal X}} \ar[rr]^<>(0.5){f''_*(\theta_{\phi_{\mathcal X}})}&&  Rf''_*\mathsf M_{U'_{\mathcal X}}\\
\tau_{\phi'}:\phi'^*Rf''_* \mathsf M_{ U'_{\mathcal X}} \ar[r]^{\alpha_{\phi'}}  &Rf'''_*  \phi_{\mathcal X}'^*\mathsf M_{ U'_{\mathcal X}} \ar[rr]^<>(0.5){f'''_*(\theta_{\phi_{\mathcal X}'})}&& Rf'''_*\mathsf M_{U''_{\mathcal X}} \\
\tau_{\phi\circ \phi'}:(\phi\circ \phi')^*Rf'_* \mathsf M_{ U_{\mathcal X}} \ar[r]^<>(0.5){\alpha_{\phi \circ \phi'}}
   &Rf'''_*  (\phi_{\mathcal X}\circ \phi_{\mathcal X}')^*\mathsf M_{ U_{\mathcal X}}  \ar[rr]^<>(0.5){f'''_*(\theta_{\phi_{\mathcal X}\circ \phi_{\mathcal X}'})}&&Rf'''_* \mathsf M_{U''_{\mathcal X}}. 
}
\end{equation}

We want to know if $
\tau_{\phi\circ \phi'}= \tau^*_{\phi'}\circ \phi'^*\tau_\phi
$, i.e., if the following diagram commutes:
\begin{equation}\label{E:BigComDiagDesc0'}
\xymatrix@C=2em{
\phi'^*\phi^*Rf'_* \mathsf M_{ U_{\mathcal X}} \ar[r]_{\phi'^*(\alpha_{\phi})}  \ar@/^1.5pc/[rrr]^{\phi'^*(\tau_{\phi})}  \ar@{=}[dd] &\phi'^*Rf''_* \phi_{\mathcal X}^*\mathsf M_{ U_{\mathcal X}} \ar[rr]_<>(0.5){\phi'^*(f''_*(\theta_{\phi_{\mathcal X}}))}&&  \phi'^* Rf''_*\mathsf M_{U'_{\mathcal X}} \ar[d]_{\alpha_{\phi'}}  \ar@/^4pc/[dd]^{\tau_{\phi'}} \\
&&&Rf'''_*  \phi_{\mathcal X}'^*\mathsf M_{ U'_{\mathcal X}}  \ar[d]_<>(0.5){f'''_*(\theta_{\phi_{\mathcal X}'})} \\
(\phi\circ \phi')^*Rf'_* \mathsf M_{ U_{\mathcal X}} \ar[r]^<>(0.5){\alpha_{\phi\circ \phi'}}
\ar@/_1.5pc/[rrr]_{\tau_{\phi\circ \phi'}} 
 & Rf'''_*  (\phi_{\mathcal X}\circ \phi_{\mathcal X}')^*\mathsf M_{ U_{\mathcal X}}  \ar[rr]^<>(0.5){f'''_*(\theta_{\phi_{\mathcal X}\circ \phi_{\mathcal X}'})} &&Rf'''_* \mathsf M_{U''_{\mathcal X}}. 
}
\end{equation}

Note that if we apply push forwards to the diagram \eqref{E:DescDataDiagMMUUA},  we have a commutative diagram 
\begin{equation}\label{E:f*DescData}
\xymatrix@C=6em{
 Rf'''_* \phi_{\mathcal X}'^*\phi_{\mathcal X}^*\mathsf M_{ U_{\mathcal X}}\ar[r]^<>(0.5){f'''_*(\phi_{\mathcal X}'^*\theta_{\phi_{\mathcal X}})} \ar@{=}[d]& Rf'''_*\phi_{\mathcal X}'^*\mathsf M_{ U'_{\mathcal X}} \ar[d]^{f'''_*(\theta_{\phi'_{\mathcal X}}) }\\
Rf'''_*(\phi_{\mathcal X}\circ \phi_{\mathcal X}')^*\mathsf M_{ U_{\mathcal X}} \ar[r]^<>(0.5){f'''_*(\theta_{\phi_{\mathcal X}\circ \phi_{\mathcal X}'})}& Rf'''_*\mathsf M_{U''_{\mathcal X}}.
}
\end{equation}
Using, in addition, the  isomorphism $\alpha_{\phi',\phi_{\mathcal X}^*\mathsf M_{U_{\mathcal X}}}: \phi'^*Rf''_* \phi_{\mathcal X}^*\mathsf M_{ U_{\mathcal X}}) \stackrel{\sim}{\to} Rf'''_* \phi_{\mathcal X}'^*\phi_{\mathcal X}^*\mathsf M_{ U_{\mathcal X}})  $, we can fill in diagram \eqref{E:BigComDiagDesc0'} above to
\begin{equation}\label{E:BigComDiagDesc}
\xymatrix@C=2em{
\phi'^*\phi^*Rf'_* \mathsf M_{ U_{\mathcal X}} \ar[r]_{\phi'^*(\alpha_{\phi})}  \ar@/^1.5pc/[rrr]^{\phi'^*(\tau_{\phi})}  \ar@{=}[dd] &\phi'^*Rf''_* \phi_{\mathcal X}^*\mathsf M_{ U_{\mathcal X}} \ar[rr]_<>(0.5){\phi'^*(f''_*(\theta_{\phi_{\mathcal X}}))} \ar[d]^{\alpha_{\phi',\phi_{\mathcal X}^*\mathsf M_{U_{\mathcal X}}}} &&  \phi'^* Rf''_*\mathsf M_{U'_{\mathcal X}} \ar[d]_{\alpha_{\phi'}}  \ar@/^4pc/[dd]^{\tau_{\phi'}} \\
&Rf'''_* \phi_{\mathcal X}'^*\phi_{\mathcal X}^*\mathsf M_{ U_{\mathcal X}} \ar@{->}@/^0pc/[rr]^<>(0.5){f'''_*(\phi_{\mathcal X}'^*\theta_{\phi_{\mathcal X}})} \ar@{=}[d] &&Rf'''_*  \phi_{\mathcal X}'^*\mathsf M_{ U'_{\mathcal X}}  \ar[d]_<>(0.5){f'''_*(\theta_{\phi_{\mathcal X}'})} \\
(\phi\circ \phi')^*Rf'_* \mathsf M_{ U_{\mathcal X}} \ar[r]^<>(0.5){\alpha_{\phi\circ \phi'}}
\ar@/_1.5pc/[rrr]_{\tau_{\phi\circ \phi'}} 
 & Rf'''_*  (\phi_{\mathcal X}\circ \phi_{\mathcal X}')^*\mathsf M_{ U_{\mathcal X}}  \ar[rr]^<>(0.5){f'''_*(\theta_{\phi_{\mathcal X}\circ \phi_{\mathcal X}'})}&&Rf'''_* \mathsf M_{U''_{\mathcal X}}. 
}
\end{equation}
The left hand square commutes by \eqref{E:PresDpf-var3}, the top right square commutes by \eqref{E:NatProp1MN}, and the bottom right square commutes by \eqref{E:f*DescData}.
\end{proof}

\subsection{\emph{Ad hoc} $i$-th derived push forward}
\label{S:PPDiagsV}

In this section, we consider the situation where we take the $i$-th cohomology of the push forward.  
In other words, given $\mathsf M$ in $\mathsf D_{X}$, then in the notation of diagram \eqref{E:PresDpf-varA}, we will assume there is a natural isomorphism of sheaves in $\mathsf A_U$:  
\begin{equation}\label{E:DefPushPullalphaA}
\alpha_p=\alpha_{p,\mathsf M}: p^*\mathcal H^i (Rf_*\mathsf M) \stackrel{\sim}{\longrightarrow} \mathcal H^i(Rf'_*p_X^*\mathsf M). 
\end{equation}
In particular, we assume these isomorphisms satisfy the following properties, analogous to those in the previous section.    First, the isomorphisms $\alpha$ are functorial in $\mathsf M$ in the sense that given a morphism  $\beta:\mathsf M\to \mathsf N$, we have a commutative diagram
\begin{equation}\label{E:NatProp1MNA}
\xymatrix@C=3em{
 p^*\mathcal H^i (Rf_*\mathsf M) \ar[r]^{p^*(Rf_*\beta)} 
 \ar[d]^{\alpha_{p,\mathsf M}}&  p^*\mathcal H^i (Rf_*\mathsf N)  \ar[d]^{\alpha_{p,\mathsf N}} \\
\mathcal H^i(Rf'_*p_X^*\mathsf M) \ar[r]^{Rf'_*(p_X^*\beta)}
 &  \mathcal H^i(Rf'_*p_X^*\mathsf N).
 }
  \end{equation}
  
  In addition, we assume they are compatible with further pull back, in the sense that if $\phi:U'\to U$ is another \'etale morphism of smooth varieties (resp.~smooth complex analytic spaces), so that we  have a  fibered product  diagram as in \eqref{E:PresDpf-var2}, 
then we have $\alpha_{p\circ \phi,\mathsf M}= \alpha^*_{\phi,p_X^*\mathsf M}\circ \phi^*\alpha_{p,\mathsf M}$, i.e., we have a 
 a commutative diagram
\begin{equation}\label{E:PresDpf-var3AA}
\xymatrix@C=3em@R=2em{
\phi^*p^*\mathcal H^i(Rf_*\mathsf M) \ar@{=}[dd] \ar[r]^{\phi^*(\alpha_{p,\mathsf M})}& \phi^*\mathcal H^i(Rf'_*p_X^*\mathsf M) \ar[d]^{\alpha_{\phi,p_X^*\mathsf M}}\\
& \mathcal H^i(Rf''_*\phi_X^*p_X^*\mathsf M) \ar@{=}[d] \\
(p\circ \phi)^*\mathcal H^i(Rf_*\mathsf M)\ar[r]^<>(0.5){\alpha_{p\circ \phi,\mathsf M}}& \mathcal H^i(Rf''_*(p_X\circ \phi_X)^*\mathsf M).
}
\end{equation}

\begin{dfn}[\'Etale base change for proper push forwards: $\mathcal H^i$]\label{D:EBCPPFith}
We say that \emph{$i$-th derived proper push forwards of objects of $\mathsf A$ (resp.~$\mathsf D$) satisfy \'etale base change for smooth varieties (resp.~smooth complex analytic spaces)} if for every proper morphism $f:X\to Y$ of smooth varieties (resp.~smooth complex analytic spaces), every object $\mathsf M$ in $\mathsf A_X$ (resp.~$\mathsf D_X$), and every \'etale morphism of smooth varieties (resp.~smooth complex analytic spaces)  $p:U\to Y$, there are natural isomorphisms $\alpha_p$ as in \eqref{E:DefPushPullalphaA} satisfying  \eqref{E:PresDpf-var3AA} and  \eqref{E:NatProp1MNA}.  
\end{dfn}

Assuming $i$-th derived proper push forwards of objects of $\mathsf D$ satisfy \'etale base change  for smooth varieties (resp.~smooth complex analytic spaces), then one can define an \emph{ad hoc} push forward on stacks.  
More precisely, we  have the following lemma:

\begin{lem}\label{L:GenDescA}
Let $f:\mathcal X\to \mathcal Y$ be a proper
 morphism of smooth separated integral DM stacks of finite type over $\mathbb C$ (or the analytification of such a morphism).  
Assume that $i$-th derived proper push forwards of objects of $\mathsf D$ satisfy \'etale base change for smooth varieties (resp.~smooth complex analytic spaces); see \Cref{D:EBCPPFith}. Then there is an \emph{ad hoc} $i$-th derived push forward  functor
$$
\mathcal H^i(Rf_*^{ah}):\mathsf D_{\mathcal X,fake}\longrightarrow \mathsf A_{\mathcal Y}
$$
defined as follows.
Given  $\mathsf M$  in $\mathsf D_{\mathcal X,fake}$, 
i.e., an object $\{\mathsf M_U,\theta_\phi\}$ in $\mathsf D$ with descent data on $\mathcal X$ (\S \ref{S:ConcreteDesc}), then, in the notation of   \eqref{E:TubPresDpf'} and \eqref{E:DefPushPullalphaA}, the data
$$
\{\mathcal H^i(Rf'_*\mathsf M_{U_{\mathcal X}}),\tau_\phi:=\alpha_\phi \circ \mathcal H^i(Rf''_*(\theta_{\phi_{\mathcal X}}))\}
$$
defines an object in $\mathsf A$ with descent data on $\mathcal Y$, i.e., an object of $\mathsf A_{\mathcal Y}$, which we define to be  $\mathcal H^i(Rf^{ah}_*\mathsf M)$ and call   the $i$-th \emph{ad hoc} derived push forward.

\end{lem}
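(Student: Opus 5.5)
The plan is to mirror the proof of \Cref{L:GenDesc}, but now working with the objects $\mathcal H^i(Rf'_*\mathsf M_{U_{\mathcal X}})$ in $\mathsf A_U$ and the isomorphisms \eqref{E:DefPushPullalphaA}. Fix $\mathsf M=\{\mathsf M_U,\theta_\phi\}$ in $\mathsf D_{\mathcal X,fake}$. For each \'etale $p:U\to\mathcal Y$ from a smooth variety (resp.~smooth analytic space), the base change $U_{\mathcal X}$ is a smooth variety (resp.~space), because $f$ is schematic. The composite $U_{\mathcal X}\to\mathcal X$ is \'etale, so $\mathsf M_{U_{\mathcal X}}$ is defined, and we set the object on $U$ to be $\mathcal H^i(Rf'_*\mathsf M_{U_{\mathcal X}})$. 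For $\phi:U'\to U$ as in \eqref{E:TubPresDpf'}, we define
\[
\tau_\phi:\phi^*\mathcal H^i(Rf'_*\mathsf M_{U_{\mathcal X}})\xrightarrow{\alpha_{\phi}}\mathcal H^i(Rf''_*\phi_{\mathcal X}^*\mathsf M_{U_{\mathcal X}})\xrightarrow{\mathcal H^i(Rf''_*\theta_{\phi_{\mathcal X}})}\mathcal H^i(Rf''_*\mathsf M_{U'_{\mathcal X}}).
\]
This map is an isomorphism, being a composite of isomorphisms. Here the $\alpha$ is the one for the morphism $f'$ and the \'etale map $\phi$, as in \Cref{D:EBCPPFith}.

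Next comes the cocycle condition $\tau_{\phi\circ\phi'}=\tau_{\phi'}\circ\phi'^*\tau_\phi$. I will prove it by drawing the analogue of diagram \eqref{E:BigComDiagDesc} with $\mathcal H^i(R(-)_*)$ in place of $R(-)_*$, and inserting the middle vertical isomorphism $\alpha_{\phi',\phi_{\mathcal X}^*\mathsf M_{U_{\mathcal X}}}$. The left square then commutes by \eqref{E:PresDpf-var3AA}, applied to $f'$ with the \'etale maps $\phi,\phi'$. The top right square commutes by naturality \eqref{E:NatProp1MNA}, applied to the morphism $\beta=\theta_{\phi_{\mathcal X}}$. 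The bottom right square commutes by applying $\mathcal H^i(Rf'''_*(-))$ to the cocycle diagram \eqref{E:DescDataDiagMMUUA} for $\mathsf M$. Along the way I must check that the pulled back descent data of $\mathsf M$ along $U''_{\mathcal X}\to U'_{\mathcal X}\to U_{\mathcal X}$ uses exactly the $2$-isomorphisms coming from the fibered product, so that $\theta_{\phi_{\mathcal X}\circ\phi'_{\mathcal X}}$ is the relevant map. The result is an object of $\mathsf A$ with descent data, which by assumption is an object of $\mathsf A_{\mathcal Y}$.

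For functoriality, let $\psi:\mathsf M\to\mathsf N$ be a morphism in $\mathsf D_{\mathcal X,fake}$. We define the component on $U$ to be $\mathcal H^i(Rf'_*\psi_{U_{\mathcal X}})$. Compatibility with the $\tau$'s, i.e.~diagram \eqref{E:DescDataDiagMorph}, splits into two squares. The first commutes by \eqref{E:NatProp1MNA} applied to $\psi_{U_{\mathcal X}}$. The second is $\mathcal H^i(Rf''_*(-))$ applied to the compatibility square of $\psi$ with $\theta$ and $\theta'$. Identities and composites are preserved because $\mathcal H^i(Rf'_*(-))$ is a functor on each chart.

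I expect the main obstacle to be bookkeeping rather than mathematics. The $2$-categorical fibered products $U_{\mathcal X}$ are only defined up to unique isomorphism, and the descent isomorphisms $\theta$ depend on the $2$-morphisms $\alpha$ in $\mathcal X_{\text{\'etale}}$. To handle this I would first fix, for each $(U,p)$, a choice of fibered product. I would then observe that morphisms $(\phi,\alpha)$ in $\mathcal Y_{\text{\'etale}}$ induce canonical morphisms in $\mathcal X_{\text{\'etale}}$ that respect composition. Once that is checked, the rest is the diagram chase above.
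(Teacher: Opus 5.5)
Your proposal is correct and matches the paper's route. The paper says only that the proof is the same as that of Lemma~\ref{L:GenDesc}: one fills in the $\mathcal H^i$-analogue of diagram~\eqref{E:BigComDiagDesc}, and its three squares commute for exactly your reasons (compatibility of the $\alpha$'s with composition of \'etale maps, naturality of $\alpha_{\phi'}$ applied to $\theta_{\phi_{\mathcal X}}$, and applying $\mathcal H^i(Rf'''_*(-))$ to the cocycle condition for $\mathsf M$). Your check of functoriality on morphisms, your use of the schematic hypothesis implicit in the notation~\eqref{E:TubPresDpf'}, and your attention to the $2$-isomorphisms of the fibered products are all points the paper leaves implicit.
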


\begin{proof}
The proof is similar to the proof of \Cref{L:GenDesc} and is left to the reader.
\end{proof}

\begin{rem}\label{R:ah-push-compatibleHH}
Assuming the conditions of  \Cref{L:GenDesc} also hold, then 
taking $\mathcal H^i(-)$ of the \emph{ad hoc} push forward $Rf_*^{ah}\mathsf M$ of \Cref{L:GenDesc} gives the \emph{ad hoc} push forward $\mathcal H^i(Rf_*^{ah}\mathsf M)$ defined in \Cref{L:GenDescA}.
\end{rem}

\section{\emph{Ad hoc} push forward of perverse sheaves}\label[appendix]{S:AppendixAdHodPF}

We now apply the results in the previous section to the push forward of perverse sheaves. 
For a scheme, or more generally a DM stack $\mathcal X$, and an algebra $A$, we denote by $A_{\mathcal X}$ the locally constant sheaf of algebras with value $A$. 
 For a complex analytic space $U$, and with $A=\C$ or $A=\Q$, we denote by $D_c(A_{U})$  the derived category of sheaves of $A$-modules on $U$ with constructible cohomology sheaves, and  by $\operatorname{Perv}(A_U)$ the category of $A$-perverse sheaves on $U$.

\subsection{Commutativity of perverse cohomology with \'etale pullback}

We start with the following lemma, which we will use later, that perverse cohomology commutes with \'etale base change:

\begin{lem} \label{CommuteCohwEtalepullbackA} Let $U$ be a smooth complex analytic space.
For any integer $i$, the perverse cohomology functors $^p\mathcal{H}^i$ from the derived category of constructible sheaves $D^b_c(\Q_{U^{an}})$ to $\operatorname{Perv}(\Q_U)$ commute canonically with \'etale pullback. More precisely, using the notation in diagram \eqref{E:TubPresDpf'}, the following diagram commutes

\begin{equation}\label{perverse-commutes-etale}
 \begin{tikzcd}
\phi'^*\phi^* \circ \textrm{ }^p\m{H}^0 \arrow[r, "\simeq"] \arrow[d, "\phi'^*\rho_{\phi}"'] & (\phi\phi')^* \circ \textrm{ }^p\m{H}^0 \arrow[d, "\rho_{\phi\phi'}"] \\
\phi'^*\circ \textrm{ }^p\m{H}^0 \circ \phi^*  \arrow[r, "\rho_{\phi'}"]                                 & \textrm{ }^p\m{H}^0 \circ (\phi\phi')^*                                      
\end{tikzcd}
\end{equation}
\end{lem}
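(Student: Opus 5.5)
The plan is to build the isomorphism $\rho_\phi$ for a single étale morphism $\phi:U'\to U$ from $t$-exactness, and then get the cocycle compatibility from the uniqueness property of truncations. First, I will show that for étale $\phi$ the pullback $\phi^*:D^b_c(\mathbb Q_U)\to D^b_c(\mathbb Q_{U'})$ is $t$-exact for the perverse $t$-structure. An étale morphism of complex analytic spaces is a local isomorphism, so $\phi^*=\phi^!$. The conditions defining ${}^pD^{\le 0}$ and ${}^pD^{\ge 0}$ are local: they concern the dimensions of supports of $\mathcal H^j(i_x^*K)$ and $\mathcal H^j(i_x^!K)$ at points. Since $\phi$ is a local homeomorphism, these conditions are preserved by $\phi^*$ and $\phi^!$, so $\phi^*$ preserves both halves.

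For a $t$-exact functor $F$ there is a canonical natural isomorphism $\rho_F:F\circ{}^p\tau_{\le 0}\stackrel{\sim}{\to}{}^p\tau_{\le 0}\circ F$, and similarly for ${}^p\tau_{\ge 0}$. It is characterized as the unique morphism compatible with the adjunction maps ${}^p\tau_{\le 0}K\to K$: applying $F$ to that map gives $F({}^p\tau_{\le 0}K)\to FK$ with source in ${}^pD^{\le 0}$, so it factors uniquely through ${}^p\tau_{\le 0}FK$. Exactness then shows the factorization is an isomorphism, since $F$ carries the truncation triangle to a truncation triangle. Composing the $\tau_{\le 0}$ and $\tau_{\ge 0}$ versions defines $\rho_\phi:\phi^*\circ{}^p\mathcal H^0\stackrel{\sim}{\to}{}^p\mathcal H^0\circ\phi^*$. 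Shifting gives the same for every ${}^p\mathcal H^i$.

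To prove that \eqref{perverse-commutes-etale} commutes, I will use the uniqueness characterization. Both paths around the square are morphisms $(\phi\phi')^*\,{}^p\tau_{\le0}K\to{}^p\tau_{\le0}(\phi\phi')^*K$ whose composite with the adjunction map to $(\phi\phi')^*K$ equals the pullback of ${}^p\tau_{\le0}K\to K$, after the canonical identification $\phi'^*\phi^*\cong(\phi\phi')^*$. For the composite path this is checked in two steps: first by the defining property of $\phi'^*\rho_\phi$, pulled back along $\phi'$, and then by that of $\rho_{\phi'}$. Since ${}^pD^{\le0}$ maps into a $\tau_{\le0}$-object factor uniquely, the two paths agree. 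The same argument handles $\tau_{\ge0}$, using maps out of $K$, and the two statements combine to give the result for ${}^p\mathcal H^0$.

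The main obstacle is bookkeeping rather than content. One must make sure the identification $\phi'^*\phi^*\cong(\phi\phi')^*$ of pseudofunctors is the one compatible with the truncation adjunction maps. This holds by naturality of that connecting isomorphism applied to the morphism ${}^p\tau_{\le 0}K\to K$.
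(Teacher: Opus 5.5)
Your proposal is correct and takes essentially the same route as the paper. The paper cites \cite[\S 4.2.4]{BBD82} for the $t$-exactness of \'etale pullback and \cite[Prop.~1.1.9]{BBD82} for the uniqueness of the morphisms between the two truncation triangles, which is exactly your adjunction/uniqueness characterization of $\rho_\phi$; you go slightly further by spelling out how that same uniqueness forces the square \eqref{perverse-commutes-etale} to commute, a step the paper leaves implicit in the word ``canonically''.
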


\begin{proof}
  Given an arbitrary object $T\in D^b_c(\Q_{U^{an}})$, we have

\begin{center}
\begin{tikzcd}
\phi^*\tau^p_{\leq 0} T \arrow[r] \arrow[d, dotted] & \phi^*T \arrow[d, Rightarrow, no head] \arrow[r] & \phi^*\tau^p_{\geq 1}T \arrow[r, "+1"] \arrow[d, dotted] & {} \\
\tau^p_{\leq 0}\phi^*T \arrow[r]                    & \phi^*T \arrow[r]                                & \tau^p_{\geq 1} \phi^*T \arrow[r, "+1"]                  & {}
\end{tikzcd}
\end{center} 
By \cite[\S 4.2.4]{BBD82}, $\phi^*$ preserves $^pD^{\geq 0}_c(\Q_{U^{an}})$ and $^pD^{\leq 0}_c(\Q_{U^{an}})$. Thus, by \cite[Prop. 1.1.9]{BBD82}, the dotted arrows that make the triangle commutative are unique even in the derived category. Hence, all perverse truncations commute canonically with \'etale pullback. Hence, the perverse cohomologies commute canonically with \'etale pullback.
\end{proof}

\subsection{\emph{Ad hoc} push forward of $A_{\mathcal X}$-modules}\label{S:PushCmodA}
Following the
discussion in \S \ref{E:ConcreteDesc}, we  define the fake bounded  derived category of sheaves of $A$-modules on $\mathcal X$, denoted 
\begin{equation}\label{E:DbfakeAX}
 {D^b_{fake}(A_{\X})},
\end{equation}
 to be the category of objects of $D^b(A_{(-)})$ with descent data on $\mathcal X$. 

In a similar way we define the fake bounded  derived category of sheaves of $A$-modules with constructible cohomology on $\mathcal X$, denoted $ {D^b_{c,fake}(A_{\X})}$,  to be the category of objects of $D^b_c(A_{(-)})$ with descent data on $\mathcal X$.

\begin{rem}[Pathologies of the fake derived category of constructible sheaves]\label{R:FakeNoGood}
The fake derived category $D^b_{fake}(A_{\mathcal X})$  is convenient for us, but in some sense, it is not really the right category to look at geometrically:
\begin{enumerate}
\item  First, the derived category of sheaves does not satisfy \'etale descent and so in particular, even when $\X$ is a scheme, the natural functor (see \eqref{E:genCompareDfake})  $D^b(\operatorname{Mod}(A_{\mathcal X}))\to D^b_{fake}(A_{\mathcal X})$, from the bounded derived category of the abelian category of  $A_{\X}$-modules, may not be an equivalence (See \cite[\href{https://stacks.math.columbia.edu/tag/0D9F}{\S 0D9F, Lem.~85.13.7}]{stacks-project}).

\item 

Second, it is not clear that the $2$-category $D^b_{fake}(A_{\mathcal X})$ is triangulated (in fact, presumably it is not) because given a commutative diagram

\begin{center}
    \begin{tikzcd}
A \arrow[r] \arrow[d, "\psi_1"] \arrow[dd, "\psi_3"', bend right] & B \arrow[d, "\phi_1"'] \arrow[dd, "\phi_3", bend left] \\
A' \arrow[r] \arrow[d, "\psi_2"]                                  & B' \arrow[d, "\phi_2"']                                \\
A'' \arrow[r]                                                     & B''                                                   
\end{tikzcd}
\end{center}
where the vertical arrows should be thought of as descent data for $A$ and for $B$, then, taking the cones does not guarantee commutativity of the isomorphisms, i.e., the diagram below may not be commutative

\begin{center}
    \begin{tikzcd}
A \arrow[r] \arrow[d, "\psi_1"] \arrow[dd, "\psi_3"', bend right] & B \arrow[d, "\phi_1"'] \arrow[dd, "\phi_3", bend left] \arrow[r] & C \arrow[d] \arrow[dd, bend left] \\
A' \arrow[r] \arrow[d, "\psi_2"]                                  & B' \arrow[d, "\phi_2"']                                          & C' \arrow[d]                      \\
A'' \arrow[r]                                                     & B'' \arrow[r]                                                    & C''                              
\end{tikzcd}
\end{center}
because the induced morphisms do not have to commute. The failure of commutativity appears because the cone construction is unique only up to isomorphism but not via unique isomorphism.

\end{enumerate}

\end{rem}

However, despite the pathologies mentioned in the remark above, $D^b_{fake}(A_{\mathcal X})$  will be sufficient for our purposes, as the main object we are interested in, namely $A_{\X}[d_{\X}]$ ($d_{\mathcal X}=\dim \mathcal X$),  also lives in this category. In fact, there is an abelian category contained in $D^b_{fake}(A_{\mathcal{X}})$, namely the category of perverse sheaves.  More precisely, we define 
\begin{equation}\label{E:PervXXDef}
\operatorname{Perv}(A_{\X})\subseteq D^b_{fake}(A_{\mathcal{X}})
\end{equation}
to the category of objects of $\operatorname{Perv}(A_{(-)})$ with descent data on $\mathcal X$. We note that perverse sheaves satisfy \'etale descent \cite[Cor. 2.1.23]{BBD82},
  and consequently, one has that that $\operatorname{Perv}(A_{\mathcal X})$ is an abelian category.  
Moreover, the object $A_{\X}[d_{\X}]$ belongs to this category.
We also observe the following:

\begin{lem}\label{D-fake-Perv = D-fake-constructible}

We have an equivalence of categories 
  $$D_{fake}^b(\operatorname{Perv}(A_{\mathcal X}))\simeq D^b_{c,fake}(A_{\X}),$$
  where on the left we mean the category of objects of $D_{fake}^b(\operatorname{Perv}(A_{(-)}))$ with descent data on $\mathcal X$. 
\end{lem}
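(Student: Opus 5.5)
The idea is to reduce to the case of varieties (complex analytic spaces), where Beilinson's theorem gives an equivalence
$$\operatorname{real}_U: D^b(\operatorname{Perv}(A_U))\stackrel{\sim}{\longrightarrow} D^b_c(A_U),$$
and then check that these realization functors are compatible with \'etale pull back. Compatible equivalences on each $U$ will then induce an equivalence on the categories of objects with descent data. Concretely, both sides of the statement are defined as categories of compatible systems $\{\mathsf M_U,\theta_\phi\}$ indexed by $\mathcal X_{\text{\'etale}}$, as in \S\ref{S:ConcreteDesc}. So it suffices to produce, for each \'etale $p:U\to\mathcal X$, an equivalence $\operatorname{real}_U$, together with natural isomorphisms
$$\rho_\phi:\phi^*\circ \operatorname{real}_U\cong \operatorname{real}_{U'}\circ \phi^*$$
for each \'etale $\phi:U'\to U$, satisfying the cocycle condition $\rho_{\phi\circ\phi'}=\rho_{\phi'}\circ\phi'^*\rho_\phi$, which has the same shape as \eqref{perverse-commutes-etale}.

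The steps are as follows. First I recall that $\phi^*$ is $t$-exact for the perverse $t$-structure \cite[\S 4.2.4]{BBD82}, so it induces an exact functor $\operatorname{Perv}(A_U)\to\operatorname{Perv}(A_{U'})$, and hence a functor $\phi^*:D^b(\operatorname{Perv}(A_U))\to D^b(\operatorname{Perv}(A_{U'}))$. I then use the filtered-derived-category construction of the realization functor. Since $\phi^*$ lifts to filtered derived categories and commutes with the perverse truncations, the construction yields the isomorphism $\rho_\phi$. Uniqueness then comes from \Cref{CommuteCohwEtalepullbackA}, applied to the truncation triangles exactly as in that proof; \cite[Prop.~1.1.9]{BBD82} supplies it, and uniqueness forces the cocycle identity.

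With the $\rho_\phi$ in hand, I define the functor on objects by
$$\{\mathsf M_U,\theta_\phi\}\longmapsto \{\operatorname{real}_U\mathsf M_U,\ \operatorname{real}_{U'}(\theta_\phi)\circ\rho_\phi\}.$$
The descent compatibility \eqref{E:CoCycledescData} for the new data follows from the cocycle identity for $\rho$, by a diagram chase like the one in \Cref{L:GenDesc}. For the inverse, I take the quasi-inverses of the $\operatorname{real}_U$ together with the inverse isomorphisms $\rho_\phi^{-1}$, transported along those quasi-inverses. Full faithfulness is checked levelwise: a morphism in either category is a family $\psi_U$ satisfying \eqref{E:DescDataDiagMorph}, and each $\operatorname{real}_U$ is fully faithful and commutes with pull back via $\rho$.

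The main obstacle is the first step: making $\rho_\phi$ canonical and functorial enough for the cocycle identity to hold on the nose, since the realization functor involves filtered lifts that are only unique up to unique isomorphism. If the filtered approach becomes unwieldy, I would instead use the description of $\operatorname{real}$ via $D^b$ of a suitable \'etale-local dg-enhancement, or Nori's/Beilinson's explicit resolution by perverse sheaves. Another option is to note that any two $t$-exact triangulated lifts of $\phi^*$ agreeing on hearts are uniquely isomorphic, which again yields the cocycle condition.
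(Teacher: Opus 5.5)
Your route is the paper's. Its proof is just the observation that $D^b(\operatorname{Perv}(A_U))\simeq D^b_c(A_U)$ for every \'etale $U\to\mathcal X$, so the two categories of objects with descent data coincide. You correctly isolate what that sentence leaves implicit: one needs comparison isomorphisms $\rho_\phi\colon \phi^*\circ\operatorname{real}_U\cong \operatorname{real}_{U'}\circ\phi^*$ satisfying $\rho_{\phi\circ\phi'}=\rho_{\phi'}\circ\phi'^*\rho_\phi$ on the nose in $D^b_c(A_{U''})$.

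\textbf{The gap.} The justification you give for the cocycle identity does not work. \Cref{CommuteCohwEtalepullbackA} and \cite[Prop.~1.1.9]{BBD82} give uniqueness of the morphisms on the \emph{outer} terms of truncation triangles once the morphism on the \emph{middle} term is fixed. In that lemma the middle morphism is the identity of $\phi^*T$. For $\rho_{\phi,K}$, however, the middle morphism $\phi^*\operatorname{real}_U(K)\to\operatorname{real}_{U'}(\phi^*K)$ is exactly the unknown, and it is not determined by its effect on perverse truncations. If $K$ has perverse cohomology in degrees $0$ and $1$, two morphisms inducing the same maps on ${}^p\tau_{\le 0}$ and ${}^p\tau_{\ge 1}$ can differ by a composite through ${}^p\tau_{\ge 1}\to{}^p\tau_{\le 0}$. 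Such a composite is given by an element of $\operatorname{Ext}^1\bigl({}^p\mathcal H^1,{}^p\mathcal H^0\bigr)$, which is nonzero in general. Already for split $K=P_0\oplus P_1[-1]$ this produces non-identity automorphisms that induce the identity on ${}^p\mathcal H^\bullet$.

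Your fallback, that two $t$-exact triangulated lifts of $\phi^*$ agreeing on the hearts are uniquely isomorphic, is also not a theorem about bare triangulated functors. Natural isomorphisms between exact functors on $D^b$ are not in general determined by their restriction to the heart, so this too needs an enhancement. Hence ``uniqueness forces the cocycle identity'' has nothing to rest on.

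\textbf{The fix.} The cocycle identity must come from functoriality of a construction, not from uniqueness in the homotopy category. The filtered approach you mention does this, and your claim that it yields $\rho_\phi$ is right. The steps are:
\begin{enumerate}
\item \'Etale pull back is exact on sheaves, so it acts on filtered complexes. The canonical isomorphisms $(\phi\circ\phi')^*\cong\phi'^*\phi^*$ satisfy the usual pseudo-functor coherences.
\item Pull back induces $t$-exact f-functors between the filtered derived categories, compatible with $\operatorname{gr}$ and with the functor $\omega$ forgetting the filtration.
\item Write $\operatorname{real}_U=\omega\circ G_U^{-1}$ as in \cite[\S 3.1]{BBD82}. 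Here $G_U$ is the equivalence from filtered objects with $\operatorname{gr}^i$ in $\operatorname{Perv}(A_U)[-i]$ to $C^b(\operatorname{Perv}(A_U))$.
\item Build $\rho_\phi$ from the compatibility isomorphisms of step 2 and the unit and counit of the adjoint equivalences $G_U$.
\item The cocycle identity is then a diagram chase using the pseudo-functor coherences, done first on $C^b$ and then passed to $D^b$.
\end{enumerate}
Alternatively, work in an $\infty$-categorical enhancement, as in the discussion of \cite{tubach_2024} in \S\ref{S:tubachf*}, where realization is natural in $t$-exact functors. Once $\rho_\phi$ is obtained either way, the rest of your argument goes through: transporting the descent data, full faithfulness, and the inverse built from quasi-inverses.
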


\begin{proof}
This follows from the definitions  since for every \'etale morphism $p:U\to \mathcal X$ from a smooth variety (resp.~complex analytic space), we have  $D^b(\operatorname{Perv}(A_{U}))=D^b_c(A_{U})$.
\end{proof}

Now let $X$ be a smooth analytic space. Following the strategy in \S \ref{S:PPDiagsVDer}, our first goal in constructing the \emph{ad hoc} push forward in this setting will be to show that given an object $K$ in $D^b_{fake}(A_X)$, then in the notation of diagram \eqref{E:PresDpf-varA}, there is a canonical isomorphism of objects in $D^b(A_U)$:  
\begin{equation}\label{E:CmodDefPushPullalphaA}
\alpha_p=\alpha_{p,K}: p^* (Rf_*K) \stackrel{\sim}{\longrightarrow} Rf'_*p_X^*K. 
\end{equation}
In particular,  these isomorphisms  satisfy the  properties  \eqref{E:NatProp1MN} and \eqref{E:PresDpf-var3}:

\begin{lem}\label{L:CmodDescA}
Proper push forwards of objects of $D^b(A_{(-)})$ satisfy \'etale base change for smooth complex analytic spaces (\Cref{D:EBCPPF}).  
In other words, there are isomorphisms as in   \eqref{E:CmodDefPushPullalphaA} satisfying 
\eqref{E:NatProp1MN} and \eqref{E:PresDpf-var3}.

\end{lem}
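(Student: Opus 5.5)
The isomorphism $\alpha_{p,K}$ in \eqref{E:CmodDefPushPullalphaA} will be the classical base change morphism for the cartesian square \eqref{E:PresDpf-varA}; we then check that it is invertible and satisfies \eqref{E:NatProp1MN} and \eqref{E:PresDpf-var3}. Concretely, $\alpha_{p,K}$ is the composite of the unit $Rf_*K\to Rf_*Rp_{X*}p_X^*K = Rp_*Rf'_*p_X^*K$ (using $f\circ p_X=p\circ f'$) with the adjunction $p^*Rp_*\to \mathrm{id}$. Equivalently, $\alpha_{p,K}$ is the morphism adjoint to $Rf_*(\text{unit}_{p_X})$. Since $p$ is étale, hence a local homeomorphism in the analytic topology, $p^*$ has the exact left adjoint $p_!$, and $p^*=p^!$. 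The map $\alpha_{p,K}$ is therefore an instance of proper base change $p^*Rf_*\simeq Rf'_*p_X^*$. Here $f$ is proper, so $Rf_*=Rf_!$, and proper base change in the topological setting (Kashiwara--Schapira, or Verdier) holds for arbitrary continuous base change, in particular for local homeomorphisms. This gives invertibility in $D^b(A_U)$.

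Naturality \eqref{E:NatProp1MN} is then formal: $\alpha$ is built from units and counits of adjunctions and from the identification $Rf_*Rp_{X*}=Rp_*Rf'_*$, all of which are natural transformations of functors, so a morphism $\beta:K\to N$ commutes with $\alpha$.

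The cocycle condition \eqref{E:PresDpf-var3} for the composite square \eqref{E:PresDpf-var2} is the only step with content. I would prove it by the standard argument that base change morphisms are compatible with horizontal pasting of cartesian squares. Write $\alpha$ as the mate of the identity 2-cell $Rp_*Rf'_*\cong Rf_*Rp_{X*}$ under the adjunctions $p^*\dashv Rp_*$ and $p_X^*\dashv Rp_{X*}$. Mates compose under pasting, and this yields $\alpha_{p\circ\phi}=\alpha_{\phi,p_X^*K}\circ\phi^*\alpha_{p,K}$. The identifications $(p\phi)^*=\phi^*p^*$ and $R(p\phi)_*=Rp_*R\phi_*$ used here are the canonical pseudofunctor isomorphisms, and they are compatible with the adjunctions.

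The main obstacle is bookkeeping: the pseudofunctoriality isomorphisms of $(-)^*$ and $R(-)_*$ on the derived category must be coherently compatible with the adjunctions, so that the mate calculus applies verbatim. I would handle this by realizing every functor on K-injective (or flabby) resolutions. There $p^*$ is exact and preserves injectives up to restriction, since $p$ is étale, and $f_*$, $f'_*$ are computed by Godement resolutions, which commute with restriction to open sets. With these choices the isomorphisms become equalities of complexes, so the diagrams can be checked on the level of sheaves, where they commute because every map is a restriction map.
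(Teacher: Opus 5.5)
Your proposal is correct, but it is organized differently from the paper's proof. The paper does not invoke a base change theorem or the mate formalism. It fixes injective resolutions $K\to I_K^\bullet$ and $p_X^*K\to I^\bullet_{p_X^*K}$ and takes the comparison map $\gamma_K\colon p_X^*I_K^\bullet\to I^\bullet_{p_X^*K}$, which is unique up to homotopy. It then defines $\alpha_{p,K}$ as $f'_*(\gamma_K)$ precomposed with the elementary sheaf-level identification $p^*f_*=f'_*p_X^*$, valid for \'etale $p$. Naturality and the cocycle condition both follow from uniqueness up to homotopy of comparison maps, packaged in the homotopy-commutative cube \eqref{E:diagramA18complexes}.

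You instead define $\alpha$ as the standard base change morphism, the mate of $Rp_*Rf'_*\cong Rf_*Rp_{X*}$. You get invertibility from proper base change and the cocycle condition from compatibility of mates with horizontal pasting. Your route identifies $\alpha$ with the canonical base change map, which makes its compatibility with other standard constructions (analytification, the de Rham functor, Tubach's base change isomorphism) transparent. The cost is that you must check the pseudofunctor isomorphisms are coherent with the adjunctions. Your plan to do this on Godement or flabby resolutions is essentially the same resolution-level bookkeeping the paper carries out with injectives.

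Two remarks. First, properness of $f$ plays no role. For a local homeomorphism $p$ the base change map is an isomorphism for any $f$, since the question is local on $U$, where $p$ is an open embedding. The paper uses only this, together with the fact that $p_X^*$ preserves injectives because it has the exact left adjoint $p_{X!}$. Second, Godement resolutions are strictly compatible with pullback along local homeomorphisms. Once you work with them, $\alpha$ becomes a chain-level identification and the cocycle condition holds on the nose, so the mate calculus is not really needed.
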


\begin{proof} 
This essentially follows from standard arguments in the derived category, and in principle could be deduced from the presentation in \cite[\S 4.5]{HTT08}; for clarity, and in particular to show that various constructions are all compatible, we include the details here.

Let $K\to I^\bullet_K$ be an injective resolution in $D^b(A_{X})$, so that $Rf_*K=f_*I_K^\bullet$ in the derived category, where $f_*$ is the push forward of sheaves. Let $p_X^*K \to I_{p_X^*K}^\bullet$ be an injective resolution, so that we have $Rf'_*p_X^*K = f'_*I_{p_X^*K}^\bullet$ in the derived category.  This gives us a  diagram
\begin{equation}
\xymatrix{
p_X^*K\ar[r]^<>(0.5){q.i.} \ar[rd]_{\cong }& p_X^*I_K^\bullet \ar@{-->}[d]^{\gamma_K} \\
 & I_{p_X^*K}^\bullet 
}
\end{equation}
where there is a map $\gamma$ as above that,  by say  
\cite[\href{https://stacks.math.columbia.edu/tag/013P}{Lem.~013P}]{stacks-project},  makes the diagram commute up to homotopy, and, by say  \cite[\href{https://stacks.math.columbia.edu/tag/013S}{Lem.~013S}]{stacks-project},  is unique up to homotopy.  By the commutativity of the diagram up to homotopy, the diagram commutes in the derived category. Applying $Rf_*$ to the right hand side of the diagram we have a quasi isomorphism
$$
\xymatrix{
p^*Rf_*K = p^*f_*I_K^\bullet = f'_*p_X^*I_K^\bullet   \ar[r]^<>(0.5) {f'_*(\gamma_K)}&  f'_*I_{p_X^*K}^\bullet = Rf'_*p_X^*K,
}
$$
where here one can use a  local computation with sections of  sheaves to show that with $p$ (and therefore $p_X$) \'etale, one has a canonical identification $p^*f_*=f'_*p_X^*$.  

While $\gamma_K$ is not unique, it is unique up to homotopy, and so it is unique in the derived category. This gives the canonical isomorphism:
$$
\xymatrix{
\alpha_{p,K}: p^* Rf_* K \ar[r]^<>(0.5){f'_*(\gamma_K)}_<>(0.5){\sim}&R f'_*p_X^* K
}
$$
To see that these isomorphisms satisfy \eqref{E:NatProp1MN} and \eqref{E:PresDpf-var3} we argue as follows.  Let $\beta:M\to N$ be a morphism, and at the same time, consider the diagram \eqref{E:PresDpf-var2}. Arguing as above, using similar notation, we obtain a diagram
\begin{equation}\label{E:diagramA18complexes}
\xymatrix@C=1.5em@R=1.5em{
&\phi_X^*p_X^*N\ar[rr]^<>(0.2){q.i.} \ar@{-}[d]_<>(0.5){\cong }&& \phi_X^*p_X^*I_N^\bullet \ar@{-->}[dd]^<>(0.2){\phi_X^*(\gamma_N)} \\
\phi_X^*p_X^*M\ar[rr]^<>(0.8){q.i.} \ar[dd]_<>(0.2){\cong } \ar[ru]^{\phi_X^*p_X^*(\beta)}&\ar[d]& \phi_X^*p_X^*I_M^\bullet \ar@{-->}[dd]^<>(0.2){\phi_X^*(\gamma_M)} \ar@{-->}[ru]&\\
&\phi_X^*p_X^*N \ar[rr]^<>(0.2){q.i.} \ar@{-}[d]_{\cong}&& \phi_X^*I_{p_X^*N}^\bullet  \ar@{-->}[dd]^<>(0.2){\gamma_{p_X^*N}}  \\
\phi_X^*p_X^*M \ar[rr]^<>(0.8){q.i.} \ar[ru]^{\phi_X^*p_X^*(\beta)} \ar[dd]_<>(0.2){\cong }&\ar[d]& \phi_X^*I_{p_X^*M}^\bullet \ar@{-->}[ru] \ar@{-->}[dd]^<>(0.2){\gamma_{p_X^*M}}&\\
&\phi_X^*p_X^*N \ar[rr]^<>(0.2){q.i.}&& I_{\phi_X^*p_X^*N}^\bullet  \\
\phi_X^*p_X^*M \ar[rr]^<>(0.5){q.i.} \ar[ru]^{\phi_X^*p_X^*(\beta)}&& I_{\phi_X^*p_X^*M}^\bullet \ar@{-->}[ru]&\\
}
\end{equation}
that commutes up to homotopy.  Arguing as above, the top cube shows that the isomorphisms $\alpha$ satisfy condition 
\eqref{E:NatProp1MN} 
 and the front face shows they satisfy 
 \eqref{E:PresDpf-var3}.
\end{proof}

\begin{cor}\label{L:CmodGenDescA}
Let $f:\mathcal X\to \mathcal Y$ be the analytification of a schematic proper morphism of smooth separated integral DM stacks of finite type over $\mathbb C$.
Then there is an \emph{ad hoc} push forward  functor
$$
Rf_*^{ah}:D^b_{fake}(A_{\mathcal X})\longrightarrow D^b_{fake}(A_{\mathcal Y})
$$
defined as follows.
Given  $K$  in $D^b_{fake}(A_{\mathcal X})$ \eqref{E:DbfakeAX},  
i.e., an object $\{K_U,\theta_\phi\}$ in $D^b(A_{(-)})$ with descent data on $\mathcal X$ (\S \ref{S:ConcreteDesc}), then, in the notation of   diagram \eqref{E:TubPresDpf'} and \eqref{E:CmodDefPushPullalphaA}, the data
$$
\{Rf'_*K_{U_{\mathcal X}},\tau_\phi:=\alpha_\phi \circ f''_*(\theta_{\phi_{\mathcal X}})\}
$$
defines an object in $D^b(A_{(-)})$ with descent data on $\mathcal Y$, i.e., an object of $\mathsf D_{fake}(A_{\mathcal Y})$, which we define to be  $Rf^{ah}_*K$ and call   the \emph{ad hoc} push forward.

\end{cor}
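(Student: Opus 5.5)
The plan is to apply \Cref{L:GenDesc} with $\mathsf A=\operatorname{Mod}(A_{(-)})$ and $\mathsf D=D^b(A_{(-)})$. The hypothesis of that lemma, namely that proper push forwards of objects of $D^b(A_{(-)})$ satisfy \'etale base change for smooth complex analytic spaces in the sense of \Cref{D:EBCPPF}, is exactly \Cref{L:CmodDescA}. So the corollary reduces to checking that the set-up of \Cref{L:GenDesc} really applies in the analytic setting, and then reading off the formula for $\tau_\phi$.

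\textbf{Step 1: charts are smooth analytic spaces.} Since $f$ is schematic and proper, for every \'etale $p:U\to\mathcal Y$ from a smooth variety the fibered product $U_{\mathcal X}$ is a scheme, and $f':U_{\mathcal X}\to U$ is proper. The morphism $U_{\mathcal X}\to\mathcal X$ is \'etale and $\mathcal X$ is smooth, so $U_{\mathcal X}$ is smooth. After analytification, every square in diagram \eqref{E:TubPresDpf'} is therefore a fibered product of smooth complex analytic spaces, with proper vertical maps and \'etale horizontal maps. This is precisely the input needed for $\alpha_\phi$ from \eqref{E:CmodDefPushPullalphaA}.

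\textbf{Step 2: cocycle condition.} For $K=\{K_U,\theta_\phi\}$ I set $\tau_\phi:=\alpha_\phi\circ f''_*(\theta_{\phi_{\mathcal X}})$. Here $f''_*(\theta_{\phi_{\mathcal X}})$ means $Rf''_*$ applied to the isomorphism $\theta_{\phi_{\mathcal X}}:\phi_{\mathcal X}^*K_{U_{\mathcal X}}\to K_{U'_{\mathcal X}}$. The cocycle identity $\tau_{\phi\circ\phi'}=\tau_{\phi'}\circ\phi'^*\tau_\phi$ then follows from the diagram chase \eqref{E:BigComDiagDesc}, which has three ingredients:
\begin{itemize}
\item compatibility of $\alpha$ with composition of \'etale maps, \eqref{E:PresDpf-var3};
\item naturality of $\alpha$ in the object, \eqref{E:NatProp1MN}, applied to $\beta=\theta_{\phi_{\mathcal X}}$;
\item the push forward of the cocycle diagram \eqref{E:DescDataDiagMMUUA} for $K$.
\end{itemize}

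\textbf{Step 3: functoriality.} A morphism $\psi=\{\psi_U\}$ in $D^b_{fake}(A_{\mathcal X})$ goes to $\{Rf'_*\psi_{U_{\mathcal X}}\}$. Compatibility with the $\tau$'s, i.e.\ diagram \eqref{E:DescDataDiagMorph}, follows by stacking two squares. The first is \eqref{E:NatProp1MN} for $\beta=\psi_{U_{\mathcal X}}$. The second is $Rf''_*$ of the compatibility square for $\psi$ on $U'_{\mathcal X}$. Compatibility with identities and composition is immediate, because $Rf'_*$ is a functor.

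\textbf{Main obstacle.} The one subtle point is 2-categorical. The maps $\phi_{\mathcal X}$ in \eqref{E:TubPresDpf'} come equipped with 2-isomorphisms, and the descent data of $K$ is indexed by morphisms in $\mathcal X_{\text{\'etale}}$. One must check that the 2-isomorphisms induced on the fibered products make the triple $(U''_{\mathcal X},U'_{\mathcal X},U_{\mathcal X})$ a commuting triangle in $\mathcal X_{\text{\'etale}}$, so that \eqref{E:CoCycledescData} applies to it. Because $f$ is schematic, these fibered products are schemes determined up to unique isomorphism. The required commutativity is then a formal consequence of the universal property, and I would verify it directly from the definition of $\alpha''$.
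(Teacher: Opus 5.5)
Your proposal is correct and follows the paper's proof exactly: the paper simply notes that the corollary is an immediate consequence of \Cref{L:CmodDescA}, which supplies the hypothesis of \Cref{D:EBCPPF}, together with \Cref{L:GenDesc}; your Steps 2--3 just unpack the diagram chase \eqref{E:BigComDiagDesc} and the functoriality check that the paper leaves implicit. One small point: in the analytic setting the descent data is indexed by all \'etale maps $U\to\mathcal Y$ from smooth complex analytic spaces, not only by analytifications of varieties, but your Step 1 goes through verbatim, since the analytified $f$ is representable by analytic spaces and proper, so each $f'$ is a proper map of smooth analytic spaces.
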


\begin{proof}
This is an immediate consequence of \Cref{L:CmodDescA} and \Cref{L:GenDesc}. 
\end{proof}

From \Cref{L:CmodGenDescA}, we also obtain an \emph{ad hoc} push forward 
$$
Rf_*^{ah}:D^b_{c,fake}(A_{\mathcal X})\longrightarrow D^b_{c,fake}(A_{\mathcal Y});
$$
the condition to have constructible cohomology sheaves is checked locally, and therefore comes from the construction.

\subsection{\emph{Ad hoc} push forward of perverse sheaves}

Let $K$ be in $ \operatorname{Perv}(A_{\mathcal X})$. By \Cref{L:CmodDescA}, in the notation of diagram \eqref{E:PresDpf-varA} we have a collection of morphisms 
 $$\alpha_p=\alpha_{p,K}: p^* (Rf_*K) \stackrel{\sim}{\longrightarrow} Rf'_*p_X^* K$$
as in \eqref{E:CmodDefPushPullalphaA} satisfying 
\eqref{E:NatProp1MN} and \eqref{E:PresDpf-var3}.  
Note that $D^b_c(A_X)$, the derived category of constructible $A_X$-sheaves, is a full and faithful subcategory of $D^b(A_X)$. So, we can apply the functors $^p\mathcal{H}^i$ to the objects and to the morphisms as above. By \Cref{CommuteCohwEtalepullbackA}, this induces canonical isomorphisms
 \begin{equation}\label{E:A2iso-replacement}
     \alpha^i_p=\alpha_{p,K,i}: p^* \, ^p \mathcal{H}^i(Rf_*K) \stackrel{\sim}{\longrightarrow}  \, ^p \mathcal{H}^i Rf'_*p_X^* K
 \end{equation}
   satisfying  
       \eqref{E:NatProp1MNA} and  \eqref{E:PresDpf-var3AA}.
Consequently, using \Cref{L:GenDescA}, we obtain:

       \begin{lem}\label{L:CmodDescAPervHi}
The $i$-th derived proper push forwards of objects of $\operatorname{Perv}(A_{(-)})$ (i.e., Perverse sheaves) satisfy \'etale base change for varieties (resp.~smooth complex analytic spaces)  (\Cref{D:EBCPPFith}).  
In other words, there are  isomorphisms as in  \eqref{E:A2iso-replacement} satisfying  
 \eqref{E:NatProp1MNA} and  \eqref{E:PresDpf-var3AA}.
\end{lem}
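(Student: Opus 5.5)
The plan is to deduce \Cref{L:CmodDescAPervHi} formally from \Cref{L:CmodDescA} together with \Cref{CommuteCohwEtalepullbackA}. Fix a proper morphism $f:X\to Y$ of smooth complex analytic spaces, an \'etale $p:U\to Y$, and the fibered square \eqref{E:PresDpf-varA}. For a perverse sheaf $K$ on $X$ (or, more generally, an object of $D^b_c(A_X)$), I define $\alpha^i_{p,K}$ as the composite
$$
p^*\,{}^p\mathcal H^i(Rf_*K)\xrightarrow{\ \rho_p\ }{}^p\mathcal H^i(p^*Rf_*K)\xrightarrow{\ {}^p\mathcal H^i(\alpha_{p,K})\ }{}^p\mathcal H^i(Rf'_*p_X^*K).
$$
Here $\rho_p$ is the canonical isomorphism supplied by \Cref{CommuteCohwEtalepullbackA}, which rests on the $t$-exactness of \'etale pull back for the perverse $t$-structure. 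The morphism $\alpha_{p,K}$ is the base change isomorphism from \Cref{L:CmodDescA}; this uses that $Rf_*K$ is constructible because $f$ is proper, so that ${}^p\mathcal H^i$ can be applied to it.

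For naturality \eqref{E:NatProp1MNA}, let $\beta:M\to N$ be a morphism. I paste two squares. The first is ${}^p\mathcal H^i$ applied to the commutative square \eqref{E:NatProp1MN}, which commutes because ${}^p\mathcal H^i$ is a functor. The second is the naturality square of $\rho_p$ with respect to $Rf_*\beta$. That naturality follows from the uniqueness statement \cite[Prop.~1.1.9]{BBD82} already used in the proof of \Cref{CommuteCohwEtalepullbackA}: any two fillers of the truncation triangles agree, so $\rho$ is a natural transformation.

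For the cocycle condition \eqref{E:PresDpf-var3AA}, I expand $\alpha^i_{p\circ\phi}$ and $\alpha^i_{\phi,p_X^*K}\circ\phi^*\alpha^i_{p}$ into composites of $\rho$'s and ${}^p\mathcal H^i(\alpha)$'s, and then commute the middle terms using the naturality of $\rho_\phi$ applied to $\alpha_{p,K}$. The remaining pieces are of two kinds. The first is ${}^p\mathcal H^i$ applied to diagram \eqref{E:PresDpf-var3}, which commutes by \Cref{L:CmodDescA}. The second is the compatibility $\rho_{p\circ\phi}=\rho_\phi\circ\phi^*\rho_p$, which is exactly diagram \eqref{perverse-commutes-etale} (stated there for ${}^p\mathcal H^0$, with the shift by $[i]$ giving the same conclusion for ${}^p\mathcal H^i$).

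The main obstacle I expect is bookkeeping rather than content. The identifications $\phi^*p^*=(p\phi)^*$ are themselves canonical isomorphisms, and these must be compatible with the $\rho$'s. I would handle this the same way as in \Cref{CommuteCohwEtalepullbackA}: every map in the diagram is induced on perverse truncations of the same underlying object, so the uniqueness in \cite[Prop.~1.1.9]{BBD82} forces the two composites to agree.

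Once these properties are established, the hypotheses of \Cref{D:EBCPPFith} hold. \Cref{L:GenDescA} then applies to give the ad hoc functors ${}^p\mathcal H^i(Rf_*^{ah}(-))$ on $\operatorname{Perv}(A_{\mathcal X})$, landing in $\operatorname{Perv}(A_{\mathcal Y})$ by \'etale descent for perverse sheaves \cite[Cor.~2.1.23]{BBD82}.
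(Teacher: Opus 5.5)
Your proposal is correct and takes the same approach as the paper. The paper also obtains $\alpha^i_p$ by applying ${}^p\mathcal H^i$ to the base change isomorphisms of \Cref{L:CmodDescA} and composing with the canonical isomorphisms of \Cref{CommuteCohwEtalepullbackA}; it then just says the lemma follows by taking cohomology. Your explicit checks, namely naturality of $\rho$ via the uniqueness in \cite[Prop.~1.1.9]{BBD82} and the cocycle condition via \eqref{perverse-commutes-etale}, spell out what the paper leaves implicit.
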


\begin{proof}
This follows from taking cohomology in \Cref{L:CmodDescA}.
\end{proof}

\begin{cor}\label{C:PervDescA}
Let $f:\mathcal X\to \mathcal Y$ be the analytification of a schematic proper morphism of smooth separated integral DM stacks of finite type over $\mathbb C$.  
For each  integer $i$, 
there is an \emph{ad hoc} $i$-th derived push forward  functor
$$
^p\mathcal H^i(Rf_*^{ah}):\operatorname{Perv}(A_{\mathcal X}) \longrightarrow \operatorname{Perv}(A_{\mathcal Y})
$$
defined as follows.
Given  $K$  in $\operatorname{Perv}(A_{\mathcal X})$ \eqref{E:PervXXDef}, 
i.e., an object $\{ K_U,\theta_\phi\}$ in $\operatorname{Perv}(A_{(-)})$ with descent data on $\mathcal X$ (\S \ref{S:ConcreteDesc}), then, in the notation of   \eqref{E:TubPresDpf'} and \eqref{E:A2iso-replacement}, the data
$$
\{ ^p\mathcal H^i(Rf'_*K_{U_{\mathcal X}}),\tau_\phi:=\alpha^i_\phi \circ ^p\mathcal H^i(Rf''_*(\theta_{\phi_{\mathcal X}}))\}
$$
defines an object in $\operatorname{Perv}(A_{(-)})$ with descent data on $\mathcal Y$, i.e., an object of $\operatorname{Perv}(A_{\mathcal Y})$, which we define to be  $^p\mathcal H^i(Rf^{ah}_*\mathsf M)$ and call   the $i$-th \emph{ad hoc} derived push forward.

\end{cor}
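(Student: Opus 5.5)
The statement is formally an application of \Cref{L:GenDescA} to the category $\mathsf A=\operatorname{Perv}(A_{(-)})$ with $\mathcal H^i={}^p\mathcal H^i$. The plan is to verify its hypothesis, namely that $i$-th derived proper push forwards of perverse sheaves satisfy \'etale base change in the sense of \Cref{D:EBCPPFith}; this is exactly \Cref{L:CmodDescAPervHi}. For each diagram \eqref{E:TubPresDpf'} the data $\{{}^p\mathcal H^i(Rf'_*K_{U_{\mathcal X}}),\tau_\phi\}$ then defines an object of $\operatorname{Perv}(A_{\mathcal Y})$. Here we use that perverse sheaves satisfy \'etale descent \cite[Cor.~2.1.23]{BBD82}, so objects of $\operatorname{Perv}(A_{(-)})$ with descent data are honestly objects of the abelian category $\operatorname{Perv}(A_{\mathcal Y})$.

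The steps run in order as follows. First, on each chart $U_{\mathcal X}$ the morphism $f'$ is a proper morphism of analytic spaces, because $f$ is schematic and proper. Hence $Rf'_*K_{U_{\mathcal X}}$ lies in $D^b_c(A_U)$, and ${}^p\mathcal H^i$ of it is perverse. Second, the isomorphisms $\tau_\phi=\alpha^i_\phi\circ{}^p\mathcal H^i(Rf''_*\theta_{\phi_{\mathcal X}})$ are built from \eqref{E:A2iso-replacement}: we apply ${}^p\mathcal H^i$ to the base change isomorphisms of \Cref{L:CmodDescA} and compose with the canonical identification $p^*\,{}^p\mathcal H^i\cong{}^p\mathcal H^i\,p^*$ of \Cref{CommuteCohwEtalepullbackA}. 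Third, we check the cocycle condition $\tau_{\phi\circ\phi'}=\tau_{\phi'}\circ\phi'^*\tau_\phi$ by the same diagram chase as \eqref{E:BigComDiagDesc}. The three squares there commute respectively by \eqref{E:PresDpf-var3AA}, by naturality \eqref{E:NatProp1MNA}, and by applying ${}^p\mathcal H^i Rf'''_*$ to the cocycle \eqref{E:DescDataDiagMMUUA} for $K$. Fourth, for functoriality, a morphism $\psi:K\to L$ in $\operatorname{Perv}(A_{\mathcal X})$ gives maps ${}^p\mathcal H^i(Rf'_*\psi_{U_{\mathcal X}})$. Compatibility of these maps with the $\tau$'s (diagram \eqref{E:DescDataDiagMorph}) follows from \eqref{E:NatProp1MNA} together with the compatibility of $\psi$ with $\theta$. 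Preservation of identities and composition is then immediate, since everything is defined chartwise.

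The main obstacle is the second step. Specifically, we must ensure that the identification $\rho_\phi:\phi^*\,{}^p\mathcal H^i\cong{}^p\mathcal H^i\,\phi^*$ is itself compatible with composition, as in \eqref{perverse-commutes-etale}, and natural in the object. Otherwise the $\alpha^i$ would not satisfy \eqref{E:PresDpf-var3AA}, even though the underlying $\alpha$ do. I would handle this through the uniqueness statement \cite[Prop.~1.1.9]{BBD82}. Since \'etale pullback is perverse $t$-exact, the comparison maps on truncations are the unique maps compatible with the identity on the middle term. Consequently any two composites of such maps that agree there must agree, and this gives both the cocycle identity and naturality. Once this is in place, the rest is formal.
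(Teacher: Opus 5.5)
Your proposal is correct and follows the paper's route. The paper's proof simply invokes \Cref{L:GenDescA}, whose hypothesis is supplied by \Cref{L:CmodDescAPervHi}: the isomorphisms \eqref{E:A2iso-replacement} satisfy \eqref{E:NatProp1MNA} and \eqref{E:PresDpf-var3AA}. Your use of \cite[Prop.~1.1.9]{BBD82} to make $\rho_\phi$ natural and compatible with composition just unpacks \Cref{CommuteCohwEtalepullbackA}, which the paper uses for exactly that purpose.
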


\begin{proof}
Given that the isomorphisms \eqref{E:A2iso-replacement}  satisfy  \eqref{E:NatProp1MNA} and  \eqref{E:PresDpf-var3AA}, this follows from \Cref{L:GenDescA}.

\end{proof}

We now show the compatibility between the \emph{ad hoc} push forward of the  $\Q$ and $\C$ perverse sheaves:

\begin{pro}\label{QPerv_same_asCPerv}
Let $K$ be in  $D^b_{c,fake} (\mathbb Q_{\mathcal X})$ \eqref{S:PushCmodA}  (resp.~$\operatorname{Perv}(\mathbb Q_{\mathcal X})$ \eqref{E:PervXXDef}).
There is a canonical isomorphism $Rf^{ah}_* (K)\otimes_{\Q_{\mathcal X}}\C_{\mathcal X}\simeq Rf_*^{ah}(K\otimes_{\Q_{\mathcal X}}\C_{\mathcal X})$ (resp.~$^p \mathcal H^i(Rf^{ah}_*(K\otimes_{\mathbb Q_{\mathcal X}}\mathbb C_{\mathcal X}))\cong (^p \mathcal H^i(Rf^{ah}_*K))\otimes_{\mathbb Q_{\mathcal X}}\mathbb C_{\mathcal X}$).
\end{pro}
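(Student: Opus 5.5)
The plan is to prove the statement first for schemes, chart by chart, and then check that these scheme-level isomorphisms respect the descent data defining the \emph{ad hoc} push forwards, so that they glue on $\mathcal Y$.

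\textbf{Step 1: the local isomorphism.} Fix an étale $p:U\to\mathcal Y$ with $U$ a smooth variety, and the cartesian square \eqref{E:TubPresDpf'}; write $f':U_{\mathcal X}\to U$ for the base change, which is a proper morphism of analytic spaces. For $K_{U_{\mathcal X}}$ in $D^b_c(\mathbb Q_{U_{\mathcal X}})$ there is the standard natural isomorphism
\[
\beta_U: Rf'_*(K_{U_{\mathcal X}})\otimes_{\mathbb Q}\mathbb C\stackrel{\sim}{\longrightarrow} Rf'_*(K_{U_{\mathcal X}}\otimes_{\mathbb Q}\mathbb C).
\]
I would construct $\beta_U$ concretely with the injective-resolution model used in the proof of \Cref{L:CmodDescA}. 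Take an injective resolution $K\to I^\bullet$. Because $\mathbb C$ is flat over $\mathbb Q$, the complex $I^\bullet\otimes_{\mathbb Q}\mathbb C$ is a resolution of $K\otimes\mathbb C$. Because $\mathbb C$ is a direct sum of copies of $\mathbb Q$ and $f'$ is proper, this complex is $f'_*$-acyclic. The comparison map $f'_*(I^\bullet)\otimes\mathbb C\to f'_*(I^\bullet\otimes\mathbb C)$ is then an isomorphism: $f'_*$ commutes with direct sums here, since $f'$ is proper, hence quasi-compact, and $I^\bullet$ is bounded. The resulting map is independent of the choice of resolution up to homotopy, so it is canonical in the derived category.

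\textbf{Step 2: compatibility with descent data.} Next I would check that the $\beta_U$ intertwine the descent isomorphisms $\tau_\phi=\alpha_\phi\circ f''_*(\theta_{\phi_{\mathcal X}})$ of \Cref{L:CmodGenDescA} on the two sides. This splits into two checks.
\begin{enumerate}
\item $\beta$ is natural in $K$, which handles the $\theta$ part.
\item $\beta$ commutes with the base change maps $\alpha_\phi$ for $K$ and for $K\otimes\mathbb C$.
\end{enumerate}
For the second check I would use the same cube-of-resolutions argument as in \eqref{E:diagramA18complexes}, now with the extra face given by $-\otimes\mathbb C$. The maps $\gamma$ can be chosen as $\gamma\otimes\mathbb C$, since tensoring with $\mathbb C$ is exact and preserves the homotopies. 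The diagram then commutes up to homotopy, hence commutes in $D^b$. Consequently the $\beta_U$ form a morphism in $D^b_{c,fake}(\mathbb C_{\mathcal Y})$, and it is an isomorphism because each $\beta_U$ is one.

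\textbf{Step 3: the perverse statement.} For $K$ in $\operatorname{Perv}(\mathbb Q_{\mathcal X})$ I would apply ${}^p\mathcal H^i$ to $\beta_U$. The functor $-\otimes_{\mathbb Q}\mathbb C$ is $t$-exact for the perverse $t$-structure: it preserves ${}^pD^{\le0}$ and ${}^pD^{\ge0}$, because the support and cosupport conditions are unaffected by the flat scalar extension $\mathbb Q\to\mathbb C$. Hence it commutes canonically with perverse truncations, by the uniqueness argument of \Cref{CommuteCohwEtalepullbackA} via \cite[Prop.~1.1.9]{BBD82}. This gives canonical isomorphisms ${}^p\mathcal H^i(Rf'_*K_{U_{\mathcal X}})\otimes\mathbb C\cong{}^p\mathcal H^i(Rf'_*(K_{U_{\mathcal X}}\otimes\mathbb C))$. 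These are compatible with the maps $\alpha^i$ of \eqref{E:A2iso-replacement} because everything is obtained by applying the functors ${}^p\mathcal H^i$ to the commuting diagrams from Step 2. Since perverse sheaves satisfy étale descent, the isomorphisms glue to the claimed isomorphism in $\operatorname{Perv}(\mathbb C_{\mathcal Y})$.

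\textbf{Main obstacle.} I expect the main work to be the bookkeeping in Step 2, namely checking that the scalar-extension isomorphism is compatible with the base change isomorphisms $\alpha_\phi$ at the level of chosen resolutions. The key point there is the uniqueness up to homotopy of the maps $\gamma$.
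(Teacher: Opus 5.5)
Your proposal is correct and is essentially the paper's argument. The paper's whole proof is the observation that $I_{\mathbb Q}\otimes_{\mathbb Q}\mathbb C$ is $f$-acyclic, because $\mathbb C$ is a direct sum, i.e.\ a filtered colimit, of finite-dimensional $\mathbb Q$-spaces and direct images along a proper map commute with such colimits; it cites \cite[Lem.~0GQW]{stacks-project} for this. The input that makes this work in the analytic topology is compactness of the fibres, not mere quasi-compactness, and boundedness of $I^\bullet$ is not needed. Your Steps 2 and 3 (compatibility with the descent data $\tau_\phi$, and perverse $t$-exactness of $-\otimes_{\mathbb Q}\mathbb C$) only make explicit what the paper compresses into ``follows immediately''.
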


\begin{proof}
This follows immediately because for an injective object $I_{\mathbb Q}$ over $\Q_{\mathcal X}$, we have $I_{\Q}\otimes_{\Q_{\mathcal X}}\C_{\mathcal X}$ is $f$-acyclic (write $\C$ as a colimit of finite dimensional vector spaces over $\Q$ and then use \cite[\href{https://stacks.math.columbia.edu/tag/0GQW}{Lem.~0GQW}]{stacks-project}).
\end{proof}

\section{\emph{Ad hoc} push forward of $D_{\mathcal X}$-modules}\label[appendix]{S:ahPushDmod}

Following the
discussion in \S \ref{E:ConcreteDesc}, we  define the fake bounded  derived category of sheaves of $D$-modules on $\mathcal X$, denoted 
\begin{equation}\label{E:DbfakeDX}
 {D^b_{fake}(D_{\X})},
\end{equation}
 to be the category of objects of $D^b(\operatorname{Mod}(D_{(-)})^{\operatorname{right}})$,  the bounded derived category of right $D$-modules,  with descent data on $\mathcal X$.

Now let $X$ be a smooth variety (resp.~complex analytic space). Following the strategy in \S \ref{S:PPDiagsVDer}, our first goal in constructing the \emph{ad hoc} push forward in this setting will be to show that given an object $M$ in $D^b_{fake}(D_X)$, then in the notation of diagram \eqref{E:PresDpf-varA}, there is a canonical isomorphism of sheaves in $D^b(D_U)$:  
\begin{equation}\label{E:CmodDefPushPullalphaADmod}
\alpha_p=\alpha_{p,M}: p^* (f_+M) \stackrel{\sim}{\longrightarrow} f'_+p_X^*M,
\end{equation}
where $f_+:D^b(D_X)\to D^b(D_Y)$ is the push forward, given on an object $M$ by  $f_+M=Rf_*\operatorname{Sp}_{X\to Y}M$, where  $Rf_*$ is the derived functor  $Rf_*:D^b(f^{-1}(D_Y))\to D^b(D_Y)$.
In particular,  these isomorphisms  satisfy the  properties  \eqref{E:NatProp1MN} and \eqref{E:PresDpf-var3}:

\begin{lem}\label{L:CmodDescADmod}
Proper push forwards of objects of $D^b(\operatorname{Mod}(D_{(-)})^{\operatorname{right}})$ satisfy \'etale base change for smooth varieties (resp.~smooth complex analytic spaces) (\Cref{D:EBCPPF}).  
In other words, there are  isomorphisms as in  \eqref{E:CmodDefPushPullalphaADmod} satisfying  
\eqref{E:NatProp1MN} and \eqref{E:PresDpf-var3}.
\end{lem}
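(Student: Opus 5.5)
I will follow the template of the proof of \Cref{L:CmodDescA}, replacing injective resolutions of sheaves of $A$-modules by resolutions adapted to the functor $Rf_*:D^b(f^{-1}D_Y)\to D^b(D_Y)$.

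The first step is to show that the relative Spencer complex commutes with \'etale pull back. Concretely, I will construct a canonical isomorphism of complexes of right $f'^{-1}D_U$-modules
$$p_X^*\operatorname{Sp}^\bullet_{X\to Y}(M)\cong \operatorname{Sp}^\bullet_{U_X\to U}(p_X^*M),$$
where on the left $p_X^*$ means $p_X^{-1}(-)\otimes_{p_X^{-1}f^{-1}D_Y}f'^{-1}D_U$. This is a local computation. Since $p$ and $p_X$ are \'etale, one has $p_X^*\mathcal T_X=\mathcal T_{U_X}$ and $p^{-1}D_Y\otimes_{p^{-1}\mathcal O_Y}\mathcal O_U=D_U$, so the terms agree. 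In \'etale coordinates the differentials $\delta_{X\to Y}$ are given by the same formulas. This identification is canonical, so it is compatible with composition of \'etale maps and is functorial in $M$ at the level of complexes. Both properties then pass to $D^b(f^{-1}D_Y)$, because $\operatorname{Sp}$ is exact on bounded complexes of right $D_X$-modules.

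The second step is the base change morphism itself. I will take a resolution $\operatorname{Sp}^\bullet_{X\to Y}(M)\to I^\bullet$ by injective (or flabby) right $f^{-1}D_Y$-modules, and a similar resolution $\operatorname{Sp}^\bullet_{U_X\to U}(p_X^*M)\to J^\bullet$. The pull back $p_X^{-1}I^\bullet$ is still flabby, hence $f'_*$-acyclic, and for $p$ \'etale there is the canonical identification $p^{-1}f_*=f'_*p_X^{-1}$ used before. Composing this identification with $p_X^{-1}I^\bullet\otimes D_U$ and the first step gives a quasi-isomorphism $p_X^{-1}I^\bullet\otimes_{\cdots}f'^{-1}D_U\to J^\bullet$. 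Such a map exists and is unique up to homotopy by the same Stacks Project lemmas (013P, 013S). Applying $f'_*$ produces $\alpha_{p,M}$.

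For the tensor-with-$D_U$ step I will use that $D_U=\mathcal O_U\otimes_{p^{-1}\mathcal O_Y}p^{-1}D_Y$ is flat over $p^{-1}D_Y$, since $p$ is flat. This makes the extension of scalars exact and lets me compute $p^*Rf_*$ as $f'_*$ of the pulled-back resolution, tensored up.

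The third step establishes the naturality properties. Functoriality \eqref{E:NatProp1MN} and the cocycle condition \eqref{E:PresDpf-var3} follow from the same cube diagram as \eqref{E:diagramA18complexes}. Everything commutes up to homotopy, hence commutes in the derived category, because the comparison maps between resolutions are unique up to homotopy and the identifications from the first step are strictly compatible.

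The main obstacle I expect is acyclicity and flatness bookkeeping. The issue is whether $p_X^{-1}$ of an injective $f^{-1}D_Y$-module, after tensoring to $f'^{-1}D_U$, is still $f'_*$-acyclic. My approach is to work with flabby resolutions throughout, since flabbiness is preserved under restriction to \'etale opens in the relevant sense and flabby sheaves compute $Rf_*$ regardless of the module structure. If that becomes delicate in the \'etale setting, I will instead pass to the analytic topology, where \'etale maps are local isomorphisms. There the statement reduces to restriction to open subsets, which preserves injectives and commutes with $f_*$.
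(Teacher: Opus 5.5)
Your proposal is essentially the paper's proof. The paper reruns the argument of \Cref{L:CmodDescA} with an injective resolution of $\operatorname{Sp}^\bullet_{X\to Y}M$ as $f^{-1}D_Y$-modules, a comparison map to a resolution of the pulled-back complex that is unique up to homotopy, and the cube diagram \eqref{E:diagramA18complexes} for naturality; your Step 1 is the compatibility the paper records separately as \eqref{E:Canon-Iso+Sp-2}. Your flatness and tensoring-with-$D_U$ bookkeeping is unnecessary in the paper's setting of \'etale sheaves or the analytic topology: there $p^{-1}D_Y=D_U$, $p_X^{-1}$ preserves injectives (it has the exact left adjoint $p_{X!}$), and $p^{-1}f_*=f'_*p_X^{-1}$ holds on the nose, whereas in the Zariski topology flatness alone would not give that identification.
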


\begin{proof}
The proof is identical to \Cref{L:CmodDescA}, except one takes an injective resolution  $\operatorname{Sp}^\bullet_{X\to Y}M \to I^\bullet_K$   in $D^b(f^{-1}D_{Y})$, so that $f_+M=Rf_*\operatorname{Sp}^\bullet_{X\to Y}M=f_*I_K^\bullet$ in the derived category, where $f_*$ is the push forward of sheaves. 
\end{proof}

\begin{cor}\label{L:CmodGenDescADmod}
Let $f:\mathcal X\to \mathcal Y$ be a proper morphism of smooth separated integral DM stacks of finite type over $\mathbb C$ (or the analytification of such a morphism).  
Then there is an \emph{ad hoc} push forward  functor
$$
f_+^{ah}:D^b_{fake}(D_{\mathcal X})\longrightarrow D^b_{fake}(D_{\mathcal Y})
$$
defined as follows.
Given  $\mathcal M$  in $D^b_{fake}(D_{\mathcal X})$ \eqref{E:DbfakeDX},  
i.e., an object $\{\mathcal M_U,\theta_\phi\}$ in $D^b(D_{(-)})$ with descent data on $\mathcal X$ (\S \ref{S:ConcreteDesc}), then, in the notation of   \eqref{E:TubPresDpf'} and \eqref{E:CmodDefPushPullalphaADmod}, the data
$$
\{f'_+\mathcal M_{U_{\mathcal X}},\tau_\phi:=\alpha_\phi \circ f''_+(\theta_{\phi_{\mathcal X}})\}
$$
defines an object in $D^b(D_{(-)})$ with descent data on $\mathcal Y$, i.e., an object of $\mathsf D_{fake}(D_{\mathcal Y})$, which we define to be  $f^{ah}_+\mathcal M$ and call   the \emph{ad hoc} push forward.

\end{cor}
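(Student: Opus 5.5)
The plan is to derive this formally, exactly as \Cref{L:CmodGenDescA} was derived from \Cref{L:CmodDescA}. The only input we need is the general descent \Cref{L:GenDesc}, applied with $\mathsf A$ the category of right $D$-modules and $\mathsf D=D^b(\operatorname{Mod}(D_{(-)})^{\operatorname{right}})$. For a proper morphism $f:X\to Y$ of smooth varieties (resp.~analytic spaces), the role of ``$Rf_*$'' is played by $f_+=Rf_*\operatorname{Sp}^\bullet_{X\to Y}(-)$, with $Rf_*:D^b(f^{-1}D_Y)\to D^b(D_Y)$. Once \Cref{L:CmodDescADmod} is in hand, the hypotheses of \Cref{D:EBCPPF} hold for this choice.

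The steps, in order, are as follows.

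First, for each \'etale $p:U\to\mathcal Y$ we form the $2$-fibered square \eqref{E:TubPresDpf'}. Because $f$ is schematic, $U_{\mathcal X}$ is a smooth variety (resp.~analytic space), so $f'_+\mathcal M_{U_{\mathcal X}}$ makes sense in $D^b(D_U)$.

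Second, we define $\tau_\phi:=\alpha_\phi\circ f''_+(\theta_{\phi_{\mathcal X}})$. This is the composite
\[
\phi^*f'_+\mathcal M_{U_{\mathcal X}}\xrightarrow{\alpha_\phi} f''_+\phi_{\mathcal X}^*\mathcal M_{U_{\mathcal X}}\xrightarrow{f''_+(\theta_{\phi_{\mathcal X}})} f''_+\mathcal M_{U'_{\mathcal X}} .
\]
Here $\alpha_\phi$ is the base change isomorphism of \Cref{L:CmodDescADmod}. It uses the canonical identification $\phi_{\mathcal X}^*\operatorname{Sp}^\bullet_{U_{\mathcal X}\to U}=\operatorname{Sp}^\bullet_{U'_{\mathcal X}\to U'}\phi_{\mathcal X}^*$, which holds because $\phi$ and $\phi_{\mathcal X}$ are \'etale, so tangent sheaves and $D$-modules pull back.

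Third, we verify the cocycle identity $\tau_{\phi\circ\phi'}=\tau_{\phi'}\circ\phi'^*\tau_\phi$ by the diagram chase \eqref{E:BigComDiagDesc}:
\begin{itemize}
\item the left square commutes by \eqref{E:PresDpf-var3};
\item the upper right square commutes by naturality \eqref{E:NatProp1MN};
\item the lower right square is $f'''_+$ applied to the cocycle \eqref{E:DescDataDiagMMUUA} for $\mathcal M$.
\end{itemize}

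Fourth, we check functoriality. A morphism $\psi=\{\psi_U\}$ in $D^b_{fake}(D_{\mathcal X})$ is sent to $\{f'_+\psi_{U_{\mathcal X}}\}$, and compatibility with the $\tau_\phi$ follows again from \eqref{E:NatProp1MN} together with the square \eqref{E:DescDataDiagMorph} for $\psi$. Identities and compositions are clearly preserved.

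The main obstacle is hidden in \Cref{L:CmodDescADmod}: making the base change isomorphism $\alpha$ canonical at the derived level in the relative Spencer setting. The point is that $p^*$ must commute with $f_*$ on $f^{-1}D_Y$-modules, and the resolution comparison map $\gamma$ must be unique up to homotopy, compatibly along $\phi$. We take that lemma as given. After that, the corollary is purely formal: it is \Cref{L:GenDesc} specialized to this $(\mathsf A,\mathsf D)$.
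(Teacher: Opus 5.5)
Your proposal is correct and is the paper's argument: the paper proves the corollary in one line, by combining \Cref{L:CmodDescADmod} (\'etale base change for $f_+$ on right $D$-modules) with the general descent \Cref{L:GenDesc}, and the proof of that lemma is exactly the diagram chase \eqref{E:BigComDiagDesc} you reproduce. Your explicit use of schematicity, so that $U_{\mathcal X}$ is a smooth variety, is the hypothesis implicit in \eqref{E:TubPresDpf'}, and your check of compatibility on morphisms fills in a step the paper leaves unstated.
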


\begin{proof}
This is an immediate consequence of \Cref{L:CmodDescADmod} and \Cref{L:GenDesc}. 
\end{proof}

\subsection{\emph{Ad hoc} $i$-th push forward of $D_{\mathcal X}$-modules}
Let $\mathcal M$ be in $ \operatorname{Mod}(D_{X})^{\operatorname{right}}$. By \Cref{L:CmodDescADmod}, we have a collection
 $$\alpha_p=\alpha_{p,K}: p^* (f_+\mathcal M) \stackrel{\sim}{\longrightarrow} f'_+p_X^* \mathcal M$$
as in \eqref{E:CmodDefPushPullalphaADmod} satisfying 
\eqref{E:NatProp1MN} and \eqref{E:PresDpf-var3}.
Taking cohomology  induces canonical isomorphisms
 \begin{equation}\label{E:A2iso-replacementDmod}
     \alpha^i_p=\alpha_{p,\mathcal M,i}: p^*  \mathcal{H}^i(f_+\mathcal M) \stackrel{\sim}{\longrightarrow}   \mathcal{H}^i f'_+p_X^* \mathcal M
 \end{equation}
 satisfying  
 \eqref{E:NatProp1MNA} and  \eqref{E:PresDpf-var3AA}:

       \begin{lem}\label{L:CmodDescADmodHi}
The $i$-th derived proper push forwards of objects of $\operatorname{Mod}(D_{(-)})^{\operatorname{right}}$ (i.e., right $D$-modules) satisfy \'etale base change for varieties (resp.~smooth complex analytic spaces)  (\Cref{D:EBCPPFith}).  
In other words, there are  isomorphisms as in  \eqref{E:A2iso-replacementDmod} satisfying  
 \eqref{E:NatProp1MNA} and  \eqref{E:PresDpf-var3AA}.

\end{lem}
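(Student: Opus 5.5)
The plan is to deduce this from \Cref{L:CmodDescADmod} by applying the cohomology functor $\mathcal H^i$, the same way \Cref{L:CmodDescAPervHi} was obtained from \Cref{L:CmodDescA}. Regard a right $D_X$-module $\mathcal M$ as an object of $D^b(\operatorname{Mod}(D_X)^{\operatorname{right}})$ concentrated in degree $0$. \Cref{L:CmodDescADmod} then gives a canonical isomorphism $\alpha_{p,\mathcal M}\colon p^*f_+\mathcal M\stackrel{\sim}{\to} f'_+p_X^*\mathcal M$ in $D^b(D_U)$, and these isomorphisms satisfy \eqref{E:NatProp1MN} and \eqref{E:PresDpf-var3}. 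We would set $\alpha^i_p:=\mathcal H^i(\alpha_{p,\mathcal M})$ after first identifying $p^*\mathcal H^i(f_+\mathcal M)$ with $\mathcal H^i(p^*f_+\mathcal M)$.

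The key preliminary step is that $\mathcal H^i$ commutes canonically with étale pullback of right $D$-modules. Since $p$ is étale, $p^*=p^{-1}(-)\otimes_{p^{-1}\mathcal O_U}\mathcal O_{U'}$ is exact. It is computed termwise on complexes, with the $D$-module structure transported via $p^*D_U=D_{U'}$. For an exact functor defined termwise, the map $p^*\mathcal H^i(C)\to \mathcal H^i(p^*C)$ is an isomorphism natural in the complex $C$, and it passes to the derived category because quasi-isomorphisms are preserved. The composite of these identifications for $\phi\circ\phi'$ is the evident one, because pullback of complexes is strictly compatible with composition up to the canonical isomorphism $(\phi\phi')^*\cong\phi'^*\phi^*$. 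This is the $D$-module analogue of \Cref{CommuteCohwEtalepullbackA}, and in fact it is easier, since we use the standard $t$-structure and no truncation uniqueness argument is needed.

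With $\alpha^i_p$ defined as above, the two required properties follow by applying the functor $\mathcal H^i$ to the corresponding diagrams.
\begin{itemize}
\item Property \eqref{E:NatProp1MNA} comes from applying $\mathcal H^i$ to the square \eqref{E:NatProp1MN} for a morphism $\beta$, together with naturality of the identification $p^*\mathcal H^i\cong\mathcal H^i p^*$.
\item Property \eqref{E:PresDpf-var3AA} comes from applying $\mathcal H^i$ to \eqref{E:PresDpf-var3}. The commutativity of \eqref{perverse-commutes-etale}-type compatibility squares for $\mathcal H^i$ and pullback then lets us insert the identifications at each vertex.
\end{itemize}

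The only step that requires care is the bookkeeping in the last point. The cocycle diagram has to be built from the $\mathcal H^i$-versus-pullback identifications for $\phi$, $\phi'$ and $\phi\circ\phi'$. We would check it on complexes using the explicit injective-resolution model for $\alpha$ from the proof of \Cref{L:CmodDescA}, where all the maps are honest maps of complexes up to homotopy. Homotopies induce equal maps on cohomology, so the diagram commutes after applying $\mathcal H^i$.
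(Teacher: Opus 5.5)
Your proposal is correct and follows the paper's argument; the paper's proof consists only of the remark that the lemma follows by taking cohomology in \Cref{L:CmodDescADmod}. Your extra step (that $\mathcal H^i$ commutes canonically with exact \'etale pullback, compatibly with composition) supplies a point the paper takes as a standing assumption in \S\ref{S:PPFs}. Since \eqref{E:NatProp1MN} and \eqref{E:PresDpf-var3} already hold in the derived category, applying the functor $\mathcal H^i$ to them suffices, and the detour through injective resolutions and homotopies is unnecessary.
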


\begin{proof}
This follows from taking cohomology in \Cref{L:CmodDescADmod}.
\end{proof}
       
Consequently, using \Cref{L:GenDescA}, we obtain:

\begin{cor}\label{C:PervDescADmod}
Let $f:\mathcal X\to \mathcal Y$ be a schematic proper morphism of smooth separated integral DM stacks of finite type over $\mathbb C$ (or the analytification of such a morphism).  
For each  integer $i$, 
there is an \emph{ad hoc} $i$-th derived push forward  functor
\begin{equation}\label{E:C:PervDescADmod}
\mathcal H^i(f_+^{ah}):\operatorname{Mod}(D_{\mathcal X})^{\operatorname{right}}\longrightarrow \operatorname{Mod}(D_{\mathcal Y})^{\operatorname{right}}
\end{equation}
defined as follows.
Given  $\mathcal M$  in $\operatorname{Mod}(D_{\mathcal X})^{\operatorname{right}}$, 
i.e., an object $\{ \mathcal M_U,\theta_\phi\}$ in $\operatorname{Mod}(D_{(-)})^{\operatorname{right}}$ with descent data on $\mathcal X$ (\S \ref{S:ConcreteDesc}), then, in the notation of   \eqref{E:TubPresDpf'} and \eqref{E:A2iso-replacementDmod}, the data
$$
\{\mathcal H^i(f'_+\mathcal M_{U_{\mathcal X}}),\tau_\phi:=\alpha^i_\phi \circ \mathcal H^i(f''_+(\theta_{\phi_{\mathcal X}}))\}
$$
defines an object in $\operatorname{Mod}(D_{(-)})^{\operatorname{right}}$ with descent data on $\mathcal Y$, i.e., an object of $\operatorname{Mod}(D_{\mathcal Y})^{\operatorname{right}}$, which we define to be  $\mathcal H^i(f^{ah}_+\mathsf M)$ and call   the $i$-th \emph{ad hoc} derived push forward.

\end{cor}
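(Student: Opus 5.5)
This follows from \Cref{L:CmodDescADmodHi} and \Cref{L:GenDescA}, in the same way that \Cref{C:PervDescA} followed from \Cref{L:CmodDescAPervHi}. Fix the category $\mathsf A$ to be right $D$-modules and $\mathsf D$ their bounded derived category. An object $\mathcal M$ of $\operatorname{Mod}(D_{\mathcal X})^{\operatorname{right}}$ is, by definition, a collection $\{\mathcal M_U,\theta_\phi\}$ with descent data, so I view it as an object of $D^b_{fake}(D_{\mathcal X})$ concentrated in degree $0$. Since the inclusion of modules into complexes commutes with \'etale pull back, the isomorphisms $\theta_\phi$ and the cocycle condition \eqref{E:CoCycledescData} persist. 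The schematic hypothesis on $f$ guarantees that in diagram \eqref{E:TubPresDpf'} each $U_{\mathcal X}$ is a smooth variety (resp.~analytic space), and that $f'$ is a proper morphism of such. So $f'_+$ is the classical push forward $Rf'_*\operatorname{Sp}^\bullet_{U_{\mathcal X}\to U}$.

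The main input is \Cref{L:CmodDescADmodHi}. It supplies, for every \'etale $p:U\to Y$ with $f:X\to Y$ proper between smooth varieties, isomorphisms $\alpha^i_p:p^*\mathcal H^i(f_+\mathcal M)\to\mathcal H^i(f'_+p_X^*\mathcal M)$. These satisfy naturality \eqref{E:NatProp1MNA} and the composition compatibility \eqref{E:PresDpf-var3AA}; they come from applying $\mathcal H^i$ to \eqref{E:NatProp1MN} and \eqref{E:PresDpf-var3} of \Cref{L:CmodDescADmod}. One point needs care: $\mathcal H^i$ commutes with \'etale pull back. This holds because $p^*$ on $D$-modules is exact for \'etale $p$, since it is just $\mathcal O$-module pull back along a flat map. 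With these isomorphisms in hand, the hypotheses of \Cref{D:EBCPPFith} are satisfied, and \Cref{L:GenDescA} then gives the functor.

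To make the argument self-contained, I would verify the cocycle identity $\tau_{\phi\circ\phi'}=\tau_{\phi'}\circ\phi'^*\tau_\phi$ for $\tau_\phi:=\alpha^i_\phi\circ\mathcal H^i(f''_+(\theta_{\phi_{\mathcal X}}))$. The method is the diagram chase of \eqref{E:BigComDiagDesc} with $\mathcal H^i$ applied throughout. Insert the auxiliary isomorphism $\alpha^i_{\phi',\phi_{\mathcal X}^*\mathcal M_{U_{\mathcal X}}}$. The left square then commutes by \eqref{E:PresDpf-var3AA}, the upper right square by \eqref{E:NatProp1MNA} applied to $\beta=\theta_{\phi_{\mathcal X}}$, and the lower right square by applying $\mathcal H^i f'''_+$ to \eqref{E:DescDataDiagMMUUA}. \'Etale descent for quasi-coherent sheaves with $D$-module structure then produces a right $D_{\mathcal Y}$-module.

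For functoriality, take a morphism $\psi=\{\psi_U\}$ of modules with descent data and set $\mathcal H^i(f'_+\psi_{U_{\mathcal X}})$. Compatibility with the $\tau_\phi$, in the sense of \eqref{E:DescDataDiagMorph}, follows again from \eqref{E:NatProp1MNA} together with the compatibility of $\psi$ with the $\theta$. Preservation of identities and composition is immediate, since $\mathcal H^if'_+$ is a functor on each chart. The only genuinely nontrivial step is the canonicity of the $\alpha_p$, including their naturality and cocycle compatibility, at the derived level. That is exactly the content of \Cref{L:CmodDescADmod}, and its proof rests on uniqueness up to homotopy of comparison maps between injective resolutions in $D^b(f^{-1}D_Y)$. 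I expect this to be the main obstacle if done from scratch, but here it is already available.
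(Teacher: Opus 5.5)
Your proposal is correct and follows the paper's own proof. The paper also obtains the corollary by feeding the isomorphisms $\alpha^i_p$ of \Cref{L:CmodDescADmodHi}, together with their properties \eqref{E:NatProp1MNA} and \eqref{E:PresDpf-var3AA}, into \Cref{L:GenDescA}; your cocycle check is the $\mathcal H^i$-version of the diagram chase \eqref{E:BigComDiagDesc} in the proof of \Cref{L:GenDesc}. One small wording point: the final gluing is just descent of \'etale sheaves, which is built into the definition of $D$-modules on $\mathcal Y$, rather than quasi-coherent descent, since $\mathcal M$ is not assumed quasi-coherent.
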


\begin{proof}

Given \Cref{L:CmodDescADmodHi}, 
 this follows from \Cref{L:GenDescA}.
\end{proof}

\subsection{Coherent, regular, and holonomic $D$-modules}
\label{S:Regular/holo-are-etale-ah}

We start by recalling our definition of quasi-coherent, coherent, regular, and holonomic $D$-modules on DM stacks, from \cite[\S 3]{CMZpositivity}: For  $\mathcal X$ a smooth separated integral DM stack locally of finite type over $\mathbb C$, a $D$-module $\mathcal M$ on $\mathcal X$ is \emph{quasi-coherent (resp.~coherent, resp.~regular, resp.~holonomic)} if for every \'etale morphism $U\to \mathcal X$ from a smooth variety $U$ one has that the  induced $D$-module $\mathcal M_U$ is quasi-coherent (resp.~coherent, resp.~regular, resp.~holonomic). 

We denote by $\operatorname{Mod}_c(D_{\mathcal X})^{\operatorname{right}}$ (resp.~$\operatorname{Mod}_h(D_{\mathcal X})^{\operatorname{right}}$, resp.~$\operatorname{Mod}_{rh}(D_{\mathcal X})^{\operatorname{right}}$) the abelian category of (right) coherent (resp.~holonomic, resp.~regular holonomic) $D_{\mathcal X}$-modules on $\mathcal X$, and for $\sharp \in \{c,h,rh\}$, we denote by  
$D^b_\sharp (\operatorname{Mod}(D_{\mathcal X})^{\operatorname{right}})$ the full subcategory of the bounded derived category $D^b (\operatorname{Mod}(D_{\mathcal X})^{\operatorname{right}})$ with cohomology in $\operatorname{Mod}_\sharp (D_{\mathcal X})^{\operatorname{right}}$.  We define 
 \begin{equation}\label{E:Dbsharpfake}
D^b_{\sharp,fake}(D_{\mathcal X})
\end{equation}
to be
the category of objects of $D^b(\operatorname{Mod}_\sharp(D_{(-)})^{\operatorname{right}})$  with descent data on $\mathcal X$. 

\begin{pro}[Coherent, regular, and holonomic $D$-modules]\label{P:ahCRHpush}
Suppose  $f:\mathcal X\to \mathcal Y$ is a  schematic proper morphism  of smooth separated integral DM stacks of finite type over $\mathbb C$ (or the analytification of such a morphism).
Then for $\sharp \in \{c,h,rh\}$, the functors  $f_+^{ah}$ (\Cref{L:CmodGenDescADmod})  and $\mathcal H^i(f^{ah}_+)$ (\Cref{C:PervDescADmod}) induce  functors 
$$
f_+^{ah}:D^b_{\sharp, fake}(D_{\mathcal X})\longrightarrow D^b_{\sharp,fake}(D_{\mathcal Y}),
$$
$$
\mathcal H^i(f_+^{ah}):\operatorname{Mod}_\sharp(D_{\mathcal X})^{\operatorname{right}}\longrightarrow \operatorname{Mod}_\sharp (D_{\mathcal Y})^{\operatorname{right}}.
$$
The fake derived categories are defined above in \eqref{E:Dbsharpfake}. 

\end{pro}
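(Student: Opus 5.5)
The plan is to reduce everything to the case of smooth varieties via the descent description of $D$-modules on $\mathcal X$ and $\mathcal Y$, and then use the classical stability results for direct images under proper morphisms. The definitions in \S\ref{S:Regular/holo-are-etale-ah} say a $D$-module on a stack is coherent (resp.~holonomic, resp.~regular holonomic) exactly when its restriction to every \'etale chart $U\to\mathcal Y$ from a smooth variety has that property. The \emph{ad hoc} push forward is, by construction (\Cref{L:CmodGenDescADmod}, \Cref{C:PervDescADmod}), the object with descent data whose value on such a chart is $f'_+(\mathcal M_{U_{\mathcal X}})$, respectively $\mathcal H^i(f'_+\mathcal M_{U_{\mathcal X}})$. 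Here $f':U_{\mathcal X}\to U$ is the base change in \eqref{E:TubPresDpf'}. So the statement is purely local on $\mathcal Y$, and the descent data are already in hand.

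First I check the functors are well defined on the subcategories. Since $f$ is schematic and proper, $U_{\mathcal X}$ is a smooth variety (or analytic space) and $f'$ is a proper morphism of smooth varieties. An object of $D^b_{\sharp,fake}(D_{\mathcal X})$ has each $\mathcal M_{U_{\mathcal X}}$ in $D^b_\sharp(D_{U_{\mathcal X}})$. The restriction of a $\sharp$-module to $U_{\mathcal X}$ stays $\sharp$, because $p_{\mathcal X}:U_{\mathcal X}\to\mathcal X$ is \'etale and the property is tested on \'etale charts, which compose.

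Next I invoke the classical theorems for the proper morphism $f'$. Proper direct image preserves $D^b_c$ (e.g.\ \cite[Thm.~2.5.1]{HTT08}), $D^b_h$ (\cite[Thm.~3.2.3]{HTT08}), and $D^b_{rh}$ (\cite[Thm.~6.1.5]{HTT08}). Hence $f'_+\mathcal M_{U_{\mathcal X}}\in D^b_\sharp(D_U)$ and each $\mathcal H^i(f'_+\mathcal M_{U_{\mathcal X}})\in\operatorname{Mod}_\sharp(D_U)$. The right/left conventions do not matter, since tensoring with $\omega$ preserves each class.

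It remains to see that this holds for every \'etale chart of $\mathcal Y$, not just a chosen cover. The charts in \eqref{E:TubPresDpf'} already range over all \'etale $p:U\to\mathcal Y$ from smooth varieties, so the defining condition holds directly. Morphisms of objects with descent data go to morphisms by the same construction, so functoriality is inherited from \Cref{L:GenDesc} and \Cref{L:GenDescA}.

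The only subtle point I expect is the case of regularity and of the analytification. For the analytic version one needs the GAGA-type comparison: $f$ is the analytification of an algebraic proper morphism, so $f'^{an}_+$ agrees with $(f'_+)^{an}$ on these complexes. In that setting "regular holonomic" should be understood via the algebraic structure on the charts. I would handle this by working with algebraic \'etale charts and citing the compatibility of proper direct image with analytification for coherent $D$-modules (\cite[\S4.7]{HTT08}). Everything else is a formal consequence of the chartwise statements.
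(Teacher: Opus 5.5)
Your proposal is correct and is essentially the paper's proof. Being coherent, holonomic, or regular holonomic is tested on \'etale charts, and the \emph{ad hoc} push forward is defined chartwise by $f'_+$ (with \Cref{R:ah-push-compatibleHH} relating the derived and $\mathcal H^i$ versions), so everything reduces to proper morphisms of smooth varieties, where one cites \cite[Thm.~2.5.1, Thm.~3.2.3, Thm.~6.1.5]{HTT08}. Your added discussion of the analytic case, via comparison with algebraic charts \cite[\S 4.7]{HTT08}, goes slightly beyond the paper, which simply invokes the algebraic results.
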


\begin{proof}
As the definitions are local, and in light of \Cref{R:ah-push-compatibleHH}, we are free to check that $\mathcal H^i(f^{ah}_+\mathcal M)$ is coherent (resp.~regular, resp.~holonomic)  after an \'etale base change.  This reduces the question to morphisms of varieties, where the result is standard (e.g., \cite[Thm.~2.5.1, Thm.~3.2.3, Thm.~6.1.5]{HTT08}).

\end{proof}

\section{Compatibility of \emph{ad hoc} push forwards}\label[appendix]{S:AppendixCompatible}

We now show that the \emph{ad hoc} push forwards we have defined are compatible; in \S \ref{S:AlgAnAgree}, we show that algebraic and analytic \emph{ad hoc} push forwards of $D$-modules agree, and in \S \ref{S:DandPervAgree} we show that push forwards of $D$-modules and perverse sheaves agree. 

\subsection{Algebraic and analytic \emph{ad hoc} push forwards of $D_{\mathcal X}$-modules agree}\label{S:AlgAnAgree}

For a variety $X$ we will denote by $\iota_X:(X^{an},\mathcal O_{X^{an}})\to (X,\mathcal O_X)$ the canonical morphism of locally ringed spaces, and denote the analytification by the associated pull back via $\iota$; i.e., for an $\mathcal O_X$-module $M$ we have $M^{an}=\iota^*M=\mathcal O_{X^{an}}\otimes_{\iota_X^{-1}\mathcal O_X}\iota_X^{-1}M$. 
Note that $D_{X^{an}}=\iota_X^*D_X$.  
 We also have a morphism of ringed spaces  $\iota_X:(X^{an}, D_{X^{an}})\to (X,D_X)$, and for a 
$D_X$-module $M$, we have $M^{an}=\iota_X^*M=D_{X^{an}}\otimes_{\iota_X^{-1}D_X}\iota_X^{-1}M$, where here we are using $\iota_X^*$ as the pull back as ringed spaces; this also agrees with the pull back as $\mathcal O_{\mathcal X}$-modules (see e.g., \cite[Rem.~3.5]{CMZpositivity}).   

\begin{pro}\label{P:alg-and-an-ah-push-agree}
Let $f:\mathcal X\to \mathcal Y$ be a schematic proper morphism of smooth separated integral DM stacks of finite type over $\mathbb C$, and let $\mathcal M$ be in $D^b_{fake}(D_{\mathcal X})$, e.g., $\mathcal M$ is a  $D_{\mathcal X}$-module. 
We have a canonical isomorphism 
$$
(f_+^{ah}\mathcal M)^{an}\stackrel{\sim}{\longrightarrow}(f^{an})^{ah}_+(\mathcal M^{an})
$$
\end{pro}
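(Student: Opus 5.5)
The plan is to reduce to the case of varieties via the descent description of the \emph{ad hoc} push forward (\Cref{L:CmodGenDescADmod}), and then to check that the resulting local isomorphisms form a morphism of descent data. Fix an \'etale $p:U\to\mathcal Y$ from a smooth variety, with fibered product $U_{\mathcal X}$ and $f':U_{\mathcal X}\to U$ as in \eqref{E:TubPresDpf'}. Analytification is compatible with \'etale morphisms: $U^{an}\to\mathcal Y^{an}$ is a local biholomorphism, and $(U_{\mathcal X})^{an}=U^{an}\times_{\mathcal Y^{an}}\mathcal X^{an}$. So the value of $(f^{an})^{ah}_+(\mathcal M^{an})$ on $U^{an}$ is $f'^{an}_+(\mathcal M_{U_{\mathcal X}})^{an}$, while the value of $(f^{ah}_+\mathcal M)^{an}$ on $U^{an}$ is $(f'_+\mathcal M_{U_{\mathcal X}})^{an}$. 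The first step is therefore to produce, for a proper morphism $g:X\to Y$ of smooth varieties, a canonical isomorphism
\[
c_g:(g_+M)^{an}=\iota_Y^*Rg_*\operatorname{Sp}^\bullet_{X\to Y}(M)\stackrel{\sim}{\longrightarrow} Rg^{an}_*\operatorname{Sp}^\bullet_{X^{an}\to Y^{an}}(M^{an})=g^{an}_+M^{an}.
\]

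To construct $c_g$, first identify $\iota_X^*\operatorname{Sp}^\bullet_{X\to Y}(M)\cong\operatorname{Sp}^\bullet_{X^{an}\to Y^{an}}(M^{an})$. This holds term by term, since $\mathcal T_{X^{an}}=\iota_X^*\mathcal T_X$ and $D_{Y^{an}}=\iota_Y^*D_Y$, and the differentials are given by the same local formulas. Then compose the natural base change map $\iota_Y^*Rg_*\to Rg^{an}_*\iota_X^*$ with this identification. That the composite is an isomorphism is the GAGA-type comparison for $D$-module direct images under proper maps (e.g., \cite[\S 4.7]{HTT08}). It can be reduced to relative GAGA for coherent $\mathcal O$-modules: the terms of the relative Spencer complex are of the form $\mathcal O$-coherent $\otimes\, f^{-1}D_Y$, and one filters by $F_jD_Y$ and passes to the colimit using \Cref{L:limRif=Rif}. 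For $M$ a bounded complex rather than a single module, one uses d\'evissage on cohomology.

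Next, I will verify naturality. The isomorphism $c_g$ must be functorial in $M$, and it must be compatible with the \'etale base change isomorphisms $\alpha$ of \Cref{L:CmodDescADmod}. Concretely, for an \'etale $\phi:U'\to U$, one needs
\[
\alpha^{an}_{\phi}\circ \phi^{an*}(c_{f'}) \;=\; c_{f''}\circ \alpha_{\phi^{an}},
\]
which is an identity of maps $\phi^{an*}(f'_+M)^{an}\to f''^{an}_+(\phi^*_{\mathcal X}M)^{an}$. Both sides are built from canonical adjunction and base change morphisms of sheaf pushforward. I will check the identity by choosing injective resolutions as in the proof of \Cref{L:CmodDescA}; the analytification of an injective resolution is only $g^{an}_*$-acyclic after the GAGA input, so at this point I will pass to the derived category via uniqueness up to homotopy. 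The result is a commuting cube analogous to \eqref{E:diagramA18complexes}. Given this compatibility, the local isomorphisms $c_{f'}$ commute with the descent isomorphisms $\tau_\phi$ on both sides. They therefore define a morphism in $D^b_{fake}(D_{\mathcal Y^{an}})$, which is an isomorphism because it is one on every chart.

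I expect the main obstacle to be the comparison step itself. Here $D_Y$ is not $\mathcal O_Y$-coherent, so relative GAGA does not apply directly to $Rg_*$ of the Spencer complex; the filtration and colimit argument (or restricting to coherent $M$ and citing the standard result) has to be done carefully. By contrast, the coherence bookkeeping for the descent data is routine, though tedious.
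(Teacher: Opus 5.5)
Your proposal is correct and is essentially the paper's proof. On each chart one takes the standard comparison $(f'_+\mathcal M_{U_{\mathcal X}})^{an}\cong f'^{an}_+(\mathcal M_{U_{\mathcal X}})^{an}$ from \cite[Prop.~4.7.2]{HTT08}; the real content is that this comparison intertwines the base change isomorphisms $\alpha$, hence the $\tau_\phi$. The paper checks this by regarding analytification as pull back along $\iota$ and comparing the two factorizations $\iota_Y\circ p^{an}=p\circ\iota_U$ through the front face of the cube \eqref{E:diagramA18complexes}, which is the same injective-resolution check you describe.

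Two small corrections. First, your displayed identity has the two $\alpha$'s interchanged, so as written the composites are not composable; it should read $c_{f''}\circ\alpha_\phi^{an}=\alpha_{\phi^{an}}\circ\phi^{an*}(c_{f'})$. Second, your optional reduction to coherent GAGA is more delicate than stated, because the Spencer terms are only quasi-coherent and the differentials are not $\mathcal O$-linear. Simply citing the standard comparison, as the paper does, is the cleaner route.
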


\begin{proof}
In light of \eqref{E:DescDataDiagMorph}, given the object with descent data  $\{f'_+\mathcal M_{U_{\mathcal X}},\tau_\phi\}$ from \Cref{L:CmodGenDescADmod} representing $f^{ah}_+\mathcal M$, and the analytic object with descent data $\{f'^{an}_+\mathcal M_{U^{an}_{\mathcal X}},\tau_{\phi^{an}}\}$ representing $(f^{an})^{ah}_+\mathcal M^{an}$, one must show that the standard isomorphisms 
\begin{equation}\label{E:StIsof+algf+an}
(f'_+M_{U_{\mathcal X}})^{an}\stackrel{\sim}{\to} f'^{an}_+\mathcal M_{U^{an}_{\mathcal X}}
\end{equation}
  (see e.g., \cite[Prop.~4.7.2]{HTT08}) are canonical in the sense that under these identifications, one has  $(\tau_\phi)^{an}= \tau_{ \phi^{an}}$.
While in principle this could be deduced from the presentation in \cite[Prop.~4.7.2]{HTT08}, for clarity, we include some of the details here.

The first step is to show, in the notation diagram of \eqref{E:PresDpf-varA},  that for  the morphisms 
$$
\alpha_p=\alpha_{p,M}: p^* (f_+M) \stackrel{\sim}{\longrightarrow} f'_+p_X^*M,
$$
given in \eqref{E:CmodDefPushPullalphaADmod}, and the corresponding morphisms in the analytic category, 
$$
\alpha_{p^{an}}=\alpha_{p^{an},M^{an}}: (p^{an})^* (f^{an}_+M^{an}) \stackrel{\sim}{\longrightarrow} f'^{an}_+(p^{an}_{X^{an}})^*M^{an},
$$
that we have $(\alpha_p)^{an}=\alpha_{p^{an}}$, after the standard identifications of the analytification of the push forward with the push forward of the analytification (i.e., \eqref{E:StIsof+algf+an}).  

For this it is convenient to consider the diagram
$$
\xymatrix@C=2em@R=1em{
&U_{X}\ar[rr]^{p_{X}} \ar@{-}[d]^{f'}&&X \ar[dd]^f\\
U^{an}_{X}\ar[rr]^<>(0.75){p_{X^{an}}} \ar[dd]_{f'^{an}} \ar[ru]^{\iota_{U_{X}}}&\ar[d]&X^{an} \ar[dd]^<>(0.25){f^{an}} \ar[ru]_{\iota_X}&\\
&U\ar@{-}[r]^p&\ar[r]&Y\\
U^{an}\ar[rr]_{p^{an}} \ar[ru]^{\iota_U}&&Y^{an} \ar[ru]_{\iota_Y}&
}
$$
Considering  the construction of the $\alpha_p$, which is explained in \Cref{L:CmodDescADmod} and \Cref{L:CmodDescA}, and now viewing the analytification functor as a pull back functor,  it is clear from the diagram above and the construction in \Cref{L:CmodDescA}, especially the front face of \eqref{E:diagramA18complexes}, that we have the following  commutative diagrams, similar to  \eqref{E:PresDpf-var3}:
\begin{equation}\label{E:PresDpf-var3ip}
\xymatrix@C=4em@R=2em{
(p^{an})^*\iota_Y^*f_+ M \ar@{=}[dd] \ar[r]^{(p^{an})^*(\alpha_{\iota_Y,M})}& (p^{an})^*f^{an}_+\iota_X^*M \ar[d]^{\alpha_{p^{an}}=\alpha_{p^{an},\iota_X^*M}}\\
& f'^{an}_+p_{X^{an}}^*\iota_X^*M \ar@{=}[d] \\
(\iota_Y\circ p^{an})^*f_+M\ar[r]^<>(0.5){\alpha_{\iota_Y\circ p^{an},M}}& f'^{an}_+(\iota_X\circ p_{X^{an}})^* M
}
\end{equation}
\begin{equation}\label{E:PresDpf-var3pi}
\xymatrix@C=4em@R=2em{
\iota_U^*p^*f_+\mathsf M \ar@{=}[dd] \ar[r]^{\iota_U^*(\alpha_{p,\mathsf M})=(\alpha_{p})^{an}}& \iota_U^*f'_+p_X^*M \ar[d]^{\alpha_{\iota_U,p_X^* M}}\\
& f'^{an}_+\iota_{U_X}^*p_X^* M \ar@{=}[d] \\
(p\circ \iota_U)^*f_+M\ar[r]^<>(0.5){\alpha_{p\circ \iota_U, M}}& f'^{an}_+(p_X\circ \iota_{U_X})^* M
}
\end{equation}
As $\iota_Y\circ p^{an}=p\circ \iota_U$, the diagrams \eqref{E:PresDpf-var3ip} and \eqref{E:PresDpf-var3pi} are equal except for the upper right corners; this implies that $\alpha_{\iota_U,p_X^* M}\circ (\alpha_p)^{an}= \alpha_{p^{an}}\circ (p^{an})^*(\alpha_{\iota_Y,M})$, which is exactly saying that $(\alpha_p)^{an}=\alpha_{p^{an}}$, after the standard identifications of the analytification of the push forward with the push forward of the analytification. 

Arguments similar to those in  \Cref{L:CmodDescA} show that the  identifications  $(\alpha_p)^{an}=\alpha_{p^{an}}$ are functorial in $M$, and stable under further \'etale base change.  With these, then arguments similar to those in \Cref{L:GenDesc} show that $(\tau_\phi)^{an}= \tau_{ \phi^{an}}$, completing the proof.   For brevity, we omit the details. 
\end{proof}

\begin{cor}\label{C:alg-and-an-ah-push-agree}
In the notation of \Cref{P:alg-and-an-ah-push-agree}, 
we have a canonical isomorphism of $D_{\mathcal Y}$-modules
$$
(\mathcal H^if_+^{ah}\mathcal M)^{an}\stackrel{\sim}{\longrightarrow}\mathcal H^i((f^{an})^{ah}_+(\mathcal M^{an})).
$$
\end{cor}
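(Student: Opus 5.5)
The plan is to take cohomology in \Cref{P:alg-and-an-ah-push-agree}, and then check that this operation is compatible with the descent data on both sides. Analytification $\iota^*$ of $D$-modules on a smooth variety is exact, since $\mathcal O_{U^{an}}$ is flat over $\iota_U^{-1}\mathcal O_U$. Hence for every \'etale $p:U\to\mathcal Y$ from a smooth variety there is a canonical isomorphism
\[
(\mathcal H^i f'_+\mathcal M_{U_{\mathcal X}})^{an}\cong \mathcal H^i\big((f'_+\mathcal M_{U_{\mathcal X}})^{an}\big),
\]
and this isomorphism is natural in morphisms of complexes. Composing it with $\mathcal H^i$ of the standard isomorphism \eqref{E:StIsof+algf+an} gives, for each chart, an isomorphism
\[
\beta_U:(\mathcal H^if'_+\mathcal M_{U_{\mathcal X}})^{an}\xrightarrow{\ \sim\ }\mathcal H^i\big(f'^{an}_+\mathcal M^{an}_{U^{an}_{\mathcal X}}\big).
\]

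Next, I would recall the two descent data. By \Cref{C:PervDescADmod}, the object $\mathcal H^if^{ah}_+\mathcal M$ is given by the data $\{\mathcal H^i(f'_+\mathcal M_{U_{\mathcal X}}),\tau^i_\phi\}$, where $\tau^i_\phi=\alpha^i_\phi\circ\mathcal H^i(f''_+\theta_{\phi_{\mathcal X}})$. The analytic object is described in the same way using $\tau^i_{\phi^{an}}$. By \eqref{E:DescDataDiagMorph}, the family $\{\beta_U\}$ defines a morphism, and hence an isomorphism, of objects with descent data exactly when $\beta_{U'}\circ(\tau^i_\phi)^{an}=\tau^i_{\phi^{an}}\circ\phi^{an*}\beta_U$ for every \'etale $\phi:U'\to U$ over $\mathcal Y$.

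To verify this compatibility, I would use the proof of \Cref{P:alg-and-an-ah-push-agree}. That proof shows $(\tau_\phi)^{an}=\tau_{\phi^{an}}$ at the level of the fake derived categories, under the identifications \eqref{E:StIsof+algf+an}. Applying $\mathcal H^i$ to this equality of morphisms in $D^b(D_{U'^{an}})$ then gives the required square. Two facts are needed here. First, $\mathcal H^i$ commutes with \'etale pullback and with analytification via the canonical exactness isomorphisms, so that $\mathcal H^i(\tau_\phi)$ is identified with $\tau^i_\phi$. Second, $\mathcal H^i$ is a functor, so the composites are preserved; in other words, \Cref{R:ah-push-compatibleHH} applies in both the algebraic and the analytic settings.

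The main obstacle I expect is bookkeeping rather than substance. One must check that the exactness isomorphism for $\iota^*$, taken together with the isomorphism $\mathcal H^i\circ p^*\cong p^*\circ\mathcal H^i$, is compatible with the composite $\iota_Y\circ p^{an}=p\circ\iota_U$. This would be handled exactly as in \eqref{E:PresDpf-var3ip}–\eqref{E:PresDpf-var3pi}: both sides are pullbacks along the same morphism of ringed spaces, so the canonical isomorphisms agree. Once that is settled, descent (\S\ref{S:ConcreteDesc}) produces the claimed canonical isomorphism of $D_{\mathcal Y^{an}}$-modules.
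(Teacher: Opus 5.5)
Your proposal is correct and follows the paper's route. The paper simply states that the corollary follows directly from \Cref{P:alg-and-an-ah-push-agree} by taking cohomology. Your argument spells out the steps left implicit there: exactness of analytification, $\mathcal H^i$ commuting with \'etale pullback, and carrying the equality $(\tau_\phi)^{an}=\tau_{\phi^{an}}$ over to the degree-$i$ descent data via \Cref{R:ah-push-compatibleHH}.
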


\begin{proof}
This follows directly from \Cref{P:alg-and-an-ah-push-agree}. 
\end{proof}

\subsection{Compatibility of \emph{ad hoc} push forwards of $D$-modules and perverse sheaves} \label{S:DandPervAgree}

To begin, given a smooth complex analytic space $X$, we recall the (shifted)  de Rham functor
$$
^pDR_X: D^b(D_X)\longrightarrow D^b(\mathbb C_X)
$$
\begin{equation}\label{E:def-dR-functor}
M^\bullet \mapsto \mathcal R\mathcal Hom^\bullet_{D_X}(\mathcal O_X,\omega_X^{-1}\otimes_{\mathcal O_X}M^\bullet);
\end{equation}
 see e.g., \cite[Prop.~4.2.1]{HTT08} and note that they are using \emph{left} $D_X$-modules, as well as the unshifted de Rham functor $DR_X= \ ^pDR_X[-\dim X]$. 
The  de Rham functor is functorial in $X$, as well, and so, for a smooth separated complex analytic DM stack of finite type over 
$\mathbb C$ there is an induced de Rham functor
\begin{equation}\label{E:def-drR-XX}
^pDR_{\mathcal X}: D^b_{fake}(D_{\mathcal X})\to D^b_{fake}(\mathbb C_{\mathcal X})
\end{equation}
$$
\{\mathcal M_U,\theta_\phi\}\mapsto \{ ^pDR_U (\mathcal M_U),\  ^pDR_U(\theta_\phi)\}
$$
on the fake derived categories of objects with descent data (see \eqref{E:DbfakeDX} and \eqref{E:DbfakeAX}).   Restricting to regular holonomic $D$-modules, we have induced functors 
$$
\xymatrix@R=1em{
\operatorname{Mod}_{rh}(D_{\mathcal X})^{\operatorname{right}}  \ar[r]^<>(0.5){^pDR_{\mathcal X}}_<>(0.5){\sim} \ar[d] & \operatorname{Perv}(\mathbb C_{\mathcal X}) \ar[d]\\
D^b_{rh,fake}(D_{\mathcal X}) \ar[r]^{^pDR_{\mathcal X}}_{\sim} \ar[d] &  D^b_{c,fake}(\mathbb C_{\mathcal X}) \ar[d]\\
D^b_{fake}(D_{\mathcal X}) \ar[r]^{^pDR_{\mathcal X}} &  D^b_{fake}(\mathbb C_{\mathcal X})
}
$$
where the middle row in the diagram comes from the Riemann--Hilbert correspondence (e.g., 
\cite[Thm.~4.6.3]{HTT08}) and the top row comes from the standard identification (e.g., from \cite[\S 7.2]{HTT08} showing that the de Rham functor preserves the $t$-structures).  

The \emph{ad hoc} push forward commutes with the de Rham functor:

\begin{pro}\label{P:ahpush-comm-dR}
Let $f:\mathcal X\to \mathcal Y$ be the analytification of a schematic proper morphism of smooth separated integral DM stacks of finite type over $\mathbb C$, and let $\mathcal M$ be in $D^b_{rh,fake}(D_{\mathcal X})$, e.g., $\mathcal M$ is a regular holonomic $D_{\mathcal X}$-module.    Then the de Rham functor \eqref{E:def-drR-XX} commutes with the \emph{ad hoc}  push forward functors   \eqref{L:CmodGenDescA} and \eqref{L:CmodGenDescADmod}; i.e., we have a canonical isomorphism 
$$
^pDR_{\mathcal Y}(f^{ah}_+\mathcal M)\stackrel{\sim}{\longrightarrow} Rf^{ah}_* \, ^pDR_{\mathcal X}(\mathcal M).
$$
\end{pro}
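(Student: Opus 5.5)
We construct the isomorphism chart by chart and then check that it is compatible with the descent data; this is the same pattern used for \Cref{P:alg-and-an-ah-push-agree}. Fix an \'etale morphism $p:U\to \mathcal Y$ from a smooth complex analytic space and form the fibered square \eqref{E:TubPresDpf0}. Recall that $f^{ah}_+\mathcal M$ is represented by the system $\{f'_+\mathcal M_{U_{\mathcal X}},\tau_\phi\}$ of \Cref{L:CmodGenDescADmod}. Likewise, $Rf^{ah}_*\,{}^pDR_{\mathcal X}(\mathcal M)$ is represented by $\{Rf'_*\,{}^pDR_{U_{\mathcal X}}(\mathcal M_{U_{\mathcal X}}),\tau'_\phi\}$ from \Cref{L:CmodGenDescA}. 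On each chart we take the standard isomorphism for proper morphisms of smooth analytic spaces (e.g.\ \cite[Thm.~4.7.?]{HTT08}, the analytic commutation of de Rham with direct image):
\begin{equation*}
\beta_U:{}^pDR_U(f'_+\mathcal M_{U_{\mathcal X}})\stackrel{\sim}{\longrightarrow} Rf'_*\,{}^pDR_{U_{\mathcal X}}(\mathcal M_{U_{\mathcal X}}).
\end{equation*}
Concretely, this isomorphism comes from the identification $\omega_{U}\otimes^L_{D_U}\mathcal M\otimes^L_{D_{U_{\mathcal X}}}D_{U_{\mathcal X}\to U}\simeq \omega\otimes^L_{D_{U_{\mathcal X}}}\mathcal M$, computed via the relative Spencer complex \eqref{E:SpDXY} and the projection formula for $Rf'_*$.

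By \eqref{E:DescDataDiagMorph}, it remains to show that for every \'etale $\phi:U'\to U$ over $\mathcal Y$ we have $\tau'_\phi\circ\phi^*\beta_U=\beta_{U'}\circ {}^pDR_{U'}(\tau_\phi)$. Since $\tau_\phi=\alpha_\phi\circ f''_+(\theta_{\phi_{\mathcal X}})$ and $\tau'_\phi=\alpha'_\phi\circ Rf''_*({}^pDR(\theta_{\phi_{\mathcal X}}))$, this square splits into two pieces. The first is the naturality of $\beta$ with respect to morphisms of $D$-modules, applied to $\theta_{\phi_{\mathcal X}}$. This holds because every step in the construction of $\beta$ (resolution, projection formula, Spencer quasi-isomorphism) is functorial. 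The second piece is the compatibility of $\beta$ with the base change isomorphisms, that is, commutativity of
\begin{equation*}
\begin{array}{ccc}
\phi^*\,{}^pDR_U(f'_+N) & \xrightarrow{\ \phi^*\beta\ } & \phi^*Rf'_*\,{}^pDR(N)\\
\downarrow{\scriptstyle {}^pDR(\alpha_\phi)} & & \downarrow{\scriptstyle \alpha'_\phi}\\
{}^pDR_{U'}(f''_+\phi_{\mathcal X}^*N) & \xrightarrow{\ \beta\ } & Rf''_*\,{}^pDR(\phi_{\mathcal X}^*N),
\end{array}
\end{equation*}
where $N=\mathcal M_{U_{\mathcal X}}$ and we use the canonical identification $\phi^*\,{}^pDR_U={}^pDR_{U'}\phi^*$, which is valid because $\phi$ is \'etale.

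We expect this base change square to be the main obstacle, because the maps $\alpha_\phi$ and $\alpha'_\phi$ are built from choices of injective resolutions (\Cref{L:CmodDescA}, \Cref{L:CmodDescADmod}), and $\beta$ also involves resolutions. Our approach is to choose an injective resolution $I^\bullet$ of $\operatorname{Sp}^\bullet_{U_{\mathcal X}\to U}(N)$ over $f'^{-1}D_U$. We then observe that $\phi_{\mathcal X}^*I^\bullet$ is a resolution of $\operatorname{Sp}^\bullet_{U'_{\mathcal X}\to U'}(\phi_{\mathcal X}^*N)$ (the Spencer complex commutes with \'etale pullback), and compare it with an injective resolution on $U'_{\mathcal X}$ via the comparison map $\gamma$ of \Cref{L:CmodDescA}. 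After this, $\beta$ becomes the map $\omega_U\otimes_{D_U}f'_*I^\bullet\to f'_*(\omega\otimes I^\bullet)$ of sheaf complexes, which visibly commutes with $\phi^*$ and $\phi^*f'_*=f''_*\phi_{\mathcal X}^*$. Uniqueness up to homotopy of $\gamma$ then gives commutativity in the derived category, exactly as in the cube \eqref{E:diagramA18complexes}. Once both squares commute, the $\beta_U$ define a morphism of objects with descent data, and it is an isomorphism because each $\beta_U$ is. This gives the claimed canonical isomorphism.
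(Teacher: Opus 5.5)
Your proposal is correct and takes the same route as the paper. The paper likewise uses, on each chart, the classical commutation of the de Rham functor with proper direct images (\cite[Prop.~4.7.5]{HTT08}) and then asserts compatibility with the descent data $\tau_\phi$, omitting the details as similar to the earlier compatibility arguments (\Cref{L:CmodDescA}, \Cref{P:alg-and-an-ah-push-agree}); your split into naturality in $\theta_{\phi_{\mathcal X}}$ plus the base-change square, settled by uniqueness up to homotopy of the comparison maps $\gamma$, supplies exactly those omitted details.

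One small correction. At the chart level, model ${}^pDR_U$ on $f'_*I^\bullet$ by the Spencer complex $f'_*I^\bullet\otimes_{\mathcal O_U}\bigwedge^{-\bullet}\mathcal T_U$, not by the underived $\omega_U\otimes_{D_U}f'_*I^\bullet$, which does not compute the derived functor. The Spencer model does compute it, commutes with \'etale pullback, and commutes with $f'_*$ via the projection formula for the locally free $\bigwedge^{j}\mathcal T_U$.
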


\begin{proof}
At the level of objects with descent data, this is saying that the objects $$\{^pDR_U (f'_+\mathcal M_{U_{\mathcal X}}),\, ^pDR_{U'}(\tau_\phi)\} \quad \text{and }\quad \{Rf'_* \, ^pDR_{U_{\mathcal X}}(\mathcal M_{U_{\mathcal X}}), \, ^p\tau_{\phi}\}$$
with descent data, corresponding to $^pDR_{\mathcal Y}(f^{ah}_+\mathcal M)$ and $Rf^{ah}_* \, ^pDR_{\mathcal X}(\mathcal M)$, respectively, 
are canonically isomorphic. This follows from the fact that for proper morphisms of smooth varieties, the de Rham functor commutes with the push forward functors (see e.g., \cite[Prop.~4.7.5]{HTT08}).  The details are similar to the proofs of previous compatibility results and are omitted for brevity.  
\end{proof}

\begin{cor}\label{C:ahpush-comm-dR}
In the notation of \Cref{P:ahpush-comm-dR}, we have a canonical isomorphism of perverse sheaves on $\mathcal Y$: 
$$
^pDR_{\mathcal Y}(\mathcal H^if^{ah}_+\mathcal M)\stackrel{\sim}{\longrightarrow} R^if^{ah}_* \, ^pDR_{\mathcal X}(\mathcal M).
$$
\end{cor}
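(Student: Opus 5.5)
The plan is to deduce the corollary from \Cref{P:ahpush-comm-dR} by applying the perverse cohomology functor $^p\mathcal H^i$ to both sides. The work is in checking that both sides are computed, chart by chart, by the \emph{ad hoc} descent data of \Cref{C:PervDescADmod} and \Cref{C:PervDescA}, and that the resulting identifications are compatible with those descent data.

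First, on each chart $p:U\to\mathcal Y$ with $U_{\mathcal X}=U\times_{\mathcal Y}\mathcal X$, the object $^pDR_{\mathcal Y}(\mathcal H^if^{ah}_+\mathcal M)$ is represented by $^pDR_U(\mathcal H^i f'_+\mathcal M_{U_{\mathcal X}})$. For regular holonomic modules on a smooth analytic space, $^pDR_U$ is $t$-exact from the standard $t$-structure on $D^b_{rh}(D_U)$ to the perverse $t$-structure (\cite[\S 7.2]{HTT08}). Hence there are canonical isomorphisms
$$
^pDR_U(\mathcal H^i f'_+\mathcal M_{U_{\mathcal X}})\cong {}^p\mathcal H^i(\,^pDR_U f'_+\mathcal M_{U_{\mathcal X}}),
$$
natural in the module and compatible with étale pullback, since $^pDR$ commutes with étale pullback. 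We then compose with $^p\mathcal H^i$ of the chartwise isomorphism $^pDR_U(f'_+\mathcal M_{U_{\mathcal X}})\cong Rf'_*\,^pDR_{U_{\mathcal X}}(\mathcal M_{U_{\mathcal X}})$ from \Cref{P:ahpush-comm-dR}. This gives isomorphisms $\psi_U:{}^pDR_U(\mathcal H^if'_+\mathcal M_{U_{\mathcal X}})\cong {}^p\mathcal H^i(Rf'_*\,^pDR_{U_{\mathcal X}}\mathcal M_{U_{\mathcal X}})$, which are exactly the charts of $R^if^{ah}_*\,^pDR_{\mathcal X}(\mathcal M)$ as defined in \Cref{C:PervDescA}. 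Note that \Cref{C:PervDescA} is stated for perverse sheaves, and $^pDR_{\mathcal X}(\mathcal M)$ is perverse when $\mathcal M$ is a regular holonomic module.

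Second, we check that the $\psi_U$ satisfy the compatibility \eqref{E:DescDataDiagMorph} with respect to the descent isomorphisms. On the $D$-module side these are $^pDR(\alpha^i_\phi\circ\mathcal H^i f''_+\theta)$, and on the perverse side they are $\alpha^i_\phi\circ{}^p\mathcal H^i Rf''_*(\theta)$, built using \Cref{CommuteCohwEtalepullbackA}. Since the derived-level isomorphism of \Cref{P:ahpush-comm-dR} is already a morphism of objects with descent data, applying the functor $^p\mathcal H^i$ preserves the commutativity of the relevant squares. It remains to see that the $t$-exactness identification intertwines the cohomology-level $\alpha^i$ of \eqref{E:A2iso-replacementDmod} and \eqref{E:A2iso-replacement}. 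Both are obtained from the derived $\alpha$ by taking (perverse) cohomology, and perverse truncations commute canonically with étale pullback by the uniqueness argument of \Cref{CommuteCohwEtalepullbackA}. So the compatibility reduces to naturality of $^p\mathcal H^i$ and of the $t$-exactness isomorphism, and by the equivalence \eqref{E:DescDataDiagMorph}, \emph{i.e.}, descent for perverse sheaves, the $\psi_U$ glue to the claimed canonical isomorphism.

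The main obstacle is this bookkeeping: the fake derived category is not triangulated (\Cref{R:FakeNoGood}), so truncations cannot be taken globally. I would address this by working entirely chartwise with the canonical, unique-in-the-derived-category truncation maps. Uniqueness, via \cite[Prop.~1.1.9]{BBD82}, forces every square built from them to commute.
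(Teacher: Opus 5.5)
Your proposal is correct and follows the same route as the paper, whose proof is the one-line observation that the corollary follows directly from \Cref{P:ahpush-comm-dR} by taking (perverse) cohomology. Your chartwise check uses $t$-exactness of ${}^pDR$ on regular holonomic modules and the canonical compatibility of perverse truncations with \'etale pullback from \Cref{CommuteCohwEtalepullbackA}; it simply makes explicit the descent-data bookkeeping that the paper leaves implicit.
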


\begin{proof}
This follows directly from \Cref{P:ahpush-comm-dR}.
\end{proof}

\begin{rem}[Regular holonomic $D$-modules with $\mathbb Q$-structure] \label{R:RHF-push}
At this point, we have enough to define the \emph{ad hoc} push forward of a regular holonomic $D$-module with $\mathbb Q$ structure, in the following sense.  Suppose that $f:\mathcal X\to \mathcal Y$ is  a schematic proper morphism of smooth separated integral DM stacks of finite type over $\mathbb C$, and suppose that $(\mathcal M,K)$ is a pair with $\mathcal M$ a regular holonomic $D_{\mathcal X}$-module and $K$ in $\operatorname{Perv}(\mathbb Q_{\mathcal X})$ such that $^pDR(\mathcal M^{an})= K\otimes_{\mathbb Q_{\mathcal X}}\mathbb C_{\mathcal X}$.  Then putting together 
\Cref{C:PervDescA}, 
\Cref{C:PervDescADmod},
\Cref{P:ahCRHpush},
\Cref{C:alg-and-an-ah-push-agree}, and 
\Cref{C:ahpush-comm-dR}, 
we have for the pair $(\mathcal H^if^{ah}_+\mathcal M,R^if^{ah}_*K)$  that $\mathcal H^if^{ah}_+\mathcal M$ is a regular holonomic $D_{\mathcal Y}$-module and $R^if^{ah}_*K$ in $\operatorname{Perv}(\mathbb Q_{\mathcal Y})$ is such that  $^pDR( \mathcal H^if^{ah}_+\mathcal M)^{an}= (R^if^{ah}_*K)\otimes_{\mathbb Q_{\mathcal Y}}\mathbb C_{\mathcal Y}$.  In order to discuss push forwards of Hodge modules, we will need to work with filtered $D$-modules.  
\end{rem}

\section{Push forward of $D$-modules}\label[appendix]{S:A:BDpushforward}

In this section we discuss the push forward of $D$-modules at the level of derived categories.  This is so we can discuss the push forward of \emph{filtered} $D$-modules in \S \ref{S:FiltDModPush}, and then the \emph{ad hoc} push forward of Hodge modules in \S \ref{S:PushHodge}.  Our presentation in this section for $D$-modules is essentially following \cite{BDstacks}.

\subsection{Generalities on Stacks}\label{S:Stack-derived-general}

In this section we will frequently need to use some general constructions with derived categories on stacks. Since we will be working with sheaves of modules over various types of sheaves of rings, for clarity, we collect and provide references for some standard results. 

\begin{pro}[Bounded derived category of modules on a ringed stack]\label{prop:Db-on-ringed-stack}
Let $\mathcal{X}$ be a DM stack, and let $\mathcal{X}_{\acute{e}tale}$ denote its small $\acute{e}tale$ site.
Let $\mathcal{A}$ be a sheaf of associative rings (not necessarily commutative) on $\mathcal{X}_{\acute{e}tale}$.
Let $\mathrm{Mod}(\mathcal{A})$ be the abelian category of sheaves of \emph{left} $\mathcal{A}$-modules on $\mathcal{X}_{\acute{e}tale}$.

\begin{enumerate}
\item The category $\mathrm{Mod}(\mathcal{A})$ has enough injectives; in fact there exist functorial injective embeddings.
\item Every complex $M^\bullet$ of objects of $\mathrm{Mod}(\mathcal{A})$ admits a functorial quasi-isomorphism
\[
M^\bullet \longrightarrow I^\bullet
\]
where each $I^n$ is injective in $\mathrm{Mod}(\mathcal{A})$ and $I^\bullet$ is $K$-injective.
\item We have the derived category $D(\mathrm{Mod}(\mathcal{A}))$  
and the full subcategory
\[
D^b(\mathrm{Mod}(\mathcal{A})) \subseteq D(\mathrm{Mod}(\mathcal{A}))
\]
on complexes with bounded cohomology is a well-defined triangulated category. We denote it by
\[
D^b(\mathcal{A}) := D^b(\mathrm{Mod}(\mathcal{A})).
\]
\item If $F : \mathrm{Mod}(\mathcal{A}) \to \mathcal{B}$ is an additive left exact functor to an abelian category $\mathcal{B}$,
then the right derived functor $RF : D(\mathrm{Mod}(\mathcal{A})) \to D(\mathcal{B})$ exists and can be computed by
\[
RF(M^\bullet) \cong F(I^\bullet),
\]
for any choice of $K$-injective resolution $(M^\bullet \to I^\bullet)$ as in (2).
\end{enumerate}
\end{pro}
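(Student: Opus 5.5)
The plan is to reduce every assertion to the corresponding statement for sheaves on a site, using the fact that a DM stack's small étale site is a site. In particular, $\mathrm{Mod}(\mathcal A)$ is the category of sheaves of left $\mathcal A$-modules on a ringed site $(\mathcal X_{\acute{e}tale},\mathcal A)$. No stack-theoretic input beyond this is needed: the category $\mathcal X_{\acute{e}tale}$ is essentially small, since $\mathcal X$ admits an étale presentation and every étale $U\to\mathcal X$ is locally covered by objects étale over that presentation, so we may replace it by an equivalent small site without changing the topos. With that understood, $\mathrm{Mod}(\mathcal A)$ is a Grothendieck abelian category. It is abelian with exact filtered colimits, and it has a generator given by the direct sum of the $j_{U!}\mathcal A|_U$ over $U$ in a small set of objects.

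For (1), I will invoke the standard result that modules over a sheaf of rings on a ringed site have enough injectives with functorial injective embeddings, cf.~\cite[\href{https://stacks.math.columbia.edu/tag/01DH}{Tag 01DH}]{stacks-project}. Alternatively, one can cite the general fact that every Grothendieck abelian category has functorial injective embeddings, which comes from the small object argument applied to the generator. Non-commutativity of $\mathcal A$ plays no role here, since the argument only uses that the sections $\mathcal A(U)$ are rings and that left module categories over a ring have enough injectives.

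For (2), I will cite the existence of functorial $K$-injective resolutions with injective terms in any Grothendieck abelian category, cf.~\cite[\href{https://stacks.math.columbia.edu/tag/079P}{Tag 079P}]{stacks-project}; this is the Spaltenstein / Serpé construction. For unbounded complexes it is the one genuinely nontrivial input, and I expect locating and checking the hypotheses of this reference to be the main obstacle. If only bounded below complexes were needed, the Cartan–Eilenberg construction would suffice. Functoriality comes from using the functorial injective embeddings from (1) at each stage of the construction.

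For (3), the existence of $D(\mathrm{Mod}(\mathcal A))$ (with small Hom sets) follows from (2), since Hom sets can then be computed in the homotopy category of $K$-injectives. The subcategory $D^b$ is a full triangulated subcategory, because bounded cohomology is preserved under shifts and, via the long exact cohomology sequence, under cones. For (4), I will use the standard fact that $K$-injective complexes are adapted to any additive functor. Then $RF(M^\bullet):=F(I^\bullet)$ is well defined up to unique isomorphism and is the right derived functor, cf.~\cite[\href{https://stacks.math.columbia.edu/tag/070K}{Tag 070K}]{stacks-project}. Left exactness is only needed to identify $R^0F$ with $F$.
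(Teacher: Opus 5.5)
Your proposal is correct and takes essentially the same route as the paper. Both observe that $\mathrm{Mod}(\mathcal A)$ is a Grothendieck abelian category, then cite the Stacks Project for functorial injective embeddings, functorial $K$-injective resolutions, small Hom sets in $D(\mathrm{Mod}(\mathcal A))$, and computing $RF$ via $K$-injective resolutions. Your remarks on the essential smallness of the small \'etale site and the explicit generator $\bigoplus_U j_{U!}\mathcal A|_U$ fill in details the paper leaves implicit; the generator argument is what covers noncommutative $\mathcal A$, since the Stacks Project's ringed-site statements assume commutative rings.
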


\begin{proof}
(1) and (2) follow by\cite[\href{https://stacks.math.columbia.edu/tag/01DU}{Tag 01DU}]{stacks-project} and \cite[\href{https://stacks.math.columbia.edu/tag/079P}{Tag 079P}]{stacks-project} respectively.
 (3) Note, $\mathrm{Mod}(\mathcal{A})$ is an example of a Grothendieck abelian category \cite[\href{https://stacks.math.columbia.edu/tag/079Q}{Tag 079Q}]{stacks-project}. In a Grothendieck abelian category $\mathcal{A}$, the existence of functorial $K$-injective resolutions implies that
$\mathrm{Hom}$-classes in $D(\mathcal{A})$ are sets, since one may compute
\[
\mathrm{Hom}_{D(\mathcal{A})}(K^\bullet,L^\bullet)=
\mathrm{Hom}_{K(\mathcal{A})}(K^\bullet, I^\bullet)
\]
after replacing $L^\bullet$ by a $K$-injective $I^\bullet$ \cite[\href{https://stacks.math.columbia.edu/tag/079Q}{Tag 079Q}]{stacks-project}.
By \cite[\href{https://stacks.math.columbia.edu/tag/05RR}{\S 05RR}]{stacks-project}, $D(\mathrm{Mod}(\mathcal{A}))$ is well-defined, and $D^b(\mathcal{A})$ is its bounded subcategory, where we require that each object has non-zero cohomology only at finitely many degrees (but we allow complexes to be of infinite length). 

(4) By \cite[\href{https://stacks.math.columbia.edu/tag/05T3}{\S 05T3}]{stacks-project}, we obtain a functor $F:K(\mathcal{A})\rightarrow D(\mathcal{B})$. Then, the result follows from \cite[\href{https://stacks.math.columbia.edu/tag/070K}{Lem.~070K}]{stacks-project}
\end{proof}

\begin{pro}[Functorial injective embeddings for graded modules]\label{lem:graded-functorial-inj}
Let $(\mathcal C,\mathcal O)$ be a ringed site and let $\mathcal A$ be a $\mathbf{Z}$-graded (possibly noncommutative) $\mathcal O$-algebra.
Then,

\begin{enumerate}
\item The category $\mathrm{Mod}(\mathcal{A})$ has enough injectives; in fact there exist functorial injective embeddings.
\item Every complex $M^\bullet$ of objects of $\mathrm{Mod}(\mathcal{A})$ admits a functorial quasi-isomorphism
\[
M^\bullet \longrightarrow I^\bullet
\]
where each $I^n$ is injective in $\mathrm{Mod}(\mathcal{A})$ and $I^\bullet$ is $K$-injective.
\item Hence we have the (unbounded) derived category $D(\mathrm{Mod}(\mathcal{A}))$, and the full subcategory
\[
D^b(\mathrm{Mod}(\mathcal{A})) \subseteq D(\mathrm{Mod}(\mathcal{A}))
\]
on complexes with bounded cohomology is a well-defined triangulated category. We denote it by
\[
D^b(\mathcal{A}) := D^b(\mathrm{Mod}(\mathcal{A})).
\]
\item If $F : \mathrm{Mod}(\mathcal{A}) \to \mathcal{B}$ is an additive left exact functor to an abelian category $\mathcal{B}$,
then the right derived functor $RF : D(\mathrm{Mod}(\mathcal{A})) \to D(\mathcal{B})$ exists and can be computed by
\[
RF(M^\bullet) \cong F(I^\bullet),
\]
for any choice of $K$-injective resolution $(M^\bullet \to I^\bullet)$ as in (2).
\end{enumerate}

\end{pro}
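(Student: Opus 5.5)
The plan is to deduce the statement from the corresponding facts for sheaves of modules over a (non-graded) ring on a site, by identifying graded $\mathcal A$-modules with modules over an auxiliary sheaf of rings, or failing that by redoing the standard Grothendieck-category argument. The category $\mathrm{Mod}(\mathcal A)$ of graded left $\mathcal A$-modules on $(\mathcal C,\mathcal O)$ is abelian, with kernels, cokernels and colimits computed degreewise on the underlying sheaves. I first show that it is a Grothendieck abelian category. Filtered colimits are exact because they are computed degreewise, and they are exact in $\mathrm{Mod}(\mathcal O)$. For a generator, I would take the direct sum over all objects $U$ of $\mathcal C$ and all integers $n$ of the graded free modules $j_{U!}\mathcal A|_U(n)$, where $(n)$ denotes the shift of grading. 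Each homogeneous local section of degree $-n$ of a graded module $M$ over $U$ is the image of the unit section of $j_{U!}\mathcal A|_U(n)$. Since $M$ is the sum of its homogeneous pieces, these maps together surject onto $M$.

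Once $\mathrm{Mod}(\mathcal A)$ is known to be Grothendieck, items (1) and (2) follow from the general results already cited in \Cref{prop:Db-on-ringed-stack}. Every Grothendieck abelian category has functorial injective embeddings \cite[\href{https://stacks.math.columbia.edu/tag/079H}{Tag 079H}]{stacks-project}. Every complex admits a functorial quasi-isomorphism to a $K$-injective complex with injective terms \cite[\href{https://stacks.math.columbia.edu/tag/079P}{Tag 079P}]{stacks-project}. Items (3) and (4) then follow exactly as in the proof of \Cref{prop:Db-on-ringed-stack}. Hom-sets in $D(\mathrm{Mod}(\mathcal A))$ are sets because they can be computed in the homotopy category after replacing the target by a $K$-injective complex. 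The bounded subcategory $D^b$ is a full triangulated subcategory, since bounded cohomology is preserved under cones. Finally, $RF$ exists and is computed as $F(I^\bullet)$ on $K$-injective resolutions, by \cite[\href{https://stacks.math.columbia.edu/tag/070K}{Lem.~070K}]{stacks-project}.

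The main obstacle I expect is making the generator argument precise. The shift functors and the graded version of $j_{U!}$ need to be defined correctly, and $\mathrm{Hom}(j_{U!}\mathcal A|_U(n),M)$ must be identified with the degree-$(-n)$ sections $M_{-n}(U)$. One also has to handle a noncommutative $\mathcal A$ and keep left and right actions straight. If checking this directly is awkward, the fallback is a reduction: $\mathrm{Mod}(\mathcal A)$ is equivalent to the category of sheaves of modules over a sheaf of rings on an auxiliary site, for instance $\mathcal C\times\mathbf Z$ with its evident ringed structure built from $\mathcal A$. Under that equivalence, Tags 01DU and 079P apply verbatim, just as in \Cref{prop:Db-on-ringed-stack}.
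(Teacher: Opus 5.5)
Your proposal is correct and follows the paper's route. The paper gets the Grothendieck property of graded $\mathcal A$-modules by citing \cite[Lem.~0FRD]{stacks-project}, whereas you verify it directly with the generator $\bigoplus_{U,n} j_{U!}\mathcal A|_U(n)$; both then deduce (1)--(2) from Tags 079H and 079P and argue (3)--(4) as in the ungraded case. You should drop the fallback via a ringed structure on $\mathcal C\times\mathbf Z$: it is unnecessary, and with $\mathbf Z$ discrete a sheaf of rings on that product gives modules that split degreewise, so it cannot encode the degree-shifting action of $\mathcal A_k$ for $k\neq 0$.
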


\begin{proof}
(1) By 
\cite[\href{https://stacks.math.columbia.edu/tag/0FRD}{Lem~0FRD}]{stacks-project}, the category $\operatorname{gr-Mod}(\mathcal A)$ is an abelian Grothendieck category. Hence, (1) and (2) follow by \cite[\href{https://stacks.math.columbia.edu/tag/079H}{Thm.~079H}]{stacks-project} and \cite[\href{https://stacks.math.columbia.edu/tag/079P}{Thm.~079P}]{stacks-project}.
The rest follows as in the ungraded case.
\end{proof}

\subsection{\v{C}ech complexes}\label{S:CechNotation}
In some cases, we will want to work with \v{C}ech complexes.  For this we fix the following notation. 
Let  $p:U\ra \mathcal{Y}$ be an \'etale cover by affines, and note that $p$ is an affine morphism, as the diagonal of $\mathcal Y$ is affine.   Let $p_{\mathcal X}:U_{\mathcal X}\ra \mathcal{X}$ be  the  \'etale cover  obtained via the fibered product with $f:\mathcal X \to \mathcal Y$; while $U_{\mathcal X}$ may not be affine, $p_{\mathcal X}$ is affine by base change.  To fix notation, one has a diagram:
\begin{equation}\label{E:CechCov}
\begin{tikzcd}
\cdots &U_{\mathcal X}\times_{\mathcal X} U_{\mathcal X}\times_{\mathcal X} U_{\mathcal X} \arrow[d,"f_3"] \arrow[r, shift left=2] \arrow[r, shift right=2] \arrow[r] & U_{\mathcal X}\times_{\mathcal X} U_{\mathcal X} \arrow[d,"f_2"] \arrow[r, shift left] \arrow[r, shift right] & U_{\mathcal X} \arrow[r, "p_{\mathcal X}"] \arrow[d, "f'=f_1"] & \mathcal X \arrow[d, "f"] \\
\cdots &U\times_{\mathcal Y} U\times_{\mathcal Y} U \arrow[r, shift left=2] \arrow[r, shift right=2] \arrow[r]           & U\times_{\mathcal Y} U \arrow[r, shift left] \arrow[r, shift right]           & U \arrow[r, "p"]                & \mathcal Y               
\end{tikzcd}
\end{equation}
Let $U_{\mathcal X,n}:=U_{\mathcal X}\times_{\mathcal{X}}\cdots \times_{\mathcal{X}} U_{\mathcal X}$ be the $n$-th fiber product, and let $p_{\mathcal X,n}:U_{\mathcal X,n}\ra \mathcal{X}$ be the induced map. We define $U_n$ and  $p_n:U_n\to \mathcal Y$ similarly.  
For any complex of sheaves $\mathcal F$ on the \'etale site of $\mathcal X$, we define the \v{C}ech complex $\check{C}^\bullet (\mathcal F, U)$ by 
\begin{equation}\label{E:CechDef}
\check{C}^i(\mathcal F,U)= \bigoplus _{n\ge 0}p_{\mathcal X, n*}(\mathcal F^{i-n}_{U_{\mathcal X,n}})
\end{equation}
 with the obvious boundary maps, where $p_{\mathcal X,n*}$ is the push forward of sheaves.

\subsection{Push forward of $D$-modules following Beilinson--Drinfeld}\label{S:BDpush}

Here we recall the push forward functor for $D$-modules on stacks given in \cite{BDstacks} via differential graded algebras (dgAs); we use the conventions for dgAs in   \cite[\href{https://stacks.math.columbia.edu/tag/09JD}{Ch.~09JD}]{stacks-project}.    
We note that the presentation in  \cite{BDstacks}  is given for more general smooth Artin stacks with affine diagonals; here we continue to work with smooth DM stacks and note that the definition of DM stacks we use in this paper implies they have affine diagonal.

Following \cite[7.2--7.5]{BDstacks}, 
denote by $\Omega_{\mathcal X}$ the differential graded algebra (dga)   of differential forms on $\mathcal X$. 
 Equivalently,  $\Omega_{\mathcal{X}}=\operatorname{DR}^\bullet (\m{O}_{\mathcal{X}})$, as defined in \S \ref{S:dR+SC'}.  
We then consider the dg ringed space $(\mathcal X,\Omega_{\mathcal X})$, as well as the category of $\Omega_{\mathcal X}$-complexes, i.e., dg $\Omega_{\mathcal X}$-modules.  We say an $\Omega_{\mathcal X}$-complex $\mathcal F=(\mathcal F^\bullet, d)$ is quasi-coherent if the $\mathcal F^i$ are quasi-coherent $\mathcal O_{\mathcal X}$-modules; quasi-coherent $\Omega_{\mathcal X}$-complexes will be called $\Omega$-complexes on $\mathcal X$.  
Denote the dg category of $\Omega$-complexes on $\mathcal X$ by $C_{\operatorname{BD}}(\mathcal X,\Omega_{\mathcal X})$.
Let $C_{\operatorname{BD}}(\mathcal X,D_{\mathcal X})$ be the dg category of complexes of \emph{right} 
$D$-modules on $\mathcal X$ (which we will call right $D_{\mathcal X}$-complexes, or just $D_{\mathcal X}$-complexes for short), and let $K_{\operatorname{BD}}(\mathcal X,D_{\mathcal X})$ be the corresponding homotopy category.  We have a pair of   functors 
\begin{equation}\label{E:DDOmega-def}
\mathbf D:C_{\operatorname{BD}}(\mathcal X,\Omega_{\mathcal X}) \longrightarrow C_{\operatorname{BD}}(\mathcal X, D_{\mathcal X}), \quad \mathbf \Omega : C_{\operatorname{BD}}(\mathcal X,D_{\mathcal X}) \longrightarrow C_{\operatorname{BD}}(\mathcal X,\Omega_{\mathcal X})
\end{equation}
    \[\mathbf{D}(\mathcal F):=\mathcal F\otimes_{\Omega_{\mathcal{X}}}\operatorname{DR}(D_{\mathcal{X}}), \quad \mathbf \Omega(\mathcal  M):=\operatorname{Hom}_{D_{\mathcal{X}}}(\operatorname{DR}(D_{\mathcal{X}}),\mathcal  M)\]
which 
form an adjoint pair $\mathbf D \dashv  \mathbf \Omega$ (via tensor-hom adjunction).  By construction, these functors are compatible with \'etale base change in the sense that if $p:U\to \mathcal X$ is an \'etale morphism from a smooth variety (or complex analytic space) $U$, then we have commutative diagrams 
\begin{equation}\label{E:DO-local}
\xymatrix{
C_{BD}(\mathcal X,\Omega_{\mathcal X})\ar[r]^{p^*} \ar[d]_{\mathbf D}& C_{BD}(\mathcal X,\Omega_{U}) \ar[d]_{\mathbf D}& C_{BD}(\mathcal X,D_{\mathcal X}) \ar[r]^{p^*} \ar[d]_{\mathbf \Omega}& C_{BD}(\mathcal X,D_{U})\ar[d]_{\mathbf \Omega}\\
C_{BD}(\mathcal X,\Omega_{\mathcal X})\ar[r]^{p^*}& C_{BD}(\mathcal X,\Omega_{U}) & C_{BD}(\mathcal X,D_{\mathcal X}) \ar[r]^{p^*}& C_{BD}(\mathcal X,D_{U})\\
}
\end{equation}
where the pull backs for the \'etale morphism come from the definitions as sheaves on the \'etale site.  
The co-unit of the adjunction  
$$\mathbf D\mathbf \Omega \stackrel{q.i.}{\longrightarrow} \operatorname{Id}$$ 
is a quasi-isomorphism (the commutativity of the diagrams in \eqref{E:DO-local} implies this can be reduced to the case of varieties, which is established in \cite[7.2.2, 7.2.4, Rem.~7.3.5]{BDstacks}).

As $\operatorname{DR}(D_{\mathcal X})= \Omega_{\mathcal X}\otimes_{\mathcal O_{\mathcal X}}D_{\mathcal X}$, where we are taking tensor products as dg modules,  we have 
\begin{equation}\label{E:Alt-D}
\mathbf D(\mathcal F)= \mathcal F\otimes_{\mathcal O_{\mathcal X}}D_{\mathcal X}.
\end{equation}
  We also have that for $\mathbf \Omega(\mathcal M)$, $(\mathbf \Omega(\mathcal M))^i=\bigoplus_{a-b=i}\bigwedge ^b\mathcal T_{\mathcal X}\otimes _{\mathcal O_{\mathcal X}}\mathcal M^a$, and consequently, we call this the \emph{Spencer complex} for $\mathcal M$,   
\begin{equation}\label{E:Sp=Omega-def}
\operatorname{Sp}^\bullet (\mathcal M):=\mathbf \Omega (\mathcal M),
\end{equation}
since this agrees with the definition in \S \ref{S:dR+SC} when $\mathcal M$ is a sheaf (complex in degree zero only).

For any $\Omega_{\mathcal X}$-complex $\mathcal F$, set $$\mathcal H_{\mathbf D}^{\bullet}(\mathcal F):=\mathcal H^{\bullet}(\mathbf{D}(\mathcal F)).$$
With this, following  \cite[7.3.2]{BDstacks}, define  
\begin{equation}\label{E:DBDdef}
D_{\operatorname{BD}}(\mathcal{X}, \Omega_{\mathcal X}):=K_{BD}(\mathcal X,\Omega_{\mathcal X})[(\mathbf D\operatorname{-q.iso})^{-1}], 
\end{equation}
to be the homotopy category of $\Omega_{\mathcal X}$-complexes, localized at $\mathbf D$-quasi-isomorphisms (i.e.,  morphisms taken by $\mathbf D$ to quasi-isomorphisms). We have the subcategory $D^b_{\operatorname{BD}}(\mathcal{X}, \Omega)$ of bounded objects, i.e., objects that have nonzero cohomology with respect to $\mathcal H_{\mathbf{D}}^i(-)$ only in finitely many degrees.
There is a $t$-structure on $D_{\operatorname{BD}}(\mathcal{X}, \Omega_{\mathcal X})$ whose heart is $\operatorname{Mod}(D_{\mathcal{X}})^{\operatorname{right}}$, the category of right $D_{\mathcal X}$-modules,  and with cohomology functor $\mathcal H^\bullet_{\mathbf D}$ \cite[7.3.4]{BDstacks}.

For smooth DM stacks we have an equivalence \cite[7.3.5]{BDstacks}
\begin{equation}\label{E:Omega-DBD=D}
\mathbf \Omega:D(\operatorname{Mod}(D_{\mathcal X})^{\operatorname{right}})\stackrel{\sim}{\longrightarrow}  D_{\operatorname{BD}}(\mathcal{X},\Omega_{\mathcal X}),
\end{equation}
with inverse
\begin{equation}\label{E:D-DBD=D}
\mathbf D:  D_{\operatorname{BD}}(\mathcal{X},\Omega_{\mathcal X})\stackrel{\sim}{\longrightarrow} D(\operatorname{Mod}(D_{\mathcal X})^{\operatorname{right}}),
\end{equation}
where $\mathbf \Omega$ and $\mathbf D$ above  are induced by the functors in \eqref{E:DDOmega-def}.  
Now let $f:\mathcal X\to \mathcal Y$ be a schematic separated morphism of finite type, of smooth separated integral DM stacks of finite type over $\mathbb C$. 
If $f$ is smooth, Beilinson--Drinfeld define a $t$-exact pull back functor \cite[7.3.6]{BDstacks} 
$$
f^*_{\operatorname{BD}}:D_{\operatorname{BD}}(\mathcal Y,\Omega_{\mathcal Y})\to D_{\operatorname{BD}}(\mathcal X,\Omega_{\mathcal X}),
$$
compatible with composition, by observing that there is a natural pull back functor at the level of $\Omega$-complexes,  $f^*_\Omega:C(\mathcal Y,\Omega_{\mathcal Y})\to C(\mathcal X,\Omega_{\mathcal X})$, which preserves $\mathbf D$-quasi-isomorphisms.  

Returning to the general case (where $f:\mathcal X\to \mathcal Y$ is no longer assumed to be smooth),  in \cite[7.3.6, Rem.~7.3.11(ii)]{BDstacks} Beilinson--Drinfeld also define a push forward functor
\begin{equation}\label{E:DBpushforward}
f_{\operatorname{BD}*}:D_{\operatorname{BD}}(\mathcal X,\Omega_{\mathcal X})\to D_{\operatorname{BD}}(\mathcal Y,\Omega_{\mathcal Y})
\end{equation}
compatible with composition.  For this, they consider  the natural push forward functor on complexes  $$f_{\Omega *}:C_{\operatorname{BD}}(\mathcal X,\Omega_{\mathcal X})\to C_{\operatorname{BD}}(\mathcal Y,\Omega_{\mathcal Y}),$$ which is the right adjoint to $f_\Omega^*$, and define $f_{\operatorname{BD}*}$ to be the derived functor of $f_{\Omega *}$, which they show preserves $\mathbf D$-quasi-isomorphisms, and therefore defines a morphism $f_{\operatorname{BD}*}$ as above in \eqref{E:DBpushforward}.   By construction, $f_{BD*}$ and $f_{\Omega*}$ are compatible with \'etale base change.

\begin{rem}\label{R:CechComplex} 
We note for later that in \cite[7.3.6]{BDstacks}, they  
 show that $f_{\operatorname{BD}*}$ can be computed via \v{C}ech 
complexes, as follows, 
 where we use the notation for \v{C}ech complexes in \S \ref{S:CechNotation}.
 We note that when $\mathcal F$ is an $\Omega_{\mathcal X}$-complex in $D_{\operatorname{BD}}(\mathcal X,\Omega)$,  then in the notation of \S \ref{S:CechNotation}, one has by construction that $p_{\mathcal X,n*}=(p_{\mathcal X,n})_{\Omega*}$, so that one has that $\check{C}^\bullet (\mathcal F, U)$ is naturally in $C_{\operatorname{BD}}(\mathcal X,\Omega)$ (it is not just a complex of sheaves).  The proof of \cite[7.3.8]{BDstacks} shows  that 
 \begin{align}\label{E:Cech-computes-BDPushforward}
     f_{\operatorname{BD}*}\mathcal F \simeq f_{\Omega*}\check{C}^\bullet (\mathcal F, U).
 \end{align}
\end{rem}

We denote by $\operatorname{Mod}(D_{\mathcal X})^{\operatorname{right}}$ the abelian category of (right) $D_{\mathcal X}$-modules on $\mathcal X$, and we denote the associated derived category by   
$D(\operatorname{Mod}(D_{\mathcal X})^{\operatorname{right}})$.

\begin{dfn}[Push forward of $D$-modules]\label{D:PushD-BD}
For $f:\mathcal X\to \mathcal Y$ a  schematic proper morphism  of smooth separated integral DM stacks of finite type over $\mathbb C$ (or the analytification of such a morphism), and  $\mathcal M$ in $D(\operatorname{Mod}(D_{\mathcal X})^{\operatorname{right}})$ (e.g., $\mathcal M$ is a $D$-module on $\mathcal X$), we define the \emph{push forward} of $\mathcal M$ as
$$
f_+:D(\operatorname{Mod}(D_{\mathcal X})^{\operatorname{right}})\longrightarrow D(\operatorname{Mod}(D_{\mathcal Y})^{\operatorname{right}})
$$
\begin{equation}\label{E:PushD-BD}
f_+\mathcal M:= \mathbf D f_{\operatorname{BD}*} \mathbf \Omega \mathcal M.
\end{equation} 
\end{dfn}

To better understand this definition, 
we define a relative Spencer complex.  For  $f:\mathcal X\to \mathcal Y$  a  schematic separated morphism of finite type, of smooth separated integral  DM stacks of finite type over $\mathbb C$ (or the analytifcation of such a morphism), and $\mathcal M$ in $D(\operatorname{Mod}(D_{\mathcal X})^{\operatorname{right}})$, we define a \emph{relative Spencer complex}

\begin{equation}\label{E:DDOmegaSp}
  \operatorname{Sp}^\bullet_{\mathcal{X}\to \mathcal{Y}}(\mathcal M) :=(\mathbf{D}\mathbf \Omega\mathcal M)\otimes_{D_\mathcal{X}}f^* D_{\mathcal{Y}},
\end{equation}
where the $f^*$ on the right is the pull back as $\mathcal O_{\mathcal Y}$-modules;
from the discussion above, this agrees with the definition in  \S \ref{S:dR+SC} when $\mathcal M$ is a sheaf. Indeed, 
\begin{align}
\nonumber 
\operatorname{Sp}^\bullet_{\mathcal{X}\to \mathcal{Y}}(\mathcal M) & = (\mathbf{D}\mathbf \Omega\mathcal M) \otimes_{D_{\mathcal{X}}} f^* D_{\mathcal{Y}}\\ 
\nonumber 
& = (\mathbf \Omega\mathcal M\otimes_{\mathcal{O}_{\X}}D_{\mathcal{X}})\otimes_{D_{\X}} (\mathcal{O}_{\X}\otimes_{f^{-1}\mathcal O_{\mathcal Y}}f^{-1} D_{\mathcal{Y}}) \\
\label{E:AltSpencf-1}
& =\mathbf \Omega \mathcal M\otimes_{f^{-1}\mathcal O_{\mathcal Y}}f^{-1} D_{\mathcal{Y}} \\
\label{E:AltSpenc}
&= \operatorname{Sp}^\bullet (\mathcal M)\otimes_{f^{-1}\mathcal O_{\mathcal Y}}f^{-1} D_{\mathcal{Y}},
\end{align}
and the last line agrees with the definition in \S \ref{S:dR+SC}.
Note that for $\mathcal M$ in $D(\operatorname{Mod}(D_{\mathcal X})^{\operatorname{right}})$, the last three equalities above define an $f^{-1}D_{\mathcal Y}$-module structure on $\operatorname{Sp}^\bullet_{\mathcal X\to \mathcal Y}(\mathcal M)$.

Now if $f:X\to Y$ is a proper morphism of \emph{varieties}, and $M$ is a $D$-module on $X$, then the definition of  $f_+M$ in \Cref{D:PushD-BD}  agrees with the standard definition (see e.g., \S\ref{S:adhoc-push-D}).  In fact,   
applying \Cref{prop:Db-on-ringed-stack}, or equivalently \cite[\href{https://stacks.math.columbia.edu/tag/071J}{\S 071J}]{stacks-project}, where we let $f:\mathcal{X}\rightarrow \mathcal Y $ be a schematic morphism of smooth DM stacks,  take $\mathcal{A}=f^{-1}D_{\mathcal Y}$ and $\mathcal{B}=\operatorname{Mod}(D_{\mathcal Y})^{\operatorname{right}}$, and set $F=f_*$, yields a derived push forward 
$$Rf_*:D^b(f^{-1}D_{\Y})\rightarrow D^b(D_{\Y}).$$
Viewing $\operatorname{Sp}_{\mathcal X\to \mathcal Y}^\bullet \mathcal M $ as an object of $D^b(f^{-1}D_{\mathcal Y})$, we obtain a push forward $Rf_*\operatorname{Sp}_{\mathcal X\to \mathcal Y}^\bullet \mathcal M $. 

We claim that
\begin{equation}\label{E:BD=Rf*}
(f_+\mathcal M :=) \ \ \mathbf D f_{BD *}\mathbf{\Omega}\mathcal M= Rf_*\operatorname{Sp}_{\mathcal X\to \mathcal Y}^\bullet \mathcal M, 
\end{equation}
 where $Rf_*$ is the derived functor $Rf_*: D^b(f^{-1}D_{\mathcal Y})\to D^b(D_{\mathcal Y})$ above; the point is that the right hand side of \eqref{E:BD=Rf*} agrees with the definition of $f_+\mathcal M$ in the case of $D$-modules on varieties, reviewed in \S\ref{S:adhoc-push-D}. 
To establish \eqref{E:BD=Rf*} we have  
\begin{align*}
\mathbf D f_{BD *}\mathbf{\Omega}\mathcal M& = (f_{BD *}\mathbf{\Omega}\mathcal M)\otimes_{\Omega_{\mathcal Y}}(\Omega_{\mathcal Y}\otimes_{\mathcal{O}_{\mathcal Y}} D_{\mathcal Y}) & \text{(by \eqref{E:Alt-D})}\\ 
  & = Rf_*\left(\mathbf{\Omega}\mathcal M\otimes_{f^{-1}\Omega_{\mathcal Y}}(f^{-1}\Omega_{\mathcal Y}\otimes_{f^{-1}\mathcal{O}_{\mathcal Y}} f^{-1}D_{\mathcal Y})\right) & \text{(Projection formula)}\\
    & = Rf_*(\mathbf{\Omega}\mathcal M\otimes_{f^{-1}\mathcal{O}_{\mathcal Y}} f^{-1}D_{\mathcal Y})\\
    & = Rf_*(\mathbf{\Omega}\mathcal M\otimes_{\mathcal{O}_{\mathcal X}}\mathcal{O}_{\mathcal X}\otimes_{f^{-1}\mathcal{O}_{\mathcal Y}} f^{-1}D_{\mathcal Y})\\  
    & = Rf_* (\mathbf{\Omega}\mathcal M \otimes_{\mathcal{O}_{\mathcal X}} f^* D_{\mathcal Y}) \\
& = Rf_* (\mathbf{\Omega} \mathcal M \otimes_{\Omega_{\mathcal X}} (\Omega_{\mathcal X} \otimes_{\mathcal{O}_{\mathcal X}} D_{\mathcal X})\otimes_{D_{\mathcal X}} f^* D_{\mathcal Y}) \\ 
&=    Rf_*(\mathbf D\mathbf \Omega \mathcal M\otimes_{D_{\mathcal X}} f^*D_{\mathcal Y}) & \text{(by \eqref{E:Alt-D})}\\ 
&= Rf_*\operatorname{Sp}_{\mathcal X\to \mathcal Y}^\bullet \mathcal M & \text{(by \eqref{E:DDOmegaSp})}
\end{align*}
Note that we are using the projection formula for dgas (e.g., \cite[Thm. 4.6]{Rybakov2015DGModulesDeRham}), and $f^*D_{\mathcal Y}$ indicates the pull back as an $\mathcal O_{\mathcal Y}$-module.

The definition of the push forward is local in the following sense:

\begin{lem}\label{L:Canon-Iso+Sp}
Let $f:\mathcal X\to \mathcal Y$ be  schematic proper morphism  of smooth separated integral DM stacks of finite type over $\mathbb C$ (or the analytification of such a morphism), and  $\mathcal M$ in $D(\operatorname{Mod}(D_{\mathcal X})^{\operatorname{right}})$ (e.g., $\mathcal M$ is a $D$-module on $\mathcal X$). Let $p:U\to \mathcal Y$ be an \'etale morphism from a smooth variety (or complex analytic space).
In the notation of diagram \eqref{E:TubPresDpf'}, 
there are canonical isomorphisms 
\begin{equation}\label{E:Canon-Iso+Sp-1}
p^*f_+\mathcal M\stackrel{\sim}{\longrightarrow}f'_+(p_{\mathcal X}^*\mathcal M)
\end{equation}
\begin{equation}\label{E:Canon-Iso+Sp-2}
\operatorname{Sp}_{\mathcal X\to \mathcal Y}^\bullet (\mathcal M) |_{U_{\mathcal X_{zar}}} \stackrel{\sim}{\longrightarrow} \operatorname{Sp}_{U_{\mathcal X}\to U}p_{\mathcal X}^*\mathcal M
\end{equation}
\begin{equation}\label{E:Canon-Iso+Sp-3}
p^*f_+\mathcal M\stackrel{\sim}{\longrightarrow} Rf'_*\operatorname{Sp}_{U_{\mathcal X}\to U}p_{\mathcal X}^*\mathcal M.
\end{equation}

\end{lem}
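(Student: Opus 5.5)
We prove the three isomorphisms in the order \eqref{E:Canon-Iso+Sp-2}, then \eqref{E:Canon-Iso+Sp-1}, and deduce \eqref{E:Canon-Iso+Sp-3} by combining them with \eqref{E:BD=Rf*} on the variety $U$.

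For \eqref{E:Canon-Iso+Sp-2}, we use the description \eqref{E:AltSpenc}, namely $\operatorname{Sp}^\bullet_{\mathcal X\to\mathcal Y}(\mathcal M)=\mathbf\Omega\mathcal M\otimes_{f^{-1}\mathcal O_{\mathcal Y}}f^{-1}D_{\mathcal Y}$. Since $p_{\mathcal X}$ is étale, the commutative squares \eqref{E:DO-local} give a canonical identification $p_{\mathcal X}^*\mathbf\Omega\mathcal M\cong\mathbf\Omega(p_{\mathcal X}^*\mathcal M)$. Here restriction to the étale site of $U_{\mathcal X}$ is simply restriction of sheaves, and $\mathcal T_{\mathcal X}$ pulls back to $\mathcal T_{U_{\mathcal X}}$. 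Because $p$ is étale, $p_{\mathcal X}^{-1}f^{-1}D_{\mathcal Y}=f'^{-1}p^{-1}D_{\mathcal Y}=f'^{-1}D_U$ compatibly with the $\mathcal O$-structures. Restriction commutes with tensor products, so these facts combine into \eqref{E:Canon-Iso+Sp-2}. Canonicity is clear because every map involved is a restriction of sheaves on the small étale site.

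For \eqref{E:Canon-Iso+Sp-1}, we unwind \Cref{D:PushD-BD}: $p^*f_+\mathcal M=p^*\mathbf Df_{BD*}\mathbf\Omega\mathcal M$. By \eqref{E:DO-local}, $p^*\mathbf D\cong\mathbf Dp^*$, and likewise $\mathbf\Omega$ commutes with $p_{\mathcal X}^*$. The remaining input is étale base change $p^*f_{BD*}\cong f'_{BD*}p_{\mathcal X}^*$ on $D_{BD}(-,\Omega)$. We obtain it from the Čech description \eqref{E:Cech-computes-BDPushforward}. Choose an étale affine cover $V\to\mathcal Y$ and pull it back along $p$ to get a cover of $U$. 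The Čech complex is built from pushforwards $p_{\mathcal X,n*}$ along affine morphisms followed by $f_{\Omega*}$, and both of these commute with flat, and in particular étale, pullback on quasi-coherent terms, by flat base change for quasi-coherent sheaves applied termwise. This yields a quasi-isomorphism of $\Omega$-complexes, and we must check that it is independent of the chosen cover. For that we would pass to a common refinement and use the standard fact that the Čech complexes of two covers and of their refinement are connected by $\mathbf D$-quasi-isomorphisms. I expect this independence and canonicity check (as opposed to merely having some isomorphism) to be the main obstacle. If it becomes unwieldy, the fallback is the adjunction route: the base change morphism $p^*f_{BD*}\to f'_{BD*}p_{\mathcal X}^*$ is defined directly from the $f^*\dashv f_*$ adjunction, so it is canonical by construction, and the Čech argument is then only needed to show that it is a quasi-isomorphism.

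Finally, for \eqref{E:Canon-Iso+Sp-3}, $U$ and $U_{\mathcal X}$ are schemes and $f'$ is a proper morphism of varieties. Therefore \eqref{E:BD=Rf*} applied to $f'$ gives $f'_+(p_{\mathcal X}^*\mathcal M)=Rf'_*\operatorname{Sp}^\bullet_{U_{\mathcal X}\to U}(p_{\mathcal X}^*\mathcal M)$. Composing this with \eqref{E:Canon-Iso+Sp-1} gives \eqref{E:Canon-Iso+Sp-3}. As a consistency check, the same isomorphism also arises from applying \eqref{E:BD=Rf*} on $\mathcal Y$, using $p^*Rf_*\cong Rf'_*p_{\mathcal X}^*$ (which holds because $p$ is étale and restriction preserves $K$-injectives, as in \Cref{prop:Db-on-ringed-stack}), and then applying \eqref{E:Canon-Iso+Sp-2}.
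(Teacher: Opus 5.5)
Your proposal is correct and follows the paper's route. The paper also obtains \eqref{E:Canon-Iso+Sp-1} by unwinding $f_+=\mathbf D f_{\operatorname{BD}*}\mathbf\Omega$ and using that $\mathbf D$, $\mathbf\Omega$ (via \eqref{E:DO-local}) and $f_{\operatorname{BD}*}$ commute with \'etale base change. It gets \eqref{E:Canon-Iso+Sp-2} in the same way from the definition of the relative Spencer complex, and \eqref{E:Canon-Iso+Sp-3} is left as the evident combination with \eqref{E:BD=Rf*} on $U$, as you do.

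The only difference is that the paper treats base change for $f_{\operatorname{BD}*}$ as holding by construction. \'Etale pullback is restriction on the small \'etale site, which is exact, preserves ($K$-)injective resolutions, and commutes on the nose with $f_{\Omega*}$; this is exactly the argument of your own consistency check for $Rf_*$. So the \v{C}ech independence-of-cover verification you flag as the main obstacle is not actually needed.
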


\begin{proof}
The first isomorphism comes from the definition of $f_+$ \eqref{E:PushD-BD} and the fact that $\mathbf D$, $\mathbf \Omega$ (see \eqref{E:DO-local}), and $f_{BD*}$ all commute with \'etale base change.  The second isomorphism is obtained in the same way from the definition of the relative Spencer complex \eqref{E:DDOmegaSp}. 
\end{proof}

\begin{cor}\label{C:Im f+=fah+}
Let $f:\mathcal X\to \mathcal Y$ be  schematic proper morphism  of smooth separated integral DM stacks of finite type over $\mathbb C$ (or the analytification of such a morphism).  For $\mathcal M$ a $D_{\mathcal X}$-module, the image of $f_+\mathcal M$ under  the natural functor $D(\operatorname{Mod}(D_{\mathcal Y})^{\operatorname{right}})\to  D_{fake}(D_{\mathcal Y})$ (see \eqref{E:genCompareDfake}) is $f_+^{ah}(\mathcal M)$ (see \Cref{L:CmodGenDescADmod}).  In other words, there is a commutative diagram 
$$
\xymatrix{
\operatorname{Mod}(D_{\mathcal X})^{\operatorname{right}} \ar[r] \ar[d]^{f_+}&   D_{fake}(D_{\mathcal X}) \ar[d]^{f^{ah}_+}\\
D(\operatorname{Mod}(D_{\mathcal Y})^{\operatorname{right}}) \ar[r]&   D_{fake}(D_{\mathcal Y}).
}
$$
\end{cor}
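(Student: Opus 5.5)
The plan is to compare the two objects with descent data that define them. The image of $f_+\mathcal M$ in $D_{fake}(D_{\mathcal Y})$ is the collection $\{p^*f_+\mathcal M,\ \text{canonical } \phi^*(p^*f_+\mathcal M)\cong (p\circ\phi)^*f_+\mathcal M\}$ indexed by \'etale $p:U\to\mathcal Y$ from smooth varieties. By \Cref{L:CmodGenDescADmod}, $f^{ah}_+\mathcal M$ is the collection $\{f'_+\mathcal M_{U_{\mathcal X}},\tau_\phi=\alpha_\phi\circ f''_+(\theta_{\phi_{\mathcal X}})\}$. The isomorphism between them will be given, on each $U$, by the canonical isomorphism \eqref{E:Canon-Iso+Sp-1} of \Cref{L:Canon-Iso+Sp}, namely $\beta_p:p^*f_+\mathcal M\stackrel{\sim}{\to} f'_+(p_{\mathcal X}^*\mathcal M)$. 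Here $p_{\mathcal X}^*\mathcal M=\mathcal M_{U_{\mathcal X}}$, and for a $D$-module $\mathcal M$ the maps $\theta$ are identities up to the canonical composition isomorphisms.

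Once the $\beta_p$ are in place, what remains is to check the morphism condition \eqref{E:DescDataDiagMorph}: for every $\phi:U'\to U$ over $\mathcal Y$, we need $\tau_\phi\circ\phi^*\beta_p=\beta_{p\circ\phi}$. I will check this in two steps.

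\begin{enumerate}
\item I will unwind $\beta_p$. By construction it is the composite of the base-change isomorphisms of $\mathbf\Omega$, $\mathbf D$ (via \eqref{E:DO-local}) and of $f_{BD*}$ with respect to \'etale pull back. Using \eqref{E:BD=Rf*} together with \eqref{E:Canon-Iso+Sp-2} and \eqref{E:Canon-Iso+Sp-3}, this becomes the isomorphism $p^*Rf_*\operatorname{Sp}^\bullet_{\mathcal X\to\mathcal Y}\mathcal M\cong Rf'_*\operatorname{Sp}^\bullet_{U_{\mathcal X}\to U}\mathcal M_{U_{\mathcal X}}$, obtained by pulling back a $K$-injective resolution (\Cref{prop:Db-on-ringed-stack}) and comparing it with a resolution upstairs. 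This is the same recipe that produced $\alpha_p$ in \Cref{L:CmodDescADmod} (and \Cref{L:CmodDescA}), now applied on the stack rather than on a variety.
\item I will use the uniqueness-up-to-homotopy argument of the cube \eqref{E:diagramA18complexes}. It applies verbatim with $\mathcal X,\mathcal Y$ in place of $X,Y$, because only functorial injective resolutions on the \'etale site are needed. That argument gives the cocycle-type identity $\beta_{p\circ\phi}=\alpha_\phi\circ\phi^*\beta_p$ as in \eqref{E:PresDpf-var3}, and this is exactly the required compatibility.
\end{enumerate}

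Naturality of the $\beta_p$ in $\mathcal M$ follows in the same way from \eqref{E:NatProp1MN}, which gives commutativity of the square of functors.

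I expect the main obstacle to be step 1: identifying the Beilinson--Drinfeld base-change isomorphism for $f_{BD*}$ with the resolution-comparison isomorphism $\alpha$. My first approach will be the \v{C}ech model \eqref{E:Cech-computes-BDPushforward}. Choosing the cover $U$ itself (refined if needed) makes $p^*$ of the \v{C}ech complex explicitly quasi-isomorphic to the corresponding \v{C}ech complex upstairs. Both comparison maps then factor through maps out of a common complex into an injective resolution, and these are unique up to homotopy by \cite[\href{https://stacks.math.columbia.edu/tag/013S}{Lem.~013S}]{stacks-project}, which forces the two isomorphisms to agree in the derived category.
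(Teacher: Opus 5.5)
Your proposal is correct and follows the paper's argument. The paper just invokes the chartwise canonical isomorphisms of \Cref{L:Canon-Iso+Sp}, specifically \eqref{E:Canon-Iso+Sp-3}, and asserts they match the descent data defining $f^{ah}_+\mathcal M$; you additionally spell out the cocycle check via the homotopy-uniqueness argument of \eqref{E:diagramA18complexes}. You can also sidestep the obstacle you flag in step 1: take $\beta_p$ to be \eqref{E:Canon-Iso+Sp-3} from the outset (that is, \eqref{E:BD=Rf*} followed by resolution comparison), as the paper does, since the statement only needs some family of chartwise isomorphisms compatible with the descent data, not the Beilinson--Drinfeld base-change isomorphism specifically.
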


\begin{proof}
If $\mathcal M$ is a $D_{\mathcal X}$-module, then it comes equipped with descent data, and so it makes sense to apply $f^{ah}_+$.  Then, considering \eqref{E:Canon-Iso+Sp-3}, one sees  that the image of $f_+\mathcal M$ in $D_{fake}(D_{\mathcal Y})$ agrees with the definition of $f^{ah}_+\mathcal M$. 
\end{proof}

Taking cohomology in \eqref{E:PushD-BD} defines  the $i$-th push forward  functor on $D_{\mathcal X}$-modules:
$$
\mathcal H^if_+:\operatorname{Mod}(D_{\mathcal X})^{\operatorname{right}}\longrightarrow \operatorname{Mod}(D_{\mathcal Y})^{\operatorname{right}}
$$
\begin{equation}\label{E:def-Hif+M}
\mathcal M\mapsto \mathcal H^i(f_+\mathcal M)
\end{equation}

\begin{cor}\label{C:Canon-Iso-Hi}
In the notation of \Cref{L:Canon-Iso+Sp}, there are canonical isomorphisms 
$$
p^*\mathcal H^i(f_+\mathcal M)\stackrel{\sim}{\longrightarrow}\mathcal H^i(f'_+(p_{\mathcal X}^*\mathcal M)).
$$

\end{cor}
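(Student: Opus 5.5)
The plan is to obtain the isomorphisms by applying the exact functor $\mathcal H^i$ to the canonical isomorphism \eqref{E:Canon-Iso+Sp-1} of \Cref{L:Canon-Iso+Sp}, and then to check that $\mathcal H^i$ commutes with \'etale pull back. Concretely, \eqref{E:Canon-Iso+Sp-1} gives a canonical isomorphism $p^*f_+\mathcal M\stackrel{\sim}{\to} f'_+(p_{\mathcal X}^*\mathcal M)$ in $D(\operatorname{Mod}(D_U)^{\operatorname{right}})$. Taking $\mathcal H^i$ yields
$$
\mathcal H^i(p^*f_+\mathcal M)\stackrel{\sim}{\longrightarrow}\mathcal H^i(f'_+(p_{\mathcal X}^*\mathcal M)).
$$

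It then remains to identify $\mathcal H^i(p^*f_+\mathcal M)$ with $p^*\mathcal H^i(f_+\mathcal M)$. Since $p$ is \'etale, the pull back $p^*$ on $D$-modules is restriction of \'etale sheaves along $U\to\mathcal Y$. In particular it is exact, because it is just restriction to the slice site and $D_U=p^*D_{\mathcal Y}$. An exact functor commutes with taking cohomology of complexes, and this identification is canonical: the comparison map $p^*\mathcal H^i(K)\to\mathcal H^i(p^*K)$ is the natural one induced by $p^*$ on kernels and images. Equivalently, one can compute $f_+\mathcal M$ by the \v{C}ech model of \Cref{R:CechComplex} and observe that $p^*$ commutes termwise with the differentials.

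Composing the two isomorphisms gives the claimed canonical isomorphism. Its naturality in $\mathcal M$ and its compatibility with a further \'etale base change $\phi:U'\to U$ follow from the corresponding properties of \eqref{E:Canon-Iso+Sp-1}, which come from the commutation of $\mathbf D$, $\mathbf\Omega$ and $f_{BD*}$ with \'etale base change, together with the functoriality of $\mathcal H^i$.

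The only step needing real care is that "canonical" is honest. The isomorphism in \Cref{L:Canon-Iso+Sp} is built from the base-change compatibilities \eqref{E:DO-local} and from the fact that $f_{BD*}$ commutes with \'etale pull back. One has to check that these are strictly natural transformations rather than isomorphisms chosen up to non-unique identification. I expect this to follow because all of these are defined at the level of complexes, through the \v{C}ech model and pull back of sheaves on the small \'etale site, so that no choices of resolutions are involved.
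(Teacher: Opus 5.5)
Your proposal is correct and matches the paper's proof, which simply takes cohomology in \eqref{E:Canon-Iso+Sp-1}. Your explicit remark that \'etale pull back is exact, so that $p^*\mathcal H^i\cong\mathcal H^i p^*$ canonically, supplies the one step the paper leaves implicit.
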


\begin{proof}
This follows by taking cohomology in  \eqref{E:Canon-Iso+Sp-1}.
\end{proof}

\begin{cor}\label{C:Hif+=Hifah+}
Let $f:\mathcal X\to \mathcal Y$ be  schematic proper morphism  of smooth separated integral DM stacks of finite type over $\mathbb C$ (or the analytification of such a morphism). 
Then the functors 
$$
\mathcal H^if_+, \ \mathcal H^if^{ah}_+:\operatorname{Mod}(D_{\mathcal X})^{\operatorname{right}}\longrightarrow \operatorname{Mod}(D_{\mathcal Y})^{\operatorname{right}}
$$
of \eqref{E:C:PervDescADmod} and \eqref{E:def-Hif+M} agree. 

\end{cor}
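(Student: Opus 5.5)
The plan is to compare the two functors by passing through the descent data on an \'etale atlas of $\mathcal Y$. Both functors produce $D_{\mathcal Y}$-modules, and a $D_{\mathcal Y}$-module is the same thing as a compatible system of $D_U$-modules on \'etale charts $p:U\to\mathcal Y$. So it suffices to show two things. First, the two systems of local modules agree canonically. Second, the gluing isomorphisms $\tau_\phi$ agree under that identification, naturally in $\mathcal M$.

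For the local modules, fix an \'etale $p:U\to\mathcal Y$ with $U$ a smooth variety, and form the square \eqref{E:TubPresDpf'}. \Cref{C:Canon-Iso-Hi} gives a canonical isomorphism $p^*\mathcal H^i(f_+\mathcal M)\cong \mathcal H^i(f'_+p_{\mathcal X}^*\mathcal M)$. By definition (\Cref{C:PervDescADmod}), the restriction of $\mathcal H^if^{ah}_+\mathcal M$ to $U$ is $\mathcal H^i(f'_+\mathcal M_{U_{\mathcal X}})$. Moreover, by \eqref{E:BD=Rf*} the $f'_+$ computed via Beilinson--Drinfeld on the variety $U_{\mathcal X}$ is the classical $Rf'_*\operatorname{Sp}^\bullet$. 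Hence both functors have the same value on each chart.

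For the gluing isomorphisms, take a further \'etale map $\phi:U'\to U$. On the $f_+$ side, the descent isomorphism is the composite $\phi^*p^*\cong (p\phi)^*$ transported through \Cref{C:Canon-Iso-Hi}. On the ad hoc side it is $\tau_\phi=\alpha^i_\phi\circ\mathcal H^i(f''_+\theta_{\phi_{\mathcal X}})$. The plan is to reduce this comparison to the derived level, using \Cref{C:Im f+=fah+}: the image of $f_+\mathcal M$ in $D_{fake}(D_{\mathcal Y})$ is $f^{ah}_+\mathcal M$, with the same transition maps. Then apply $\mathcal H^i$, which commutes with \'etale pullback, and invoke \Cref{R:ah-push-compatibleHH}; this identifies $\mathcal H^i$ of the ad hoc derived push forward with the ad hoc $i$-th push forward. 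Functoriality in $\mathcal M$ follows from the naturality squares \eqref{E:NatProp1MN} and \eqref{E:NatProp1MNA}, since every identification above is natural.

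The main obstacle is checking, inside \Cref{C:Im f+=fah+}, that the base-change isomorphisms \eqref{E:Canon-Iso+Sp-1} obtained from the Beilinson--Drinfeld construction coincide with the $\alpha_\phi$ of \Cref{L:CmodDescADmod}, which come from injective resolutions. The approach is to compute $f_{BD*}$ via the \v{C}ech complex of \Cref{R:CechComplex}. Restricted to a chart, this gives a resolution compatible with $p^*$, and by the uniqueness up to homotopy of comparison maps between resolutions (as in the proof of \Cref{L:CmodDescA}), both isomorphisms are the unique such map in the derived category. This forces the cocycle data to agree.
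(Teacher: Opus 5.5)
Your proposal is correct and follows the paper's argument: the paper's proof consists exactly of taking $\mathcal H^i$ in \Cref{C:Im f+=fah+} and then invoking \Cref{R:ah-push-compatibleHH}. Your chartwise comparison via \Cref{C:Canon-Iso-Hi}, and your \v{C}ech/uniqueness-of-comparison-maps check that the Beilinson--Drinfeld base-change isomorphisms agree with the $\alpha_\phi$ of \Cref{L:CmodDescADmod}, only make explicit a compatibility that the paper's proof of \Cref{C:Im f+=fah+} leaves as ``one sees''.
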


\begin{proof}

Taking cohomology in \Cref{C:Im f+=fah+}  gives the identification $\mathcal H^i(f_+\mathcal M)= \mathcal H^i(f^{ah}_+\mathcal M)$ (see \Cref{R:ah-push-compatibleHH}). 
\end{proof}

\subsection{Compatibility  of algebraic and analytic push forwards}

In this subsection we establish the compatibility of the algebraic and analytic push forwards, which essentially comes down to the fact that the push forward can be computed using \v Cech complexes.  
 We will use the superscript $b$ (resp.~subscript $coh$) to indicate full subcategories of derived categories whose objects have bounded (resp.~coherent) cohomology sheaves; e.g., we will denote by $D^b_{BD,coh}(\mathcal X, \Omega_{\X})\subseteq D^b_{BD}(\mathcal X, \Omega_{\X})$ the full subcategory of objects whose cohomology sheaves  are coherent, and are non-zero in only a finite number of places.

\begin{lem}\label{L:fBDanCom}
Let $f:\mathcal X\to \mathcal Y$ be  schematic proper morphism  of smooth separated integral DM stacks of finite type over $\mathbb C$. 
The analytification functor and Beilinson--Drinfeld push forward 
\eqref{E:DBpushforward}
commute; i.e., 
there is a commutative diagram
$$
\begin{tikzcd}
D^b_{BD,coh}(\mathcal X, \Omega_{\X}) \arrow[r, "an"] \arrow[d, "f_{BD *}"] & D^b_{BD,coh}(\mathcal X^{an},\Omega_{\X^{an}}) \arrow[d, "f_{BD *}"] \\
D^b_{BD,coh}(\mathcal Y, \Omega_{\Y}) \arrow[r, "{an}"]                  & D^b_{BD,coh}(\mathcal Y^{an}, \Omega_{\Y^{an}})                 
\end{tikzcd}
$$
\end{lem}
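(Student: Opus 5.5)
The plan is to compute both composites in the square by \v{C}ech complexes, using \Cref{R:CechComplex}, and then to reduce the comparison to the case of schemes, where it is GAGA. First I fix an \'etale cover $p:U\to\mathcal Y$ by an affine scheme, and form the simplicial diagram \eqref{E:CechCov}. Its analytification $p^{an}:U^{an}\to\mathcal Y^{an}$ is an \'etale cover of the analytic stack, and $(U_{\mathcal X,n})^{an}=(U^{an})_{\mathcal X^{an},n}$, since analytification commutes with fibered products. By \eqref{E:Cech-computes-BDPushforward}, for $\mathcal F$ in $D^b_{BD,coh}(\mathcal X,\Omega_{\mathcal X})$ we have $f_{BD*}\mathcal F\simeq f_{\Omega*}\check C^\bullet(\mathcal F,U)$, and likewise on the analytic side. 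So it suffices to produce a natural $\mathbf D$-quasi-isomorphism
$$
\bigl(f_{\Omega*}\check C^\bullet(\mathcal F,U)\bigr)^{an}\longrightarrow f^{an}_{\Omega*}\check C^\bullet(\mathcal F^{an},U^{an}).
$$
I would build this map from the base change morphisms $\iota^*f_*\to f^{an}_*\iota^*$ term by term. These are compatible with the $\Omega$-module structures, because $\Omega_{\mathcal X^{an}}=\iota^*\Omega_{\mathcal X}$ and the differentials are $\mathbb C$-linear and defined \'etale locally.

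Next I check that this map is a $\mathbf D$-quasi-isomorphism. Since $\mathbf D$ commutes with analytification (by \eqref{E:Alt-D}, using $D_{\mathcal X^{an}}=\iota^*D_{\mathcal X}$), and quasi-isomorphism can be tested after pulling back along the \'etale cover $U^{an}$, the question reduces, via the base change of \Cref{L:Canon-Iso+Sp}, to the morphisms $f_n:U_{\mathcal X,n}\to U_n$. These are proper morphisms of schemes, and the cohomology sheaves of $\mathcal F$ are coherent. In this setting the comparison $(Rf_{n*}\mathcal G)^{an}\simeq Rf^{an}_{n*}\mathcal G^{an}$ for coherent $\mathcal O$-modules $\mathcal G$ is relative GAGA (Grothendieck, SGA1 XII). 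I would apply it to the filtration of the complex by the $\Omega$-degree, i.e.\ to the $\mathcal O$-coherent graded pieces, and then pass to the total complex by a spectral sequence argument, which is valid because the complexes are bounded.

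The main obstacle is that the terms $\mathcal F^i$ of an object of $D^b_{BD,coh}$ are only quasi-coherent, with coherent cohomology. So GAGA does not apply termwise, and $\mathbf D$-quasi-isomorphisms, rather than $\mathcal O$-quasi-isomorphisms, are the relevant notion. To handle this, I would first replace $\mathcal F$ by $\mathbf\Omega\mathcal M$, using the equivalence \eqref{E:Omega-DBD=D}, with $\mathcal M$ a bounded complex of $D$-modules with coherent cohomology. Then I would use the Spencer filtration together with the isomorphism of \eqref{E:BD=Rf*}. This reduces the comparison to $Rf_*$ of $\mathcal M\otimes_{\mathcal O_{\mathcal X}}\bigwedge^b\mathcal T_{\mathcal X}\otimes f^*D_{\mathcal Y}$. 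Writing $D_{\mathcal Y}=\varinjlim F_jD_{\mathcal Y}$ with coherent pieces, and using that $Rf_*$ and analytification commute with filtered colimits on the relevant coherent pieces (as in \cite[Prop.~4.7.2]{HTT08}), I obtain the comparison in the variety case. Finally, naturality of all the maps with respect to the face maps of the \v{C}ech diagram shows that the map glues to the stated square.
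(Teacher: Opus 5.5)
Like the paper, you pass to the \v{C}ech model of \Cref{R:CechComplex}, but from there the routes diverge. The paper asserts that the square of complex-level functors (\v{C}ech complex, then $f_{\Omega*}$, then analytification, versus analytification first) commutes, and says this follows from the definitions; no analytic input is used. You instead build a comparison map from the base change morphisms $\iota^*f_*\to f^{an}_*\iota^*$ and argue that it is a $\mathbf D$-quasi-isomorphism by relative GAGA. Your reading is the right one. Those base change morphisms are not isomorphisms termwise, and the lemma genuinely contains GAGA: take $\mathcal Y$ a point, $\mathcal X$ a smooth projective variety, and $\mathcal F=\mathbf\Omega(\mathcal G\otimes_{\mathcal O_{\mathcal X}}D_{\mathcal X})$ with $\mathcal G$ coherent. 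Then the lemma amounts to $R\Gamma(\mathcal X,\mathcal G)\simeq R\Gamma(\mathcal X^{an},\mathcal G^{an})$.

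Your execution, however, has two gaps. First, the verification cannot be done term by term on the \v{C}ech complex. Its terms are $p_{n*}f_{n*}(\cdots)$ with $p_n\colon U_n\to\mathcal Y$ \'etale but not finite. The map $(p_{n*}\mathcal H)^{an}\to p^{an}_{n*}\mathcal H^{an}$ already fails to be an isomorphism for the open immersion $\mathbb A^1\subset\mathbb P^1$, since holomorphic functions may have essential singularities at $\infty$. Pulling back along $U^{an}$ does not remove such pushforwards. Moreover, \eqref{E:Cech-computes-BDPushforward}, on which your first reduction rests, requires affine pieces. With the cover pulled back from $\mathcal Y$, the $U_{\mathcal X,n}$ are only proper over $U_n$. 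For $\mathcal Y=U=\operatorname{Spec}\mathbb C$ the right-hand side is just the complex of termwise global sections $\Gamma(\mathcal X,\mathcal F^\bullet)$; for $\mathcal F=\Omega_{\mathcal X}$ this is $\Gamma(\mathcal X,\Omega^\bullet_{\mathcal X})$, not de Rham cohomology.

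The fix is to bypass the \v{C}ech model. Take the comparison $(f_{BD*}\mathcal F)^{an}\to f^{an}_{BD*}\mathcal F^{an}$ to be the base change morphism (e.g.\ through \eqref{E:BD=Rf*}). Then test it after pulling back to $U^{an}$, using that analytification, $\mathbf D$, $\mathbf\Omega$ and $f_{BD*}$ commute compatibly with \'etale base change (as in \Cref{L:Canon-Iso+Sp}). This reduces to the single proper morphism of smooth schemes $U_{\mathcal X}\to U$, where, via \eqref{E:Omega-DBD=D} and \eqref{E:BD=Rf*}, the claim is exactly \cite[Prop.~4.7.2]{HTT08}.

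Second, if you prove that case by hand, remember that a coherent $D$-module $\mathcal M$ is not $\mathcal O$-coherent, so $\mathcal M\otimes\bigwedge^b\mathcal T_{\mathcal X}\otimes f^*F_jD_{\mathcal Y}$ is not coherent. You must also exhaust $\mathcal M$ by coherent $\mathcal O$-submodules, or remove $D_{\mathcal Y}$ by the projection formula. This works because analytification, $R^if_*$ and, for proper $f$, $R^if^{an}_*$ all commute with filtered colimits.
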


\begin{proof}
 We need to check that the diagram commutes via a canonical natural transformation. 
 To this end, choosing an \'etale presentation $p:U_{\mathcal X}\to \mathcal X$, and using the fact that  $f_{BD*}$ can be computed using a \v Cech resolution (\Cref{R:CechComplex}), 
it suffices to show there is a canonical isomorphism 
$$
f_{\Omega *}  \check{C}(\mathcal F^{an},U_{\mathcal X}^{an})\simeq (f_{\Omega *}  \check{C}(\mathcal F,U_{\mathcal X}))^{an}.
$$
In other words, we need to check that there is a commutative diagram
$$
\begin{tikzcd}
C_{\operatorname{BD}}(\mathcal X,\Omega_{\mathcal X})  \arrow[r, "\operatorname{an}"] \arrow[d, "{f_{\Omega *}\check{C}( (-),U_{\mathcal X})}"] & C_{\operatorname{BD}}(\mathcal X^{an},\Omega_{\mathcal X^{an}})  \arrow[d, "{f_{\Omega *}\check{C}( (-),U_{\mathcal X}^{an})}"] \\
C_{\operatorname{BD}}(\mathcal Y,\Omega_{\mathcal Y}) \arrow[r, "\operatorname{an}"]                                                              & C_{\operatorname{BD}}(\mathcal Y^{an},\Omega_{\mathcal Y^{an}})                                                              
\end{tikzcd}
$$
which follows from the definitions. 
\end{proof}

With this we can show that the push forward $f_+$ and analytification functor commute:

\begin{pro}\label{P:f+alg=f+an}
Let $f:\mathcal X\to \mathcal Y$ be  schematic proper morphism  of smooth separated integral DM stacks of finite type over $\mathbb C$, and  let
$\mathcal M$ be in $D_{coh}(\operatorname{Mod}(D_{\mathcal X})^{\operatorname{right}})$ (e.g., $\mathcal M$ is a coherent $D$-module on $\mathcal X$). Then we have
$$
(f_+\mathcal M)^{an}= f_+(\mathcal M^{an}).
$$

\end{pro}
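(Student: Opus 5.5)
The plan is to reduce to \Cref{L:fBDanCom} by unwinding \Cref{D:PushD-BD}: $f_+\mathcal M=\mathbf D f_{BD*}\mathbf\Omega\mathcal M$. Accordingly, I will show that each of the three functors $\mathbf\Omega$, $f_{BD*}$, $\mathbf D$ commutes with analytification, via canonical natural isomorphisms. These isomorphisms should be compatible with the equivalences \eqref{E:Omega-DBD=D} and \eqref{E:D-DBD=D} on both the algebraic and analytic sides. Composing them then gives the desired canonical isomorphism $(f_+\mathcal M)^{an}\cong f_+(\mathcal M^{an})$.

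First, for $\mathbf\Omega$ and $\mathbf D$, I will work at the level of complexes. Analytification is the pull back $\iota^*=\mathcal O^{an}\otimes_{\iota^{-1}\mathcal O}\iota^{-1}(-)$, which is exact because $\mathcal O^{an}$ is flat over $\iota^{-1}\mathcal O$. Term by term, $\mathbf\Omega(\mathcal M)^i=\bigoplus\bigwedge^b\mathcal T_{\mathcal X}\otimes\mathcal M^a$. Since $\iota^*\mathcal T_{\mathcal X}=\mathcal T_{\mathcal X^{an}}$ and $\iota^*D_{\mathcal X}=D_{\mathcal X^{an}}$, we get $\iota^*\mathbf\Omega(\mathcal M)\cong\mathbf\Omega(\mathcal M^{an})$. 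I will check that the Spencer differentials match \'etale locally, where they are given by explicit formulas in local coordinates. Similarly, \eqref{E:Alt-D} gives $\iota^*\mathbf D(\mathcal F)=\iota^*(\mathcal F\otimes_{\mathcal O}D)\cong\mathcal F^{an}\otimes_{\mathcal O^{an}}D^{an}$. By exactness of $\iota^*$, both isomorphisms pass to the localized categories, since quasi-isomorphisms and $\mathbf D$-quasi-isomorphisms are preserved.

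Second, the boundedness and coherence hypotheses of \Cref{L:fBDanCom} must be met. If $\mathcal M$ has coherent cohomology, then $\mathbf\Omega\mathcal M$ lies in $D^b_{BD,coh}(\mathcal X,\Omega_{\mathcal X})$, where coherence and boundedness are measured by $\mathcal H^\bullet_{\mathbf D}$, using $\mathbf D\mathbf\Omega\simeq\mathrm{Id}$. If needed, I will pass to a good filtration to keep coherence over $\mathcal O$ under control. Then \Cref{L:fBDanCom} supplies the canonical isomorphism $(f_{BD*}\mathbf\Omega\mathcal M)^{an}\cong f_{BD*}((\mathbf\Omega\mathcal M)^{an})$, which chains with the first step.

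The main obstacle is the hidden use of GAGA. The \v{C}ech computation in \Cref{L:fBDanCom} reduces to comparing $f_{\Omega*}$ with its analytic counterpart on the schematic proper pieces $U_{\mathcal X,n}\to U_n$, and this is only valid for coherent sheaves. The terms of $\mathbf\Omega\mathcal M$ are quasi-coherent $\mathcal O$-modules, e.g.\ $D$-modules, which are not $\mathcal O$-coherent. I would first handle this by writing the terms as filtered colimits of coherent subsheaves. Higher direct images commute with such colimits, both algebraically and analytically by properness (cf.\ the \v{C}ech formula \eqref{E:Cech-computes-BDPushforward}). Alternatively, after reducing via \Cref{L:Canon-Iso+Sp} to the schematic case over an \'etale chart, I would quote the variety result \cite[Prop.~4.7.2]{HTT08}, whose proof handles exactly this point. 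Canonicity of the resulting isomorphisms then lets them glue, as in \Cref{P:alg-and-an-ah-push-agree}.
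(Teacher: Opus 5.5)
Your proposal follows the paper's proof: it unwinds $f_+=\mathbf D f_{BD*}\mathbf\Omega$, checks at the level of complexes that $\mathbf D$ and $\mathbf\Omega$ commute with analytification, and passes to the localized categories using exactness of analytification and the equivalences \eqref{E:Omega-DBD=D} and \eqref{E:D-DBD=D}, before invoking \Cref{L:fBDanCom} for $f_{BD*}$. Your worry that the \v{C}ech comparison behind \Cref{L:fBDanCom} silently uses GAGA on terms that are only quasi-coherent is legitimate (the paper just says it ``follows from the definitions''), and exhausting $\mathbf\Omega\mathcal M$ by coherent subcomplexes via a good filtration is a sound way to supply that step; by contrast, gluing chartwise isomorphisms obtained from \cite[Prop.~4.7.2]{HTT08} would by itself only give the statement in the fake derived category (cf.\ \Cref{R:FakeNoGood}), unless you first construct a global comparison map and use the charts only to check that it is an isomorphism.
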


\begin{proof}
The first observation is that $\mathbf D$ and $\mathbf \Omega$ commute with the analytification functor.  
This can be deduced by first considering the functors at the level of complexes; e.g., considering $\mathbf D$, one observes first that, by definition, one has a commutative diagram 
$$
\begin{tikzcd}
C_{\operatorname{BD}}(\mathcal X,\Omega_{\mathcal{X}})\arrow[r, "\operatorname{an}"] \arrow[d, "\mathbf D"] & C_{\operatorname{BD}}(\mathcal X^{an},\Omega_{\mathcal{X}^{an}}) \arrow[d, "\mathbf D"] \\
C_{\operatorname{BD}}(\mathcal X,D_{\mathcal{X}}) \arrow[r, "\operatorname{an}"]                     & C_{\operatorname{BD}}(\mathcal X^{an},\Omega_{\mathcal{X}^{an}}).       
\end{tikzcd}
$$
This then induces a commutative diagram 
$$
\begin{tikzcd}
D_{\operatorname{BD}}(\mathcal X,\Omega_{\mathcal{X}})\arrow[r, "\operatorname{an}"] \arrow[d, "\mathbf D"] & D_{\operatorname{BD}}(\mathcal X^{an},\Omega_{\mathcal{X}^{an}}) \arrow[d, "\mathbf D"] \\
D_{\operatorname{BD}}(\mathcal X,D_{\mathcal{X}}) \arrow[r, "\operatorname{an}"]                     & D_{\operatorname{BD}}(\mathcal X^{an}, D_{\mathcal{X}^{an}})       
\end{tikzcd}
$$
Here we are using that the analytification is an exact functor to deduce that there is a well-defined induced analytification functor after localization (e.g., \cite[\href{https://stacks.math.columbia.edu/tag/015F}{Lem.~015F}]{stacks-project}).  
Via the equivalences  \eqref{E:Omega-DBD=D} and  \eqref{E:D-DBD=D}, this then induces commutative diagrams
$$
\begin{tikzcd}
D_{\operatorname{BD}}(\mathcal X,\Omega_{\mathcal{X}})\arrow[r, "\operatorname{an}"] \arrow[d, "\mathbf D"] & D_{\operatorname{BD}}(\mathcal X^{an},\Omega_{\mathcal{X}^{an}}) \arrow[d, "\mathbf D"] \\
D(\operatorname{Mod}(D_{\mathcal X})^{\operatorname{right}}) \arrow[r, "\operatorname{an}"]                     & D(\operatorname{Mod}(D_{\mathcal X^{an}})^{\operatorname{right}})     
\end{tikzcd}
$$
From this, using \Cref{L:fBDanCom}, one has 
$$(f_+\mathcal M)^{an} := (\mathbf Df_{BD*}\mathbf \Omega \mathcal M)^{an} = \mathbf Df_{BD*}\mathbf \Omega (\mathcal M^{an})=: f_+(\mathcal M^{an}),$$
completing the proof.
\end{proof}

\begin{rem} Combining \Cref{P:f+alg=f+an} and \Cref{C:Hif+=Hifah+} provides a second proof of 
\Cref{P:alg-and-an-ah-push-agree}
and \Cref{C:alg-and-an-ah-push-agree}.
\end{rem}

\subsection{Coherent, regular, and holonomic $D$-modules}
\label{S:Regular/holo-are-etale}
We now address push forwards for coherent, regular, and holonomic $D$-modules; we reviewed the terminology in 
\Cref{S:Regular/holo-are-etale-ah}.

\begin{pro}[Coherent, regular, and holonomic $D$-modules]\label{P:CRHpush}
Suppose  $f:\mathcal X\to \mathcal Y$ is a  schematic proper morphism  of smooth separated integral DM stacks of finite type over $\mathbb C$ (or the analytification of such a morphism).
Then for $\sharp \in \{c,h,rh\}$, the functor  $f_+$ \eqref{E:PushD-BD} induces a functor 
$$
f_+:D^b_\sharp (\operatorname{Mod}(D_{\mathcal X})^{\operatorname{right}})\longrightarrow D^b_\sharp(\operatorname{Mod}(D_{\mathcal Y})^{\operatorname{right}})
$$
In other words, suppose   $\mathcal M$ is in $D^b_\sharp (\operatorname{Mod}(D_{\mathcal X})^{\operatorname{right}})$ for $\sharp \in \{c,h,rh\}$, i.e., for all $i$ we have $\mathcal H^i(\mathcal M)$ is a coherent (resp.~regular, resp.~holonomic) $D_{\mathcal X}$-module. Then the push forward $f_+\mathcal M$ (\Cref{D:PushD-BD}) is in $D^b_\sharp (\operatorname{Mod}(D_{\mathcal Y})^{\operatorname{right}})$, i.e., for all $i$ one has  $\mathcal H^i(f_+\mathcal M)$ is  a coherent (resp.~regular, resp.~holonomic) $D_{\mathcal Y}$-module.
\end{pro}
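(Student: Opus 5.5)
The plan is to reduce to the case of varieties by an \'etale base change, just as in the proof of \Cref{P:ahCRHpush}, using now the genuinely derived push forward $f_+$ of \Cref{D:PushD-BD} rather than the \emph{ad hoc} one. The first point is that each property $\sharp\in\{c,h,rh\}$ of a $D_{\mathcal Y}$-module is defined \'etale locally (\S\ref{S:Regular/holo-are-etale-ah}). So, to show that $\mathcal H^i(f_+\mathcal M)$ has property $\sharp$, it suffices to show that $p^*\mathcal H^i(f_+\mathcal M)$ has property $\sharp$ for every \'etale $p:U\to\mathcal Y$ from a smooth variety (or complex analytic space) $U$.

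First I will fix such a $p$ and form the cartesian square \eqref{E:TubPresDpf'}. Since $f$ is schematic and proper, $f':U_{\mathcal X}\to U$ is a proper morphism of smooth varieties (resp.~analytic spaces). Next I will use that \'etale pull back is $t$-exact, so $p^*\mathcal H^i(f_+\mathcal M)\cong \mathcal H^i(p^*f_+\mathcal M)$. Combining this with \eqref{E:Canon-Iso+Sp-1} of \Cref{L:Canon-Iso+Sp} gives
$$
p^*\mathcal H^i(f_+\mathcal M)\cong \mathcal H^i\big(f'_+(p_{\mathcal X}^*\mathcal M)\big),
$$
which is essentially \Cref{C:Canon-Iso-Hi} extended from modules to complexes. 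Here $f'_+$ is the usual push forward $Rf'_*\operatorname{Sp}^\bullet_{U_{\mathcal X}\to U}(-)$, by \eqref{E:Canon-Iso+Sp-3} and \eqref{E:BD=Rf*}. The cohomology sheaves satisfy $\mathcal H^j(p_{\mathcal X}^*\mathcal M)=p_{\mathcal X}^*\mathcal H^j(\mathcal M)$, and these have property $\sharp$ by definition, since $p_{\mathcal X}$ is \'etale. Boundedness is also preserved, so $p_{\mathcal X}^*\mathcal M\in D^b_\sharp(\operatorname{Mod}(D_{U_{\mathcal X}})^{\operatorname{right}})$.

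Finally I will invoke the classical results for proper push forward of varieties: coherence is preserved by \cite[Thm.~2.5.1]{HTT08}, holonomicity by \cite[Thm.~3.2.3]{HTT08}, and regular holonomicity by \cite[Thm.~6.1.5]{HTT08}. From these, $f'_+$ carries $D^b_\sharp$ to $D^b_\sharp$, so each $\mathcal H^i(f'_+p_{\mathcal X}^*\mathcal M)$ has property $\sharp$. Boundedness of $f_+\mathcal M$ also follows: taking $U$ quasi-compact and surjective, there are only finitely many $i$ with $\mathcal H^i$ nonzero, because $\mathcal X$ has finite dimension and $f'_+$ has bounded amplitude.

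The main obstacle I expect is the compatibility between the Beilinson--Drinfeld push forward and \'etale base change at the level of complexes. I need the isomorphism \eqref{E:Canon-Iso+Sp-1} to hold for unbounded or complex-valued $\mathcal M$, not only for modules, and this rests on $f_{BD*}$, $\mathbf D$ and $\mathbf\Omega$ commuting with \'etale pull back. If this is delicate, the fallback is to reduce to single modules. I would use the truncation triangles for $\mathcal M$, induct on its cohomological amplitude, and apply the five lemma together with the fact that each subcategory $\operatorname{Mod}_\sharp$ is closed under extensions, kernels and cokernels; the module case is then \Cref{C:Hif+=Hifah+} combined with \Cref{P:ahCRHpush}.
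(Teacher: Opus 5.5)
Your proposal is correct and is essentially the paper's proof: check the property \'etale locally, use the canonical isomorphism $p^*\mathcal H^i(f_+\mathcal M)\cong \mathcal H^i(f'_+(p_{\mathcal X}^*\mathcal M))$ of \Cref{C:Canon-Iso-Hi}, and then invoke \cite[Thm.~2.5.1, Thm.~3.2.3, Thm.~6.1.5]{HTT08} for proper morphisms of varieties. The obstacle you anticipate does not arise, because \Cref{L:Canon-Iso+Sp}, and hence \Cref{C:Canon-Iso-Hi}, is already stated for arbitrary objects of $D(\operatorname{Mod}(D_{\mathcal X})^{\operatorname{right}})$, so your truncation fallback is unnecessary; your explicit boundedness check is a small point that the paper leaves implicit.
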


\begin{proof}
As the definitions are local, we are free to check that $\mathcal H^i(f_+\mathcal M)$ is coherent (resp.~regular, resp.~holonomic)  after an \'etale base change.  Then \Cref{C:Canon-Iso-Hi} reduces the question to morphisms of varieties, where the result is standard (e.g., \cite[Thm.~2.5.1, Thm.~3.2.3, Thm.~6.1.5]{HTT08}).
\end{proof}

\section{Push forward of filtered $D$-modules}\label[appendix]{S:FiltDModPush}

We  defined the notion of a filtered $D$-module $(\mathcal M,F_\bullet)$ in the obvious way in  \cite[Def.~3.7]{CMZpositivity}, as well as the notion of a good filtration 
\cite[\S 3.1.5]{CMZpositivity}. 
The definition of coherent $D_{\mathcal X}$-modules we made via a local condition (see \Cref{S:Regular/holo-are-etale-ah})  then leads to a natural question, namely,  whether a $D_{\mathcal X}$-module being coherent is equivalent to the existence of a good filtration (not just the existence of good filtrations locally). The following proposition establishes this:

\begin{pro}\label{Existence-of-good-filtr-on-stack}
A quasi-coherent $D_{\mathcal X}$-module $\mathcal M$ is coherent if and only if there exists a good filtration $F_\bullet \mathcal M$ on $\mathcal M$.
  
\end{pro}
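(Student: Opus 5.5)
The easy direction comes first. Suppose $F_\bullet\mathcal M$ is a good filtration on $\mathcal M$. Then for any \'etale $p:U\to\mathcal X$ from a smooth variety, the pull back $p^*F_\bullet\mathcal M$ is a good filtration on $\mathcal M_U$. The reason is that goodness (each $F_k$ coherent over $\mathcal O$, exhaustive, and $F_lD\cdot F_k=F_{k+l}$ for $k\gg0$) is étale local, and $p^*$ is exact and compatible with the order filtration on $D$. On a variety, the existence of a good filtration implies coherence, so $\mathcal M_U$ is coherent for every such $U$. By the local definition in \S\ref{S:Regular/holo-are-etale-ah}, $\mathcal M$ is therefore coherent.

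For the converse, let $\mathcal M$ be coherent. I would build the filtration globally from a single coherent $\mathcal O_{\mathcal X}$-submodule that generates $\mathcal M$.

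\textbf{Step 1.} Write $\mathcal M$, viewed as a quasi-coherent $\mathcal O_{\mathcal X}$-module, as the filtered union of its coherent $\mathcal O_{\mathcal X}$-submodules. This uses that $\mathcal X$ is a Noetherian DM stack (finite type, separated, quasi-compact), where every quasi-coherent sheaf is the union of its coherent subsheaves, e.g.\ by Laumon--Moret-Bailly \cite{LMB}.

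\textbf{Step 2.} Fix an étale atlas $p:U\to\mathcal X$ with $U$ a quasi-compact smooth variety (a finite disjoint union of affines). Because $\mathcal M_U$ is coherent over $D_U$, it is generated over $D_U$ by finitely many sections. Quasi-compactness then gives a single coherent $\mathcal G\subseteq\mathcal M$ with $D_U\cdot p^*\mathcal G=\mathcal M_U$. Since $p$ is faithfully flat, the image $D_{\mathcal X}\cdot\mathcal G\to\mathcal M$ is surjective, as surjectivity may be checked after $p^*$.

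\textbf{Step 3.} Define $F_k\mathcal M:=F_kD_{\mathcal X}\cdot\mathcal G$ for $k\ge0$ and $F_k\mathcal M=0$ for $k<0$, meaning the image of $F_kD_{\mathcal X}\otimes_{\mathcal O_{\mathcal X}}\mathcal G\to\mathcal M$. Each $F_k\mathcal M$ is then a coherent $\mathcal O_{\mathcal X}$-module, being the image of a coherent sheaf. The filtration is exhaustive by Step 2, and it satisfies $F_lD_{\mathcal X}\cdot F_k\mathcal M=F_{k+l}\mathcal M$ for all $k\ge0$ by construction. Hence it is good; the identities involved can all be checked on $U$, where they are the standard ones.

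I expect Step 1 to be the main obstacle: ensuring that quasi-coherent sheaves on the stack are unions of coherent subsheaves, so that a \emph{global} generating coherent $\mathcal G$ exists. If the citation is awkward, an alternative is to push forward along the finite flat cover $q:V\to\mathcal X$ of \S\ref{S:DM-intro}. One would take a good filtration $F'_\bullet$ on $q^*\mathcal M$ on the quasi-projective variety $V$, set $\mathcal G:=\operatorname{Im}(\mathcal M\cap q_*F'_0)$, using the adjunction $\mathcal M\hookrightarrow q_*q^*\mathcal M$ (injective by faithful flatness), and then proceed as in Step 3.
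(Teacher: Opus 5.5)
Your main argument is correct and is essentially the paper's proof. The paper also invokes \cite[Prop.~15.4]{LMB} to write $\mathcal M$ as the filtered union of its coherent $\mathcal O_{\mathcal X}$-subsheaves $\mathcal N_\lambda$, finds one with $D_{\mathcal X}\cdot\mathcal N_\lambda=\mathcal M$, and sets $F_k\mathcal M:=\operatorname{Im}(F_kD_{\mathcal X}\otimes_{\mathcal O_{\mathcal X}}\mathcal N_\lambda\to\mathcal M)$; it finds the generating $\mathcal N_\lambda$ via the ascending chain condition for $D_U$-submodules of $\mathcal M|_U$ on an affine \'etale cover, where you use finite generation over $D_U$ plus compactness of the filtered colimit (an equivalent device), and it leaves implicit the easy direction that you spell out.

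You should drop your fallback through the finite flat cover $q:V\to\mathcal X$. That cover is in general ramified, and then $q^*\mathcal M$ need not be a coherent $D_V$-module; already the pullback of $D_{\mathbb A^1}$ under $t\mapsto t^2$ is not finitely generated. Nothing in your main argument depends on it.
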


\begin{proof} 
Let $\mathcal{M}$ be a coherent $D_{\mathcal{X}}$-module on  $\mathcal{X}$.
    By \cite[Prop. 15.4]{LMB}, as $\mathcal M$ is by definition a  quasi-coherent $\mathcal{O}_{\mathcal{X}}$-module on $\mathcal{X}$, it is the colimit of its coherent $\mathcal O_{\mathcal X}$-subsheaves $\mathcal{N}_{\lambda}$. Consider $\mathcal{M}_{\lambda}:=\operatorname{Im}(D_{\mathcal{X}}\otimes_{\mathcal O_{\mathcal X}} \mathcal{N}_{\lambda} \to \mathcal{M})$. 
    Although it is not necessary, we note that each $\mathcal M_\lambda$ is evidently quasi-coherent as an $\mathcal O_{\mathcal X}$-module, and finitely generated as a $D_{\mathcal X}$-module, and so is a coherent $D_{\mathcal X}$-module (e.g., \cite[Prop.~1.4.9(ii)]{HTT08}). 
    The collection of  $D_{\mathcal X}$-submodules  $\mathcal M_\lambda \subseteq \mathcal{M}$ satisfies the ascending chain condition (ACC); indeed,  we can pull back to an \'etale cover $p:U\to \mathcal{X}$, with $U$ a finite union of affine varieties,  and there ACC holds since we are assuming that $\mathcal M|_U$ is a coherent $D_U$-module (e.g., \cite[Prop. 1.4.6]{HTT08}).
       So, there exists $\mathcal{N}_{\lambda}$, a  coherent $\mathcal{O}_{\mathcal{X}}$-module, for which $\mathcal{M}_{\lambda}=\mathcal{M}$. Then we obtain a filtration $F_{\bullet}$ on $\mathcal{M}$ by $F_k\mathcal{M}:=\operatorname{Im}(F_k D_{\mathcal{X}}\otimes_{\mathcal O_{\mathcal X}} \mathcal{N}_{\lambda}\to \mathcal{M}).$ It is an exhaustive filtration by coherent $\mathcal{O}_{\mathcal{X}}$-modules, it satisfies $F_k\mathcal{M}=0$ for $k<0$, and $F_iD_{\mathcal{X}}\cdot F_k\mathcal{M}=F_{k+i}\mathcal{M}$, for all $i\geq 0, k\geq 0$, and so we obtain a good filtration on $\mathcal M$.
\end{proof}

It follows from \Cref{Existence-of-good-filtr-on-stack} and \Cref{P:CRHpush} that the $i$-th push forward of a coherent $D$-module admits a good filtration; but at this stage, we do not have much control over this filtration. To say more, we use the Rees algebra approach; the terminology was reviewed in \S \ref{S:Tilde-notation-from-MHM-project}.

To begin, for  a filtered $D_{\mathcal X}$-module $(\mathcal M,F_\bullet)$,   we can define a filtration $F_\bullet$ on $\mathbf \Omega (\mathcal M)$ using the filtration in \S \ref{S:dR+SCFilt}, namely, by setting 
$$
F_k((\mathbf \Omega(\mathcal M))^i):=F_{k+i}\mathcal M \otimes _{\mathcal O_{\mathcal X}}\bigwedge ^{-i}  \mathcal T_{\mathcal X}.
$$
Then viewing $\mathbf D\mathbf \Omega \mathcal M$ as a tensor power of filtered modules via \eqref{E:Alt-D}, we obtain a filtration $F_\bullet$ on $\mathbf D\mathbf \Omega \mathcal M$ given by
$$
F_k((\mathbf D\mathbf \Omega \mathcal M)^i):=\sum_{j+\ell=k} F_{j+i}\mathcal M \otimes _{\mathcal O_{\mathcal X}}\bigwedge ^{-i}  \mathcal T_{\mathcal X}\otimes_{\mathcal O_{\mathcal X}} F_\ell D_{\mathcal X}.
$$
Finally,  from the definition of the relative Spencer complex in \eqref{E:DDOmegaSp} and the alternate formulation in 
\eqref{E:AltSpencf-1}, one obtains a filtration $F_\bullet$ on $\operatorname{Sp}_{\mathcal X\to \mathcal Y}^\bullet (\mathcal M)$ as  in \S \ref{S:dR+SCFilt} 
given by 
$$
F_k((\operatorname{Sp}_{\mathcal X\to \mathcal Y}^\bullet (\mathcal M))^i):=\sum_{j+\ell=k} F_{j+i}\mathcal M \otimes _{\mathcal O_{\mathcal X}}\bigwedge ^{-i}  \mathcal T_{\mathcal X}\otimes_{f^{-1}\mathcal O_{\mathcal Y}} f^{-1}F_\ell D_{\mathcal Y}.
$$
All of the above filtrations are filtrations by $\mathcal O_{\mathcal X}$-submodules. 

Given the filtrations, we can take the associated Rees modules for the filtered complexes above (see  \S \ref{S:Tilde-notation-from-MHM-project} for a review of the notation) and from the descriptions of the filtrations above, we obtain an identification 
\begin{equation}
\widetilde{\operatorname{Sp}^\bullet_{\mathcal X\to\mathcal Y}(M)}=\widetilde{\mathbf{D}\mathbf{\Omega}(M)}\otimes_{\widetilde{D_{\X}}}\widetilde{f^* D_{\Y}} 
\end{equation}
as chain complexes of $\widetilde{f^{-1}D_{\mathcal{Y}}}$-modules.  
Then, applying \Cref{lem:graded-functorial-inj} to $\mathcal{A}=f^{-1}\widetilde{D_{\mathcal Y}}$,  $\mathcal{B}=\operatorname{Mod}(\widetilde{D_{\mathcal Y}})^{\operatorname{right}}$, and $F=f_*$,  yields a derived push forward 
$$Rf_*:D^b(f^{-1}\widetilde{D_{\Y}})\rightarrow D^b(\widetilde{D_{\Y}}).$$
Viewing $\widetilde {\operatorname{Sp}_{X\to Y}^\bullet \mathcal M }$ as an object of $D^b(\widetilde {f^{-1}D_{\mathcal Y}})$, we obtain a push forward  
$$
Rf_* \widetilde{\operatorname{Sp}^\bullet_{\mathcal X\to\mathcal Y}(\mathcal M)}
$$
in $D^b(\widetilde{D_{\mathcal{Y}}})$.
  By taking cohomology, we obtain a $\widetilde {D_{\mathcal Y}}$-module 
$$
R^if_* \widetilde{\operatorname{Sp}^\bullet_{\mathcal X\to\mathcal Y}(\mathcal M)}.
$$

\begin{pro}\label{FilteredDmodPush}
Let $f:\mathcal X\to \mathcal Y$ be  schematic proper morphism  of smooth separated integral DM stacks of finite type over $\mathbb C$ (or the analytification of such a morphism), and  let  $(\mathcal M,F_\bullet)$ be a filtered $D_{\mathcal X}$-module. 
For all $i$ and $p$  we have that 
$$(R^if_* \widetilde{\operatorname{Sp}^\bullet _{\mathcal X\to\mathcal Y}(\mathcal M)})_p=R^if_* F_p \operatorname{Sp}^\bullet_{\mathcal X\to\mathcal Y}(\mathcal M),$$
where on the left $Rf_*$ is the derived functor $Rf_*:D^b(f^{-1}\widetilde{D_{\mathcal{Y}}})\rightarrow D^b(\widetilde{D_{\mathcal{Y}}})$ and on the right it is the derived functor   $Rf_*: D^b(f^{-1}\mathcal{O}_{\mathcal{Y}}) \rightarrow D^b(\mathcal{O}_{\mathcal{Y}})$. Moreover, the $D_{\mathcal Y}$-module $R^if_*\operatorname{Sp}_{\mathcal X\to \mathcal Y}^\bullet (\mathcal M)$ is canonically isomorphic to the $D_{\mathcal Y}$-module obtained by applying the derived functor 
$R^if_*:D^b(f^{-1}\widetilde{D_{\mathcal{Y}}})\rightarrow D^b(\widetilde{D_{\mathcal{Y}}})$ to $\operatorname{Sp}^\bullet_{\mathcal X\to \mathcal Y}(\mathcal M)$; i.e., we are free to apply either derived functor.   
\end{pro}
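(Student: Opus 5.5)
The plan is to treat the Rees module $\widetilde{\operatorname{Sp}^\bullet_{\mathcal X\to\mathcal Y}(\mathcal M)}=\bigoplus_p F_p\operatorname{Sp}^\bullet_{\mathcal X\to\mathcal Y}(\mathcal M)\,z^p$ as a complex of graded $f^{-1}\widetilde{D_{\mathcal Y}}$-modules, and to compare the derived functor $Rf_*:D^b(f^{-1}\widetilde{D_{\mathcal Y}})\to D^b(\widetilde{D_{\mathcal Y}})$ with $Rf_*$ computed on the underlying sheaves of abelian groups (equivalently of $f^{-1}\mathcal O_{\mathcal Y}$-modules). The key input is the standard fact that the forgetful functor from $f^{-1}\mathcal A$-modules to $f^{-1}\mathcal O_{\mathcal Y}$-modules (or abelian sheaves) takes injectives to objects acyclic for $f_*$. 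Over a site, an injective module over a sheaf of rings is flasque-like, i.e.\ has vanishing higher cohomology on every object of the site; this is \cite[\href{https://stacks.math.columbia.edu/tag/03FD}{Tag 03FD}]{stacks-project} and its variants for the small étale site of $\mathcal X$. Consequently $R^if_*$ computed in either category agrees with $R^if_*$ on the underlying sheaves. This already gives the final assertion of the proposition, applied to $\operatorname{Sp}^\bullet_{\mathcal X\to\mathcal Y}(\mathcal M)$ with the ring $f^{-1}D_{\mathcal Y}$.

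For the graded statement, I will use \Cref{lem:graded-functorial-inj}. The category of graded $f^{-1}\widetilde{D_{\mathcal Y}}$-modules is Grothendieck, and taking the degree-$p$ piece is an exact functor to $f^{-1}\mathcal O_{\mathcal Y}$-modules. It has an exact left adjoint, namely $N\mapsto f^{-1}\widetilde{D_{\mathcal Y}}\otimes_{f^{-1}\mathcal O_{\mathcal Y}}N$ placed in degree $p$. The latter is exact because $\widetilde{D_{\mathcal Y}}$ is locally free over $\mathcal O_{\mathcal Y}$, with graded pieces $F_\ell D_{\mathcal Y}$ locally free. Hence the degree-$p$ functor preserves injectives, so a graded injective resolution $I^\bullet$ of the Rees complex has $(I^\bullet)_p$ an injective resolution of $F_p\operatorname{Sp}^\bullet_{\mathcal X\to\mathcal Y}(\mathcal M)$. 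Since $f_*$ commutes with taking graded pieces, because the grading is a direct sum decomposition as sheaves, we get
\[
(R^if_*\widetilde{\operatorname{Sp}^\bullet})_p=\mathcal H^i(f_*I^\bullet)_p=\mathcal H^i(f_*(I^\bullet)_p)=R^if_*F_p\operatorname{Sp}^\bullet .
\]
One subtlety is that $f_*$ of a direct sum is the direct sum of the $f_*$'s only for quasi-compact $f$. This holds here because $f$ is schematic and proper, and I would check it étale locally on $\mathcal Y$.

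The step I expect to be the main obstacle is justifying the acyclicity and degree-wise injectivity on the étale site of a stack, since the cited lemmas are stated for ringed sites. I would handle it by noting that the small étale site of $\mathcal X$ is a ringed site, so the Stacks Project results apply verbatim. The only remaining issues are boundedness, which comes from the fact that $\operatorname{Sp}^\bullet$ has finite length, and the identification of $f^{-1}$ with the pullback of sheaves on sites. If necessary, I would instead compute via the Čech complex of \Cref{R:CechComplex}, reducing to affine schematic morphisms of varieties where the statement is classical.
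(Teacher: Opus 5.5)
Your proposal is correct and follows essentially the same route as the paper: the degree-$p$ functor on graded $f^{-1}\widetilde{D_{\mathcal Y}}$-modules is exact and has an exact left adjoint, so it preserves injectives. Hence a graded injective resolution of the Rees complex restricts in degree $p$ to an injective resolution of $F_p\operatorname{Sp}^\bullet_{\mathcal X\to\mathcal Y}(\mathcal M)$, and $f_*$ commutes with taking degree-$p$ pieces, which gives the identity.

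Your version is slightly more careful at a few points the paper leaves implicit or states loosely. You write the left adjoint correctly as $f^{-1}\widetilde{D_{\mathcal Y}}\otimes_{f^{-1}\mathcal O_{\mathcal Y}}(-)$ shifted to degree $p$, which is exact because the $F_\ell D_{\mathcal Y}$ are locally free. You flag the quasi-compactness needed for $f_*$ to respect the direct sum decomposition. You also give an explicit argument, via acyclicity of injectives or preservation of injectives under the forgetful functor, for the final ``either derived functor'' assertion.
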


\begin{proof}
For any abelian category $\mathcal{A}$, we denote by $\operatorname{Ch}(\mathcal A)$ the chain complexes of objects in $\mathcal A$. We work with $\operatorname{gr-Mod}(f^{-1}\widetilde{\mathcal D}_{\mathcal Y})$-modules on $\X$ (equivalently, one can work in $\operatorname{gr-Mod}(f^{-1}\widetilde{\mathcal O}_{\mathcal Y})$-modules; the argument is the same).
For an object $N=\bigoplus_{q\in\mathbb Z}N_q\in \operatorname{gr-Mod}(f^{-1}\widetilde{\mathcal D}_{\mathcal Y})$ we write $N_p$ for its degree-$p$ component. For a complex $K^\bullet$ in $\operatorname{Ch}(\operatorname{gr-Mod}(f^{-1}\widetilde{\mathcal D}_{\mathcal Y}))$, we write $(K^\bullet)_p$ for the complex obtained by taking degree-$p$ components termwise.

For any graded sheaf $N$ on $\X$ and any open $ U\rightarrow \Y$ we have
\[
(f_*N)_p(U)
=\bigl(f_*N(U)\bigr)_p
= N(f^{-1}U)_p
= N_p(f^{-1}U)
=\bigl(f_*N_p\bigr)(U).
\]
Thus there is a canonical identification of functors on $\operatorname{gr-Mod}(f^{-1}\widetilde{\mathcal D}_{\mathcal Y})$:
\begin{equation}\label{eq:commute-abelian}
(f_*N)_p \cong f_*(N_p).
\end{equation}

The functor
\[
(-)_p:\operatorname{gr-Mod}(f^{-1}\widetilde{\mathcal D}_{\mathcal Y})\to \operatorname{Mod}(f^{-1}\mathcal{O}_{\Y}),\qquad N=\bigoplus_q N_q \mapsto N_p
\]
is exact. Also, $(-)_p$ is right adjoint to the exact functor (see \cite[Prop.~1.1.3]{HTT08})
\[
i_p:\operatorname{Mod}(f^{-1}\mathcal{O}_{\mathcal Y})\to \operatorname{gr-Mod}(f^{-1}\widetilde{ D}_{\mathcal Y}),\qquad A\mapsto \bigr(f^{-1}\widetilde{D}_{\Y}\otimes_{f^{-1}\widetilde{\mathcal{O}}_{\mathcal{ Y}}}A \bigr)(p),
\]
where $N^{\bullet}(p)$ is the module whose grading is shifted by $p$. To see the adjunction, observe,
\[
\operatorname{Hom}_{\operatorname{gr-Mod}(f^{-1}\widetilde{\mathcal D}_{\mathcal Y})}(i_pA,N)\cong \operatorname{Hom}_{\operatorname{Mod}(f^{-1}\mathcal{O}_{\mathcal Y})}(A,N_p),
\]
because the action of positively graded pieces of the filtration of $f^{-1}\widetilde{\mathcal D}_{\mathcal Y}$ send $i_pA$ to $0$ in $N$ and thus the data of $i_p A$ is precisely captured by the $f^{-1}\mathcal{O}_{\mathcal Y}$ action on $A$. Similarly for the maps.

Since $i_p$ is exact, its right adjoint, $(-)_p$, preserves injectives; hence if $I$ is injective as an object in $\operatorname{gr-Mod}(f^{-1}\widetilde{\mathcal D}_{\mathcal Y})$,
then $I_p$ is injective as a (non-graded) sheaf.

Let $K^\bullet$ be a (bounded below) complex in $\operatorname{gr-Mod}(f^{-1}\widetilde{\mathcal D}_{\mathcal Y})$.
Choose an injective resolution $K^\bullet\to I^\bullet$ in $\operatorname{Ch}(\operatorname{gr-Mod}(f^{-1}\widetilde{\mathcal D}_{\mathcal Y}))$.
Then
\[
Rf_*K^\bullet \simeq f_*I^\bullet.
\]
Applying degree-$p$ and using \eqref{eq:commute-abelian} termwise gives
\[
(Rf_*K^\bullet)_p \;\simeq\; (f_*I^\bullet)_p \;\cong\; f_*(I^\bullet_p).
\]
By the above discussion, $I^\bullet_p$ is a complex of injective sheaves, and since $(-)_p$ is exact,
the map $(K^\bullet)_p\to (I^\bullet)_p$ is a quasi-isomorphism. Hence $I^\bullet_p$ computes $Rf_*(K^\bullet_p)$, i.e.  $
Rf_*(K^\bullet_p)\simeq f_*(I^\bullet_p).$
Thus,
\begin{equation}\label{eq:degree-commutes-Rf}
(Rf_*K^\bullet)_p \;\simeq\; Rf_*(K^\bullet_p).
\end{equation}
Applying \eqref{eq:degree-commutes-Rf} with $K^\bullet=\operatorname{Sp}_{\X\to \Y}^\bullet (\widetilde {\mathcal{M}})$ 
gives the claim.
\end{proof}

The inclusions $F_p\operatorname{Sp}_{\mathcal X\to \mathcal Y}^\bullet (\mathcal M)\hookrightarrow \operatorname{Sp}_{\mathcal X\to \mathcal Y}^\bullet (\mathcal M)$ induce $\mathcal O_{\mathcal X}$-module morphisms  
\begin{equation}\label{E:RiFptoRi}
R^if_*F_p\operatorname{Sp}_{\mathcal X\to \mathcal Y}^\bullet (\mathcal M)\rightarrow R^if_*\operatorname{Sp}_{\mathcal X\to \mathcal Y}^\bullet (\mathcal M),
\end{equation}
which in turn induces a canonical morphism
$
\varinjlim_{p}R^if_*F_p\operatorname{Sp}_{\mathcal X\to \mathcal Y}^\bullet (\mathcal M)\rightarrow R^if_*\operatorname{Sp}_{\mathcal X\to \mathcal Y}^\bullet (\mathcal M)
$,
where $R^if_*$ is the derived functor 
$R^if_*: D^b(f^{-1}\mathcal{O}_{\mathcal{Y}}) \rightarrow D^b(\mathcal{O}_{\mathcal{Y}})$.

\begin{lem}\label{L:limRif=Rif} In the notation above, 
for a filtered $D_{\mathcal X}$-module $(\mathcal M,F_\bullet)$, the canonical morphism $$\varinjlim_{p}R^if_*F_p\operatorname{Sp}_{\mathcal X\to \mathcal Y}^\bullet (\mathcal M)\rightarrow R^if_*\operatorname{Sp}_{\mathcal X\to \mathcal Y}^\bullet (\mathcal M)$$ is an isomorphism, where $R^if_*$ is the derived functor 
$R^if_*: D^b(f^{-1}\mathcal{O}_{\mathcal{Y}}) \rightarrow D^b(\mathcal{O}_{\mathcal{Y}})$.  

\end{lem}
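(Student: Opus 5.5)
The plan is to reduce to the statement that higher direct images commute with filtered colimits of sheaves, applied to the directed system $\{F_p\operatorname{Sp}_{\mathcal X\to \mathcal Y}^\bullet(\mathcal M)\}_p$. The filtration is exhaustive on each term: for each $i$, $\bigcup_p F_p$ of the degree-$i$ term equals $\mathcal M\otimes\bigwedge^{-i}\mathcal T_{\mathcal X}\otimes_{f^{-1}\mathcal O_{\mathcal Y}} f^{-1}D_{\mathcal Y}$, because $F_\bullet\mathcal M$ and $F_\bullet D_{\mathcal Y}$ are exhaustive and tensor products and filtered unions commute. It follows that $\operatorname{Sp}_{\mathcal X\to \mathcal Y}^\bullet(\mathcal M)=\varinjlim_p F_p\operatorname{Sp}_{\mathcal X\to \mathcal Y}^\bullet(\mathcal M)$ as a complex of $f^{-1}\mathcal O_{\mathcal Y}$-modules, where the colimit is taken termwise and is exact. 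The complexes are bounded, of length $n+1$, so it suffices to show that $R^if_*$ commutes with filtered colimits of bounded complexes. This reduces, via the hypercohomology spectral sequence (or induction on the length using the stupid truncation triangles and the five lemma, since filtered colimits are exact), to commutation for single sheaves.

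First reduce to the case where $\mathcal Y$ is a scheme. Both sides are sheaves on $\mathcal Y_{\text{\'etale}}$. The map is compatible with \'etale pull back $p:U\to\mathcal Y$, because $R^if_*$ is computed via K-injective resolutions, which restrict to complexes computing $R^if'_*$ by flat base change along the \'etale $p$ ($p^*$ is exact and preserves injectives on the small \'etale site). Colimits also commute with $p^*$. So it suffices to verify the isomorphism after pulling back along an \'etale cover $U\to\mathcal Y$ by an affine scheme. Since $f$ is schematic, $f':U_{\mathcal X}\to U$ is then a proper morphism of schemes, and $U_{\mathcal X}$ is quasi-compact and quasi-separated.

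In that situation, cohomology of a qcqs scheme (or its \'etale/Zariski site) commutes with filtered colimits of abelian sheaves; this is \cite[\href{https://stacks.math.columbia.edu/tag/07TA}{\S 07TA}]{stacks-project}, e.g.\ the statement that $H^i(W,-)$ commutes with filtered colimits for $W$ qcqs. Since $R^if'_*\mathcal F$ is the sheafification of $V\mapsto H^i(f'^{-1}V,\mathcal F)$, and the $f'^{-1}V$ for affine $V$ are qcqs, we get $\varinjlim_p R^if'_*\mathcal F_p\cong R^if'_*\varinjlim_p\mathcal F_p$. The terms are $f^{-1}\mathcal O_{\mathcal Y}$-modules rather than $\mathcal O_{\mathcal X}$-modules, but $R^if_*$ for $f^{-1}\mathcal O_{\mathcal Y}$-modules agrees with the one for abelian sheaves, since injectives restrict to flasque/acyclic objects. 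The colimit statement is therefore purely about underlying abelian sheaves.

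I expect the main obstacle to be bookkeeping rather than mathematics. One must check that the morphism in the lemma, which is induced by the inclusions \eqref{E:RiFptoRi}, agrees with the canonical comparison map in the Stacks Project statement. One must also check that the étale-local comparison isomorphisms glue with the descent data on $\mathcal Y$, in the sense of \S\ref{S:ConcreteDesc}. Both follow from naturality of the maps involved, since we are comparing two morphisms of sheaves that agree locally.
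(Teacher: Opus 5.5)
Your proposal is correct and follows essentially the same route as the paper. You write $\operatorname{Sp}^\bullet_{\mathcal X\to\mathcal Y}(\mathcal M)$ as the termwise filtered colimit of the bounded complexes $F_p\operatorname{Sp}^\bullet_{\mathcal X\to\mathcal Y}(\mathcal M)$, reduce to single sheaves by induction on length via stupid truncations, exactness of filtered colimits and the five lemma, and settle the single-sheaf case with \cite[\S 07TA]{stacks-project}. Your explicit \'etale-local reduction to a proper morphism of qcqs schemes is a detail the paper leaves implicit, since it cites that result directly on the stack.
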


\begin{proof}
This essentially comes down to \cite[\href{https://stacks.math.columbia.edu/tag/07TA}{\S 07TA}]{stacks-project}, that colimits commute with higher direct images under mild hypotheses.  
We will be showing that
\[
\varinjlim_p R^i f_*(K_p)\xrightarrow{\sim} R^i f_*(K),\]
and applying it to
\[
K_p:=F_p\operatorname{Sp}^\bullet_{\mathcal X \to \mathcal Y}(M), \qquad
K:=\operatorname{Sp}^\bullet_{\mathcal X \to \mathcal Y}(M)=\varinjlim_p K_p .
\]

Choose integers $a \le b$ such that each $K_p$ is concentrated in degrees
$[a,b]$. We prove the claim by induction on $b-a$.

If $b=a$, then $K_p=\mathcal F_p[-a]$ with $\mathcal F_p:=K_p^a$ quasi-coherent, and
$K=(\varinjlim_p \mathcal F_p)[-a]$. Hence
\[
R^i f_*(K_p)=R^{i-a}f_*(\mathcal F_p), \qquad
R^i f_*(K)=R^{i-a}f_*\!\left(\varinjlim_p \mathcal F_p\right).
\]
Crucially, by \cite[\href{https://stacks.math.columbia.edu/tag/07TA}{\S 07TA}]{stacks-project},
\[
\varinjlim_p R^i f_*(K_p)
=
\varinjlim_p R^{i-a}f_*(\mathcal F_p)
\cong
R^{i-a}f_*\!\left(\varinjlim_p \mathcal F_p\right)
=
R^i f_*(K).
\]

Assume now $b>a$. For each $k$, let
\[
L_p:=\sigma_{\ge a+1}K_p, \qquad A_p:=K_p^a[-a],
\]
where $\sigma_{\ge a+1}$ denotes the stupid truncation. Then there is a short exact
sequence of complexes
\[
0 \longrightarrow L_p \longrightarrow K_p \longrightarrow A_p \longrightarrow 0 .
\]
Since filtered colimits are exact in $\operatorname{Mod}(\mathcal O_{\mathcal{X}})$
\cite[\href{https://stacks.math.columbia.edu/tag/01AH}{Lem.~01AH}]{stacks-project}, taking filtered colimits we get
\[
0 \longrightarrow L \longrightarrow K \longrightarrow A \longrightarrow 0,
\]
where
\[
L=\sigma_{\ge a+1}K, \qquad A=K^a[-a]
      =\left(\varinjlim_p K_p^a\right)[-a].
\]

Hence, we have long exact sequences of cohomology sheaves:
\[
\cdots \to R^i f_*(L_p)\to R^i f_*(K_p)\to R^i f_*(A_p)\to R^{i+1}f_*(L_p)\to \cdots
\]
and
\[
\cdots \to R^i f_*(L)\to R^i f_*(K)\to R^i f_*(A)\to R^{i+1}f_*(L)\to \cdots.
\]

As filtered colimits are exact in $\operatorname{Mod}(\mathcal O_Y)$, taking $\varinjlim_p$ of the
first long exact sequence yields a long exact sequence
\[
\cdots \to \varinjlim_p R^i f_*(L_p)
\to \varinjlim_p R^i f_*(K_p)
\to \varinjlim_p R^i f_*(A_p)
\to \varinjlim_p R^{i+1}f_*(L_p)\to \cdots.
\]

Now $L_p$ is concentrated in degrees $[a+1,b]$, so by the induction hypothesis, the canonical maps are isomorphisms
\[
\varinjlim_p R^i f_*(L_p)\xrightarrow{\sim} R^i f_*(L),
\qquad
\varinjlim_p R^{i+1} f_*(L_p)\xrightarrow{\sim} R^{i+1} f_*(L).
\]
Also $A_p$ is concentrated in a single degree, so by the base case
\[
\varinjlim_p R^i f_*(A_p)\xrightarrow{\sim} R^i f_*(A).
\]
Comparing the two long exact sequences, the five lemma gives
\[
\varinjlim_p R^i f_*(K_p)\xrightarrow{\sim} R^i f_*(K).
\]
Hence, we are done.
\end{proof}

We now define a filtration on the push forward. To begin,  define a filtration $F_\bullet$ on $\mathcal H^i(f_+\mathcal M)=R^if_*\operatorname{Sp}_{\mathcal X\to \mathcal Y}(\mathcal M)$ via the images of the morphisms  \eqref{E:RiFptoRi}, i.e., 
\begin{equation}\label{E:FiltOnHif+M}
F_k(R^if_*\operatorname{Sp}_{\mathcal X\to \mathcal Y}(\mathcal M)):=\operatorname{Im}\left(R^if_*F_k\operatorname{Sp}_{\mathcal X\to \mathcal Y}^\bullet (\mathcal M)\rightarrow R^if_*\operatorname{Sp}_{\mathcal X\to \mathcal Y}^\bullet (\mathcal M)\right),
\end{equation}
where $R^if_*$ is the derived functor 
$R^if_*: D^b(f^{-1}\mathcal{O}_{\mathcal{Y}}) \rightarrow D^b(\mathcal{O}_{\mathcal{Y}})$. 
The content of \Cref{L:limRif=Rif} is that 
 the filtration in \eqref{E:FiltOnHif+M} is the filtration induced on $\varinjlim_{p}R^if_*F_p\operatorname{Sp}_{\mathcal X\to \mathcal Y}^\bullet (\mathcal M)=R^if_*\operatorname{Sp}_{\mathcal X\to \mathcal Y}^\bullet (\mathcal M)$ via the Rees construction applied to $R^if_* \widetilde{\operatorname{Sp}^\bullet_{\mathcal X\to\mathcal Y}(\mathcal M)}$.  Finally, we recall \Cref{R:Hodge-Strict-Saito}, discussing when the morphisms \eqref{E:RiFptoRi} are known to be injective; i.e., when the $F_k\mathcal H^i(\mathcal M,F_\bullet)$ are equal to $R^if_*F_k\operatorname{Sp}_{\mathcal X\to \mathcal Y}^\bullet (\mathcal M)$. 
 
In summary, we have defined an $i$-th push forward of filtered $D$-modules.  More precisely, for a smooth separated integral DM stack $\mathcal X$ of finite type over $\mathbb C$ (or the analytification of such a stack) let $MF(D_{\mathcal X})^{\operatorname{right}}$ denote the category of pairs $(\mathcal M,F_\bullet)$ such that $\mathcal M$ is a $D_{\mathcal X}$-module and $F_\bullet$ is a filtration on $\mathcal M$.
Then if $f:\mathcal X\to \mathcal Y$ is  schematic proper morphism  of smooth separated integral DM stacks of finite type over $\mathbb C$ (or the analytification of such a morphism), then for each $i$ we have a functor
\begin{equation*}
\mathcal H^if_+: MF(D_{\mathcal X})^{\operatorname{right}}\longrightarrow MF(D_{\mathcal Y})^{\operatorname{right}}
\end{equation*}
\begin{equation}\label{E:PushFiltD}
\mathcal H^if_+(\mathcal M,F_\bullet):=(\mathcal H^if_+\mathcal M,F_\bullet)
\end{equation}   
where $\mathcal H^if_+\mathcal M$ is defined in \eqref{E:def-Hif+M} and the filtration on $\mathcal H^if_+\mathcal M$ is defined in \eqref{E:FiltOnHif+M}.

\begin{pro}\label{L:Canon-Iso+SpHi}
Let $f:\mathcal X\to \mathcal Y$ be a  schematic proper morphism  of smooth separated integral DM stacks of finite type over $\mathbb C$ (or the analytification of such a morphism), and  let $(\mathcal M,F_\bullet)$  be a filtered $D_{\mathcal X}$-module. Let $p:U\to \mathcal Y$ be an \'etale morphism from a smooth variety (or complex analytic space).
In the notation of diagram \eqref{E:TubPresDpf'}, 
there are canonical isomorphisms 
\begin{equation}\label{E:Canon-Iso+Sp-1Hi}
\alpha_\phi^i:p^*\mathcal H^if_+(\mathcal M,F_\bullet) \stackrel{\sim}{\longrightarrow}\mathcal H^if'_+(p_{\mathcal X}^*(\mathcal M,F_\bullet))
\end{equation}

\begin{equation}\label{E:Canon-Iso+Sp-3Hi}
p^*F_k\mathcal H^if_+(\mathcal M,F_\bullet)\stackrel{\sim}{\longrightarrow} \operatorname{Im}\left(R^if'_*F_k \operatorname{Sp}_{U_{\mathcal X}\to U}p_{\mathcal X}^*\mathcal M\to  R^if'_*\operatorname{Sp}_{U_{\mathcal X}\to U}p_{\mathcal X}^*\mathcal M \right) .
\end{equation}

Moreover, if $F_\bullet \mathcal M$ is a good filtration, then so is the filtration $F_\bullet \mathcal H^if_+\mathcal M$.  
\end{pro}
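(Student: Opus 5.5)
The plan is to reduce everything to the case of varieties through the étale local nature of the construction, using the identifications established earlier in this appendix. Fix an étale morphism $p:U\to\mathcal Y$ from a smooth variety (or complex analytic space), with base change $f':U_{\mathcal X}\to U$ and $p_{\mathcal X}:U_{\mathcal X}\to\mathcal X$ as in \eqref{E:TubPresDpf'}.

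First, the underlying $D$-module part of \eqref{E:Canon-Iso+Sp-1Hi} is \Cref{C:Canon-Iso-Hi}. By \Cref{L:Canon-Iso+Sp}, the relative Spencer complex restricts to $\operatorname{Sp}_{U_{\mathcal X}\to U}p_{\mathcal X}^*\mathcal M$. The same argument, using that $\mathbf D$, $\mathbf\Omega$ and the filtrations of \S\ref{S:dR+SCFilt} are defined étale locally, shows that this identification respects the filtrations $F_k$. Concretely, $p_{\mathcal X}^*F_k\operatorname{Sp}^\bullet_{\mathcal X\to\mathcal Y}(\mathcal M)\cong F_k\operatorname{Sp}^\bullet_{U_{\mathcal X}\to U}(p_{\mathcal X}^*\mathcal M)$ as complexes of $\mathcal O$-modules. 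This holds because $p_{\mathcal X}^*\mathcal T_{\mathcal X}=\mathcal T_{U_{\mathcal X}}$ and $f'^{-1}F_\ell D_U$ corresponds to $p_{\mathcal X}^{-1}f^{-1}F_\ell D_{\mathcal Y}$ after tensoring.

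Next, I use flat base change for the étale (hence flat) morphism $p$ and the proper schematic morphism $f$, at the level of $\mathcal O$-modules: $p^*R^if_*K\cong R^if'_*p_{\mathcal X}^*K$ for each filtered piece $K=F_k\operatorname{Sp}^\bullet$ and for $K=\operatorname{Sp}^\bullet$ itself. This is proved as in \Cref{L:CmodDescA} with injective resolutions. The resulting isomorphisms are natural in $K$, so they commute with the maps \eqref{E:RiFptoRi}. Since $p^*$ is exact, it commutes with taking images, which gives \eqref{E:Canon-Iso+Sp-3Hi}. Combining this with the unfiltered isomorphism yields the filtered isomorphism \eqref{E:Canon-Iso+Sp-1Hi}.

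The step I expect to be the main obstacle is checking compatibility. The $\mathcal O$-linear base change isomorphism on $R^if_*\operatorname{Sp}^\bullet$ has to agree with the $D$-module isomorphism of \Cref{C:Canon-Iso-Hi}. Here I would invoke \Cref{FilteredDmodPush}, which says that the two derived functors give canonically the same sheaf. Both isomorphisms are then induced from one map of injective resolutions in $D^b(f^{-1}\widetilde{D_{\mathcal Y}})$, by taking graded pieces and using the fact that $(-)_p$ preserves injectives.

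Finally, goodness is étale local: coherence of the $F_k$ and the condition $F_jD\cdot F_k=F_{j+k}$ for $k\gg0$ can both be checked after pulling back along an étale cover. By \eqref{E:Canon-Iso+Sp-3Hi}, this reduces the question to proper morphisms of varieties, where the standard result applies (e.g. \cite[\S 8.7]{MHM-project}).
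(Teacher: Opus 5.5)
Your proposal is correct and follows essentially the paper's argument: étale base change for $R^if_*$ on complexes of $f^{-1}\mathcal O_{\mathcal Y}$-modules, together with the locality of the filtered relative Spencer complex \eqref{E:Canon-Iso+Sp-2}, exactness of $p^*$ to pass to images, and an étale-local reduction of goodness to the case of varieties. The paper's proof is a terse version of exactly this; your extra check that the $\mathcal O$-linear and $D$-linear base change isomorphisms agree addresses a point the paper leaves implicit.
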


\begin{proof}
The first isomorphism comes from the fact that in the definition of the filtration \eqref{E:FiltOnHif+M}, $R^if_*$ is the derived functor 
$R^if_*: D^b(f^{-1}\mathcal{O}_{\mathcal{Y}}) \rightarrow D^b(\mathcal{O}_{\mathcal{Y}})$, and this functor satisfies \'etale base change.  The second isomorphism is established using the same property, along with the fact that the relative Spencer complex is local \eqref{E:Canon-Iso+Sp-2}.  The statement about good filtrations can be checked locally, and therefore comes from \eqref{E:Canon-Iso+Sp-1Hi}, and the case for varieties, which is well known (see \cite[2.3.5]{Saito}). 
\end{proof}

It follows from \Cref{L:Canon-Iso+SpHi} and \Cref{P:CRHpush} 
that for $\sharp \in \{c,h,rh\}$, the functor  $\mathcal H^if_+$ \eqref{E:PushFiltD} induces a functor 
\begin{equation}\label{E:Hif+rh}
\mathcal H^if_+:MF_\sharp(D_{\mathcal X})^{\operatorname{right}}\longrightarrow 
MF_\sharp(D_{\mathcal Y})^{\operatorname{right}}. 
\end{equation}

\begin{rem}[Filtered \emph{Ad hoc} push forward] \label{R:FiltahPF}
We make the following observation following \Cref{L:Canon-Iso+SpHi}.
Given  $(\mathcal M,F_\bullet)$  in $MF(D_{\mathcal X})^{\operatorname{right}}$ viewed as an object $\{ (\mathcal M_U,F_U),\theta_\phi\}$ in $MF(D_{(-)})^{\operatorname{right}}$ with descent data on $\mathcal X$ (\S \ref{S:ConcreteDesc}), then, in the notation of diagram  \eqref{E:TubPresDpf'},  \eqref{E:Canon-Iso+Sp-1Hi}, and \eqref{E:Canon-Iso+Sp-3Hi}, the data
$$
\{\mathcal H^if'_+(\mathcal M_{U_{\mathcal X}},F_{U_{\mathcal X}}),\tau_\phi:=\alpha^i_\phi \circ \mathcal H^i(f''_+(\theta_{\phi_{\mathcal X}}))\}
$$
defines an object in $MF(D_{(-)})^{\operatorname{right}}$ with descent data on $\mathcal Y$, i.e., an object of $MF(D_{\mathcal Y})^{\operatorname{right}}$. From 
 \Cref{L:Canon-Iso+SpHi} one obtains  that this is the object with descent data associated with  $\mathcal H^i(f_+(\mathcal M,F_\bullet))$.  
\end{rem}

\section{\emph{Ad hoc} push forward of Hodge modules}\label{S:PushHodge}
For a smooth separated integral DM stack $\mathcal X$ of finite type over $\mathbb C$,  let  $$MF_{rh}(D_{\mathcal X}, \mathbb Q)$$ be the category of regular holonomic $D_{\mathcal X}$-modules with $\mathbb Q$-structure.  More precisely, this is the category of triples $(\mathcal M,F_\bullet,K)$ such that $\mathcal M$ is  a regular holonomic $D_{\mathcal X}$-module, $F_\bullet$ is a good filtration on $\mathcal M$, and $K$ in $\operatorname{Perv}(\mathbb Q_{\mathcal X})$ is such that $^pDR(\mathcal M^{an})= K\otimes_{\mathbb Q_{\mathcal X}}\mathbb C_{\mathcal X}$.
Using the identification $\mathcal H^if_+=\mathcal H^if^{ah}_+$ of \Cref{C:Hif+=Hifah+}, then from \Cref{R:RHF-push}, we conclude that the functor \eqref{E:Hif+rh} induces a functor
$$
\mathcal H^if^{ah}_+:MF_{rh}(D_{\mathcal X},\mathbb Q)\longrightarrow 
MF_{rh}(D_{\mathcal Y},\mathbb Q)
$$
\begin{equation}\label{E:Hif+rhQQ}
(\mathcal M,F_\bullet,K)\mapsto (\mathcal H^if_+(\mathcal M,F_\bullet),R^if^{ah}_*K).
\end{equation}

\begin{pro}[\emph{Ad hoc} push forward for Hodge modules] \label{P:Hif+Hodge}
Let $f:\mathcal X\to \mathcal Y$ be a  schematic proper morphism  of smooth separated integral DM stacks of finite type over $\mathbb C$.  Then 
the \emph{ad hoc} push forward functor $\mathcal H^if^{ah}_+$ of \eqref{E:Hif+rhQQ} on filtered regular holonomic $D_{\mathcal X}$-modules with $\mathbb Q$-structure induces an \emph{ad hoc} push forward functor on pure Hodge modules of weight $w$:
$$
\mathcal H^if^{ah}_+:\mathsf {HM}(\mathcal X,w) \longrightarrow 
\mathsf {HM}(\mathcal Y,w+i)
$$
\begin{equation}\label{E:Hif+Hodge}
(\mathcal M,F_\bullet,K)\mapsto (\mathcal H^if_+(\mathcal M,F_\bullet),R^if^{ah}_*K).
\end{equation}
\end{pro}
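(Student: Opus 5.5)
The plan is to reduce everything to Saito's theory on an \'etale atlas and then glue. An object of $\mathsf{HM}(\mathcal X,w)$ is, by definition, a compatible system of pure polarizable Hodge modules of weight $w$ on the $U_{\mathcal X}$ for \'etale $U\to\mathcal Y$ (pulled back via $p_{\mathcal X}$), together with descent isomorphisms $\theta_\phi$. Its underlying triple $(\mathcal M,F_\bullet,K)$ lies in $MF_{rh}(D_{\mathcal X},\mathbb Q)$. We already know from \eqref{E:Hif+rhQQ} that $\mathcal H^if^{ah}_+(\mathcal M,F_\bullet,K)$ is a well-defined object of $MF_{rh}(D_{\mathcal Y},\mathbb Q)$. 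So the content of the statement is twofold: the output should be a pure polarizable Hodge module of weight $w+i$, and the construction should be functorial on morphisms of Hodge modules.

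First I would check the Hodge module property after \'etale base change. By \Cref{L:Canon-Iso+SpHi}, specifically \eqref{E:Canon-Iso+Sp-1Hi}, we have $p^*\mathcal H^if_+(\mathcal M,F_\bullet)\cong \mathcal H^if'_+(p_{\mathcal X}^*(\mathcal M,F_\bullet))$ as filtered $D_U$-modules. On the perverse side, \Cref{C:PervDescA} gives $p^*R^if^{ah}_*K\cong {}^p\mathcal H^iRf'_*(p_{\mathcal X}^*K)$. These identifications are compatible with the comparison $^pDR(\cdot^{an})$ by \Cref{C:ahpush-comm-dR} and \Cref{C:alg-and-an-ah-push-agree}. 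The restriction to $U$ is therefore exactly the triple underlying Saito's $\mathcal H^if'_+$ of the pure Hodge module $\mathsf M|_{U_{\mathcal X}}$ on the variety $U_{\mathcal X}$. By \cite[(4.3.3),(4.5.2)]{Saito1990}, this is a pure polarizable Hodge module of weight $w+i$; here I use that $f'$ is projective, since $f$ is schematic proper and the stacks have quasi-projective coarse spaces, or else projectivity is assumed. Because being a (polarizable) Hodge module on $\mathcal Y$ is defined \'etale locally, this settles the object-level claim.

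Next, polarizations. A polarization $S$ of $\mathsf M$ descends to the system of local polarizations, and Saito's induced polarization on $\mathcal H^if'_+$ is built by pushing forward $S$ and composing with the Lefschetz operator of an $f'$-ample class. To glue these, I would pick the relatively ample class on $\mathcal X$ globally, which exists because $f$ is schematic projective, so that the local choices are pulled back from a single one. Then the local polarizations agree under the $\tau_\phi$. Alternatively, it suffices to show polarizability, a property rather than extra data, which holds \'etale locally.

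Finally, functoriality. A morphism of Hodge modules $\beta$ gives morphisms of the underlying triples, and $\mathcal H^if^{ah}_+(\beta)$ is already defined via \eqref{E:Hif+rhQQ}. Locally it coincides with Saito's $\mathcal H^if'_+(\beta|_{U_{\mathcal X}})$, which is a morphism of Hodge modules. I expect the main obstacle to be coherence between the filtered and perverse descent data, namely that the $\tau_\phi$ on the $D$-module side and on the $K$ side correspond under $^pDR$. This is what \Cref{P:ahpush-comm-dR} provides, so the remaining bookkeeping reduces to naturality, together with the polarization gluing just described.
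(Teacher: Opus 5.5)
Your proposal is correct and follows essentially the same route as the paper: since $\mathsf{HM}(\mathcal Y,w+i)$ is a full subcategory of $MF_{rh}(D_{\mathcal Y},\mathbb Q)$, the only thing to check is that the output is a pure Hodge module of weight $w+i$, which is an \'etale-local question, and via \eqref{E:Canon-Iso+Sp-1Hi} and \Cref{C:PervDescA} it reduces to Saito's theorem for the proper morphisms $f'$ of varieties. Your extra steps are not needed: functoriality is automatic from fullness, polarizability is checked locally, and Saito's result applies to proper (not just projective) morphisms. So the global relatively ample class, which would require $f$ to be projective and that is not assumed here, can be dropped.
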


\begin{proof}
If $(\mathcal M,F_\bullet,K)$ is a pure Hodge module on $\mathcal X$ of weight $w$, the only thing to check is that the regular holonomic $D_{\mathcal Y}$-module  with $\mathbb Q$-structure $(\mathcal H^if_+(\mathcal M,F_\bullet),R^if^{ah}_*K)$ is a pure Hodge module of weight $w+i$.  By definition, this is a local question, and so using \eqref{E:Canon-Iso+Sp-1Hi} and the definition of the \emph{ad hoc} push forward of perverse sheaves (\Cref{C:PervDescA}), this reduces to the case of varieties, which is well known.
\end{proof}

\begin{rem}[\emph{Ad hoc} push forward construction for Hodge modules] \label{R:FiltahPFHodge}
We make the following observation following \Cref{P:Hif+Hodge}.
Given $\mathsf M$  in $\mathsf {HM}(\mathcal X,w)$,
then, in the notation of diagram  \eqref{E:TubPresDpf'}, tacitly contained in \Cref{P:Hif+Hodge} is the statement that 
 there are canonical isomorphisms 
\begin{equation}\label{E:Canon-Iso+Sp-1HiHodge}
\alpha_\phi^i:p^*\mathcal H^if^{ah}_+\mathsf M \stackrel{\sim}{\longrightarrow}\mathcal H^if'_+(p_{\mathcal X}^*\mathsf M),
\end{equation}
obtained via the canonical isomorphisms of \eqref{E:Canon-Iso+Sp-1Hi} and the definition of the \emph{ad hoc} push forward of perverse sheaves (\Cref{C:PervDescA}).
Moreover, 
given  $\mathsf M$  in $\mathsf {HM}(\mathcal X,w)$ viewed as an object $\{ \mathsf M_U,\theta_\phi\}$ in $\mathsf {HM}((-),w)$ with descent data on $\mathcal X$ (\S \ref{S:ConcreteDesc}), then, in the notation of diagram  \eqref{E:TubPresDpf'} and \eqref{E:Canon-Iso+Sp-1HiHodge}, the data
$$
\{\mathcal H^if'_+\mathsf M_{U_{\mathcal X}},\tau_\phi:=\alpha^i_\phi \circ \mathcal H^i(f''_+(\theta_{\phi_{\mathcal X}}))\}
$$
defines an object in $\mathsf {HM}((-),w+i)$ with descent data on $\mathcal Y$, i.e., an object of $\mathsf {HM}(\mathcal Y,w+i)$. From 
 \Cref{P:Hif+Hodge} one obtains  that this is the object with descent data associated with  $\mathcal H^if^{ah}_+\mathsf M$.  
\end{rem}

\fi 

 \bibliographystyle{amsalpha}
 \bibliography{mhm_bib}

@article{Rybakov2015DGModulesDeRham,
  author       = {Sergey Rybakov},
  title        = {DG-modules over de Rham DG-algebra},
  journal      = {European Journal of Mathematics},
  year         = {2015},
  volume       = {1},
  pages        = {25--53},
  doi          = {10.1007/s40879-014-0014-4}
}

@misc{CMZpositivity,
      title={Positivity in the context of {H}odge modules and {H}iggs bundles on {D}eligne--{M}umford stacks}, 
      author={Sebastian Casalaina-Martin and Shend Zhjeqi},
      year={2026},
      eprint={2605.22989},
      archivePrefix={arXiv},
      primaryClass={math.AG},
      url={https://arxiv.org/abs/2605.22989}, 
      note={preprint, \url{https://arxiv.org/abs/2605.22989}}, 
}

@misc{CMZslope_stability,
      title={Movable curve classes and slope stability on {D}eligne--{M}umford stacks}, 
      author={Sebastian Casalaina-Martin and Shend Zhjeqi},
      year={2026},
      eprint={2605.26101},
      archivePrefix={arXiv},
      primaryClass={math.AG},
      url={https://arxiv.org/abs/2605.26101},
      note={preprint, \url{https://arxiv.org/abs/2605.26101}} 
}

@misc{CMZfoliations,
      title={Foliations, slope stability, and positivity of log canonical bundles on {D}eligne--{M}umford stacks}, 
      author={Sebastian Casalaina-Martin and Shend Zhjeqi},
      year={2026},
      eprint={2605.26443},
      archivePrefix={arXiv},
      primaryClass={math.AG},
      note={preprint, \url{https://arxiv.org/abs/2605.26443}}, 
}

@article {PS17,
    AUTHOR = {Popa, Mihnea and Schnell, Christian},
     TITLE = {Viehweg's hyperbolicity conjecture for families with maximal
              variation},
   JOURNAL = {Invent. Math.},
  FJOURNAL = {Inventiones Mathematicae},
    VOLUME = {208},
      YEAR = {2017},
    NUMBER = {3},
     PAGES = {677--713},
      ISSN = {0020-9910,1432-1297},
   MRCLASS = {14D06 (14D07 14E30 14F10)},
  MRNUMBER = {3648973},
MRREVIEWER = {Andreas\ H\"oring},
       DOI = {10.1007/s00222-016-0698-9},
       URL = {https://doi.org/10.1007/s00222-016-0698-9},
}

@book {FGAE,
    AUTHOR = {Fantechi, Barbara and G\"ottsche, Lothar and Illusie, Luc and
              Kleiman, Steven L. and Nitsure, Nitin and Vistoli, Angelo},
     TITLE = {Fundamental algebraic geometry},
    SERIES = {Mathematical Surveys and Monographs},
    VOLUME = {123},
      NOTE = {Grothendieck's FGA explained},
 PUBLISHER = {American Mathematical Society, Providence, RI},
      YEAR = {2005},
     PAGES = {x+339},
      ISBN = {0-8218-3541-6},
   MRCLASS = {14-06 (14A15 14D15 14F20)},
  MRNUMBER = {2222646},
MRREVIEWER = {Liam\ O'Carroll},
       DOI = {10.1090/surv/123},
       URL = {https://doi.org/10.1090/surv/123},
}

@article {WW23,
    AUTHOR = {Wei, Chuanhao and Wu, Lei},
     TITLE = {Hyperbolicity for log smooth families with maximal variation},
   JOURNAL = {Int. Math. Res. Not. IMRN},
  FJOURNAL = {International Mathematics Research Notices. IMRN},
      YEAR = {2023},
    NUMBER = {1},
     PAGES = {708--743},
      ISSN = {1073-7928,1687-0247},
   MRCLASS = {14D06 (14A30 14E30 32Q45)},
  MRNUMBER = {4530119},
MRREVIEWER = {Guolei\ Zhong},
       DOI = {10.1093/imrn/rnab280},
       URL = {https://doi.org/10.1093/imrn/rnab280},
}

@book {HTT08,
    AUTHOR = {Hotta, Ryoshi and Takeuchi, Kiyoshi and Tanisaki, Toshiyuki},
     TITLE = {{$D$}-modules, perverse sheaves, and representation theory},
    SERIES = {Progress in Mathematics},
    VOLUME = {236},
   EDITION = {Japanese},
 PUBLISHER = {Birkh\"auser Boston, Inc., Boston, MA},
      YEAR = {2008},
     PAGES = {xii+407},
      ISBN = {978-0-8176-4363-8},
   MRCLASS = {32C38 (14F05 14F10 17B10)},
  MRNUMBER = {2357361},
MRREVIEWER = {Corrado\ Marastoni},
       DOI = {10.1007/978-0-8176-4523-6},
       URL = {https://doi.org/10.1007/978-0-8176-4523-6},
}

@unpublished{BDstacks,
AUTHOR = {A. Beilinson and V. Drinfeld},
TITLE = {Quantization of {H}itchin's integrable system and {H}ecke
eigensheaves},
NOTE = {\href{https://math.uchicago.edu/~drinfeld/langlands/QuantizationHitchin.pdf}{preprint}},
URL = {https://math.uchicago.edu/~drinfeld/langlands/QuantizationHitchin.pdf}
}

@article {KV04,
    AUTHOR = {Kresch, Andrew and Vistoli, Angelo},
     TITLE = {On coverings of {D}eligne-{M}umford stacks and surjectivity of
              the {B}rauer map},
   JOURNAL = {Bull. London Math. Soc.},
  FJOURNAL = {The Bulletin of the London Mathematical Society},
    VOLUME = {36},
      YEAR = {2004},
    NUMBER = {2},
     PAGES = {188--192},
      ISSN = {0024-6093,1469-2120},
   MRCLASS = {14A20 (14D20 14F22)},
  MRNUMBER = {2026412},
MRREVIEWER = {Martin\ M\"oller},
       DOI = {10.1112/S0024609303002728},
       URL = {https://doi.org/10.1112/S0024609303002728},
}

@incollection {kresch09,
    AUTHOR = {Kresch, Andrew},
     TITLE = {On the geometry of {D}eligne-{M}umford stacks},
 BOOKTITLE = {Algebraic geometry---{S}eattle 2005. {P}art 1},
    SERIES = {Proc. Sympos. Pure Math.},
    VOLUME = {80, Part 1},
     PAGES = {259--271},
 PUBLISHER = {Amer. Math. Soc., Providence, RI},
      YEAR = {2009},
      ISBN = {978-0-8218-4702-2},
   MRCLASS = {14A20 (14D23 14L24)},
  MRNUMBER = {2483938},
MRREVIEWER = {Fabio\ Perroni},
       DOI = {10.1090/pspum/080.1/2483938},
       URL = {https://doi.org/10.1090/pspum/080.1/2483938},
}

@misc{stacks-project,
  author       = {The {Stacks project authors}},
  title        = {The stacks project},
  howpublished = {\url{https://stacks.math.columbia.edu}},
  year         = {2026},
}

@misc{kerodon,
  author       = {Jacob Lurie},
  title        = {Kerodon},
  howpublished = {\url{https://kerodon.net}},
  year         = {2026},
}

@misc{LurieHA,
  author       = {Lurie, Jacob},
  title        = {Higher Algebra},
  year         = {2017},
  month        = sep,
  date         = {2017-09-18},
    howpublished           = {\url{https://www.math.ias.edu/~lurie/papers/HA.pdf}},
  urldate      = {2026-07-03}
}

@book {LMB,
    AUTHOR = {Laumon, G\'erard and Moret-Bailly, Laurent},
     TITLE = {Champs alg\'ebriques},
    SERIES = {Ergebnisse der Mathematik und ihrer Grenzgebiete. 3. Folge. A
              Series of Modern Surveys in Mathematics [Results in
              Mathematics and Related Areas. 3rd Series. A Series of Modern
              Surveys in Mathematics]},
    VOLUME = {39},
 PUBLISHER = {Springer-Verlag, Berlin},
      YEAR = {2000},
     PAGES = {xii+208},
      ISBN = {3-540-65761-4},
   MRCLASS = {14A20 (14D20)},
  MRNUMBER = {1771927},
MRREVIEWER = {Dan\ Edidin},
}

@book {kollar_singularities_MMP,
    AUTHOR = {Koll\'ar, J\'anos},
     TITLE = {Singularities of the minimal model program},
    SERIES = {Cambridge Tracts in Mathematics},
    VOLUME = {200},
      NOTE = {With a collaboration of S\'andor Kov\'acs},
 PUBLISHER = {Cambridge University Press, Cambridge},
      YEAR = {2013},
     PAGES = {x+370},
      ISBN = {978-1-107-03534-8},
   MRCLASS = {14E30 (14B05)},
  MRNUMBER = {3057950},
MRREVIEWER = {Tommaso\ De Fernex},
       DOI = {10.1017/CBO9781139547895},
       URL = {https://doi.org/10.1017/CBO9781139547895},
}

@book {kollar_families,
    AUTHOR = {Koll\'ar, J\'anos},
     TITLE = {Families of varieties of general type},
    SERIES = {Cambridge Tracts in Mathematics},
    VOLUME = {231},
      NOTE = {With the collaboration of Klaus Altmann and S\'andor J.
              Kov\'acs},
 PUBLISHER = {Cambridge University Press, Cambridge},
      YEAR = {2023},
     PAGES = {xviii+471},
      ISBN = {978-1-009-34610-8},
   MRCLASS = {14J10 (14D20 14E30 14J29)},
  MRNUMBER = {4566297},
MRREVIEWER = {Chenyang\ Xu},
}

@article {Pat12,
    AUTHOR = {Patakfalvi, Zsolt},
     TITLE = {Viehweg's hyperbolicity conjecture is true over compact bases},
   JOURNAL = {Adv. Math.},
  FJOURNAL = {Advances in Mathematics},
    VOLUME = {229},
      YEAR = {2012},
    NUMBER = {3},
     PAGES = {1640--1642},
      ISSN = {0001-8708,1090-2082},
   MRCLASS = {14J10 (14E05)},
  MRNUMBER = {2871152},
MRREVIEWER = {Atsushi\ Moriwaki},
       DOI = {10.1016/j.aim.2011.12.013},
       URL = {https://doi.org/10.1016/j.aim.2011.12.013},
}

@article {KK08,
    AUTHOR = {Kebekus, Stefan and Kov\'acs, S\'andor J.},
     TITLE = {Families of varieties of general type over compact bases},
   JOURNAL = {Adv. Math.},
  FJOURNAL = {Advances in Mathematics},
    VOLUME = {218},
      YEAR = {2008},
    NUMBER = {3},
     PAGES = {649--652},
      ISSN = {0001-8708,1090-2082},
   MRCLASS = {14J10 (14E30)},
  MRNUMBER = {2414316},
MRREVIEWER = {Jaros\l aw\ A.\ Wi\'sniewski},
       DOI = {10.1016/j.aim.2008.01.005},
       URL = {https://doi.org/10.1016/j.aim.2008.01.005},
}

@article {BHPS13,
    AUTHOR = {Bhatt, Bhargav and Ho, Wei and Patakfalvi, Zsolt and Schnell,
              Christian},
     TITLE = {Moduli of products of stable varieties},
   JOURNAL = {Compos. Math.},
  FJOURNAL = {Compositio Mathematica},
    VOLUME = {149},
      YEAR = {2013},
    NUMBER = {12},
     PAGES = {2036--2070},
      ISSN = {0010-437X,1570-5846},
   MRCLASS = {14D22 (14D23 14J15 14J17)},
  MRNUMBER = {3143705},
MRREVIEWER = {Wenfei\ Liu},
       DOI = {10.1112/S0010437X13007288},
       URL = {https://doi.org/10.1112/S0010437X13007288},
}

@article {KP17proj,
    AUTHOR = {Kov\'acs, S\'andor J. and Patakfalvi, Zsolt},
     TITLE = {Projectivity of the moduli space of stable log-varieties and
              subadditivity of log-{K}odaira dimension},
   JOURNAL = {J. Amer. Math. Soc.},
  FJOURNAL = {Journal of the American Mathematical Society},
    VOLUME = {30},
      YEAR = {2017},
    NUMBER = {4},
     PAGES = {959--1021},
      ISSN = {0894-0347,1088-6834},
   MRCLASS = {14J10},
  MRNUMBER = {3671934},
MRREVIEWER = {Atsushi\ Moriwaki},
       DOI = {10.1090/jams/871},
       URL = {https://doi.org/10.1090/jams/871},
}

@article {EV90effective,
    AUTHOR = {Esnault, H\'el\`ene and Viehweg, Eckart},
     TITLE = {Effective bounds for semipositive sheaves and for the height
              of points on curves over complex function fields},
      NOTE = {Algebraic geometry (Berlin, 1988)},
   JOURNAL = {Compositio Math.},
  FJOURNAL = {Compositio Mathematica},
    VOLUME = {76},
      YEAR = {1990},
    NUMBER = {1-2},
     PAGES = {69--85},
      ISSN = {0010-437X,1570-5846},
   MRCLASS = {14H10 (11G35 14F10 14J25)},
  MRNUMBER = {1078858},
MRREVIEWER = {Bruce\ Hunt},
       URL = {http://www.numdam.org/item?id=CM_1990__76_1-2_69_0},
}

@book {EVvan92,
    AUTHOR = {Esnault, H\'el\`ene and Viehweg, Eckart},
     TITLE = {Lectures on vanishing theorems},
    SERIES = {DMV Seminar},
    VOLUME = {20},
 PUBLISHER = {Birkh\"auser Verlag, Basel},
      YEAR = {1992},
     PAGES = {vi+164},
      ISBN = {3-7643-2822-3},
   MRCLASS = {14F17 (14F40 32L10 32L20)},
  MRNUMBER = {1193913},
MRREVIEWER = {Marko\ Roczen},
       DOI = {10.1007/978-3-0348-8600-0},
       URL = {https://doi.org/10.1007/978-3-0348-8600-0},
}

@article {PW16,
    AUTHOR = {Popa, Mihnea and Wu, Lei},
     TITLE = {Weak positivity for {H}odge modules},
   JOURNAL = {Math. Res. Lett.},
  FJOURNAL = {Mathematical Research Letters},
    VOLUME = {23},
      YEAR = {2016},
    NUMBER = {4},
     PAGES = {1139--1155},
      ISSN = {1073-2780,1945-001X},
   MRCLASS = {14C30 (14D07 14F10 32C38 32G20)},
  MRNUMBER = {3554504},
MRREVIEWER = {Fumio\ Hazama},
       DOI = {10.4310/MRL.2016.v23.n4.a8},
       URL = {https://doi.org/10.4310/MRL.2016.v23.n4.a8},
}

@article {CPFol19,
    AUTHOR = {Campana, Fr\'ed\'eric and P\u{a}un, Mihai},
     TITLE = {Foliations with positive slopes and birational stability of
              orbifold cotangent bundles},
   JOURNAL = {Publ. Math. Inst. Hautes \'Etudes Sci.},
  FJOURNAL = {Publications Math\'ematiques. Institut de Hautes \'Etudes
              Scientifiques},
    VOLUME = {129},
      YEAR = {2019},
     PAGES = {1--49},
      ISSN = {0073-8301,1618-1913},
   MRCLASS = {37F75 (14E30 14M22 32S65)},
  MRNUMBER = {3949026},
MRREVIEWER = {Carla\ Novelli},
       DOI = {10.1007/s10240-019-00105-w},
       URL = {https://doi.org/10.1007/s10240-019-00105-w},
}

@incollection {BBD82,
    AUTHOR = {Beilinson, A. A. and Bernstein, J. and Deligne, P.},
     TITLE = {Faisceaux pervers},
 BOOKTITLE = {Analysis and topology on singular spaces, {I} ({L}uminy,
              1981)},
    SERIES = {Ast\'erisque},
    VOLUME = {100},
     PAGES = {5--171},
 PUBLISHER = {Soc. Math. France, Paris},
      YEAR = {1982},
   MRCLASS = {32C38},
  MRNUMBER = {751966},
MRREVIEWER = {Zoghman\ Mebkhout},
}

@book {GW20,
    AUTHOR = {G\"ortz, Ulrich and Wedhorn, Torsten},
     TITLE = {Algebraic geometry {I}. {S}chemes---with examples and
              exercises},
    SERIES = {Springer Studium Mathematik---Master},
   EDITION = {Second},
 PUBLISHER = {Springer Spektrum, Wiesbaden},
      YEAR = {[2020] \copyright 2020},
     PAGES = {vii+625},
      ISBN = {978-3-658-30732-5; 978-3-658-30733-2},
   MRCLASS = {14-01},
  MRNUMBER = {4225278},
       DOI = {10.1007/978-3-658-30733-2},
       URL = {https://doi.org/10.1007/978-3-658-30733-2},
}

@book {matsumura,
    AUTHOR = {Matsumura, Hideyuki},
     TITLE = {Commutative ring theory},
    SERIES = {Cambridge Studies in Advanced Mathematics},
    VOLUME = {8},
   EDITION = {Second},
      NOTE = {Translated from the Japanese by M. Reid},
 PUBLISHER = {Cambridge University Press, Cambridge},
      YEAR = {1989},
     PAGES = {xiv+320},
      ISBN = {0-521-36764-6},
   MRCLASS = {13-01},
  MRNUMBER = {1011461},
}

@article {Hart80,
    AUTHOR = {Hartshorne, Robin},
     TITLE = {Stable reflexive sheaves},
   JOURNAL = {Math. Ann.},
  FJOURNAL = {Mathematische Annalen},
    VOLUME = {254},
      YEAR = {1980},
    NUMBER = {2},
     PAGES = {121--176},
      ISSN = {0025-5831,1432-1807},
   MRCLASS = {14F05 (14D22)},
  MRNUMBER = {597077},
MRREVIEWER = {Klaus\ Hulek},
       DOI = {10.1007/BF01467074},
       URL = {https://doi.org/10.1007/BF01467074},
}

@article{tubach_2024, title={Mixed {H}odge modules on stacks}, volume={13}, DOI={10.1017/fms.2025.10122}, journal={Forum of Mathematics, Sigma}, author={Tubach, Swann}, year={2025}, pages={e175}}

@article {RS20Intersection,
    AUTHOR = {Richarz, Timo and Scholbach, Jakob},
     TITLE = {The intersection motive of the moduli stack of shtukas},
   JOURNAL = {Forum Math. Sigma},
  FJOURNAL = {Forum of Mathematics. Sigma},
    VOLUME = {8},
      YEAR = {2020},
     PAGES = {Paper No. e8, 99},
      ISSN = {2050-5094},
   MRCLASS = {20G05 (14D23 14F42 19E15)},
  MRNUMBER = {4061978},
MRREVIEWER = {Ilya\ Karzhemanov},
       DOI = {10.1017/fms.2019.32},
       URL = {https://doi.org/10.1017/fms.2019.32},
}

@article {aoki23,
    AUTHOR = {Aoki, Ko},
     TITLE = {Tensor triangular geometry of filtered objects and sheaves},
   JOURNAL = {Math. Z.},
  FJOURNAL = {Mathematische Zeitschrift},
    VOLUME = {303},
      YEAR = {2023},
    NUMBER = {3},
     PAGES = {Paper No. 62, 27},
      ISSN = {0025-5874,1432-1823},
   MRCLASS = {18G80 (18F20 18F70 18N60)},
  MRNUMBER = {4549105},
MRREVIEWER = {Maosong\ Xiang},
       DOI = {10.1007/s00209-023-03210-z},
       URL = {https://doi.org/10.1007/s00209-023-03210-z},
}

@misc{paulin13,
      title={The {R}iemann-{H}ilbert Correspondence for Algebraic Stacks}, 
      author={Alexander Paulin},
      year={2013},
      eprint={1308.5890},
      archivePrefix={arXiv},
      primaryClass={math.AG},
      url={https://arxiv.org/abs/1308.5890}, 
}

@incollection {CMW18,
    AUTHOR = {Casalaina-Martin, Sebastian and Wise, Jonathan},
     TITLE = {An introduction to moduli stacks, with a view towards {H}iggs
              bundles on algebraic curves},
 BOOKTITLE = {The geometry, topology and physics of moduli spaces of {H}iggs
              bundles},
    SERIES = {Lect. Notes Ser. Inst. Math. Sci. Natl. Univ. Singap.},
    VOLUME = {36},
     PAGES = {199--399},
 PUBLISHER = {World Sci. Publ., Hackensack, NJ},
      YEAR = {2018},
      ISBN = {978-981-3229-08-2},
   MRCLASS = {14D23 (14H60 18F99)},
  MRNUMBER = {3837871},
MRREVIEWER = {Arvid\ Siqveland},
}

@book{Deligne1970,
  author    = {Pierre Deligne},
  title     = {Équations différentielles à points singuliers réguliers},
  series    = {Lecture Notes in Mathematics},
  volume    = {163},
  publisher = {Springer-Verlag},
  year      = {1970},
}

@article{Saito1990,
  author    = {Morihiko Saito},
  title     = {Mixed Hodge Modules},
  journal   = {Publications of the Research Institute for Mathematical Sciences},
  volume    = {26},
  number    = {2},
  year      = {1990},
  pages     = {221--333},
}

@article {PTW18,
    AUTHOR = {Popa, Mihnea and Taji, Behrouz and Wu, Lei},
     TITLE = {Brody hyperbolicity of base spaces of certain families of
              varieties},
   JOURNAL = {Algebra Number Theory},
  FJOURNAL = {Algebra \& Number Theory},
    VOLUME = {13},
      YEAR = {2019},
    NUMBER = {9},
     PAGES = {2205--2242},
      ISSN = {1937-0652,1944-7833},
   MRCLASS = {14D07 (14D23 14E30 14J10 14J15 14J29)},
  MRNUMBER = {4039502},
MRREVIEWER = {Kimio\ Miyajima},
       DOI = {10.2140/ant.2019.13.2205},
       URL = {https://doi.org/10.2140/ant.2019.13.2205},
}

@misc{MHM-project,
  author = {Christian Schnell and Claude Sabbah},
  title = {The MHM Project},
  note = {Lecture notes, unpublished manuscript},
  year = {2017},
  url = {https://www.math.stonybrook.edu/~cschnell/mhm-project.pdf}
}

@article{PopaSchnell2013,
  author    = {Mihnea Popa and Christian Schnell},
  title     = {Generic vanishing theory via mixed Hodge modules},
  journal   = {Forum of Mathematics, Sigma},
  volume    = {1},
  pages     = {e1, 60pp},
  year      = {2013},
  doi       = {10.1017/fms.2013.1},
  mrnumber  = {3090229}
}

@article {fujita_fiber_spaces,
    AUTHOR = {Fujita, Takao},
     TITLE = {On {K}\"ahler fiber spaces over curves},
   JOURNAL = {J. Math. Soc. Japan},
  FJOURNAL = {Journal of the Mathematical Society of Japan},
    VOLUME = {30},
      YEAR = {1978},
    NUMBER = {4},
     PAGES = {779--794},
      ISSN = {0025-5645,1881-1167},
   MRCLASS = {32G13 (14E99 32J99)},
  MRNUMBER = {513085},
       DOI = {10.2969/jmsj/03040779},
       URL = {https://doi.org/10.2969/jmsj/03040779},
}

@article {FFS14Remarks,
    AUTHOR = {Fujino, Osamu and Fujisawa, Taro and Saito, Morihiko},
     TITLE = {Some remarks on the semipositivity theorems},
   JOURNAL = {Publ. Res. Inst. Math. Sci.},
  FJOURNAL = {Publications of the Research Institute for Mathematical
              Sciences},
    VOLUME = {50},
      YEAR = {2014},
    NUMBER = {1},
     PAGES = {85--112},
      ISSN = {0034-5318,1663-4926},
   MRCLASS = {14D07 (32G20)},
  MRNUMBER = {3167580},
MRREVIEWER = {Christian\ Schnell},
       DOI = {10.4171/PRIMS/125},
       URL = {https://doi.org/10.4171/PRIMS/125},
}

@article {FF14variationsHMS,
    AUTHOR = {Fujino, Osamu and Fujisawa, Taro},
     TITLE = {Variations of mixed {H}odge structure and semipositivity
              theorems},
   JOURNAL = {Publ. Res. Inst. Math. Sci.},
  FJOURNAL = {Publications of the Research Institute for Mathematical
              Sciences},
    VOLUME = {50},
      YEAR = {2014},
    NUMBER = {4},
     PAGES = {589--661},
      ISSN = {0034-5318,1663-4926},
   MRCLASS = {14D07 (14C30 32G20)},
  MRNUMBER = {3273305},
MRREVIEWER = {Jan\ Nagel},
       DOI = {10.4171/PRIMS/145},
       URL = {https://doi.org/10.4171/PRIMS/145},
}

@misc{AlperStacksBook,
      title={Stacks and Moduli: working draft}, 
      author={Jarod Alper},
      year={2025},
      url={https://sites.math.washington.edu/~jarod/moduli.pdf},
       howpublished = {\url{https://sites.math.washington.edu/~jarod/moduli.pdf}},
      note = {Draft of book},  
}

@article {Kawamata,
    AUTHOR = {Kawamata, Yujiro},
     TITLE = {Characterization of abelian varieties},
   JOURNAL = {Compositio Math.},
  FJOURNAL = {Compositio Mathematica},
    VOLUME = {43},
      YEAR = {1981},
    NUMBER = {2},
     PAGES = {253--276},
      ISSN = {0010-437X,1570-5846},
   MRCLASS = {14J10 (32J15)},
  MRNUMBER = {622451},
MRREVIEWER = {Daniel\ Comenetz},
       URL = {http://www.numdam.org/item?id=CM_1981__43_2_253_0},
}

@incollection {kollar_sub,
    AUTHOR = {Koll\'ar, J\'anos},
     TITLE = {Subadditivity of the {K}odaira dimension: fibers of general
              type},
 BOOKTITLE = {Algebraic geometry, {S}endai, 1985},
    SERIES = {Adv. Stud. Pure Math.},
    VOLUME = {10},
     PAGES = {361--398},
 PUBLISHER = {North-Holland, Amsterdam},
      YEAR = {1987},
      ISBN = {0-444-70313-6},
   MRCLASS = {14J10 (14C30 14D20)},
  MRNUMBER = {946244},
MRREVIEWER = {Yujiro\ Kawamata},
       DOI = {10.2969/aspm/01010361},
       URL = {https://doi.org/10.2969/aspm/01010361},
}

@article{Saito,
  author  = {Saito, Morihiko},
  title   = {Modules de Hodge polarisables},
  journal = {Publ. Res. Inst. Math. Sci.},
  volume  = {24},
  year    = {1988},
  number  = {6},
  pages   = {849--995},
}

@article {Zuo00negativity,
    AUTHOR = {Zuo, Kang},
     TITLE = {On the negativity of kernels of {K}odaira-{S}pencer maps on
              {H}odge bundles and applications},
      NOTE = {Kodaira's issue},
   JOURNAL = {Asian J. Math.},
  FJOURNAL = {Asian Journal of Mathematics},
    VOLUME = {4},
      YEAR = {2000},
    NUMBER = {1},
     PAGES = {279--301},
      ISSN = {1093-6106,1945-0036},
   MRCLASS = {32G20 (14D07 32J25)},
  MRNUMBER = {1803724},
MRREVIEWER = {Jan\ Nagel},
       DOI = {10.4310/AJM.2000.v4.n1.a17},
       URL = {https://doi.org/10.4310/AJM.2000.v4.n1.a17},
}

@article {VZ01isotriv,
    AUTHOR = {Viehweg, Eckart and Zuo, Kang},
     TITLE = {On the isotriviality of families of projective manifolds over
              curves},
   JOURNAL = {J. Algebraic Geom.},
  FJOURNAL = {Journal of Algebraic Geometry},
    VOLUME = {10},
      YEAR = {2001},
    NUMBER = {4},
     PAGES = {781--799},
      ISSN = {1056-3911,1534-7486},
   MRCLASS = {14D06 (14J10)},
  MRNUMBER = {1838979},
MRREVIEWER = {S\'andor\ J.\ Kov\'acs},
}

@incollection{VZ02,
  author = {Viehweg, Eckart and Zuo, Kang},
  title = {Base spaces of non-isotrivial families of smooth minimal models},
  booktitle = {Complex Geometry (Göttingen, 2000)},
  publisher = {Springer},
  address = {Berlin},
  year = {2002},
  pages = {279--328},
  mrnumber = {1922109}
}

@article{VZ03,
  author = {Viehweg, Eckart and Zuo, Kang},
  title = {On the Brody hyperbolicity of moduli spaces for canonically polarized manifolds},
  journal = {Duke Mathematical Journal},
  volume = {118},
  number = {1},
  pages = {103--150},
  year = {2003},
  mrnumber = {1978884}
}

@article {hassett_weighted,
    AUTHOR = {Hassett, Brendan},
     TITLE = {Moduli spaces of weighted pointed stable curves},
   JOURNAL = {Adv. Math.},
  FJOURNAL = {Advances in Mathematics},
    VOLUME = {173},
      YEAR = {2003},
    NUMBER = {2},
     PAGES = {316--352},
      ISSN = {0001-8708,1090-2082},
   MRCLASS = {14H10 (14D22 14E30)},
  MRNUMBER = {1957831},
MRREVIEWER = {Ivan\ S.\ Kausz},
       DOI = {10.1016/S0001-8708(02)00058-0},
       URL = {https://doi.org/10.1016/S0001-8708(02)00058-0},
}
\end{document}